\g@addto@macro\normalsize{%
	\setlength\abovedisplayskip{4pt}
	\setlength\belowdisplayskip{4pt}
	\setlength\abovedisplayshortskip{4pt}
	\setlength\belowdisplayshortskip{4pt}
}
\numberwithin{equation}{section}
\crefname{section}{Section}{Sections}
\crefname{subsection}{Subsection}{Subsections}
\crefname{condition}{Condition}{Conditions}
\crefname{hypothesis}{Hypothesis}{Conditions}
\crefname{assumption}{Assumption}{Assumptions}
\crefname{lemma}{Lemma}{Lemmas}
\crefname{definition}{Definition}{Definitions}
\newtheorem{theorem} {Theorem}[section]
\newtheorem{proposition} [theorem]{Proposition}
\newtheorem{lemma}[theorem]{Lemma}
\newtheorem{corollary}[theorem]{Corollary}
\newtheorem{counter example}[theorem]{Counter Example}
\newtheorem{remark}[theorem] {Remark}
\newtheorem{claim}[theorem] {Claim}
\def\CC{{\rm \kern.24em \vrule width.02em height1.4ex depth-.05ex \kern-.26emC}}
\def\TagOnRight
\def\AA{{it I} \hskip-3pt{\tt A}}
\def\QQ{\rlap {\raise 0.4ex \hbox{$\scriptscriptstyle |$}} {\hskip -0.1em Q}}
\newcommand{\vo}{\vec{o}\@ifnextchar{^}{\,}{}}
\def\YYint#1#2#3{{\setbox0=\hbox{$#1{#2#3}{\iint}$}
		\vcenter{\hbox{$#2#3$}}\kern-.50\wd0}}
\def\XXint#1#2#3{{\setbox0=\hbox{$#1{#2#3}{\int}$}
		\vcenter{\hbox{$#2#3$}}\kern-.50\wd0}}
\def\namedlabel#1#2{\begingroup
	\def\@currentlabel{#2}%
	\label{#1}\endgroup
}
\newcommand{\rmh}[1]{\mathpalette{\raisem@th{#1}}}
\newcommand{\raisem@th}[3]{\hspace*{-1pt}\raisebox{#1}{$#2#3$}}
\newcounter{desccount}
\newcommand{\descref}[2]{\hyperref[#1]{\textnormal{\textcolor{black}{}\textcolor{blue}{ #2}\textcolor{black}{}}}}
\newcommand{\dref}[2]{\hyperref[#1]{\textcolor{black}{(}\textcolor{blue}{\bf #2}\textcolor{black}{)}}}
\newcommand{\be} {\begin{eqnarray}}
	\newcommand{\ee} {\end{eqnarray}}
\newcommand{\Bea} {\begin{eqnarray*}}
	\newcommand{\Eea} {\end{eqnarray*}}
\newcommand{\pa} {\partial}
\newcommand{\al} {\alpha}
\newcommand{\rr}{\rightarrow}
\newcommand{\B} {\beta}
\newcommand{\de} {\delta}
\newcommand{\p}  {\prime}
\newcommand{\e}  {\varepsilon}
\newcommand{\la} {\lambda}
\newcommand{\si} {\sigma}
\newcommand{\f}{\infty}
\newcommand{\R}{\mathbb{R}}
\newcommand{\noi} {\noindent}
\newcommand{\va} {\varphi}
\newcommand{\norm}[1]{\left|\hspace{-0.2mm}\left| #1 \right|\hspace{-0.2mm}\right|}
\newcommand{\abs}[1]{\left| #1\right|}
\newcounter{whitney}
\newcounter{ineqcounter}
\def\ps@pprintTitle{%
	\let\@oddhead\@empty
	\let\@evenhead\@empty
	\def\@oddfoot{}%
	\let\@evenfoot\@oddfoot}
\newcommand{\refcheckize}[1]{%
	\expandafter\let\csname @@\string#1\endcsname#1%
	\expandafter\DeclareRobustCommand\csname relax\string#1\endcsname[1]{%
		\csname @@\string#1\endcsname{##1}\wrtusdrf{##1}}%
	\expandafter\let\expandafter#1\csname relax\string#1\endcsname
}
\newcommand{\mainsectionstyle}{%
	\renewcommand{\@secnumfont}{\bfseries}
	\renewcommand\section{\@startsection{section}{2}%
		\z@{.5\linespacing\@plus.7\linespacing}{-.5em}%
		{\normalfont\bfseries}}%
}
\xpatchcmd{\MaketitleBox}{\hrule}{}{}{}
\xpatchcmd{\MaketitleBox}{\hrule}{}{}{}
\date{}
\title{Viscous approximation of triangular system in 1-d with nonlinear viscosity}
\author[1,a]{Boris Haspot}
\author[2,b]{Animesh Jana}
\affil[a]{\footnotesize	 Universit\'e Paris Dauphine, PSL Research University, CEREMADE (UMR CNRS 7534), Place du Mar\' echal De Lattre De Tassigny 75775 Paris cedex 16, France.}
\affil[b]{\footnotesize Istituto di Matematica Applicata e Tecnologie Informatiche ``Enrico Magenes'' - Consiglio Nazionale delle Ricerche, Via Ferrata, 5/a - 27100 Pavia, Italy.}
\affil[1]{\em \footnotesize	 haspot@ceremade.dauphine.fr}
\affil[2]{\em \footnotesize	animesh.jana@imati.cnr.it}
\begin{document}
	\maketitle
	\begin{abstract}
		We study the vanishing viscosity limit for $2\times2$ triangular system of hyperbolic conservation laws when the viscosity coefficients are non linear. In this article, we assume that the viscosity matrix $B(u)$ is commutating with the convective part $A(u)$. We show the existence of global smooth solution to the parabolic equation satisfying uniform total variation bound in $\e$ provided that the initial data is small in $BV$. This extends the previous result of Bianchini and Bressan [Commun. Pure Appl. Anal. (2002)] which was considering the case $B(u)=I$.
	\end{abstract}
	
	\tableofcontents

\section{Introduction}
It is well known that hyperbolic systems generally generate infinitely many weak solutions, in order to overcome this lack of uniqueness it is relevant to consider processes of vanishing viscous limits in order to select the relevant physical solutions for systems of nonlinear hyperbolic systems.
In this article, we are interested in studying the vanishing viscosity limit for the following hyperbolic triangular system in one dimension which reads as follows
\begin{equation}\label{hyperbo}
	\left\{\begin{array}{rl}
		u_{1,t}+(f(u_1))_x&=0,\\
		u_{2,t}+(g(u_1,u_2))_x&=0,
	\end{array}\mbox{ for }x\in\R\mbox{ and }t>0,
\right.
\end{equation}
where $f$ and $g$ are smooth functions. This system appears in various physics and engineering models, including multi-component chromatography, two-phase flow in porous media and sedimentation processes (see \cite{BGJ,KMR,KMR-1} and references therein for more detailed discussion on applications of the system). Here, we consider the flux functions $f$ and $g$ such that the system of equations \eqref{hyperbo} becomes strictly hyperbolic (see \eqref{condhyperbo} below for more precise condition). We study now the following viscosity approximation,
\begin{equation}	\label{viscous-system-2}
	\begin{cases}
	\begin{aligned}
			u_{1,t}+(f(u_1))_x&=\e(\al_1(u_1)u_{1,x})_x,\\
			u_{2,t}+(g(u_1,u_2))_x&=\e\left[(\B(u_1,u_2) u_{1,x})_x+(\al_2(u_1,u_2)u_{2,x})_x\right],
	\end{aligned}
	\end{cases}
\end{equation}
with $\e>0$. The system \eqref{viscous-system-2} is supplemented with the following initial condition,
\begin{equation}
			u(0,\cdot)=\bar{u},
\end{equation}
where $\bar{u}$ will be assumed small in $TV$, the Total Variation.
In this article, we prove the global existence of smooth solution to \eqref{viscous-system-2} and study the vanishing viscosity limit, that is, to show that the limit of $(u_1^\e,u_2^\e)$ as $\e\rr0$ is solution of the system \eqref{hyperbo}. 
We can write \eqref{hyperbo} in the following form
\begin{equation}
	u_t+A(u)u_x=0,
\end{equation}
where $A(u)$ is determined as follows
\begin{equation}\label{defA}
	A(u)=\begin{pmatrix}
		f^\p(u_1)&0\\
		\frac{\pa g}{\pa u_1}&	\frac{\pa g}{\pa u_2}
	\end{pmatrix}
	\;\;\mbox{and}\;\;u=\begin{pmatrix}
		u_1\\
		u_2
	\end{pmatrix}.
\end{equation}
We can note that the eigenvalues of $A(u)$ are $\lambda_1(u)=f'(u_1)$ and $\lambda_2(u)=\frac{\pa g}{\pa u_2}(u)$. We assume now the following strict hyperbolicity condition:
\begin{equation}\label{condhyperbo}
	\lambda_2(u)-\lambda_1(u)=\frac{\pa g}{\pa u_2}(u)-f^\p(u_1)\geq c>0,
\end{equation}
for $u$ in a neighborhood of a compact $K$ with $\{\bar{u}(x),x\in\R\}$ included in $K$. 
It is well known that under the condition \eqref{condhyperbo} the system \eqref{hyperbo} admits a unique global solution provided that  the initial data $u(0)=\bar{u}$ is sufficiently small in $BV(\R)$ (see \cite{BB-vv-lim-ann-math,Bressan-book,Glimm}), the uniqueness is defined in a suitable class of solutions which satisfied in particular the {\em Liu condition} (we refer also to \cite{BDL}). To study the vanishing viscosity limit for \eqref{hyperbo} we recall that we assume that $\bar{u}$ is small in $BV(\R)$ so that we enter in the framework of the theory of existence and uniqueness previously mentionned. Next, we would like to write the parabolic system of equations \eqref{viscous-system-2} in the following form
\begin{equation}\label{eqn-main1}
	u_t+A(u)u_x=\e (B(u)u_{x})_x,
\end{equation}
where $B$ is defined as follows
\begin{equation}\label{defB}
	B(u)=\begin{pmatrix}
		\al_1(u_1)&0\\
		\B(u)&\al_2(u)
	\end{pmatrix}.
\end{equation} 
In addition, we assume that the coefficients $\alpha_i$ are smooth functions such that there exists $c_1,M>0$ such that for any $u\in\R^2$ we have for any $i\in\{1,2\}$
\begin{equation}
	0<c_1\leq \alpha_i(u)\leq M<+\infty.
	\label{condialpha}
\end{equation}
In the sequel we deal with a viscosity matrix $B(u)$ which commutate with the convective matrix $A(u)$. Note that to satisfy the condition $AB=BA$, the function $\B(u)$ must satisfy the following condition
\begin{equation}\label{condicommut}
	\B(u)=(\al_1(u_1)-\al_2(u))\gamma(u)\mbox{ where }\gamma(u)=\frac{\frac{\pa g}{\pa u_1}(u)}{f^\p(u_1)-\frac{\pa g}{\pa u_2}(u)}.
\end{equation}
This yields that the matrices $A(u),B(u)$ have same eigenvectors for any $u\in K$.

We can observe that by using the following rescaling of coordinates $s=\frac{t}{\e}$ and $y=\frac{x}{\e}$ we are reduced to study the following system :
\begin{equation}\label{eqn-main}
	\begin{cases}
	\begin{aligned}
			&u_s+A(u)u_y= (B(u)u_{y})_y,\\
			&u(0,\cdot)=\bar{u}(\e\cdot).
			\end{aligned}
	\end{cases}
\end{equation}
We point out here that the total variation of $\bar{u}^\e$  is independent of $\e$, in other term the $BV$ space is invariant by the scaling $x\rightarrow \e x$. We are finally reduced to study the system \eqref{viscous-system-2} with $\e=1$.

The study of vanishing viscosity limit for system of conservation laws can be found in \cite{BB-temple,BB-triangular,BB-vv-lim-ann-math,Chen-Kang-Vas,Chen-Per,GX,Serre-1} including in particular the so called case of compressible Euler equations when we consider the viscous approximation issue of the compressible Navier-Stokes system (see in particular \cite{Chen-Kang-Vas,Chen-Per}, it is important to point out that the study is particularly delicate in this case since the
the viscosity is only partial, in other words $B(u)$ is not invertible). We refer to \cite{Ancona-Bianchini,Spinolo} for the study of vanishing viscosity limit for system of conservation laws with boundaries. Here, we briefly discuss a few of them which are relevant to the setting and techniques of the current article. When the convective matrix $A$ belongs to the Temple class, that is when each characteristic is a straight line, the existence, the uniform $TV$ estimates in $\e$ and the $L^1$ Lipschitz estimate for the system \eqref{eqn-main1} are established in \cite{BB-temple} for $B=I$. We have studied the vanishing viscosity limit for the Temple class hyperbolic system in \cite{HJ} when the matrices $A$ and $B$ share same eigenvectors. Furthermore, Bianchini and Bressan \cite{BB-triangular,BB-vv-lim-ann-math} developed a powerful method for studying the vanishing viscosity limit by decomposing the gradient of solution as a sum of the gradient of viscous travelling wave and by introducing suitable functionals for carefully analyzing  all
interaction terms. In other words Bianchini and Bressan decompose the solution $u_x=\sum_i v_i\tilde{r}_i(u,v_i,\sigma_i)$ into scalar components in a basis $(\tilde{r}_i(u,v_i,\sigma_i))_{\{1\leq i\leq n\}}$ selected by a center manifold technique which is suitable inasmuch as it enables to estimates in particular the $TV$ bound of the viscous travelling wave. It is important to point out that generally  in the purely hyperbolic case without viscosity, we decompose $u_x$ along a basis $\{r_1(u),\cdots,r_n(u)\}$ of eigenvectors of the
matrix $A(u)$, however this choice does not work here due to the fact that gradient of the $i$-viscous travelling waves $U'$ is not necessary collinear to $r_i(U)$.
In particular this aspect is crucial for obtaining  BV estimates when $A$ is not in Temple class (in fact when $A$ is Temple we can {\em a priori} decompose the gradient $u_x$ in the basis $\{r_1(u),\cdots,r_n(u)\}$ provided that $A$ and $B$ are commutating because in this case gradient of the $i$-viscous travelling wave are collinear to $r_i(U)$). In these articles \cite{BB-triangular,BB-vv-lim-ann-math}, the viscosity matrix $B$ is taken to be $I$. Here we study the vanishing viscosity limit for triangular system \eqref{hyperbo} when $A$ and $B$ have same eigenvectors. Now, we would like to highlight the following key points of the current article.
\begin{itemize}
	\item It deals with a class of nonlinear viscosity matrix $B(u)$ when the hyperbolic part $A(u)$ is not Temple class. This extends the result of \cite{BB-triangular} from $B(u)=I$ to the class of matrices which are diagonalizable with strictly positive eigenvalues and satisfying the relation $AB=BA$. Since $A(u)$ is not necessarily Temple class the gradient of the viscous travelling waves are not in the direction of eigenvectors. Therefore, as in \cite{BB-triangular,BB-vv-lim-ann-math} we need to decompose $u_x$ in the basis of gradient of travelling waves as follows $u_x=\sum_{i=1}^2 v_i\tilde{r}_i(u,v_i,\sigma_i)$. In particular each $v_i$ satisfies the following equation:
	$$\pa_t v_i+\pa_x(\tilde{\lambda}_i v_i)-\pa_x(\alpha_i \pa_x v_i)=\phi_i,$$
	with $\tilde{\lambda}_i$ well chosen and $\phi_i$ a remainder term.
	Due to the fact that we deal with a variable viscosity matrix $B(u)$, we get new terms in $\phi_i$ the forcing part of the parabolic equations which govern $v_i$. As in \cite{BB-triangular,BB-vv-lim-ann-math}, the proof is decomposed in two part, the first one consists in collecting parabolic estimates on the solution $u$ which allows in particular to prove that $\|u_x(t,\cdot)\|_{L^1}$ remains small until a time $\hat{t}$. This time $\hat{t}$ can be seen as a threshold time which delimits the parabolic behavior of the system and the hyperbolic behavior of the equations. In our case since we deal with variable viscosity coefficients, compared with  \cite{BB-triangular}, we need to develop new parabolic estimates. To do this by a suitable change of variable on $u_x$ we manage to diagonalize the system in a such way that we can extend the analysis of parabolic estimates done in \cite{BB-triangular,BB-vv-lim-ann-math}.
	
	\item There exist new terms of the form $(\al_1-\al_2)(w_{1,xx}v_1-w_1v_{1,xx})$ in $\varphi_2$ which does not appear in the analysis of  \cite{BB-triangular} (here $v_1=u_{1,x}$ and $w_1=u_{1,t}$). It is needless to say that for $\al_1=\al_2$ these terms do not appear. In particular for $B=\al(u) I$ we do not need to deal with such terms. To resolve this issue, we introduce a new variable $z$. We discuss below where we mention how this new variable enable us to estimate this new term in $L^1_tL^1_x$ norm. 
	
\end{itemize}

To prove the global existence of smooth solution and uniform total variation estimate we adapt the method developed by Bianchini and Bressan \cite{BB-temple,BB-triangular,BB-vv-lim-ann-math}. As we have mentioned earlier, we need to decompose $u_x$ in the basis of travelling waves which enables us to diagonalize the system, and to deal only with scalar equations on $v_1$ and $v_2$. In our case $\phi_1=0$ 
and $\phi_2$ is composed of terms of the following form which exists in the analysis of  \cite{BB-temple,BB-triangular,BB-vv-lim-ann-math}
\begin{equation*}
	v_1v_2,\,v_1v_{2,x},\,v_{1,x}v_2,\, (w_{1,x}v_1-v_{1,x}w_1),\, v_1^2\left(\frac{w_1}{v_1}\right)_x^2\chi_{\left\{\abs{\frac{w_1}{v_1}}\leq \de_1\right\}}\,\mbox{ and }w_1^2\chi_{\left\{\abs{\frac{w_1}{v_1}}\geq \de_1\right\}}.
\end{equation*}
Additionally, the force $\phi_2$ contains a term of the form
\begin{equation*}
	(\al_1-\al_2)(w_{1,xx}v_1-w_1v_{1,xx}),
\end{equation*}
which requires to be estimated in $L^1_{t,x}$ norm.
In order to handle this new difficulty we introduce a new variable $z_1=\al_1w_{1,x}-\tilde{\la}_1w_1$ which can be seen as {\em an effective flux} for the parabolic equation of $w_1$. 
Thanks to the structure of the system the new variable $z_1$ satisfies a parabolic equation 
\begin{equation*}
	z_{1,t}+(\tilde{\la}_1z_1)_x-(\al_1 z_{1,x})_x=\phi,
\end{equation*}
where remainder term $\phi$ can be bound by $z_{1,x}v_1-z_1v_{1,x}$ and $w_{1,x}v_1-w_1v_{1,x}$. See \eqref{phi} for detailed definition of $\phi$. By adapting the area functional formula from Bianchini and Bressan \cite{BB-triangular,BB-vv-lim-ann-math} we can prove the required $L^1_tL^1_x$ estimate for the following terms
\begin{equation*}
	z_{1,x}v_1-z_1v_{1,x}.
\end{equation*}
Then we can estimate $v_{1,xx}w_1-w_{1,xx}v_1$ by $z_{1,x}v_1-z_1v_{1,x}$ and $w_{1,x}v_1-w_1v_{1,x}$ by applying a bootstrap arguments. 
In section \ref{section:stability}, we study the stability of solution. We consider an initial data $\bar{u}^\theta:=\theta \bar{u}+(1-\theta)\bar{v}$ and $u^\theta$ is the corresponding to initial data $\bar{u}^\theta$. As in \cite{BB-temple,BB-triangular,BB-vv-lim-ann-math}, $h:=\frac{\pa u}{\pa\theta}$ satisfies a linear equation (see \eqref{equationh}). We decompose $h$ in the basis of travelling waves. Then the unknowns $h_1,h_2$ satisfies parabolic equations with force. In the equation for $h_2$, there appears new term like $(\al_1-\al_2)(h_{1,xx}v_1-h_1v_{1,xx})$ and $(\al_1-\al_2)(h_{1,xx}w_1-h_1w_{1,xx})$. We resolve the issue by defining a new quantity $\hat{h}_1=\al_1h_{1,x}-\tilde{\la}_1h_1$. Again, due to the structure of the system, we obtain that $\hat{h}_1$ satisfies the following parabolic equation,
\begin{equation*}
	\hat{h}_{1,t}+(\tilde{\la}_1\hat{h}_1)_x-(\al_1 \hat{h}_{1,x})_x=\hat{\phi}_1,
\end{equation*}
where remainder term $\hat{\phi}_1$ can be bound by $\hat{h}_{1,x}v_1-\hat{h}_1v_{1,x}$, $h_{1,x}v_1-h_1v_{1,x}$ and $w_{1,x}h_1-w_1h_{1,x}$. Again by using the area functional formula from Bianchini and Bressan \cite{BB-triangular,BB-vv-lim-ann-math} we can prove the required $L^1_tL^1_x$ estimate.

Rest of the article is organized as follows. In the next subsection \ref{sec:main-result}, we state the main result on existence and stability of global smooth solution for \eqref{eqn-main}. In section \ref{sec:parabolic} and \ref{sec:grad-decomp}, we study the parabolic estimates of the solution and the decomposition of $u_x$ respectively. The remainder terms are calculated in section \ref{section:rem}. We study the global existence of smooth solution by proving uniform total variation estimate which is followed from an interaction estimates which can be found in section \ref{sec:shortening} and \ref{sec:transversal}. We prove the stability in section \ref{section:stability}. Finally, we establish the vanishing viscosity limit in section \ref{sec:vv-lim}.

\subsection{Presentation of the results}\label{sec:main-result}
Now, we state the main result of the article. 
 \begin{theorem}\label{theo1}
   	Consider the following hyperbolic system with viscosity,
   	\begin{equation}\label{eqn-thm-1}
   		u_t+A(u)u_x=(B(u)u_x)_x,\quad u(0,x)=\bar{u}(x),
   	\end{equation}
	where $A$ and $B$ are defined as in \eqref{defA} and \eqref{defB}.
   We assume that $f$ and $g$ are smooth functions verifying the strict hyperbolicity condition \eqref{condhyperbo} in a neighborhood of a compact set $K\subset\R^2$ 
   and that $A$ and $B$ are commutating such that the condition \eqref{condicommut} is satisfied.
There exists $L_1,L_2,L_3>0$ and $\de_0>0$ such that the following holds. If $\bar{u}$ satisfies
   \begin{equation}\label{condition-data-thm-1}
   	TV(\bar{u})\leq\de_0\mbox{ and }\lim\limits_{x\rr-\f}\bar{u}(x)=u^*\in K,
   \end{equation}
 then there exists unique solution $u$ to the Cauchy problem \eqref{eqn-thm-1} and it satisfies the following properties for any $t, s\geq 0$ 
\begin{align}
	TV(u(t))&\leq L_1TV(\bar{u}),\label{thm-1:BV-bound}\\
		\norm{u(t)-v(t)}_{L^1}&\leq L_2\norm{\bar{u}-\bar{v}}_{L^1},\label{thm1-Lipschitz}\\
	\norm{u(t)-u(s)}_{L^1}&\leq L_3\left(\abs{t-s}+\abs{\sqrt{t}-\sqrt{s}}\right),\label{L1-cont}
\end{align} 
where $v$ is the unique solution of \eqref{eqn-thm-1} with initial data $\bar{v}$ satisfying \eqref{condition-data-thm-1}. 
   \end{theorem}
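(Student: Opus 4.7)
The plan is to adapt the Bianchini--Bressan framework from \cite{BB-triangular,BB-vv-lim-ann-math}, which handled the case $B(u)=I$, to the present setting where the viscosity matrix $B(u)$ varies with $u$ and commutes with $A(u)$. As in \cite{BB-triangular}, the short-time existence in $H^2$ is standard, so the main task is to obtain uniform-in-time bounds on $TV(u(t))$ together with the $L^1$ stability estimates under the smallness hypothesis $TV(\bar u)\leq \delta_0$. The proof splits into two regimes separated by a threshold time $\hat t$: on $[0,\hat t]$ I would rely on purely parabolic smoothing estimates (this is the content of \cref{sec:parabolic}) to propagate the smallness of $\|u_x(t,\cdot)\|_{L^1}$ and to gain pointwise bounds on higher derivatives with the expected decay in $t^{-1/2}$, $t^{-1}$. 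Because the viscosity is now variable, I would first diagonalize the linearized system by an appropriate change of variable on $u_x$ so that the heat-kernel estimates from \cite{BB-triangular} apply to each scalar component with coefficients $\alpha_i(u)$ in place of $1$; ellipticity of $B$ through \eqref{condialpha} is what makes this possible.

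On $[\hat t,\infty)$ the decomposition $u_x=\sum_{i=1}^{2}v_i\tilde r_i(u,v_i,\sigma_i)$ along gradients of viscous travelling waves is introduced via a center manifold, exactly as in \cite{BB-triangular,BB-vv-lim-ann-math}, so that each $v_i$ solves a scalar viscous conservation law
\begin{equation*}
\partial_t v_i+\partial_x(\tilde\lambda_i v_i)-\partial_x(\alpha_i\partial_x v_i)=\phi_i,
\end{equation*}
with $\phi_1=0$ because of the triangular structure. The goal is then to show $\|\phi_2\|_{L^1_{t,x}}\lesssim \delta_0^2$, which, combined with the scalar parabolic $L^1$-contraction principle, yields $\|v_i(t,\cdot)\|_{L^1}\lesssim TV(\bar u)$ and hence \eqref{thm-1:BV-bound}. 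Most of the summands in $\phi_2$, namely $v_1v_2$, $v_1v_{2,x}$, $v_{1,x}v_2$, $w_{1,x}v_1-v_{1,x}w_1$, and the ``transversal'' pieces involving $w_1/v_1$, are controlled by the shortening/interaction and area functionals of \cref{sec:shortening,sec:transversal}, which I would set up in the commuting-$B$ setting; here commutativity \eqref{condicommut} is essential so that $A$ and $B$ share eigenvectors and the travelling-wave basis remains compatible with the viscous dissipation.

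The main obstacle is the genuinely new term $(\alpha_1-\alpha_2)(w_{1,xx}v_1-w_1v_{1,xx})$ in $\phi_2$, which has no analogue when $B=I$ or even $B=\alpha(u)I$. To handle it I would introduce the ``effective flux'' $z_1:=\alpha_1 w_{1,x}-\tilde\lambda_1 w_1$, compute that it satisfies a parabolic equation of the same type
\begin{equation*}
z_{1,t}+(\tilde\lambda_1 z_1)_x-(\alpha_1 z_{1,x})_x=\phi,
\end{equation*}
whose right-hand side is controlled by the wedge quantities $z_{1,x}v_1-z_1v_{1,x}$ and $w_{1,x}v_1-w_1v_{1,x}$. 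An extension of the Bianchini--Bressan area functional then gives $L^1_{t,x}$ control of $z_{1,x}v_1-z_1v_{1,x}$, after which one solves algebraically for $w_{1,xx}v_1-w_1v_{1,xx}$ in terms of $z_{1,x}v_1-z_1v_{1,x}$ and $w_{1,x}v_1-w_1v_{1,x}$ and closes a bootstrap. Combined with the parabolic estimates of the first phase, this delivers the BV bound globally in time.

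For the stability estimate \eqref{thm1-Lipschitz} I would interpolate initial data by $\bar u^\theta:=\theta \bar u+(1-\theta)\bar v$ and set $h:=\partial u/\partial\theta$, which satisfies a linear parabolic system. Decomposing $h$ in the travelling-wave basis produces scalar equations for $h_1,h_2$ where the forcing in $h_2$ contains the same type of problematic terms $(\alpha_1-\alpha_2)(h_{1,xx}v_1-h_1 v_{1,xx})$; these are handled by introducing $\hat h_1:=\alpha_1 h_{1,x}-\tilde\lambda_1 h_1$, which again satisfies a parabolic equation whose source is controlled by wedge quantities estimable through the same area functional. Integration in $\theta$ then gives \eqref{thm1-Lipschitz}. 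Finally, the $L^1$ time continuity \eqref{L1-cont} follows by integrating the equation $u_t=-A(u)u_x+(B(u)u_x)_x$ in time and bounding the two contributions in $L^1_x$: the convective part contributes $|t-s|$ using the already-proved uniform BV bound, while the parabolic part contributes $|\sqrt t-\sqrt s|$ through the standard $\|u_{xx}(\tau,\cdot)\|_{L^1}\lesssim \tau^{-1/2}$ smoothing estimate from the parabolic phase. Uniqueness is immediate from \eqref{thm1-Lipschitz}.
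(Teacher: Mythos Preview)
Your proposal is correct and follows essentially the same approach as the paper: the two-phase split at $\hat t$, the diagonalizing change of variable for the parabolic estimates, the travelling-wave decomposition with $\phi_1=0$, the introduction of the effective flux $z_1=\alpha_1 w_{1,x}-\tilde\lambda_1 w_1$ to control the new term $w_{1,xx}v_1-w_1v_{1,xx}$ via the area functional, and the analogous variable $\hat h_1=\alpha_1 h_{1,x}-\tilde\lambda_1 h_1$ in the stability argument are exactly what the paper does. The only items you leave implicit are the length functional and the energy estimate with cutoff $\hat\theta(w_1/v_1)$ used to bound the ``wrong speed'' and ``change in speed'' contributions (your reference to \cref{sec:shortening,sec:transversal} covers these), and in the stability section the paper also needs to control $h_{1,xx}w_1-h_1w_{1,xx}$, which requires pairing $\hat h_1$ with $w_1$ and $h_1$ with $z_1$; but these fit into the scheme you describe.
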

   \begin{remark}
   We would like to point out that in the previous Theorem our analysis does not require to consider a conservative system. Indeed we could deal with a triangular matrix $A(u)$ which does not satisfy necessary $A(u)=D f(u)$ with $f=(f_1,g)$. 
   \end{remark}
   We deduce the following results on the vanishing viscosity limit process.
   
    \begin{corollary}\label{theo2}
   	Consider the Cauchy problem hyperbolic system with vanishing viscosity,
   	\begin{equation}\label{eqn-thm-1a}
   		u^\e_t+A(u^\e)u^\e_x=\e(B(u^\e)u^\e_x)_x,\quad u^\e(0,x)=\bar{u}(x).
   	\end{equation}
   We assume that the drift $A$  and viscosity matrix $B$ satisfy the same conditions as in the Theorem \ref{theo1}. There exists $L_1,L_2,L_3>0$ and $\de_0>0$ such that the following holds. If $\bar{u}$ satisfies
   \begin{equation}\label{condition-data-thm-1a}
   	TV(\bar{u})\leq\de_0\mbox{ and }\lim\limits_{x\rr-\f}\bar{u}(x)\in K,
   \end{equation}
for some compact set $K\subset\R^2$ then there exists unique solution $u^\e$ to the Cauchy problem \eqref{eqn-thm-1a} and it satisfies the following properties
\begin{align}
	TV(u^\e(t))&\leq L_1TV(\bar{u}),\label{cor-1:BV-bound}\\
		\norm{u^\e(t)-v^\e(t)}_{L^1}&\leq L_2\norm{\bar{u}-\bar{v}}_{L^1},\label{cor-Lipschitz}\\
	\norm{u^\e(t)-u^\e(s)}_{L^1}&\leq L_3\left(\abs{t-s}+\sqrt{\e}\abs{\sqrt{t}-\sqrt{s}}\right),\label{cor-L1-cont}
\end{align} 
where $v^\e$ is the unique solution corresponding to $\bar{v}$ satisfying \eqref{condition-data-thm-1a}. 
\\
Furthermore, for any $t\geq 0$ the sequence $(u^\e(t))_{\e>0}$ converges to $u(t)$ in $L^1_{loc}(\R)$ topology which is the unique solution to hyperbolic system \eqref{hyperbo}. 
   \end{corollary}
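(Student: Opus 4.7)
The plan is to deduce the corollary from Theorem~\ref{theo1} by the parabolic rescaling already indicated in the introduction, and to obtain the limit $\e\to 0$ by compactness combined with Bressan's uniqueness theorem for the Standard Riemann Semigroup. Fix $\e>0$ and set $v(s,y):=u^\e(\e s,\e y)$. A direct chain-rule computation shows that $v$ solves
\begin{equation*}
v_s+A(v)v_y=(B(v)v_y)_y,\qquad v(0,y)=\bar{u}(\e y).
\end{equation*}
Since total variation is invariant under the dilation $y\mapsto\e y$, one has $TV(\bar{u}(\e\,\cdot))=TV(\bar{u})\leq\de_0$, and the limit at $-\f$ is unchanged. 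Theorem~\ref{theo1} then delivers a unique global smooth solution $v$ satisfying \eqref{thm-1:BV-bound}--\eqref{L1-cont} with constants $L_1,L_2,L_3$, from which $u^\e(t,x):=v(t/\e,x/\e)$ yields the unique smooth solution of \eqref{eqn-thm-1a}.

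The estimates \eqref{cor-1:BV-bound}--\eqref{cor-L1-cont} then follow by carefully tracking the factors of $\e$ under the change of variable $x=\e y$: total variation is invariant in $x$, while $L^1$ norms in $x$ pick up a factor $\e$ relative to $L^1$ norms in $y$. The BV bound \eqref{cor-1:BV-bound} is therefore immediate. For \eqref{cor-Lipschitz}, the two rescaled solutions picked up by \eqref{thm1-Lipschitz} both sides gain an identical factor of $\e$, which cancels. For \eqref{cor-L1-cont},
\begin{equation*}
\|u^\e(t)-u^\e(s)\|_{L^1}=\e\,\|v(t/\e)-v(s/\e)\|_{L^1}\leq \e L_3\left(\frac{|t-s|}{\e}+\left|\sqrt{t/\e}-\sqrt{s/\e}\right|\right),
\end{equation*}
which simplifies to the stated bound with the expected $\sqrt{\e}$ factor.

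For the vanishing viscosity limit, fix a horizon $T>0$. The bounds \eqref{cor-1:BV-bound} and \eqref{cor-L1-cont} are uniform in $\e\in(0,1]$, so Helly's selection theorem in space combined with the equicontinuity in time provided by \eqref{cor-L1-cont} yields a subsequence $u^{\e_k}$ converging in $C([0,T];L^1_{\loc}(\R))$ to some $u\in L^\f(0,T;BV(\R))$. In the distributional formulation of \eqref{eqn-thm-1a} the viscous term reads $\e_k\pa_x(B(u^{\e_k})u^{\e_k}_x)$; since $u^{\e_k}$ stays in a fixed compact neighborhood of $K$ (by the BV bound and smallness of $\de_0$), $B$ is bounded there, and so $\|B(u^{\e_k})u^{\e_k}_x\|_{L^\f_tL^1_x}\lesssim L_1\,TV(\bar{u})$ uniformly. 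This forces the viscous term to vanish in $\mathcal D'$ as $k\to\f$, and $u$ is a weak solution of \eqref{hyperbo}.

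It remains to identify the limit. Passing \eqref{cor-Lipschitz} to the limit exhibits the map $\bar{u}\mapsto u(t,\cdot)$ as an $L^1$-Lipschitz semigroup of small-BV weak solutions of \eqref{hyperbo}. By Bressan's uniqueness theorem for the Standard Riemann Semigroup (\cite{Bressan-book,BB-vv-lim-ann-math}), such a semigroup is unique, and in particular coincides with the Bianchini--Bressan inviscid limit constructed via the $B=I$ approximation, which is the unique Liu-admissible entropy solution of \eqref{hyperbo}. Since the limit is independent of the extracted subsequence, the whole family $u^\e$ converges in $C([0,T];L^1_{\loc}(\R))$ to this unique solution. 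The main obstacle is this identification step: the existence and the uniform estimates for $u^\e$ come for free from Theorem~\ref{theo1} via rescaling, but matching the limit of our nonlinear viscosity regularisation with the entropy solution distinguished by the $B=I$ theory requires invoking the Lipschitz-semigroup uniqueness rather than a direct admissibility analysis of the $B(u)$-viscous travelling waves.
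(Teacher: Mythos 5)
Your rescaling step, the derivation of \eqref{cor-1:BV-bound}--\eqref{cor-L1-cont} from Theorem \ref{theo1} (including the bookkeeping of the factor $\e$ in the $L^1$ norms and the $\sqrt{\e}$ in \eqref{cor-L1-cont}), and the compactness argument showing that any subsequential limit is a weak solution of \eqref{hyperbo} all coincide with the paper's proof of the corollary.

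The gap is in the identification of the limit. You assert that passing \eqref{cor-Lipschitz} to the limit produces an $L^1$-Lipschitz semigroup of small-BV weak solutions and that ``such a semigroup is unique'' by Bressan's Standard Riemann Semigroup theorem. That theorem does not give uniqueness in this generality: a Lipschitz semigroup is unique only within the class of semigroups compatible with a \emph{prescribed admissible Riemann solver}, i.e.\ whose trajectories issuing from Riemann data coincide, for small times, with the Liu-admissible self-similar solution. An $L^1$-Lipschitz semigroup of weak solutions carrying no admissibility information is not a priori unique, and checking the Riemann-solver compatibility for the limit of the $B(u)$-regularisation is precisely the admissibility analysis of the $B(u)$-viscous travelling waves that your last sentence claims to bypass. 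The paper closes this step by a different mechanism: it verifies, as in \cite{BB-vv-lim-ann-math}, that every shock of the limit satisfies the Liu condition, and then applies the Bressan--De Lellis uniqueness result \cite{BDL}, which states that a global weak solution small in $BV$ whose shocks are Liu-admissible is unique; this yields a single accumulation point and hence convergence of the whole family $(u^\e)_{\e>0}$. As written, your final paragraph does not establish that the limit is the admissible solution of \eqref{hyperbo}, so the identification (and with it the convergence of the full sequence rather than a subsequence) remains unproved.
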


\section{Parabolic estimates}\label{sec:parabolic}
We wish now to prove different regularity estimates for solutions of \eqref{eqn-main}. In the sequel we consider a global $BV$ solution $u=u(t,x)$ of  \eqref{eqn-main} satisfying $\lim u(t,x)_{x\rightarrow-\infty}=u^*$, we observe easily that $u^*$ does not depend on the time $t$. We denote by
\begin{equation*}
\begin{aligned}
&P(u)=\begin{pmatrix}
		1&0\\
		-\gamma(u)&1
	\end{pmatrix}
\;\;\mbox{and}\;\;P^{-1}(u)=\begin{pmatrix}
		1&0\\
		\gamma(u)&1
	\end{pmatrix},
	\end{aligned}
\end{equation*}
which satisfy
\begin{equation}\label{diag}
P(u)A(u)P^{-1}(u)=\begin{pmatrix}
		f'_1(u_1)&0\\
		0&\frac{\pa g}{\pa u_2}(u)
	\end{pmatrix}\;\;\mbox{and}\;\;P(u)B(u)P^{-1}(u)=\begin{pmatrix}
		\alpha_1(u_1)&0\\
		0&\alpha_2(u)
	\end{pmatrix}.
\end{equation}
Furthermore, we indicate the eigenvectors of $A(u)$ as follows
	\begin{equation}	\label{eigenv}
		r_1(u)=\begin{pmatrix}
			1\\
			\gamma(u)
		\end{pmatrix}\mbox{and}\; r_2(u)=r_2=\begin{pmatrix}
			0\\
			1
		\end{pmatrix}\mbox{ where }\gamma(u)=\frac{\frac{\pa g}{\pa u_1}(u)}{f^\p(u_1)-\frac{\pa g}{\pa u_2}(u)}.
	\end{equation}
	We can note that the previous formula is well defined due to the strict hyperbolicity assumption \eqref{condhyperbo}.
	 We consider in the sequel the vectors $l_i$ for $i\in\{1,2\}$ with $(l_i)_j=1$ if $j=i$ and $0$ otherwise. 
	 
	 Note that $A(u)=DF(u)$ with $F=(f,g)^T$, it implies in particular that the system is conservative. In the sequel we consider $A_1,B_1$ defined as follows
	 \begin{equation*}
	 	A_1(u)=\begin{pmatrix}
	 		f'_1(u_1)&0\\
	 		0&\frac{\pa g}{\pa u_2}(u)
	 	\end{pmatrix}\mbox{ and }
 	B_1(u)=\begin{pmatrix}
 		\alpha_1(u_1)&0\\
 		0&\alpha_2(u)
 	\end{pmatrix}.
	 \end{equation*}
 Taking now the derivative of \eqref{eqn-main}, we get:
\begin{equation}
\begin{aligned}
&u_{x,t}+A(u)u_{xx}=(B(u)u_{xx})_x+{\cal R},
\end{aligned}
\label{ux}
\end{equation}
with:
\begin{equation}
{\cal R}=\begin{pmatrix}
		(\alpha_1'(u_1)(u_{1,x})^2)_x-f''(u_1)(u_{1,x})^2\\
		(\frac{\pa }{\pa x}(\beta(u))u_{1,x})_x+(\frac{\pa }{\pa x}(\alpha_2(u))u_{2,x})_x
	\end{pmatrix}
\end{equation}
We wish now as in \cite{HJ} to diagonalize the previous system, then we consider in this section the unknown $w=\begin{pmatrix}
		w_1\\
		w_2
	\end{pmatrix}=P(u)u_x=\begin{pmatrix}
		u_{1,x}\\
		-\gamma(u)u_{1,x}+u_{2,x}
	\end{pmatrix}$. We want to use the equation \eqref{ux} and the matrix $P(u)$. The direct computations give
\begin{equation}\label{ux2}
\begin{aligned}
&P(u)u_{x,t}=w_t-\begin{pmatrix}
		0\\
		\frac{\pa }{\pa t}(\gamma(u))w_1
	\end{pmatrix},\;\;P(u)A(u)u_{xx}=A_1(u)w_x+A_1(u)\begin{pmatrix}
		0\\
		\frac{\pa }{\pa x}(\gamma(u))w_1
	\end{pmatrix}\\
	&P(u)(B(u)u_{xx})_x=P(u)B(u)u_{xxx}+P(u)\begin{pmatrix}
		\alpha'_1(u_1)u_{1,x} u_{1,xx}\\
		\frac{\pa }{\pa x}(\beta(u))u_{1,xx}+\frac{\pa }{\pa x}(\alpha_2(u))u_{2,xx}
	\end{pmatrix}.
\end{aligned}
\end{equation}
Using the fact that $P(u)u_{xxx}=w_{xx}+{\cal R}_1$ with ${\cal R}_1=\begin{pmatrix}
		0\\
		2\frac{\pa }{\pa x}(\gamma(u))u_{1,xx}+\frac{\pa ^2}{\pa^2 x}(\gamma(u))u_{1,x}
	\end{pmatrix}$ it follows that
\begin{equation}
\begin{aligned}
&P(u)B(u)u_{xxx}=B_1(u)w_{xx}+B_1(u){\cal R}_1
\end{aligned}
\label{ux3}
\end{equation}
Setting 
\begin{align*}
&{\cal R}_2=\begin{pmatrix}
	\alpha'_1(u_1)u_{1,x} u_{1,xx}\\
	\frac{\pa}{\pa x}(\beta(u))u_{1,xx}+\frac{\pa }{\pa x}(\alpha_2(u))u_{2,xx}
\end{pmatrix},\\
& {\cal R}_3=\begin{pmatrix}
	0\\
	\frac{\pa }{\pa t}(\gamma(u))u_{1,x}
\end{pmatrix},
 {\cal R}_4=A_1(u)\begin{pmatrix}
	0\\
	\frac{\pa }{\pa x}(\gamma(u))u_{1,x}
\end{pmatrix},
\end{align*}
and combining \eqref{ux}, \eqref{ux2} and \eqref{ux3} it yields
\begin{equation}
w_t+A_1(u)w_x-B_1(u)w_{xx}=B_1(u){\cal R}_1+P(u){\cal R}_2+{\cal R}_3-{\cal R}_4+P(u){\cal R}.
\end{equation}
We can simplify ${\cal R}_3$ and observe that
 \begin{align}
 ({\cal R}_3)_2&=u_{1,x}\biggl(\frac{\pa}{\pa u_1}\gamma(u)\big(-f'(u_1)u_{1,x}+\alpha_1'(u_1)(u_{1,x})^2+\alpha_1(u_1)u_{1,xx}\big) \nonumber\\
 &+\frac{\pa}{\pa u_2}\gamma(u)(-\frac{\pa}{\pa u_1}g(u)u_{1,x}-\frac{\pa}{\pa u_2}g(u)u_{2,x}
 +(\beta(u)u_{1,x}+\alpha_2(u)u_{2,x})_x\big)\biggl).\label{hypertech}
\end{align}
We have in particular obtained the following system:
\begin{equation}
\begin{cases}
\begin{aligned}
&w_{1,t}+f_1'(u_1)w_{1,x}-\alpha_1(u_1)w_{1,xx}=\big(B_1(u){\cal R}_1+P(u){\cal R}_2+{\cal R}_3-{\cal R}_4+P(u){\cal R}\big)_1,\\
&w_{2,t}+\frac{\pa }{\pa u_2}g(u) w_{2,x}-\alpha_2(u)w_{2,xx}=\big(B_1(u){\cal R}_1+P(u){\cal R}_2+{\cal R}_3-{\cal R}_4+P(u){\cal R}\big)_2.
\end{aligned}
\end{cases}
\end{equation}
As in \cite{HJ}, we set $w_1(t,x)=\widetilde{w}_1(t,X_1(t,x))$, $w_2(t,x)=\widetilde{w}_1(t,X_2(t,x))$ with $(X_1)_x(t,x)=\frac{1}{\sqrt{\alpha_1(u_1(t,x))}}$ and $(X_2)_x(t,x)=\frac{1}{\sqrt{\alpha_2(u(t,x))}}$. It implies that for $i\in\{1,2\}$,
\begin{equation}\label{estimw}
\begin{cases}
\begin{aligned}
w_{i,t}(t,x)&=\widetilde{w}_{i,t}(t,X_i(t,x))+\widetilde{w}_{i,x}(t,X_i(t,x))X_{i,t}(t,x),\\
w_{i,x}(t,x)&=\widetilde{w}_{i,x}(t,X_i(t,x))\frac{1}{\sqrt{\alpha_i(u(t,x))}},\\
w_{i,xx}(t,x)&=\widetilde{w}_{i,xx}(t,X_i(t,x))\frac{1}{\alpha_i(u(t,x))}\\
&-\frac{1}{2}(\alpha_{i}(u(t,x))^{-\frac{3}{2}}(\frac{\pa \alpha_i}{\pa u_1}(u(t,x))u_{1,x}(t,x)\\
&+\frac{\pa \alpha_i}{\pa u_2}(u(t,x))u_{2,x}(t,x))\widetilde{w}_{i,x}(t,X_i(t,x)).
\end{aligned}
\end{cases}
\end{equation}
Furthermore,
\begin{align}
	&w_{i,xxx}(t,x)\nonumber\\
	&=\widetilde{w}_{i,xxx}(t,X_i(t,x))\frac{1}{\alpha_i^{3/2}(u(t,x))}\nonumber\\
	&-\frac{3}{2\alpha^{2}_{i}(u(t,x))}\left(\frac{\pa \alpha_i}{\pa u_1}(u(t,x))u_{1,x}(t,x)+\frac{\pa \alpha_i}{\pa u_2}(u(t,x))u_{2,x}(t,x)\right)\widetilde{w}_{i,xx}(t,X_i(t,x))\nonumber\\
	&+\frac{3}{4\alpha^{5/2}_{i}(u(t,x))}\left(\frac{\pa \alpha_i}{\pa u_1}(u(t,x))u_{1,x}(t,x)+\frac{\pa \alpha_i}{\pa u_2}(u(t,x))u_{2,x}(t,x)\right)^2\widetilde{w}_{i,x}(t,X_i(t,x))\nonumber\\
	&-\frac{1}{2\alpha^{3/2}_{i}(u(t,x))}\left(\frac{\pa^2 \alpha_i}{\pa u_1^2}(u(t,x))u_{1,x}(t,x)+\frac{\pa^2 \alpha_i}{\pa u_2\pa u_1}(u(t,x))u_{2,x}(t,x)\right)u_{1,x}(t,x)\widetilde{w}_{i,x}(t,X_i(t,x))\nonumber\\
	&-\frac{1}{2\alpha^{3/2}_{i}(u(t,x))}\left(\frac{\pa^2 \alpha_i}{\pa u_1\pa u_2}(u(t,x))u_{1,x}(t,x)+\frac{\pa^2 \alpha_i}{\pa u_2^2}(u(t,x))u_{2,x}(t,x)\right)u_{2,x}(t,x)\widetilde{w}_{i,x}(t,X_i(t,x))\nonumber\\
	&-\frac{1}{2\alpha^{3/2}_{i}(u(t,x))}\left(\frac{\pa \alpha_i}{\pa u_1}(u(t,x))u_{1,xx}(t,x)+\frac{\pa \alpha_i}{\pa u_2}(u(t,x))u_{2,xx}(t,x)\right)\widetilde{w}_{i,x}(t,X_i(t,x)).\label{eqn:w-i-xxx}
\end{align}
Since $X_i$ is invertible from $\R$ to $\R$ we obtain the following system on $\widetilde{w}_i$,
\begin{equation}
\begin{cases}
\begin{aligned}
&\widetilde{w}_{1,t}+\frac{f'_1(u_1)}{\sqrt{\alpha_1(u_1)}}(\cdot,X_1^{-1})\widetilde{w}_{1,x}-\widetilde{w}_{1,xx}\\
&\quad\quad\quad=-\frac{1}{2}(\alpha_{1}(u_1(\cdot,X_1^{-1}))^{-\frac{1}{2}}(\frac{\pa \alpha_1}{\pa u_1}(u(\cdot,X_1^{-1}))u_{1,x}(\cdot,X_1^{-1})\\
&\quad\quad\quad\quad+\frac{\pa \alpha_1}{\pa u_2}(u(\cdot,X_1^{-1})))u_{2,x}(\cdot,X_1^{-1})))\widetilde{w}_{1,x}+{\cal G}_1(\cdot,X_1^{-1})-\widetilde{w}_{1,x}X_{1,t}(\cdot,X_1^{-1}),\\
&\widetilde{w}_{2,t}+\frac{\frac{\pa }{\pa u_2}g(u) }{\sqrt{\alpha_2(u)}}(\cdot,X_2^{-1})\widetilde{w}_{2,x}-\widetilde{w}_{2,xx}\\
&\quad\quad\quad=-\frac{1}{2}(\alpha_{2}(u(\cdot,X_2^{-1}))^{-\frac{1}{2}}(\frac{\pa \alpha_2}{\pa u_1}(u(\cdot,X_2^{-1}))u_{1,x}(\cdot,X_2^{-1})\\
&\quad\quad\quad\quad+\frac{\pa \alpha_2}{\pa u_2}(u(\cdot,X_2^{-1})))u_{2,x}(\cdot,X_2^{-1})))\widetilde{w}_{2,x}+{\cal G}_2(\cdot,X_2^{-1})-\widetilde{w}_{2,x}X_{2,t}(\cdot,X_2^{-1}),
\end{aligned}
\end{cases}
\label{sol29}
\end{equation}
where the term $\mathcal{G}$ is defined as follows
\begin{equation}\label{defG}
{\cal G}=B_1(u){\cal R}_1+P(u){\cal R}_2+{\cal R}_3-{\cal R}_4+P(u){\cal R}.
\end{equation}
We denote now respectively by $G_1$ and $G_2$ the fundamental solution of the two following equations
\begin{equation}
\begin{cases}
\begin{aligned}
&w_t+\frac{f'_1(u^*_1)}{\sqrt{\alpha_1(u^*_1)}}w_{x}-w_{xx}=0,\\
&w_t+\frac{\frac{\pa }{\pa u_2}g(u^*) }{\sqrt{\alpha_2(u^*)}}w_{x}-w_{xx}=0.
\end{aligned}
\end{cases}
\end{equation}
The functions $G_i$ satisfy the following estimates (see \cite{Friedman}) for $\kappa>0$ large enough
\begin{equation}\label{def:kappa}
	\norm{G_i(t,\cdot)}_{L^1}\leq \kappa,\quad	\norm{G_{i,x}(t,\cdot)}_{L^1}\leq \frac{\kappa}{\sqrt{t}},\quad	\norm{G_{i,xx}(t,\cdot)}_{L^1}\leq \frac{\kappa}{t}.
\end{equation}
To have the parabolic estimates for \eqref{eqn-main}, we define the following constants
\begin{align*}
	\kappa_A&=\sup\limits_{\tiny\abs{u-u^*}\leq \bar{\de}}\max\limits_{\tiny\begin{array}{cc}&1\leq k\leq5,\\
			& |\theta|\leq 5\end{array}}\left\{\abs{f^{(k)}_1(u_1)},\abs{\pa^\theta g(u)},1\right\},\\
	\kappa_B&=\sup\limits_{\tiny \abs{u-u^*}\leq \bar{\de}}\max\limits_{\tiny\begin{array}{cc}&0\leq|\theta|\leq 5,\\
			&0\leq k\leq5,\\
			 &1\leq i\leq 2\end{array}}\left\{\frac{1}{\sqrt{\alpha_i(u)}},\sqrt{\alpha_i(u)},|\alpha_1^{(k)}(u_1)|,|\pa^\theta\alpha_2(u)|,\right.\\
		 &\hspace*{8cm}\left.\frac{1}{\alpha_i(u)},|\pa^\theta(\gamma(u))|,|\pa^\theta(\beta(u))|\right\},
\end{align*}
with $\bar{\delta}>0$ such that the solution $u(t,x)$ lives all along the time in the ball $B(u^*, {\bar{\delta}}/{2})$ which will be satisfied provided that $\mbox{Tot.Var.}(u(0,\cdot))$ is chosen sufficiently small. 

Now, we have the following result on the parabolic regularizing of $u_1$ and $u_2$. This helps us to establish the smoothness of solution up to a small time $\hat{t}$.
\begin{proposition}\label{prop:parabolic}
	There exist $C_1,C_2,C_3,C_4>0$ depending on $\kappa_A,\kappa_B,\kappa$ such that if $u$ is a solution to the equation \eqref{eqn-main} satisfying 
	\begin{equation}\label{assumption:u-L1}
		\norm{u_{1,x}(t,\cdot)}_{L^1}+\norm{u_{2,x}(t,\cdot)}_{L^1}\leq \de_0\mbox{ for all }t\in[0,\hat{t}]\mbox{ where }\hat{t}\leq \frac{1}{(C_4\delta_0)^2},
	\end{equation}
	for $\delta_0$ sufficiently small in terms of $\kappa_A$, $\kappa_B$ and $\kappa$, then we have for $i\in\{1,2\}$
	\begin{equation}\label{estimate:parabolic-1}
		\norm{u_{i,xx}(t,\cdot)}_{L^1}\leq \frac{C_1\delta_0}{\sqrt{t}},\,\, \norm{u_{i,xxx}(t,\cdot)}_{L^1}\leq \frac{C_2\de_0}{t}\mbox{ and }\norm{u_{i,xxxx}(t,\cdot)}_{L^1}\leq \frac{C_3\de_0}{t^{3/2}}.
	\end{equation}
\end{proposition}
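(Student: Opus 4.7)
My approach is to work with the diagonalised system \eqref{sol29} for the rescaled variables $\widetilde{w}_i$ and close the three estimates via Duhamel representation combined with a bootstrap argument, in the spirit of the parabolic analysis in \cite{BB-triangular} but accommodating the extra source terms produced by the nonlinear viscosity matrix $B(u)$. First I would freeze the drift at $u^*$: writing $\tilde{\lambda}_i(u) = f'_i(u)/\sqrt{\alpha_i(u)}$ (with the convention $f_1=f$ and $f_2=\partial_{u_2}g$), one splits $\tilde{\lambda}_i(u) = \tilde{\lambda}_i(u^*) + (\tilde{\lambda}_i(u) - \tilde{\lambda}_i(u^*))$ and transfers the second piece, of size $O(\norm{u_x}_{L^1})=O(\delta_0)$ by \eqref{assumption:u-L1}, to the right-hand side together with the transport remainder $-\widetilde{w}_{i,x}X_{i,t}(\cdot,X_i^{-1})$ and the nonlinear source $\mathcal{G}_i(\cdot,X_i^{-1})$ defined in \eqref{defG}. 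Convolution with the constant-coefficient fundamental solution $G_i$ then provides a Duhamel representation of $\widetilde{w}_i$; the heat-kernel $L^1$-bounds \eqref{def:kappa}, supplemented by $\norm{G_{i,xxx}(t,\cdot)}_{L^1}\leq\kappa/t^{3/2}$ from \cite{Friedman}, are the main analytic tool.

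Inspecting $\mathcal{R},\mathcal{R}_1,\dots,\mathcal{R}_4$ one verifies that every contribution to $\mathcal{G}$ is either already in divergence form $\partial_x h$ (with $h$ quadratic in $u_x$) or is itself a product of the form $u_x u_{xx}$, $u_x^3$ or $u_x^2$ with smooth $u$-dependent coefficients bounded by $\kappa_A,\kappa_B$; the same structure is true for $X_{i,t}$ after using the equation \eqref{eqn-main} to eliminate time derivatives of $u$. For the divergence-form pieces I would integrate by parts in the Duhamel convolution, shifting one derivative onto the kernel, in order to avoid a $(t-s)^{-1/2}$ singularity compounding with a $1/s$ singularity of the source.

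The heart of the proof is then a bootstrap for the $u_{i,xx}$ bound: set
$$M(t):=\sup_{0<s\leq t}\sqrt{s}\sum_{i=1}^2\norm{u_{i,xx}(s,\cdot)}_{L^1},$$
assume $M(t)\leq 2C_1\delta_0$, and use the elementary one-dimensional interpolation $\norm{u_x(s,\cdot)}_{L^\infty}\leq\norm{u_{xx}(s,\cdot)}_{L^1}\leq 2C_1\delta_0/\sqrt{s}$ to control each source term in terms of $\delta_0$ and $M$. A careful distribution of the time integrals (including a splitting $\int_0^{t/2}+\int_{t/2}^t$ where the source has a $1/s$ singularity, so that on one piece one uses the kernel bound at $t-s\simeq t$ and on the other the smallness of $s$) yields a nonlinear contribution of order $C\delta_0 M(t)/\sqrt{t}$, which together with the free evolution bound $\norm{G_{i,x}(t,\cdot)*\bar{w}_i}_{L^1}\leq \kappa\delta_0/\sqrt{t}$ closes the bootstrap once $\delta_0$ is small relative to $\kappa,\kappa_A,\kappa_B$. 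The return from $\widetilde{w}_{i,x}$ to $u_{i,xx}$ via the relations \eqref{estimw} only costs bounded Jacobian factors and lower-order corrections already controlled by $M$, which yields the first bound in \eqref{estimate:parabolic-1}.

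For the $\norm{u_{i,xxx}}_{L^1}$ and $\norm{u_{i,xxxx}}_{L^1}$ bounds I would run two further iterations of the same scheme, differentiating \eqref{sol29} once or twice, using the explicit chain-rule formula \eqref{eqn:w-i-xxx} to track the new source terms (which now involve $u_{xxx}$ or $u_{xxxx}$ multiplied by lower-order derivatives), and invoking $\norm{G_{i,xx}(t,\cdot)}_{L^1}\leq\kappa/t$ and $\norm{G_{i,xxx}(t,\cdot)}_{L^1}\leq\kappa/t^{3/2}$ respectively. Each iteration rests on a bootstrap assumption $t\norm{u_{i,xxx}(t)}_{L^1}\leq 2C_2\delta_0$, $t^{3/2}\norm{u_{i,xxxx}(t)}_{L^1}\leq 2C_3\delta_0$ that is closed by the same kernel-singularity splitting. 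The principal obstacle throughout is the self-referential appearance of the higher derivatives in the source terms, which forces the bootstrap to be quantitatively sharp; it is defeated by the smallness of $\delta_0$ together with the constraint $\hat{t}\leq 1/(C_4\delta_0)^2$, which guarantees that every nonlinear contribution carries a small prefactor $C\delta_0$ in front of the bootstrap quantity and can therefore be absorbed on the left.
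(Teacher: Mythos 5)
Your plan follows essentially the same route as the paper's proof: freeze the drift and viscosity at $u^*$ after the change of variables $X_i$, represent $\widetilde{w}_{i,x}$ (and its higher derivatives) by Duhamel's formula against the constant-coefficient kernels $G_i$, bound the sources $\mathcal{G}_i$, $\mathcal{A}_i$ and $\widetilde{w}_{i,x}X_{i,t}$ in terms of $\norm{u_{xx}}_{L^1}$, and close via a continuity/bootstrap argument absorbed by the smallness of $\delta_0$ and the constraint $\hat t\leq 1/(C_4\delta_0)^2$. The only cosmetic differences are that the paper phrases the bootstrap as a contradiction at the first time equality is attained, and for the higher-order estimates it restarts the Duhamel formula at $t/2$ using only $G_{i,x}$ rather than invoking $\norm{G_{i,xxx}}_{L^1}\leq\kappa/t^{3/2}$; both devices are interchangeable with yours.
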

\begin{proof}We first prove \eqref{estimate:parabolic-1} for smooth initial data. We argue by contradiction. To this end, first we assume that the conclusion \eqref{estimate:parabolic-1} does not hold. Due to the assumption of smoothness of initial data, solution is smooth up to a small time and due to the continuity we can assume that there exists a time $t^*$ such that \eqref{estimate:parabolic-1} holds for $t\in[0,t^*]$ and equality attains at $t=t^*$.
	
	 From \eqref{sol29}, the solution $\widetilde{w}_i$ with $i\in\{1,2\}$ can be represented as follows for $t>0$,
\begin{align*}
	\widetilde{w}_{i,x}(t,\cdot)&=G_{i,x}({t}/{2},\cdot)*\widetilde{w}_{i}({t}/{2},\cdot)\\
	&-\frac{1}{2}\int_{\frac{t}{2}}^tG_{i,x}(t-s)*\Big[\alpha_{i}(u(\cdot,X_i^{-1}))^{-\frac{1}{2}}\Big(\frac{\pa \alpha_i}{\pa u_1}(u(\cdot,X_i^{-1}))u_{1,x}(\cdot,X_i^{-1})\\
	&+\frac{\pa \alpha_i}{\pa u_2}(u(\cdot,X_i^{-1}))u_{2,x}(\cdot,X_i^{-1})\Big)\widetilde{w}_{i,x}+{\cal G}_i(\cdot,X_i^{-1})+2{\cal A}_i-\widetilde{w}_{i,x}X_{i,t}(\cdot,X_i^{-1})\Big]ds,
\end{align*}
where ${\cal A}_1$ and ${\cal A}_2$ are defined as follows
\begin{equation}\label{defn:A-i}
\begin{array}{rl}
&{\cal A}_1=\bigg(\frac{f'_1(u^*_1)}{\sqrt{\alpha_1(u^*_1)}}-\frac{f'_1(u_1)}{\sqrt{\alpha_1(u_1)}}(\cdot,X_1^{-1})\bigg)\widetilde{w}_{1,x},\\
&{\cal A}_2=\bigg(\frac{\frac{\pa }{\pa u_2}g(u^*) }{\sqrt{\alpha_2(u^*)}}-\frac{\frac{\pa }{\pa u_2}g(u) }{\sqrt{\alpha_2(u)}}(\cdot,X_2^{-1})\bigg)\widetilde{w}_{2,x}.
\end{array}	
\end{equation}
From \eqref{def:kappa}, we deduce that for $t>0$, $C>0$
\begin{align}
\| \widetilde{w}_{i,x}(t,\cdot)\|_{L^1}&\leq \frac{\kappa\sqrt{2}}{\sqrt{t}}\|\widetilde{w}_{i}({t}/{2},\cdot)\|_{L^1}\nonumber\\
&+\mathcal{O}(1) \int_{\frac{t}{2}}^t\frac{\kappa}{\sqrt{t-s}}\bigg(\|\Big[\alpha_{i}(u(\cdot,X_i^{-1}))^{-\frac{1}{2}}\Big(\frac{\pa \alpha_i}{\pa u_1}(u(\cdot,X_i^{-1}))u_{1,x}(\cdot,X_i^{-1})\nonumber\\
&\hspace{6cm}+\frac{\pa \alpha_2}{\pa u_2}(u(\cdot,X_i^{-1}))u_{2,x}(\cdot,X_i^{-1})\Big)\Big]\widetilde{w}_{i,x}\|_{L^1}\nonumber\\
&\hspace{1cm}+\|{\cal G}_i(\cdot,X_i^{-1})\|_{L^1}+\|{\cal A}_i\|_{L^1}+\|\widetilde{w}_{i,x}(s,\cdot)X_{i,t}(s,X_i^{-1}(s,\cdot))\|_{L^1}\bigg)ds. \label{estim0}
\end{align}
We observe using \eqref{defG}, \eqref{hypertech},
\begin{align}
&\|{\cal G}_i(s,X_i^{-1}(s,\cdot))\|_{L^1}=\mathcal{O}(1) \|{\cal G}_i(s,\cdot)\|_{L^1} \nonumber\\
&=\mathcal{O}(1) \big(\|u_{2,x} u_{1,xx}(s,\cdot)\|_{L^1}+\|u_{1,x} u_{1,xx}(s,\cdot)\|_{L^1}+\|u_{1,x}^3(s,\cdot)\|_{L^1}+\|u_{1,x} u_{2,x}^2(s,\cdot)\|_{L^1}\nonumber\\
&\hspace{0.5cm}+\| u_{2,x}u_{1,x}^2(s,\cdot)\|_{L^1}+\|u_{2,xx}u_{1,x}(s,\cdot)\|_{L^1}+\|u_{2,xx}u_{2,x}(s,\cdot)\|_{L^1}
+\|u_{1,x}^2(s,\cdot)\|_{L^1}\nonumber\\
&\hspace{0.5cm}+\|u_{1,x}u_{2,x}(s,\cdot)\|_{L^1}\big)\nonumber\\
&= \mathcal{O}(1)(\|u_{2,xx}\|_{L^1}\| u_{1,xx}\|_{L^1}+\| u_{1,xx}\|^2_{L^1}+\|u_{1,xx}\|_{L^1}^2\delta_0+\delta_0\| u_{2,xx}\|^2_{L^1}\nonumber\\
&\hspace{2cm}+\delta_0\| u_{2,xx}\|_{L^1}\|u_{1,xx}\|_{L^1}+\|u_{2,xx}\|^2_{L^1}+\delta_0\|u_{1,xx}\|_{L^1}+\delta_0\|u_{2,xx}\|_{L^1}\big).\label{estim1}
\end{align}
Similarly, we have since $w_1=u_{1,x}$ and $w_2=-\gamma(u)u_{1,x}+u_{2,x}$,
\begin{align}
\|{\cal A}_i(s,\cdot)\|_{L^1}&=\mathcal{O}(1)\| u_x(s,\cdot)\|_{L^1}\|\widetilde{w}_{i,x}(s,\cdot)\|_{L^1}\nonumber\\
&=\mathcal{O}(1)\| u_x(s,\cdot)\|_{L^1}\norm{w_{i,x}(s,X_i^{-1}(s,\cdot)))\sqrt{\alpha_i(u(s,X_i^{-1}(s,\cdot))}}_{L^1}\nonumber\\
&=\mathcal{O}(1) \| u_x(s,\cdot)\|_{L^1}\|w_{i,x}(s,\cdot)\|_{L^1}\nonumber\\
&=\mathcal{O}(1)\| u_x(s,\cdot)\|_{L^1}(\|u_{1,xx}(s,\cdot)\|_{L^1}+\|u_{2,xx}(s,\cdot)\|_{L^1}
\nonumber\\
&\hspace{5cm}+\|u_{1,x}^2(s,\cdot)\|_{L^1}+\|u_{1,x}u_{2,x}(s,\cdot)\|_{L^1})\nonumber\\
&=\mathcal{O}(1)\delta_0(\|u_{1,xx}(s,\cdot)\|_{L^1}+\|u_{2,xx}(s,\cdot)\|_{L^1}
).\label{estim2}
\end{align}
Similarly, we have
\begin{align}
&\norm{\alpha_{i}(u(\cdot,X_i^{-1}))^{-\frac{1}{2}}\Big(\frac{\pa \alpha_i}{\pa u_1}(u(\cdot,X_i^{-1}))u_{1,x}(\cdot,X_i^{-1})+\frac{\pa \alpha_i}{\pa u_2}(u(\cdot,X_i^{-1}))u_{2,x}(\cdot,X_i^{-1})\Big)\widetilde{w}_{i,x}}_{L^1}\nonumber\\
&=\mathcal{O}(1)(\|u_{1,x}(\cdot,X_i^{-1})\|_{L^\infty}+\|u_{2,x}(\cdot,X_i^{-1})\|_{L^\infty})\|\widetilde{w}_{i,x}(s,\cdot)\|_{L^1}\nonumber\\
&=\mathcal{O}(1) \big(\|u_{1,xx}(s,\cdot)\|_{L^1}+\|u_{2,xx}(s,\cdot)\|_{L^1}) (\|u_{1,xx}(s,\cdot)\|_{L^1}+\|u_{2,xx}(s,\cdot)\|_{L^1}\nonumber\\
&\hspace{1cm}+\|u_{1,x}^2(s,\cdot)\|_{L^1}+\|u_{1,x}u_{2,x}(s,\cdot)\|_{L^1}\big)\nonumber\\
&=\mathcal{O}(1) (\|u_{1,xx}(s,\cdot)\|_{L^1}+\|u_{2,xx}(s,\cdot)\|_{L^1}) ^2.
\label{estim3}
\end{align}
We want to estimate now $\widetilde{w}_{i,x}X_{i,t}(\cdot,X_i^{-1})$, let us compute $X_{i,t}$, we have then:
\begin{align}
&X_{i,t}(x)=-\frac{1}{2}\int^x_0\alpha_i(u(t,z))^{-\frac{3}{2}}(\frac{\pa}{\pa u_1}\alpha_i(u(t,z))u_{1,t}(t,z)+
\frac{\pa}{\pa u_2}\alpha_i(u(t,z))u_{2,t}(t,z))dz\nonumber\\
&=-\frac{1}{2}\int^x_0\alpha_i(u(t,z))^{-\frac{3}{2}}\Big[\frac{\pa}{\pa u_1}\alpha_i(u(t,z))\big(-f'(u_1(t,z))u_{1,x}(t,z)+(\alpha_1(u_1)u_{1,x})_x(t,z)\big)
\nonumber\\
&\hspace*{2cm}+
\frac{\pa}{\pa u_2}\alpha_i(u(t,z))\big(-(g(u))_x(t,z)+(\beta(u)u_{1,x}+\alpha_2(u)u_{2,x})_x(t,z)\big)\Big]\,dz.\label{eqn-X-i-t}
\end{align}
It implies that for $C_2>0$,
\begin{equation*}
\begin{aligned}
\|X_{i,t}(s,\cdot)\|_{L^\infty}&=\mathcal{O}(1)\Big(\|u_{1,x}\|_{L^1}+\|u_{2,x}\|_{L^1}+\|u^2_{1,x}\|_{L^1}+\|u_{2,x}u_{1,x}\|_{L^1}+\|u^2_{2,x}\|_{L^1}\\
&\hspace{6cm}+\|u_{1,xx}\|_{L^1}+\|u_{2,xx}\|_{L^1}\Big)(s,\cdot)\\
&=\mathcal{O}(1)(2\delta_0+2\|u_{1,xx}\|_{L^1}+2\|u_{2,xx}\|_{L^1}
)(s,\cdot).
\end{aligned}
\label{2.24}
\end{equation*}
It implies that
\begin{align}
&\|\widetilde{w}_{i,x}X_{i,t}(s,X_i^{-1}(s,\cdot))\|_{L^1}\nonumber\\
&=\mathcal{O}(1)\Big(2\delta_0+2\delta_0 \|u_{1,xx}\|_{L^1}+\delta_0\|u_{2,xx}\|_{L^1}+\|u_{1,xx}\|_{L^1}+\|u_{2,xx}\|_{L^1}\Big)(s,\cdot)\|\widetilde{w}_{i,x}(s,\cdot)\|_{L^1}\nonumber\\
&=\mathcal{O}(1)\big(\delta_0+ \|u_{1,xx}(s,\cdot)\|_{L^1}+\|u_{2,xx}(s,\cdot)\|_{L^1}\big)\big(\|u_{1,xx}(s,\cdot)\|_{L^1}+\|u_{2,xx}(s,\cdot)\|_{L^1}\big).\label{estim4}
\end{align}
We wish now to estimate $\|u_{i,xx}(t,\cdot)\|_{L^1}$, we recall that
\begin{align}
&\|u_{i,xx}(t,\cdot)\|_{L^1}\leq \kappa_B^2 (\|w_{1,x}(t,\cdot)\|_{L^1}+\|w_{2,x}(t,\cdot)\|_{L^1}+\|w^2_1(t,\cdot)\|_{L^1}+\|w_1w_2(t,\cdot)\|_{L^1})\nonumber\\
&\leq \kappa_B^4 (\|\widetilde{w}_{1,x}(t,\cdot)\|_{L^1}+\|\widetilde{w}_{2,x}(t,\cdot)\|_{L^1})+ \kappa_B^2\|u_{1,x}(t,\cdot)\|_{L^1}\|u_{1,xx}(t,\cdot)\|_{L^1}\nonumber\\
&+2\kappa^3_B\|u_{1,x}\|_{L^1}\big(\|u_{1,xx}(t,\cdot)\|_{L^1}+\|u_{2,xx}(t,\cdot)\|_{L^1}\big)\nonumber\\
&\leq \kappa_B^4 (\|\widetilde{w}_{1,x}(t,\cdot)\|_{L^1}+\|\widetilde{w}_{2,x}(t,\cdot)\|_{L^1})+ 3\kappa_B^3\delta_0\big(\|u_{1,xx}(t,\cdot)\|_{L^1}
\|u_{2,xx}(t,\cdot)\|_{L^1}\big).\label{estim5}
\end{align}
It yields
\begin{equation}\label{estim5bis}
(1- 6\kappa_B^3\delta_0)\sum_{i=1}^{2}\|u_{i,xx}(t,\cdot)\|_{L^1}\leq 2\kappa_B^4 (\|\widetilde{w}_{1,x}(t,\cdot)\|_{L^1}+\|\widetilde{w}_{2,x}(t,\cdot)\|_{L^1}).
\end{equation}
Since $1- 6\kappa_B^3\delta_0\geq\frac{1}{2}$ we deduce that
\begin{equation}
\begin{aligned}
&\sum_{i=1}^{2}\|u_{i,xx}(t,\cdot)\|_{L^1}\leq 4\kappa_B^4 (\|\widetilde{w}_{1,x}(t,\cdot)\|_{L^1}+\|\widetilde{w}_{2,x}(t,\cdot)\|_{L^1}).
\end{aligned}
\label{estim5bis1}
\end{equation}
Combining \eqref{estim0}, \eqref{estim1}, \eqref{estim2}, \eqref{estim3}, \eqref{estim4}, \eqref{estim5bis1} and \eqref{estimate:parabolic-1} it gives for $C_3>0$ large enough
\begin{align*}
	\sum_{i=1}^2\|u_{i,xx}(t,\cdot)\|_{L^1}&\leq \frac{8 \kappa \kappa_B^6\sqrt{2}}{\sqrt{t}}\Big(\|u_{1,x}({t}/{2},\cdot)\|_{L^1}+\|u_{2,x}({t}/{2},\cdot)\|_{L^1}\Big)\\
	&+C_3 \sum_{i=1,2}\int_{\frac{t}{2}}^t\frac{1}{\sqrt{t-s}} \Big[\delta_0(\|u_{1,xx}(s,\cdot)\|_{L^1}+\|u_{2,xx}(s,\cdot)\|_{L^1})\\
	&\hspace*{4.5cm}+\|u_{1,xx}(s,\cdot)\|^2_{L^1}+\|u_{2,xx}(s,\cdot)\|^2_{L^1}\\
	&\hspace*{5cm}+\|u_{1,xx}(s,\cdot)\|_{L^1}\|u_{2,xx}(s,\cdot)\|_{L^1}\Big] ds.
\end{align*}
Hence, we obtain
\begin{align}
	\sum_{i=1}^2\|u_{i,xx}(t,\cdot)\|_{L^1}&\leq  \frac{\kappa \kappa_B^4\sqrt{2}}{\sqrt{t}}\delta_0
	+4 C_3 \kappa_B^4\int_{\frac{t}{2}}^t\frac{\kappa}{\sqrt{t-s}} \kappa_B^7\kappa_A^2(2 \delta_0^2 \frac{2\kappa \kappa_B^4}{\sqrt{s}}+ \frac{12\kappa^2 \kappa_B^8\de_0^2}{s}
	)ds\nonumber\\
	&\leq \frac{\kappa \kappa_B^6 8 \sqrt{2}}{\sqrt{t}}\delta_0+32 C_3\kappa_B^{15}\kappa^2\kappa_A^2\delta_0^2+\frac{96\sqrt{2} C_3\kappa^3\kappa_B^{19}\kappa_A^2\delta_0^2}{\sqrt{t}},\label{estim6}
\end{align}
 which implies  since $\de_0<\min\left\{\frac{1}{2},\frac{2\sqrt{2}(2-\sqrt{2})}{96 C_3\kappa_B^13\kappa^2\kappa_A^2}\right\}$ and $t^*\leq\hat{t}$ that
	\begin{equation*}
		\sum_{i=1}^2\norm{u_{xx}(t^*,\cdot)}_{L^1}<\frac{16 \kappa \kappa_B^6\de_0}{\sqrt{t^*}}.
	\end{equation*}
  This contradicts with the assumption that equality holds in \eqref{estimate:parabolic-1} at $t=t^*$. Hence, by density argument we conclude that \eqref{estimate:parabolic-1} holds for BV initial data as well.
  
  We apply similar arguments to prove the second and third estimate of \eqref{estimate:parabolic-1}, we have by using \eqref{estimw},
\begin{align*}
\|u_{i,xxx}(t,\cdot)\|_{L^1}&=\mathcal{O}(1)   \Big(\|w_{1,xx}(t,\cdot)\|_{L^1}+\|w_{2,xx}(t,\cdot)\|_{L^1}+
\|w^3_{1}(t,\cdot)\|_{L^1}+\|w^2_{1}w_2(t,\cdot)\|_{L^1}\\
&\hspace*{2cm}+\|w_{1}w^2_2(t,\cdot)\|_{L^1}+\|w_{1,x}w_1(t,\cdot)\|_{L^1}+\|w_{2,x}w_1(t,\cdot)\|_{L^1}\Big)\\
&=\mathcal{O}(1) \Big(\|w_{1,xx}(t,\cdot)\|_{L^1}+\|w_{2,xx}(t,\cdot)\|_{L^1}+\|u_{1,x}(t,\cdot)\|_{L^1}\|u_{1,xx}(t,\cdot)\|^2_{L^1}\\
&\hspace*{1cm}+\|u_{1,xx}(t,\cdot)\|^2_{L^1} \|w_2(t,\cdot)\|_{L^1}+\|u_{1,xx}(t,\cdot)\|_{L^1}\|w_2(t,\cdot)\|_{L^\infty}\|w_2(t,\cdot)\|_{L^1}\\
&\hspace*{1cm}+\|u_{1,xx}(t,\cdot)\|^2_{L^1}+\|u_{1,xx}(t,\cdot)\|_{L^1}\|w_{2,x}(t,\cdot)\|_{L^1}\Big)\\
&=\mathcal{O}(1) \Big(\|w_{1,xx}(t,\cdot)\|_{L^1}+\|w_{2,xx}(t,\cdot)\|_{L^1}+
\|u_{1,xx}(t,\cdot)\|^2_{L^1}\\
&\hspace{3cm}+\|u_{1,xx}(t,\cdot)\|_{L^1}\|u_{2,xx}(t,\cdot)\|_{L^1}\Big)\\
&=\mathcal{O}(1) \Big(\sum_{i=1}^2\|\widetilde{w}_{i,xx}(t,\cdot)\|_{L^1}+ \sum_{i=1}^2\|u_{i,xx}(t,\cdot)\|_{L^1}\sum_{i=1}^2\|\widetilde{w}_{i,x}(t,\cdot)\|_{L^1}\\
&\hspace*{3.5cm}+\|u_{1,xx}(t,\cdot)\|^2_{L^1}+\|u_{1,xx}(t,\cdot)\|_{L^1}\|u_{2,xx}(t,\cdot)\|_{L^1}\Big)\\
&=\mathcal{O}(1)  \Big( \sum_{i=1}^2\|\widetilde{w}_{i,xx}(t,\cdot)\|_{L^1}+ \left(\sum_{i=1}^2\|u_{i,xx}(t,\cdot)\|_{L^1}\right)^2+
\|u_{1,xx}(t,\cdot)\|^2_{L^1}\\
&\hspace*{1cm}+\|u_{1,xx}(t,\cdot)\|_{L^1}\|u_{2,xx}(t,\cdot)\|_{L^1}\Big)\\
&=\mathcal{O}(1)  \left(\sum_{i=1}^2\|\widetilde{w}_{i,xx}(t,\cdot)\|_{L^1}+\left(\sum_{i=1}^2\|u_{i,xx}(t,\cdot)\|_{L^1}\right)^2\right).
\end{align*}
Again by contradiction we assume that the second inequality in  \eqref{estimate:parabolic-1} is not satisfied, there exists again $t^*<\hat{t}$ such that this estimate is a equality.
From the first inequality in \eqref{estimate:parabolic-1}, we deduce in particular that for $t\in]0,t^*]$ we have for $C_4$ large enough
\begin{equation}\label{estimate-u-xxx-1}
\|u_{i,xxx}(t,\cdot)\|_{L^1}\leq {C}_4   \bigg(\sum_{i=1}^2\|\widetilde{w}_{i,xx}(t,\cdot)\|_{L^1}+\frac{\kappa^2 \kappa_B^{12}\de_0^2}{t}\bigg).
\end{equation}
It remains now to estimate $\sum_{i=1}^2\|\widetilde{w}_{i,xx}(t,\cdot)\|_{L^1}$, again we have
\begin{align}
\widetilde{w}_{i,xx}(t,\cdot)&=G_{i,x}({t}/{2},\cdot)*\widetilde{w}_{i,x}({t}/{2},\cdot)\nonumber\\
&-\frac{1}{2}\int_{\frac{t}{2}}^tG_{i,x}(t-s)*\Big[{\cal H}_i \widetilde{w}_{i,xx}+{\cal H}_{i,x} \widetilde{w}_{i,x}+{\cal G}_{i,x}(\cdot,X_i^{-1})(X_i^{-1})_x+{\cal A}_{i,x} \nonumber\\
&\hspace{3cm}-\widetilde{w}_{i,xx}X_{i,t}(\cdot,X_i^{-1})-\widetilde{w}_{i,x}X_{i,t x}(\cdot,X_i^{-1})(X_i^{-1})_x\Big]ds,\label{estima21}
\end{align}
with
$${\cal H}_i=\alpha_{i}(u(\cdot,X_i^{-1}))^{-\frac{1}{2}}\left[\frac{\pa \alpha_i}{\pa u_1}(u(\cdot,X_i^{-1}))u_{1,x}(\cdot,X_i^{-1})+\frac{\pa \alpha_i}{\pa u_2}(u(\cdot,X_i^{-1}))u_{2,x}(\cdot,X_i^{-1}))\right].$$ It implies in particular using \eqref{estimate:parabolic-1} that for ${C}_5>0$ large enough
\begin{align}
\| \widetilde{w}_{i,xx}(t,\cdot)\|_{L^1}&\leq \frac{C_5  \delta_0}{t} +\frac{1}{2}\int_{\frac{t}{2}}^t\left\|G_{i,x}(t-s)*\big[{\cal H}_i \widetilde{w}_{i,xx}+{\cal H}_{i,x} \widetilde{w}_{i,x}+{\cal A}_{i,x}\right.\nonumber\\
&\hspace*{3.5cm}+{\cal G}_{i,x}(\cdot,X_i^{-1})(X_i^{-1})_x-\widetilde{w}_{i,xx}X_{i,t}(\cdot,X_i^{-1})\nonumber\\
&\hspace*{3.5cm}\left.-\widetilde{w}_{i,x}X_{i,t x}(\cdot,X_i^{-1})(X_i^{-1})_x\big]\right\|_{L^1}ds.\label{estima211}
\end{align}
We observe that
\begin{align}
	\norm{\mathcal{R}}_{L^1}&=\mathcal{O}(1)\left(\norm{u_x}_{L^\f}\norm{u_{xx}}_{L^1}+\norm{u_x}_{L^\f}\norm{u_{x}}_{L^1}+\norm{u_x}^2_{L^\f}\norm{u_{x}}_{L^1}\right),\\
	\norm{\mathcal{R}_x}_{L^1}&=\mathcal{O}(1)\Big(\norm{u_x}_{L^\f}\norm{u_{xxx}}_{L^1}+\norm{u_{xx}}_{L^\f}\norm{u_{xx}}_{L^1}+\norm{u_x}^2_{L^\f}\norm{u_{xx}}_{L^1}\\
	&\hspace{1cm}+\norm{u_x}^3_{L^\f}\norm{u_{x}}_{L^1}+\norm{u_x}_{L^\f}\norm{u_{xx}}_{L^1}+\norm{u_x}^2_{L^\f}\norm{u_{x}}_{L^1}\Big),\\
   \norm{\mathcal{R}_1}_{L^1}&=\mathcal{O}(1)\left(\norm{u_x}_{L^\f}\norm{u_{xx}}_{L^1} +\norm{u_x}^2_{L^\f}\norm{u_{x}}_{L^1}\right),\\
	\norm{\mathcal{R}_{1,x}}_{L^1}&=\mathcal{O}(1)\Big(\norm{u_x}_{L^\f}\norm{u_{xxx}}_{L^1}+\norm{u_{xx}}_{L^\f}\norm{u_{xx}}_{L^1}+\norm{u_x}^2_{L^\f}\norm{u_{xx}}_{L^1}\\
	&\hspace{1.5cm}+\norm{u_x}^3_{L^\f}\norm{u_{x}}_{L^1}\Big),\\
	 \norm{\mathcal{R}_2}_{L^1}&=\mathcal{O}(1) \norm{u_x}_{L^\f}\norm{u_{xx}}_{L^1} ,\\
	\norm{\mathcal{R}_{2,x}}_{L^1}&=\mathcal{O}(1)\Big(\norm{u_x}_{L^\f}\norm{u_{xxx}}_{L^1}+\norm{u_{xx}}_{L^\f}\norm{u_{xx}}_{L^1}+\norm{u_x}^2_{L^\f}\norm{u_{xx}}_{L^1}\Big),\\
	\norm{\mathcal{R}_3}_{L^1}&=\mathcal{O}(1)\left(\norm{u_x}_{L^\f}\norm{u_{xx}}_{L^1}+\norm{u_x}_{L^\f}\norm{u_{x}}_{L^1}+\norm{u_x}^2_{L^\f}\norm{u_{x}}_{L^1}\right),\\
	\norm{\mathcal{R}_{3,x}}_{L^1}&=\mathcal{O}(1)\Big(\norm{u_x}_{L^\f}\norm{u_{xxx}}_{L^1}+\norm{u_{xx}}_{L^\f}\norm{u_{xx}}_{L^1}+\norm{u_x}^2_{L^\f}\norm{u_{xx}}_{L^1}\\
	&\hspace{1cm}+\norm{u_x}^3_{L^\f}\norm{u_{x}}_{L^1}+\norm{u_x}_{L^\f}\norm{u_{xx}}_{L^1}+\norm{u_x}^2_{L^\f}\norm{u_{x}}_{L^1}\Big),\\
	\norm{\mathcal{R}_4}_{L^1}&=\mathcal{O}(1) \norm{u_x}_{L^\f}\norm{u_{x}}_{L^1} ,\\
	\norm{\mathcal{R}_{4,x}}_{L^1}&=\mathcal{O}(1)\Big(\norm{u_x}_{L^\f}\norm{u_{xx}}_{L^1}+\norm{u_x}^2_{L^\f}\norm{u_{x}}_{L^1}\Big).
\end{align}
We note now that using \eqref{estimate:parabolic-1}
\begin{align}
&\|{\cal G}_{i,x}(t,X_i^{-1}(t,\cdot))(X_i^{-1})_x(t,\cdot)\|_{L^1}=\mathcal{O}(1)\|{\cal G}_{i,x}(t,\cdot)\|_{L^1}\nonumber\\
&=\mathcal{O}(1)\big[\norm{u_x}_{L^\f}\norm{\mathcal{R}_1}_{L^1}+\norm{\mathcal{R}_{1,x}}_{L^1}+\norm{u_x}_{L^\f}\norm{\mathcal{R}_2}_{L^1}+\norm{\mathcal{R}_{2,x}}_{L^1}\nonumber\\
&\hspace{1cm}+\norm{\mathcal{R}_{3,x}}_{L^1}+\norm{\mathcal{R}_{4,x}}_{L^1}+\norm{u_x}_{L^\f}\norm{\mathcal{R}}_{L^1}+\norm{\mathcal{R}_{x}}_{L^1}\big]\nonumber\\
&=\mathcal{O}(1)\Big(\norm{u_x}_{L^\f}\norm{u_{xxx}}_{L^1}+\norm{u_{xx}}_{L^\f}\norm{u_{xx}}_{L^1}+\norm{u_x}^2_{L^\f}\norm{u_{xx}}_{L^1}+\norm{u_x}^3_{L^\f}\norm{u_{x}}_{L^1}\nonumber\\
&\hspace{1.5cm}+\norm{u_x}_{L^\f}\norm{u_{xx}}_{L^1}+\norm{u_x}^2_{L^\f}\norm{u_{x}}_{L^1}\Big)\nonumber\\
&=\mathcal{O}(1)\left(\frac{\de_0^2}{t}+\frac{\de_0^2}{t^{3/2}}\right).\label{estima3}
\end{align}
Similarly we have using \eqref{estimw} and \eqref{2.24},
\begin{align}
&\|\widetilde{w}_{i,xx}(t,\cdot)X_{i,t}(t,X_i^{-1}(t,\cdot))\|_{L^1}\nonumber\\
&\leq \|\widetilde{w}_{i,xx}(t,\cdot)\|_{L^1}\|X_{i,t}(t,X_i^{-1}(t,\cdot))\|_{L^\infty}\nonumber\\
&=\mathcal{O}(1)\left(\delta_0+\sum_{l=1}^2\|u_{l,xx}(t,\cdot)\|_{L^1}\right)\left(\|w_{i,xx}(t,\cdot)\|_{L^1}+\sum_{l=1}^2\|u_{l,xx}(t,\cdot)\|_{L^1}\|w_{i,x}(t,\cdot)\|_{L^1}\right)\nonumber\\
&=\mathcal{O}(1)\left(\delta_0+ \|u_{xx}(t,\cdot)\|_{L^1}\right)\Big[\|u_{xxx}(t,\cdot)\|_{L^1}+\|u_{x}(t,\cdot)\|_{L^\f}\|u_{xx}(t,\cdot)\|_{L^1}\nonumber\\
&+\|u_{x}(t,\cdot)\|_{L^\f}\|u_{xx}(t,\cdot)\|_{L^1}+\norm{u_{xx}(t,\cdot)}_{L^1}^2+\norm{u_{xx}(t,\cdot)}_{L^1}\norm{u_x(t,\cdot)}_{L^\f}\norm{u_x(t,\cdot)}_{L^1}\Big]\nonumber\\
&=\mathcal{O}(1)\left(\delta_0+\frac{\delta_0}{\sqrt{t}}\right)\left(\frac{\delta_0^2}{t}+\sum_{l=1}^2\|u_{l,xxx}(t,\cdot)\|_{L^1}\right).\label{estima4}
\end{align}
Taking derivative of \eqref{eqn-X-i-t} with respect to $x$ we get
\begin{align*}
	X_{i,tx}(x)
	&=-\frac{1}{2}\alpha_i(u(t,x))^{-\frac{3}{2}}\Big[\frac{\pa}{\pa u_1}\alpha_i(u(t,x))\big(-f'(u_1(t,x))u_{1,x}(t,x)+(\alpha_1(u_1)u_{1,x})_x(t,x)\big)
	\nonumber\\
	&\hspace*{2cm}+
	\frac{\pa}{\pa u_2}\alpha_i(u(t,x))\big(-(g(u))_x(t,x)+(\beta(u)u_{1,x}+\alpha_2(u)u_{2,x})_x(t,x)\big)\Big].
\end{align*}
For sufficiently large constant $C_{10},C_{11}>0$ we have
\begin{align*}
	\norm{X_{i,tx}}_{L^1}&=\mathcal{O}(1)\Big[\norm{u_{1,xx}}_{L^1}+\norm{u_{1,x}}_{L^1}\norm{u_{1,x}}_{L^\f}+\norm{u_{1,x}}\norm{u_{2,x}}_{L^\f}\\
	&\hspace*{2cm}+\norm{u_{2,x}}_{L^1}\norm{u_{2,x}}_{L^\f}+\norm{u_{2,xx}}_{L^1}\Big]\\
	&=\mathcal{O}(1)\left[\norm{u_{xx}}_{L^1}+\norm{u_{x}}_{L^1}\right].
\end{align*}
It yields for large enough constants $C_{12},C_{13},C_{14}>0$,
\begin{align}
	&\norm{\widetilde{w}_{i,x}X_{i,t x}(\cdot,X_i^{-1})(X_i^{-1})_x}_{L^1}\nonumber\\
	&=\mathcal{O}(1)\norm{w_{i,x}}_{L^\f}\norm{X_{i,tx}(\cdot,X_i^{-1})}_{L^1}\norm{(X_i^{-1})_x}_{L^\f}\nonumber\\
	&=\mathcal{O}(1)\norm{w_{i,xx}}_{L^1}\norm{X_{i,tx}}_{L^1}\norm{X_{i,xx}}_{L^1}\nonumber\\
	&=\mathcal{O}(1)\left(\frac{\delta_0^2}{t}+\sum_{l=1}^2\|u_{l,xxx}(t,\cdot)\|_{L^1}\right)\left(\norm{u_{1,xx}}_{L^1}+\norm{u_{2,xx}}_{L^1}\right)\norm{u_x}_{L^1}\nonumber\\
	&=\mathcal{O}(1)\left(\frac{\delta_0^2}{t}+\sum_{l=1}^2\|u_{l,xxx}(t,\cdot)\|_{L^1}\right)\frac{\de_0^2}{\sqrt{t}}.\label{est-w-i-x-X-i-t}
\end{align}
From the definition of $\mathcal{H}_i$ we get
\begin{equation*}
	\norm{\mathcal{H}_i}_{L^\f}=\mathcal{O}(1)\norm{u_{xx}}_{L^1}\mbox{ and }\norm{\mathcal{H}_{i,x}}_{L^1}=\mathcal{O}(1)\norm{u_{xx}}_{L^1},
\end{equation*}
for some large constant $C_{15}>0$. Hence, it follows for $C_{16},C_{17}>0$ large enough,
\begin{align}
	&\norm{{\cal H}_i \widetilde{w}_{i,xx}}_{L^1}+\norm{{\cal H}_{i,x} \widetilde{w}_{i,x}}_{L^1} \nonumber\\
	&\leq \norm{\mathcal{H}_i}_{L^\f}\norm{\widetilde{w}_{i,xx}}_{L^1}+\norm{\mathcal{H}_{i,x}}_{L^1}\norm{\widetilde{w}_{i,x}}_{L^\f}\nonumber\\
	&=\mathcal{O}(1)\norm{u_{xx}}_{L^1}\norm{w_{i,xx}}_{L^1}\nonumber\\
	&=\mathcal{O}(1)\norm{u_{xx}}_{L^1}\left(\frac{\delta_0^2}{t}+\sum_{l=1}^2\|u_{l,xxx}(t,\cdot)\|_{L^1}\right)\nonumber\\
	&=\mathcal{O}(1)\left(\frac{\delta_0^2}{t}+\sum_{l=1}^2\|u_{l,xxx}(t,\cdot)\|_{L^1}\right)\frac{\de_0}{\sqrt{t}}.\label{est-Hw-i}
\end{align}
Similarly, from \eqref{defn:A-i} we have
\begin{align}
	\norm{\mathcal{A}_{i,x}}_{L^1}&=\mathcal{O}(1)\norm{u_x}_{L^\f}\norm{\widetilde{w}_{i,xx}}_{L^1}\nonumber\\
		&=\mathcal{O}(1)\norm{u_{xx}}_{L^1}\left(\frac{\delta_0^2}{t}+\sum_{l=1}^2\|u_{l,xxx}(t,\cdot)\|_{L^1}\right)\nonumber\\
		&=\mathcal{O}(1)\left(\frac{\delta_0^2}{t}+\sum_{l=1}^2\|u_{l,xxx}(t,\cdot)\|_{L^1}\right)\frac{\de_0}{\sqrt{t}}.\label{est-A-i}
\end{align}
From \eqref{estimate-u-xxx-1}, \eqref{estima211}, \eqref{estima3}, \eqref{estima4}, \eqref{est-w-i-x-X-i-t}, \eqref{est-Hw-i} and \eqref{est-A-i} we get for $C_{6}>0$ large enough
\begin{align}
\sum_{i=1}^2 \|u_{i,xxx}(t,\cdot)\|_{L^1} &\leq \left(\frac{C_2\delta_0}{2t}+C_{6}\int_{\frac{t}{2}}^t\frac{1}{\sqrt{t-s}}\left(\frac{\delta_0^2}{s}+\frac{\delta_0^2}{s^{\frac{3}{2}}}\right)ds\right)\nonumber\\
&\leq \big(\frac{C_2\delta_0}{2t}+\frac{2\sqrt{2}C_{6}\delta_0^2}{\sqrt{t}}+\frac{4C_{6}\delta_0^2}{t})\leq\frac{C_2}{t},\label{estima6}
\end{align}
if we take $C_2\geq 16C_{6}$ sufficiently large and such that $\delta_0\leq\frac{1}{\sqrt{\hat{t}}}$ which gives the result. 

	  We apply similar arguments to prove the fourth order estimate of $u_{xxxx}$ as in \eqref{estimate:parabolic-1}. From \eqref{ux3} and \eqref{eqn:w-i-xxx} we have
	  \begin{align*}
	  	\|u_{i,xxxx}(t,\cdot)\|_{L^1}&=\mathcal{O}(1)\bigg(\norm{w_{xxx}}_{L^1}+\norm{u_x}_{L^\f}\norm{u_{xxx}}_{L^1}+\norm{u_x}^2_{L^\f}\norm{u_{xx}}_{L^1}\\
	  	&\hspace{1cm} +\norm{u_x}_{L^\f}^3\norm{u_x}_{L^1}+\norm{u_{xx}}_{L^\f}\norm{u_{xx}}_{L^1}\bigg)\\
	  	&=\mathcal{O}(1)\bigg(\norm{w_{xxx}}_{L^1}+\norm{u_{xx}}_{L^1}\norm{u_{xxx}}_{L^1}+\norm{u_{xx}}^2_{L^1}\norm{u_{xx}}_{L^1}\\
	  	&\hspace{1cm} +\norm{u_{xx}}_{L^1}^3\norm{u_x}_{L^1}+\norm{u_{xxx}}_{L^1}\norm{u_{xx}}_{L^1}\bigg)\\
	  	&=\mathcal{O}(1)\bigg(\norm{\widetilde{w}_{1,xxx}}_{L^1}+\norm{\widetilde{w}_{2,xxx}}_{L^1}+\norm{u_{x}}_{L^\f}(\norm{\widetilde{w}_{1,xx}}_{L^1}+\norm{\widetilde{w}_{2,xx}}_{L^1})\\
	  	&\hspace{1cm} +(\norm{u_{xx}}_{L^\f}+\norm{u_{x}}_{L^\f}^2) (\norm{\widetilde{w}_{1,x}}_{L^1}+\norm{\widetilde{w}_{2,x}}_{L^1})\\
	  	&\hspace{1cm} +\norm{u_{xx}}_{L^1}\norm{u_{xxx}}_{L^1}+\norm{u_{xx}}^2_{L^1}\norm{u_{xx}}_{L^1}\\
	  	& \hspace{1cm} +\norm{u_{xx}}_{L^1}^3\norm{u_x}_{L^1}+\norm{u_{xxx}}_{L^1}\norm{u_{xx}}_{L^1}\bigg).
	  \end{align*}
  From \eqref{estimw} we have
  \begin{align*}
  	\norm{\widetilde{w}_{i,x}}_{L^1}&=\mathcal{O}(1)\norm{{w}_{i,x}}_{L^1}=\mathcal{O}(1)(\norm{u_{xx}}_{L^1}+\norm{u_x}_{L^\f}\norm{u_x}_{L^1}),\\
  	\norm{\widetilde{w}_{i,xx}}_{L^1}&=\mathcal{O}(1)\left(\norm{w_{i,xx}}_{L^1}+\norm{u_x}_{L^\f}\norm{w_{i,x}}_{L^1}\right)\\
  	&=\mathcal{O}(1)\left(\norm{u_{xxx}}_{L^1}+\norm{u_x}_{L^\f}\norm{u_{xx}}_{L^1}+\norm{u_x}^2_{L^\f}\norm{u_{x}}_{L^1}\right).
  \end{align*}
  Then we obtain
  \begin{align}
  	\|u_{i,xxxx}(t,\cdot)\|_{L^1}&=\mathcal{O}(1)\bigg(\norm{\widetilde{w}_{1,xxx}}_{L^1}+\norm{\widetilde{w}_{2,xxx}}_{L^1}+\norm{u_{x}}_{L^\f}\norm{u_{xxx}}_{L^1}+\norm{u_x}^2_{L^\f}\norm{u_{xx}}_{L^1}\nonumber\\
  	&\hspace{1.5cm} +(\norm{u_{xx}}_{L^\f}+\norm{u_{x}}_{L^\f}^2) \norm{u_{xx}}_{L^1}+\norm{u_{xx}}_{L^1}\norm{u_{xxx}}_{L^1}\nonumber\\
  	& \hspace{1.5cm} +\norm{u_{xx}}^2_{L^1}\norm{u_{xx}}_{L^1}+\norm{u_{xx}}_{L^1}^3\norm{u_x}_{L^1}+\norm{u_{xxx}}_{L^1}\norm{u_{xx}}_{L^1}\bigg)\nonumber\\
  	&=\mathcal{O}(1)\bigg(\norm{\widetilde{w}_{1,xxx}}_{L^1}+\norm{\widetilde{w}_{2,xxx}}_{L^1}+\frac{\de_0^2}{t^{3/2}}\bigg).
  \end{align}
Similarly, we have
\begin{equation*}
	\norm{\widetilde{w}_{xxx}}_{L^1}=\mathcal{O}(1)\left(\norm{u_{xxxx}}_{L^1}+\frac{\de_0^2}{t^{3/2}}\right).
\end{equation*}
To estimate $\sum_{i=1}^2\|\widetilde{w}_{i,xx}(t,\cdot)\|_{L^1}$, again we have
\begin{align}
	\widetilde{w}_{i,xxx}(t,\cdot)&=G_{i,x}({t}/{2},\cdot)*\widetilde{w}_{i,xxx}({t}/{2},\cdot)\nonumber\\
	&-\frac{1}{2}\int_{\frac{t}{2}}^tG_{i,x}(t-s)*\Big[{\cal H}_i \widetilde{w}_{i,xxx}+2{\cal H}_{i,x} \widetilde{w}_{i,xx} +{\cal H}_{i,xx} \widetilde{w}_{i,x}+2{\cal A}_{i,xx}\nonumber\\
	&\hspace{3cm}+{\cal G}_{i,xx}(\cdot,X_i^{-1})(X_i^{-1})^2_x+{\cal G}_{i,x}(\cdot,X_i^{-1})(X_i^{-1})_{xx}\nonumber\\
	&\hspace{3cm}-\widetilde{w}_{i,xxx}X_{i,t}(\cdot,X_i^{-1})-2\widetilde{w}_{i,xx}X_{i,t x}(\cdot,X_i^{-1})(X_i^{-1})_x \nonumber\\
	&\hspace{2.5cm}-\widetilde{w}_{i,x}X_{i,t xx}(\cdot,X_i^{-1})(X_i^{-1})^2_x-\widetilde{w}_{i,x}X_{i,t x}(\cdot,X_i^{-1})(X_i^{-1})_{xx}\Big]ds.\label{estima31}
\end{align}
We calculate
\begin{align*}
	{\cal G}_{xx}&=u_{xx}\cdot DB_1(u){\cal R}_1+(u_x\otimes u_x):D^2B_1(u){\cal R}_1+2u_x\cdot DB_1(u){\cal R}_{1,x}+B_1(u){\cal R}_{1,xx}\\
	&\quad +u_{xx}\cdot DP(u){\cal R}_2+(u_x\otimes u_x):D^2P(u){\cal R}_2+2u_x\cdot DP(u){\cal R}_{2,x}+P(u){\cal R}_{2,xx}\\
	&\quad +{\cal R}_{3,xx}-{\cal R}_{4,xx}+u_{xx}\cdot DP(u){\cal R}+(u_x\otimes u_x):D^2P(u){\cal R}\\
	&\quad +2u_x\cdot DP(u){\cal R}_{x}+P(u){\cal R}_{xx}.
\end{align*}
Note that
\begin{align*}
	{\cal R}_{1,xx}&=\begin{pmatrix}
		0\\
		2\frac{\pa }{\pa x}(\gamma(u))u_{1,xxxx}+4\frac{\pa^2 }{\pa x^2}(\gamma(u))u_{1,xxx}+2\frac{\pa^3 }{\pa x^3}(\gamma(u))u_{1,xx}
	\end{pmatrix}\\
&+\begin{pmatrix}
	0\\
	\frac{\pa ^4}{\pa x^4}(\gamma(u))u_{1,x}+2\frac{\pa ^3}{\pa x^3}(\gamma(u))u_{1,xx}+\frac{\pa ^2}{\pa^2 x}(\gamma(u))u_{1,xxx}
\end{pmatrix}\\
&=\mathcal{O}(1)\left(\abs{u_{x}}\abs{u_{1,xxxx}}+\abs{u_{xx}}\abs{u_{1,xxx}}+\abs{u_{x}}^2\abs{u_{1,xxx}}+\abs{u_{xxx}}\abs{u_{1,xx}}\right)\\
&+\mathcal{O}(1)\left(\abs{u_{xx}}\abs{u_x}\abs{u_{1,xx}}+\abs{u_x}^3\abs{u_{1,xx}}+\abs{u_{xxxx}}\abs{u_{1,x}}+\abs{u_{xxx}}\abs{u_{x}}\abs{u_{1,x}}\right)\\
&+\mathcal{O}(1)\left(\abs{u_{xx}}\abs{u_{x}}^2\abs{u_{1,x}}+\abs{u_{xx}}^2\abs{u_{1,x}}+\abs{u_x}^4\abs{u_{1,x}}\right),\\
	{\cal R}_{2,xx}&=\begin{pmatrix}
		\alpha^{\p\p\p}_1(u_1)(u_{1,x}^2+u_{1,xx})u_{1,x} u_{1,xx}+2\alpha^{\p\p}_1(u_1)u_{1,x}[u_{1,x} u_{1,xxx}+u_{1,xx}^2]\\
		\frac{\pa^3}{\pa x^3}(\beta(u))u_{1,xx}+\frac{\pa^3 }{\pa x^3}(\alpha_2(u))u_{2,xx}+2\frac{\pa^2}{\pa x^2}(\beta(u))u_{1,xxx}+2\frac{\pa^2 }{\pa x^2}(\alpha_2(u))u_{2,xxx}
	\end{pmatrix}\\
&+\begin{pmatrix}
	\alpha^{\p}_1(u_1)[3u_{1,xxx} u_{1,xx} +u_{1,x} u_{1,xxxx}]+\alpha^{\p}_1(u_1)  [3u_{1,xxx} u_{1,xx} +u_{1,x} u_{1,xxxx}]\\
	\frac{\pa}{\pa x}(\beta(u))u_{1,xxxx}+\frac{\pa }{\pa x}(\alpha_2(u))u_{2,xxxx}
\end{pmatrix}\\	
&=\mathcal{O}(1)\left(\abs{u_{x}}\abs{u_{xxxx}}+\abs{u_{xx}}\abs{u_{xxx}}+\abs{u_{x}}^2\abs{u_{xxx}}+\abs{u_{xx}}^2\abs{u_x} +\abs{u_x}^3\abs{u_{xx}}\right),\\
 {\cal R}_{3,xx}&=\begin{pmatrix}
		0\\
		\frac{\pa^3 }{\pa t\pa x^2}(\gamma(u))u_{1,x}+2\frac{\pa^2 }{\pa t\pa x}(\gamma(u))u_{1,xx}+\frac{\pa }{\pa t}(\gamma(u))u_{1,xxx}
	\end{pmatrix}\\
&=\mathcal{O}(1)\left(\abs{u_{txx}}\abs{u_x}+\abs{u_{tx}}\abs{u_x}^2+\abs{u_t}\abs{u_{xx}}\abs{u_x}+\abs{u_t}\abs{u_x}^3+\abs{u_{tx}}\abs{u_{xx}}+\abs{u_t}\abs{u_{xxx}}\right)\\
&=\mathcal{O}(1)\left(\abs{u_{xxxx}}\abs{u_x}+\abs{u_{xxx}}\abs{u_{x}}^2+\abs{u_{xx}}^2\abs{u_x}+\abs{u_{xx}}\abs{u_x}^3+\abs{u_x}^4+\abs{u_{xxx}}\abs{u_{x}}\right)\\
&+\mathcal{O}(1)\left(\abs{u_{xx}}\abs{u_x}^2+\abs{u_x}^3+\abs{u_{xxx}}\abs{u_x}^2+\abs{u_{xx}}\abs{u_x}^3+\abs{u_x}^5+\abs{u_{xx}}\abs{u_x}^2+\abs{u_x}^4\right)\\
&+\mathcal{O}(1)\left( \abs{u_{xx}}^2\abs{u_x}+\abs{u_{xx}}\abs{u_x}^3+\abs{u_{xx}}\abs{u_x}^2+\abs{u_{xx}}\abs{u_x}^3+\abs{u_x}^5++\abs{u_x}^4\right)\\
&+\mathcal{O}\left(\abs{u_{xxx}}\abs{u_{xx}}+\abs{u_{xx}}^2\abs{u_x}+\abs{u_{xx}}\abs{u_x}^3+\abs{u_{xx}}^2+\abs{u_{xx}}\abs{u_x}^2+\abs{u_x}\abs{u_{xxx}}\right)\\
&+\mathcal{O}(1)\left(\abs{u_{xx}}\abs{u_{xxx}}+\abs{u_x}^2\abs{u_{xxx}}
\right),\\
	{\cal R}_{4,xx}&=[u_x\otimes u_x:D^2A_1(u)+u_{xx}\cdot DA_1(u)]\begin{pmatrix}
		0\\
		\frac{\pa }{\pa x}(\gamma(u))u_{1,x}
	\end{pmatrix}\\
&+u_{x}\cdot DA_1(u)\begin{pmatrix}
	0\\
	\frac{\pa^2 }{\pa x^2}(\gamma(u))u_{1,x}+\frac{\pa }{\pa x}(\gamma(u))u_{1,xx}
\end{pmatrix}\\
&+ A_1(u)\begin{pmatrix}
	0\\
	\frac{\pa^3 }{\pa x^3}(\gamma(u))u_{1,x}+2\frac{\pa^2 }{\pa x^2}(\gamma(u))u_{1,xx}+\frac{\pa }{\pa x}(\gamma(u))u_{1,xxx}
\end{pmatrix}\\
&=\mathcal{O}(1)\left(\abs{u_x}^4+\abs{u_x}^2\abs{u_{xx}}+\abs{u_{xxx}}\abs{u_x}+\abs{u_{xx}}^2\right),\\
{\cal R}_{xx}&=\begin{pmatrix}
	(\alpha_1'(u_1)(u_{1,x})^2)_{xxx}-\left(f''(u_1)(u_{1,x})^2\right)_{xx}\\
	\left(\frac{\pa }{\pa x}(\beta(u))u_{1,x}\right)_{xxx}+\left(\frac{\pa }{\pa x}(\alpha_2(u))u_{2,x}\right)_{xxx}
\end{pmatrix}\\
&=\mathcal{O}(1)\left(\abs{u_x}^5+\abs{u_{xx}}\abs{u_{x}}^3+\abs{u_{xxx}}\abs{u_x}^2+\abs{u_{xxxx}}\abs{u_x}+\abs{u_{xxx}}\abs{u_{xx}}+\abs{u_x}\abs{u_{xx}}^2\right).
\end{align*}
Subsequently, we get
\begin{align*}
	&\norm{{\cal G}_{i,xx}(\cdot,X_i^{-1})(X_i^{-1})^2_x}_{L^1}\\
	&=\mathcal{O}(1)\left(\norm{u_{xx}}_{L^\f}\norm{\mathcal{R}_1}_{L^1}+\norm{u_{x}}^2_{L^\f}\norm{\mathcal{R}_1}_{L^1}+\norm{u_{x}}_{L^\f}\norm{\mathcal{R}_{1,x}}_{L^1}+\norm{\mathcal{R}_{1,xx}}_{L^1}\right)\\
	&+\mathcal{O}(1)\left(\norm{u_{xx}}_{L^\f}\norm{\mathcal{R}_2}_{L^1}+\norm{u_{x}}^2_{L^\f}\norm{\mathcal{R}_2}_{L^1}+\norm{u_{x}}_{L^\f}\norm{\mathcal{R}_{2,x}}_{L^1}+\norm{\mathcal{R}_{2,xx}}_{L^1}\right)\\
	&+\mathcal{O}(1)\left(\norm{u_{xx}}_{L^\f}\norm{\mathcal{R}}_{L^1}+\norm{u_{x}}^2_{L^\f}\norm{\mathcal{R}}_{L^1}+\norm{u_{x}}_{L^\f}\norm{\mathcal{R}_{x}}_{L^1}+\norm{\mathcal{R}_{xx}}_{L^1}+\norm{\mathcal{R}_{3,xx}}_{L^1}+\norm{\mathcal{R}_{4,xx}}_{L^1}\right).
\end{align*}
Then we get
\begin{align*}
	&\norm{{\cal G}_{i,xx}(\cdot,X_i^{-1})(X_i^{-1})^2_x}_{L^1}\\
	&=\mathcal{O}(1)\left(\norm{u_x}_{L^\f}\norm{u_{xx}}_{L^1} +\norm{u_x}^2_{L^\f}\norm{u_{x}}_{L^1}\right)\left[\norm{u_{xx}}_{L^\f}+\norm{u_x}_{L^\f}^2\right]\\
	&+\mathcal{O}(1)\Big(\norm{u_x}^2_{L^\f}\norm{u_{xxx}}_{L^1}+\norm{u_x}_{L^\f}\norm{u_{xx}}_{L^\f}\norm{u_{xx}}_{L^1}+\norm{u_x}^3_{L^\f}\norm{u_{xx}}_{L^1}+\norm{u_x}^4_{L^\f}\norm{u_{x}}_{L^1}\Big)\\
	&+\mathcal{O}(1)\left(\norm{u_{x}}_{L^\f}\norm{u_{xxxx}}_{L^1}+\norm{u_{xx}}_{L^\f}\abs{u_{xxx}}_{L^1}+\norm{u_{x}}_{L^\f}^2\norm{u_{xxx}}_{L^1}+\norm{u_{xxx}}_{L^1}\norm{u_{xx}}_{L^\f}\right)\\
	&+\mathcal{O}(1)\left(\norm{u_{xx}}_{L^1}\norm{u_x}_{L^\f}\norm{u_{xx}}_{L^\f}+\norm{u_x}_{L^\f}^3\norm{u_{xx}}_{L^1}+\norm{u_{xxxx}}_{L^1}\norm{u_{x}}_{L^\f}+\norm{u_{xxx}}_{L^1}\norm{u_{x}}^2_{L^\f}\right)\\
	&+\mathcal{O}(1)\left(\norm{u_{xx}}_{L^1}\norm{u_{x}}_{L^\f}^2\norm{u_{x}}_{L^\f}+\norm{u_{xx}}_{L^\f}^2\norm{u_{x}}_{L^1}+\norm{u_x}_{L^\f}^4\norm{u_{x}}_{L^1}\right)\\
	&+\mathcal{O}(1) \norm{u_x}_{L^\f}\norm{u_{xx}}_{L^1}\left[\norm{u_{xx}}_{L^\f}+\norm{u_x}_{L^\f}^2\right]\\
	&+\mathcal{O}(1)\Big(\norm{u_x}^2_{L^\f}\norm{u_{xxx}}_{L^1}+\norm{u_x}_{L^\f}\norm{u_{xx}}_{L^\f}\norm{u_{xx}}_{L^1}+\norm{u_x}^3_{L^\f}\norm{u_{xx}}_{L^1}\Big)\\
	&+\mathcal{O}(1)\left(\norm{u_{x}}_{L^\f}\norm{u_{xxxx}}_{L^1}+\norm{u_{xx}}_{L^\f}\norm{u_{xxx}}_{L^1}+\norm{u_{x}}_{L^\f}^2\norm{u_{xxx}}_{L^1}\right)\\
	&+\mathcal{O}(1)\left(\norm{u_{xx}}_{L^\f}^2\norm{u_x}_{L^1} +\norm{u_x}_{L^\f}^3\norm{u_{xx}}_{L^1}\right)\\
	&+\mathcal{O}(1)\left(\norm{u_x}_{L^\f}\norm{u_{xx}}_{L^1}+\norm{u_x}_{L^\f}\norm{u_{x}}_{L^1}+\norm{u_x}^2_{L^\f}\norm{u_{x}}_{L^1}\right)\left[\norm{u_{xx}}_{L^\f}+\norm{u_x}_{L^\f}^2\right]\\
	&+\mathcal{O}(1)\Big(\norm{u_x}^2_{L^\f}\norm{u_{xxx}}_{L^1}+\norm{u_x}_{L^\f}\norm{u_{xx}}_{L^\f}\norm{u_{xx}}_{L^1}+\norm{u_x}^3_{L^\f}\norm{u_{xx}}_{L^1}\\
	&\hspace{1cm}+\norm{u_x}^4_{L^\f}\norm{u_{x}}_{L^1}+\norm{u_x}^2_{L^\f}\norm{u_{xx}}_{L^1}+\norm{u_x}^3_{L^\f}\norm{u_{x}}_{L^1}\Big)\\
	&+\mathcal{O}(1)\left(\norm{u_x}_{L^\f}^4\norm{u_x}_{L^1}+\norm{u_{xx}}_{L^1}\norm{u_{x}}_{L^\f}^3+\norm{u_{xxx}}_{L^1}\norm{u_x}_{L^\f}^2+\norm{u_{xxxx}}_{L^1}\norm{u_x}_{L^\f}\right)\\
	&+\mathcal{O}(1)\left(\norm{u_{xxx}}_{L^1}\norm{u_{xx}}_{L^\f}+\norm{u_x}_{L^1}\norm{u_{xx}}_{L^\f}^2\right)\\
	&+\mathcal{O}(1)\Big(\norm{u_{xxxx}}_{L^1}\norm{u_x}_{L^\f}+\norm{u_{xxx}}_{L^1}\norm{u_{x}}_{L^\f}^2+\norm{u_{xx}}_{L^\f}^2\norm{u_x}_{L^1}+\norm{u_{xx}}_{L^1}\norm{u_x}_{L^\f}^3\\
	&\hspace{1cm}+\norm{u_x}_{L^\f}^3\norm{u_x}_{L^1}+\norm{u_{xxx}}_{L^1}\norm{u_{x}}_{L^\f}\Big)\\
	&+\mathcal{O}(1)\Big(\norm{u_{xx}}_{L^1}\norm{u_x}_{L^\f}^2+\norm{u_x}_{L^\f}^2\norm{u_x}_{L^1}+\norm{u_{xxx}}_{L^1}\norm{u_x}_{L^\f}^2+\norm{u_{xx}}_{L^1}\norm{u_x}_{L^\f}^3\\
	&\hspace{1cm}+\norm{u_x}_{L^\f}^4\norm{u_x}_{L^1}+\norm{u_{xx}}_{L^\f}\norm{u_x}_{L^\f}^2+\norm{u_x}^3_{L^\f}\norm{u_x}_{L^1}\Big)\\
	&+\mathcal{O}(1)\Big( \norm{u_{xx}}_{L^\f}^2\norm{u_x}_{L^1}+\norm{u_{xx}}_{L^1}\norm{u_x}_{L^\f}^3+\norm{u_{xx}}_{L^1}\norm{u_x}_{L^\f}^2+\norm{u_{xx}}_{L^1}\norm{u_x}_{L^\f}^3\\
	&\hspace{1cm}+\norm{u_x}_{L^\f}^4\norm{u_x}_{L^1}+\norm{u_x}_{L^\f}^3\norm{u_x}_{L^1}\Big)\\
	&+\mathcal{O}(1)\Big(\norm{u_{xxx}}_{L^1}\norm{u_{xx}}_{L^\f}+\norm{u_{xx}}_{L^\f}^2\norm{u_x}_{L^1}+\norm{u_{xx}}_{L^1}\norm{u_x}_{L^\f}^3+\norm{u_{xx}}_{L^1}\norm{u_{xx}}_{L^\f}\\
	&\hspace{1cm}+\norm{u_{xx}}_{L^1}\norm{u_x}_{L^\f}^2+\norm{u_x}_{L^\f}\norm{u_{xxx}}_{L^1}\Big)\\
	&+\mathcal{O}(1)\left(\norm{u_{xx}}_{L^\f}\norm{u_{xxx}}_{L^1}+\norm{u_x}_{L^\f}^2\norm{u_{xxx}}_{L^1}\right)\\
	&+\mathcal{O}(1)\left(\norm{u_x}^3_{L^\f}\norm{u_x}_{L^1}+\norm{u_x}_{L^\f}^2\norm{u_{xx}}_{L^1}+\norm{u_{xxx}}_{L^1}\norm{u_x}_{L^\f}+\norm{u_{xx}}_{L^\f}\norm{u_{xx}}_{L^1}\right),
\end{align*}
which gives
\begin{equation}\label{estimate-3-1}
	\norm{{\cal G}_{i,xx}(\cdot,X_i^{-1})(X_i^{-1})^2_x}_{L^1}=\mathcal{O}(1)\left( \frac{\de^2_0}{t^{3/2}}+\frac{\de^2_0}{t^{2}}\right).
\end{equation}
From the definition of $\mathcal{H}_i$ we have
\begin{equation*}
	\norm{\mathcal{H}_{i,xx}}_{L^1}=\mathcal{O}(1)\left(\norm{u_{xxx}}_{L^1}+\norm{u_{x}}_{L^\f}\norm{u_{xx}}_{L^1}+\norm{u_x}_{L^\f}^2\norm{u_x}_{L^1}\right)=\mathcal{O}(1)\frac{\de_0}{t}.
\end{equation*}
Then we get
\begin{align}
		&\norm{{\cal H}_i \widetilde{w}_{i,xxx}}_{L^1}+2\norm{{\cal H}_{i,x} \widetilde{w}_{i,xx}}_{L^1} +\norm{{\cal H}_{i,xx} \widetilde{w}_{i,x}}_{L^1} \nonumber\\
		&\leq \norm{{\cal H}_i}_{L^\f}\norm{\widetilde{w}_{i,xxx}}_{L^1}+2\norm{{\cal H}_{i,x}}_{L^\f}\norm{\widetilde{w}_{i,xx}}_{L^1} +\norm{{\cal H}_{i,xx}}_{L^1}\norm{ \widetilde{w}_{i,x}}_{L^\f}\nonumber\\
		&\leq\mathcal{O}(1)\frac{\de_0}{\sqrt{t}}\left(\norm{u_{xxxx}}_{L^1}+\frac{\de_0^2}{t^{3/2}}\right)+3\norm{\mathcal{H}_{i,xx}}_{L^1}\norm{\widetilde{w}_{xx}}_{L^1}\nonumber\\
		&=\mathcal{O}(1)\frac{\de_0}{\sqrt{t}}\left(\norm{u_{xxxx}}_{L^1}+\frac{\de_0^2}{t^{3/2}}\right)+\mathcal{O}(1)\frac{\de_0}{t}\left(\norm{u_{xxx}}_{L^1}+\norm{u_x}_{L^\f}\norm{u_{xx}}_{L^1}+\norm{u_x}^2_{L^\f}\norm{u_{x}}_{L^1}\right)\nonumber\\
		&=\mathcal{O}(1)\frac{\de_0}{\sqrt{t}}\left(\norm{u_{xxxx}}_{L^1}+\frac{\de_0^2}{t^{3/2}}\right).\label{estimate-3-2}
\end{align}
Furthermore, we estimate
\begin{align}
	\norm{{\cal G}_{i,x}(\cdot,X_i^{-1})(X_i^{-1})_{xx}}_{L^1}&=\mathcal{O}(1)\norm{\mathcal{G}_{i,xx}}_{L^1}\norm{u_x}_{L^1} \nonumber\\
	&=\mathcal{O}(1)\left(\frac{\de_0^3}{t^{3/2}}+\frac{\de_0^3}{t^2}\right), \label{estimate-3-3}\\
	\norm{\widetilde{w}_{i,xxx}X_{i,t}(\cdot,X_i^{-1})}_{L^1}&=\norm{\widetilde{w}_{i,xxx}}_{L^1}\norm{X_{i,t}(\cdot,X_i^{-1})}_{L^\f}\nonumber\\
	&=\mathcal{O}(1)\left(\norm{u_{xxxx}}_{L^1}+\frac{\de_0^2}{t^{3/2}}\right)\left(\de_0+\frac{\de_0}{\sqrt{t}}\right), \label{estimate-3-4}\\
	\norm{\widetilde{w}_{i,xx}X_{i,t x}(\cdot,X_i^{-1})(X_i^{-1})_x}_{L^1}&=\mathcal{O}(1)\norm{\widetilde{w}_{i,xx}}_{L^1}\norm{X_{i,t x}(\cdot,X_i^{-1})}_{L^\f}\norm{(X_i^{-1})_x}_{L^\f}\nonumber\\
	&=\mathcal{O}(1)\norm{\widetilde{w}_{i,xx}}_{L^1}\norm{X_{i,t x}(\cdot,X_i^{-1})}_{L^\f}\nonumber\\
	&=\mathcal{O}(1)\left(\norm{u_{xxx}}_{L^1}+\frac{\de_0^2}{t}\right)\frac{\de_0}{\sqrt{t}},\label{estimate-3-5}\\
	\norm{\widetilde{w}_{i,x}X_{i,t xx}(\cdot,X_i^{-1})(X_i^{-1})^2_x}_{L^1}&\leq \norm{\widetilde{w}_{i,x}}_{L^\f}\norm{X_{i,t xx}(\cdot,X_i^{-1})}_{L^1}\norm{(X_i^{-1})^2_x}_{L^\f}\nonumber\\
	&\leq \norm{\widetilde{w}_{i,xx}}_{L^1}\norm{X_{i,t xx}(\cdot,X_i^{-1})}_{L^1}\nonumber\\
	&=\mathcal{O}(1)\frac{\de_0}{t}\left(\norm{u_{xxx}}_{L^1}+\norm{u_{xx}}_{L^1}+\norm{u_x}_{L^\f}^2\right)\nonumber\\
	&=\mathcal{O}(1)\frac{\de_0}{t^2},\label{estimate-3-6}\\
	\norm{\widetilde{w}_{i,x}X_{i,t x}(\cdot,X_i^{-1})(X_i^{-1})_{xx}}_{L^1}
	&\leq 	\norm{\widetilde{w}_{i,x}}_{L^\f}\norm{X_{i,t x}(\cdot,X_i^{-1})}_{L^1}\norm{(X_i^{-1})_{xx}}_{L^\f}\nonumber\\
	&=\mathcal{O}(1)\norm{\widetilde{w}_{i,xx}}_{L^1}(\norm{u_{xx}}_{L^1}+\norm{u_x}_{L^1})\norm{u_x}_{L^\f}\nonumber\\
	&=\mathcal{O}(1)\frac{\de_0^2}{t^2}.\label{estimate-3-7}
\end{align}
We also have
\begin{align}	
	\norm{{\cal A}_{i,xx}}_{L^1}&=\mathcal{O}(1)\bigg(\norm{u_{xx}}_{L^1}\norm{\widetilde{w}_x}_{L^\f}+\norm{u_{x}}_{L^\f}^2\norm{\widetilde{w}_x}_{L^1}+\norm{u_x}_{L^\f}\norm{\widetilde{w}_{xx}}_{L^1}\nonumber\\
	&\hspace{4cm}+\norm{u-u^*}_{L^\f}\norm{\widetilde{w}_{xxx}}_{L^1}\bigg)\nonumber\\
	&=\mathcal{O}(1)\left(\de_0\norm{u_{xxxx}}_{L^1}+\frac{\de_0^2}{t^{3/2}}\right).\label{estimate-3-8}
\end{align}
From \eqref{estima31}, \eqref{estimate-3-1}, \eqref{estimate-3-2}, \eqref{estimate-3-3}, \eqref{estimate-3-4}, \eqref{estimate-3-5}, \eqref{estimate-3-6}, \eqref{estimate-3-7}, and \eqref{estimate-3-8} we get for $C_{7}>0$ large enough
\begin{align}
	\sum_{i=1}^2 \|u_{i,xxx}(t,\cdot)\|_{L^1} &\leq \left(\frac{C_3\delta_0}{2t^{3/2}}+C_{7}\int_{\frac{t}{2}}^t\frac{1}{\sqrt{t-s}}\left(\frac{\delta_0^2}{s^2}+\frac{\delta_0^2}{s^{3/2}}\right)ds\right)\nonumber\\
	&\leq  \bigg(\frac{C_3\delta_0}{2t^{3/2}}+\frac{4\sqrt{2}C_{7}\delta_0^2}{t^{3/2}}+\frac{4C_{7}\delta_0^2}{t})\leq\frac{C_3}{t^{3/2}},\label{estima-7}
\end{align}
	if $C_3\geq 32C_{7}$ is chosen sufficiently large and such that $\delta_0\leq\frac{1}{\sqrt{\hat{t}}}$ which gives the result. This completes the proof of Proposition \ref{prop:parabolic}.
\end{proof}

\begin{corollary}
\label{coro2.2} Let $T>\hat{t}$. Assume that the solution $u$ of  \eqref{eqn-main} satisfies on $[0,T]$
$$\|u_{1,x}(t,\cdot)\|_{L^1}+\|u_{2,x}(t,\cdot)\|_{L^1}\leq \delta_0,$$
then for all $t\in[\hat{t},T]$ we have
\begin{equation}
	\begin{aligned}
		&\sum_{i=1}^2\norm{u_{i,xx}(t,\cdot)}_{L^1}, \sum_{i=1}^2\norm{u_{i,x}(t,\cdot)}_{L^\infty},=O(1)\delta_0^2\\
		&\sum_{i=1}^2 \norm{u_{i,xxx}(t,\cdot)}_{L^1},\sum_{i=1}^2 \norm{u_{i,xx}(t,\cdot)}_{L^\infty}=O(1)\delta_0^3\\
		&\sum_{i=1}^2\norm{u_{i,xxxx}(t,\cdot)}_{L^1},\sum_{i=1}^2\norm{u_{i,xxx}(t,\cdot)}_{L^\infty}=O(1)\delta_0^4
		\end{aligned}
		\end{equation}
\end{corollary}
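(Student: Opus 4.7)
The plan is to reduce the corollary to a repeated application of Proposition~\ref{prop:parabolic} with time origin shifted to $t-\hat{t}$, combined with the one-dimensional Gagliardo--Nirenberg embedding $\|f\|_{L^\infty}\leq \|f_x\|_{L^1}$ to pass from $L^1$ bounds on higher derivatives to $L^\infty$ bounds on the lower-order ones.

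First, I would observe that $\hat{t}$ was chosen so that $\hat{t}\leq 1/(C_4\delta_0)^2$, equivalently $C_4\delta_0\leq 1/\sqrt{\hat{t}}$. Under the standing smallness hypothesis of the corollary, namely $\|u_{1,x}(t,\cdot)\|_{L^1}+\|u_{2,x}(t,\cdot)\|_{L^1}\leq \delta_0$ on $[0,T]$, the entire hypothesis \eqref{assumption:u-L1} of Proposition~\ref{prop:parabolic} is satisfied on any translated interval $[\tau,\tau+\hat{t}]\subset[0,T]$. Therefore, for $t\in[\hat{t},T]$, I would apply Proposition~\ref{prop:parabolic} to the shifted Cauchy problem with initial time $\tau:=t-\hat{t}$ and initial data $u(\tau,\cdot)$. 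The conclusion \eqref{estimate:parabolic-1} evaluated at elapsed time $\hat{t}$ yields
\begin{equation*}
\sum_{i=1}^2\|u_{i,xx}(t,\cdot)\|_{L^1}\leq \frac{2C_1\delta_0}{\sqrt{\hat{t}}}\leq 2C_1 C_4\delta_0^2,
\end{equation*}
and analogously $\sum_i\|u_{i,xxx}(t,\cdot)\|_{L^1}\leq 2C_2\delta_0/\hat{t}\leq 2C_2 C_4^2\delta_0^3$ and $\sum_i\|u_{i,xxxx}(t,\cdot)\|_{L^1}\leq 2C_3\delta_0/\hat{t}^{3/2}\leq 2C_3 C_4^3\delta_0^4$, which are the three $L^1$ estimates claimed in the corollary, up to relabelling the constant $\mathcal{O}(1)$.

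Second, I would deduce the $L^\infty$ bounds by the elementary one-dimensional inequality $\|g\|_{L^\infty(\mathbb{R})}\leq \|g_x\|_{L^1(\mathbb{R})}$ (valid for $g\in W^{1,1}(\mathbb{R})$ with $g\to 0$ at $-\infty$, which holds here since all the derivatives $u_{i,x},u_{i,xx},u_{i,xxx}$ are integrable and decay at infinity by the parabolic smoothing). This gives immediately
\begin{equation*}
\sum_{i=1}^2\|u_{i,x}(t,\cdot)\|_{L^\infty}\leq \sum_{i=1}^2\|u_{i,xx}(t,\cdot)\|_{L^1}=\mathcal{O}(1)\delta_0^2,
\end{equation*}
and similarly for $u_{i,xx}$ and $u_{i,xxx}$ in $L^\infty$, chaining down to the $L^1$ norms of $u_{i,xxx}$ and $u_{i,xxxx}$ already controlled.

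No part of the argument is really an obstacle: the only subtle point is to justify the initial smoothness at the shifted time $\tau=t-\hat{t}$. Because the hypothesis of the corollary only gives a $BV$-type bound on $u_x(\tau,\cdot)$, I would invoke the density/continuity argument already used at the end of the proof of Proposition~\ref{prop:parabolic} (the contradiction-plus-density scheme) so that the shifted estimates apply verbatim. This produces the required bounds for all $t\in[\hat{t},T]$ and completes the proof.
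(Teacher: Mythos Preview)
Your proposal is correct and follows exactly the paper's one-line proof, which simply says ``apply Proposition~\ref{prop:parabolic} on $[t-\hat{t},t]$''; you have merely spelled out the translation argument and the Sobolev embedding $\|g\|_{L^\infty}\leq\|g_x\|_{L^1}$ that the paper leaves implicit. One small slip: from $\hat{t}\leq 1/(C_4\delta_0)^2$ you correctly deduce $C_4\delta_0\leq 1/\sqrt{\hat{t}}$, but the bound $\delta_0/\sqrt{\hat{t}}=\mathcal{O}(1)\delta_0^2$ needs the reverse inequality, so you should take $\hat{t}$ equal to (not merely bounded above by) $1/(C_4\delta_0)^2$, which is indeed the standard choice of threshold time here.
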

\begin{proof} It suffices to apply Proposition \eqref{prop:parabolic} on $[t-\hat{t},t]$.
\end{proof}
\begin{proposition}
\label{prop2.3}
There exists $C>0$ large enough depending on $\kappa_A$, $\kappa_B$, $\kappa$ such that
 if the solution $u$ of  \eqref{eqn-main} verifies:
 \begin{equation}
 \mbox{Tot.Var.}(u(0,\cdot))\leq\frac{\delta_0}{C},
 \label{2.16}
 \end{equation}
 then $u$ satisfies on the whole interval $[0,\hat{t}]$:
 \begin{equation}
 \|u_x(t,\cdot)\|_{L^1}\leq \frac{\delta_0}{2}.
 \label{2.17}
 \end{equation}
\end{proposition}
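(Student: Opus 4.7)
My plan is a standard bootstrap built on top of Proposition~\ref{prop:parabolic}. I would first define
\[
t^\star := \sup\{t \in [0,\hat{t}] \,:\, \|u_x(s,\cdot)\|_{L^1} \leq \delta_0 \text{ for all } s \in [0, t]\},
\]
which is strictly positive by continuity since $\mbox{Tot.Var.}(u(0,\cdot)) \leq \delta_0/C < \delta_0$. To obtain $t^\star = \hat{t}$ and hence \eqref{2.17}, it suffices to establish the strict improvement $\|u_x(t,\cdot)\|_{L^1} \leq \delta_0/2$ on $[0, t^\star]$. Throughout $[0, t^\star]$ the hypothesis of Proposition~\ref{prop:parabolic} is satisfied, so the parabolic bounds $\|u_{xx}(t)\|_{L^1} \leq C_1\delta_0/\sqrt{t}$, $\|u_x(t)\|_{L^\infty} \leq \|u_{xx}(t)\|_{L^1}$ and $\|u_{xxx}(t)\|_{L^1} \leq C_2\delta_0/t$ are available. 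I would then pass to the diagonalized and $x$-rescaled variables $\widetilde{w}_i$ of Section~\ref{sec:parabolic}, which satisfy \eqref{sol29} whose constant-coefficient linearization has fundamental solution $G_i$ obeying $\|G_i(\tau)\|_{L^1} \leq \kappa$ and $\|G_{i,x}(\tau)\|_{L^1} \leq \kappa/\sqrt{\tau}$; moreover $\|u_x\|_{L^1}$ is comparable to $\sum_i \|\widetilde{w}_i\|_{L^1}$ up to $\kappa_B$-factors.

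The Duhamel formula gives
\[
\widetilde{w}_i(t) = G_i(t) * \widetilde{w}_i(0) + \int_0^t G_i(t-s) * \mathcal{F}_i(s)\, ds,
\]
where $\mathcal{F}_i$ collects $\mathcal{G}_i$ from \eqref{defG}, the drift-correction $\mathcal{A}_i$, and the time-change contribution $-\widetilde{w}_{i,x} X_{i,t}$. The initial-data contribution is at most $\kappa \kappa_B\, \mbox{Tot.Var.}(u(0,\cdot)) \leq \kappa \kappa_B \delta_0/C$. The core step is bounding the source integral by splitting each piece of $\mathcal{F}_i$ according to whether it has divergence form $\partial_x(\cdot)$. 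The divergence-form pieces (the parts of $B_1(u)\mathcal{R}_1$, $P(u)\mathcal{R}_2$ and $P(u)\mathcal{R}$ coming already equipped with an explicit $\partial_x$, together with products $u_x u_{xx} = \tfrac{1}{2}\partial_x(u_x^2)$ and the derivative part of $\mathcal{A}_i$) are handled by transferring $\partial_x$ onto $G_i$ and combining $\|G_{i,x}(\tau)\|_{L^1} \leq \kappa/\sqrt{\tau}$ with the parabolic bound $\|\cdot\|_{L^1} \leq C\delta_0^2/\sqrt{s}$; the beta-integral $\int_0^t (t-s)^{-1/2} s^{-1/2} ds = \pi$ gives a bounded $O(\delta_0^2)$ contribution. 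The non-divergence quadratic pieces ($(u_x)^2$-type remainders from $f''$, $\gamma$-derivatives, and the $\mathcal{R}_3$, $\mathcal{R}_4$ blocks) admit $\|\cdot\|_{L^1} \leq C\delta_0^2/\sqrt{s}$, producing $O(\delta_0^2\sqrt{t})$ after integration against $\|G_i\|_{L^1} \leq \kappa$.

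The truly delicate remainders are the cubic terms, typified by $(u_{1,x})^3$ inside $\mathcal{G}_i$ and by $\widetilde{w}_{i,x} X_{i,t}$, whose $L^1$ norms are only $O(\delta_0^3/s)$ and hence not integrable at $s=0$. These I would handle by splitting $[0,t] = [0, t/2] \cup [t/2, t]$ and using the semigroup identity $G_i(t-s) = G_i(t/2) * G_i(t/2-s)$ for $s \leq t/2$ to rewrite the first sub-integral as $G_i(t/2) * [\text{subsolution at time } t/2]$, trading the singular integral for $\|\widetilde{w}_i(t/2)\|_{L^1}$, which is controlled by the same bootstrap; on $[t/2, t]$ the bound $1/s \leq 2/t$ makes the integral harmless. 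Collecting everything yields $\|u_x(t,\cdot)\|_{L^1} \leq \kappa_B^3 \kappa\, \delta_0/C + K \delta_0^2 \sqrt{\hat{t}}$ with $K = K(\kappa_A, \kappa_B, \kappa)$. The hypothesis $\hat{t} \leq 1/(C_4 \delta_0)^2$ converts the second term into $K \delta_0/C_4$, so choosing $C$ and $C_4$ large enough drives the total strictly below $\delta_0/2$, closing the bootstrap and forcing $t^\star = \hat{t}$. The hardest part will be the systematic bookkeeping of the many source terms in $\mathcal{F}_i$: identifying the hidden divergence structure of $P(u)\mathcal{R}_2$ and $\mathcal{A}_i$, and showing that the cubic residuals are the only true obstructions, for which the semigroup-splitting argument above is indispensable.
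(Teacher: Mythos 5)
Your overall architecture --- a continuation/bootstrap on $t^\star$, the Duhamel representation with the kernels $G_i$, the initial-data term bounded by $\kappa\kappa_B\,\mbox{Tot.Var.}(u(0,\cdot))\le \kappa\kappa_B\delta_0/C$, and the conversion $\delta_0^2\sqrt{\hat t}\le \delta_0/C_4$ --- is exactly the intended argument (the paper omits the proof and points to \cite[Prop.~2.3]{BB-vv-lim-ann-math} and \cite[Prop.~3.6]{HJ}, which proceed this way). However, there is a genuine gap in your treatment of the source terms whose $L^1$ norm is only $O(\delta_0^p/s)$ near $s=0$ (the fully expanded pieces of $\mathcal{G}_i$ such as $u_{1,x}u_{1,xx}$, $u_{1,x}^3$, the term $\mathcal{H}_i\widetilde w_{i,x}$, and $\widetilde w_{i,x}X_{i,t}$). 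Your semigroup splitting does not repair this. If you apply the identity $\int_0^{t/2}G_i(t-s)*\mathcal{F}_i(s)\,ds=G_i(t/2)*\bigl[\widetilde w_i(t/2)-G_i(t/2)*\widetilde w_i(0)\bigr]$ to the \emph{full} source, you trade the singular integral for $\kappa\|\widetilde w_i(t/2)\|_{L^1}$, and the bootstrap only gives $\|\widetilde w_i(t/2)\|_{L^1}=O(1)\delta_0$; since $\kappa\ge 1$ this contribution is of size $\kappa\delta_0$, which can never be driven below $\delta_0/4$, so the strict improvement needed to close the bootstrap is lost. (This trick is the right one for the \emph{decay} estimates of Proposition \ref{prop:parabolic}, where one gains a factor $1/\sqrt{t}$ from $G_{i,x}$ and can afford the multiplicative $\kappa$; it is useless for a non-amplification estimate at the same derivative level.) If instead you apply the identity only to the cubic/singular \emph{part} of the source, the identity is simply false: the inner integral $\int_0^{t/2}G_i(t/2-s)*\mathcal{F}_i^{\rm cub}(s)\,ds$ is not the Duhamel part of any solution at time $t/2$, and its $L^1$ norm is still bounded only by $\int_0^{t/2}\kappa\,\delta_0^p s^{-1}\,ds=+\infty$. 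Convolving with $G_i(t/2)$ on the outside does not remove the divergence.

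The way out is to never produce the $1/s$ terms in the first place: work with the conservative form $v_t+(A(u)v)_x=\bigl((B(u)v)_x\bigr)_x$ for $v=u_x$, subtract the frozen-coefficient operator, and write the whole source as a single exact derivative, $\partial_x\bigl[(A(u^*)-A(u))v+(B(u)-B(u^*))v_x+(v\bullet B(u))v\bigr]$, moving exactly \emph{one} derivative onto the Green kernel. Then every factor measured in $L^1$ is controlled using only $\|v(s)\|_{L^1}\le\delta_0$ and the first-order bound $\|v_x(s)\|_{L^1}+\|v(s)\|_{L^\infty}=O(\delta_0/\sqrt{s})$ from \eqref{estimate:parabolic-1}, giving $O(\delta_0^2)+O(\delta_0^2/\sqrt{s})$; the resulting integrals $\int_0^t(t-s)^{-1/2}\,ds$ and $\int_0^t(t-s)^{-1/2}s^{-1/2}\,ds$ are both finite and yield $O(\delta_0^2\sqrt{\hat t}+\delta_0^2)=O(\delta_0/C_4)$, after which your choice of $C$ and $C_4$ closes the bootstrap. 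In short: the divergence structure must be preserved globally (your own beta-integral step applied to \emph{all} of the source), rather than restored term by term after expansion --- the mixed products like $u_{1,x}u_{2,xx}$ cannot be rewritten as exact derivatives individually, and the cubic residues $O(\delta_0^3/s)$ are an artifact of over-expanding $\mathcal{G}_i$.
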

Proof of the proposition follows from estimates \eqref{estimate:parabolic-1} with the argument as in \cite[Proposition 2.3]{BB-vv-lim-ann-math}, \cite[Proposition 3.6]{HJ}. We omit the proof here.
\section{Gradient decomposition}\label{sec:grad-decomp}
	We would like to decompose $u_x$ in terms of travelling waves of \eqref{eqn-main}. Let $u(t,x)=U(x-\si_1t)$ be a travelling wave corresponding to 1-family. Then if satisfies
	\begin{equation}
		\left\{\begin{array}{rl}
			&u_x=v\\
			&v_x=B^{-1}(A(u)-\si_1\mathbb{I})-B^{-1}(u)[v\cdot DB(u)]v,\\
			&\si_{1,x}=0.
		\end{array}\right.
	\end{equation} 
As it is done in Appendix \ref{App-1}, we note that the gradient of 1-family travelling waves can be written under the following form
\begin{equation}
	u_x=v_1\tilde{r}_1\mbox{ where }\tilde{r}_1=\begin{pmatrix}
		1\\
		s(u,v_1,\si_1)
	\end{pmatrix}
	\label{vecteur1}
\end{equation}
for some $C^2$ function $s$. Furthermore, we can calculate that $\tilde{r}_1=r_1(u)+\tilde{\psi}_2(u,v_1,\si_1)r_2$ where $\tilde{\psi}_2$ is as in Appendix \ref{App-1}. We note from \eqref{eigenv} that:
$$s(u,v_1,\si_1)=\tilde{\psi}_2(u,v_1,\si_1)+\frac{\frac{\pa g}{\pa u_1}(u)}{f^\p(u_1)-\frac{\pa g}{\pa u_2}(u)}.$$
Therefore, from \eqref{tech1} we deduce that
\begin{equation}
\begin{aligned}
&s(u,v_1,\sigma_1)=\mathcal{O}(1)v_1,\,\,s_\si(u,v_1,\sigma_1)=\mathcal{O}(1)v_1,\,\,s_{\si\si}(u,v_1,\sigma_1)=\mathcal{O}(1)v_1,\\
&s_{u\si}(u,v_1,\sigma_1)=\mathcal{O}(1)v_1.
\end{aligned}
	\label{cle1}
\end{equation}
Hence, we get
\begin{equation}
	\tilde{r}_{1,\si}(u,v_1,\sigma_1)=\mathcal{O}(1)v_1,\quad 	\tilde{r}_{1,\si\si}(u,v_1,\sigma_1)=\mathcal{O}(1)v_1,\quad 	\tilde{r}_1\bullet\tilde{r}_{1,\si}=\mathcal{O}(1)v_1.
	\label{cle2}
\end{equation}
Furthermore, we can deduce some identities. Assume that $u(t,x)=U(x-\sigma_1 t)$ is a 1 travelling wave with speed $\sigma_1\in\R$ since $U'(x)=v_1(x)\tilde{r}_1(U(x),v_1(x),\sigma_1)$ and $u$ satisfies the equation \eqref{eqn-main}, we get
\begin{equation}\label{cal-1}
	(v_1B(u)\tilde{r}_1)_x=v_1(A(u)-\si_1)\tilde{r}_1.
\end{equation}
We observe that $B(u)\tilde{r}_1(u,v_1,\sigma_1)=\al_1\tilde{r}_1+s_1r_2$ where $s_1=(\al_2(u)-\al_1(u_1))\tilde{\psi}_2(u,v_1,\si_1)$. Therefore, we have
\begin{align}
	(v_1B(u)\tilde{r}_1)_x&=(\al_1v_1\tilde{r}_1+v_1s_1r_2)_x\\
	&=\al_1v_{1,x}\tilde{r}_1+v_1^2\al_1^\p(u_1)\tilde{r}_1+\al_1v_1^2\tilde{r}_1\bullet \tilde{r}_1+\al_1v_1v_{1,x}\tilde{r}_{1,v}\\
	&\quad\quad+v_{1,x}s_1r_2+v_1^2\tilde{r}_1\bullet s_1 r_2+v_1v_{1,x}s_{1,v}r_2.
\end{align}
Comparing the first coordinate of \eqref{cal-1} 
we get $\al_1v_{1,x}+v_1^2 \al_1^\p=(\la_1-\si_1)v_1$. Hence, we have
\begin{align}
	&v_1A(u)\tilde{r}_1-v_1^2\al_1^\p\tilde{r}_1-\al_1v_1^2\tilde{r}_1\bullet \tilde{r}_1-v_1^2\tilde{r}_1\bullet s_1 r_2\nonumber\\
	&= v_{1}(\la_1-v_1 \al_1^\p)\tilde{r}_1+ v^2_{1}(\la_1-v_1  \al_1^\p-\si_1)\tilde{r}_{1,v}+\frac{1}{\al_1}v_{1}(\la_1-v_1\al_1^\p-\si_1)s_1r_2\nonumber\\
	&+\frac{1}{\al_1}v^2_{1}(\la_1-v_1  \al_1^\p-\si_1)s_{1,v}r_2.\label{rel:A-v}
\end{align}
We want to have a decomposition of $u_x=\begin{pmatrix}
		u_{1,x}\\
		u_{2,x}
	\end{pmatrix}$ in the basis $(\tilde{r}_1(u,v_1,\sigma_1),r_2)$ as 
\begin{equation}\label{decomp:u-x}
	u_x(t,x)=v_1(t,x)\tilde{r}_1(u(t,x),v_1(t,x),\sigma_1(t,x))+v_2(t,x) r_2.
\end{equation}
\begin{remark}
We point out on the fact that we will use the decomposition \eqref{decomp:u-x} only for $t\geq \hat{t}$, indeed for large time $t$ we expect that $|v_1(t,x)=u_{1,x}(t,x)|$ is sufficiently small in $L^\infty$ norm  such that $|v_1(t,x)|\leq\e$ with $\e>0$ defined in the Appendix. In particular we can apply the Central Manifold Theorem since $(u(t,x),v_1(t,x),\sigma_1(t,x))$ is in a neighbourhood of $(u^*,0,\lambda_1^*)$.
\end{remark}
\begin{remark}
We claim that the decomposition of $u_x$ corresponds to a sum of gradient of viscous travelling wave. Indeed we recall that the $2$ Lax curve associated to \eqref{hyperbo} is of Temple type since it is a straight line. In this case (see \cite{HJ}), we can observe that the gradient of the $2$ viscous travelling waves take the direction of $r_2$.
\end{remark}
\begin{remark} Under the previous decomposition, due to the form of the vector $\tilde{r}_1(u,v_1,\sigma_1)$ (see \eqref{vecteur1}), we deduce that 
\begin{equation}
\label{formulev}
v_1(t,x)=u_{1,x}(t,x)\;\;\mbox{and}\;\;v_2(t,x)=u_{2,x}(t,x)-s(u(t,x),u_{1,x}(t,x),\sigma_1(t,x))u_{1,x}(t,x). 
\end{equation}
\end{remark}
Therefore, we can have $Bu_x=\al_1v_1\tilde{r}_1+v_1s_1r_2+\al_2v_2r_2$. Differentiating w.r.t. $x$ we obtain
\begin{align*}
	(B(u)u_x)_x&=\al_1v_{1,x}\tilde{r}_1+\al_1v^2_{1}\tilde{r}_1\bullet \tilde{r}_1+\al_1v_{1}v_2r_2\bullet \tilde{r}_1+\al_1v_{1,x}v_1\tilde{r}_{1,v}\\
	&+\al_1\si_{1,x}v_1\tilde{r}_{1,\si}+(v_1s_1)_xr_2+\al_2v_{2,x}r_2+v_1^2 \al^\p_1\tilde{r}_1+v_2^2(r_2\bullet \al_2)r_2\\
	&+v_1v_2(\tilde{r}_1\bullet \al_2)r_2.
\end{align*} 
By using \eqref{rel:A-v} we obtain
\begin{align*}
	&(B(u)u_x)_x-A(u)u_x\\
	&=\al_1v_{1,x}\tilde{r}_1+\al_1v^2_{1}\tilde{r}_1\bullet \tilde{r}_1+v_1^2 \al^\p_1\tilde{r}_1-v_1A(u)\tilde{r}_1-v_2A(u)r_2+\al_1v_{1}v_2r_2\bullet \tilde{r}_1\\
	&+\al_1v_{1,x}v_1\tilde{r}_{1,v}+\al_1\si_{1,x}v_1\tilde{r}_{1,\si}+v_{1,x}s_1r_2+v^2_{1}\tilde{r}_1\bullet s_1r_2\\
	&+v_1v_2r_2\bullet s_1r_2+v_1v_{1,x}s_{1,v}r_2+v_1\si_{1,x}s_{1,\si}r_2+\al_2v_{2,x}r_2\\
	&+v_2^2(r_2\bullet \al_2)r_2+v_1v_2(\tilde{r}_1\bullet \al_2)r_2\\
	&=\al_1v_{1,x}\tilde{r}_1-\la_2v_2r_2+\al_1v_{1}v_2r_2\bullet \tilde{r}_1+\al_1v_{1,x}v_1\tilde{r}_{1,v}+\al_1\si_{1,x}v_1\tilde{r}_{1,\si}\\
	&+v_{1,x}s_1r_2+v_1v_2r_2\bullet s_1r_2+v_1v_{1,x}s_{1,v}r_2+v_1\si_{1,x}s_{1,\si}r_2+\al_2v_{2,x}r_2\\
	&- v_{1}(\la_1-v_1  \al_1^\p)\tilde{r}_1-v^2_{1}(\la_1-v_1  \al_1^\p-\si_1)\tilde{r}_{1,v}\\
	&-\frac{1}{\al_1}v_{1}(\la_1-v_1 \al_1^\p-\si_1)s_1r_2-\frac{1}{\al_1}v^2_{1}(\la_1-v_1  \al_1^\p-\si_1)s_{1,v}r_2\\
	&+v_2^2(r_2\bullet \al_2)r_2+v_1v_2(\tilde{r}_1\bullet \al_2)r_2.
\end{align*}
We simplify
\begin{equation}
\begin{aligned}
	&(B(u)u_x)_x-A(u)u_x\\
	&=(\al_1v_{1,x}-(\la_1-v_1\al_1^\p) v_1)[\tilde{r}_1+v_1\tilde{r}_{1,v}]+\frac{1}{\al_1}(\al_1v_{1,x}-(\la_1-v_1\al_1^\p)v_1)(s_1+v_1s_{1,v})r_2\\
	&+(\al_2v_{2,x}- (\la_2-v_2({r}_2\bullet \al_2))v_2)r_2+v^2_{1}\si_1\tilde{r}_{1,v}+\frac{1}{\al_1}v_{1}\si_1s_1r_2\\
	&+\frac{1}{\al_1}v^2_{1}\si_1s_{1,v}r_2+v_1\si_{1,x}s_{1,\si}r_2+\al_1\si_{1,x}v_1\tilde{r}_{1,\si} \\
	&+v_1v_2(\tilde{r}_1\bullet \al_2)r_2+v_1v_2r_2\bullet s_1r_2+\al_1v_{1}v_2r_2\bullet \tilde{r}_1.
\end{aligned}
\label{ut1}
\end{equation}
Following \cite{BB-triangular,BB-vv-lim-ann-math} we set $w_1:=\al_1v_{1,x}-(\la_1-v_1\al_1^\p)v_1$. Then we have
\begin{equation}
\begin{aligned}
	&(B(u)u_x)_x-A(u)u_x\\
	&=w_1[\tilde{r}_1+v_1\tilde{r}_{1,v}]\\
	&+(\al_2v_{2,x}- (\la_2-v_2({r}_2\bullet \al_2))v_2)r_2\\
	&+\frac{1}{\al_1}(w_1+\si_1v_1)(s_1+v_1s_{1,v})r_2+v_1\si_{1,x}s_{1,\si}r_2+v^2_{1}\si_1\tilde{r}_{1,v}\\
	&+\al_1\si_{1,x}v_1\tilde{r}_{1,\si}+v_1v_2r_2\bullet s_1r_2+\al_1v_{1}v_2r_2\bullet \tilde{r}_1+v_1v_2(\tilde{r}_1\bullet \al_2)r_2.\label{eqn-B-A-2}
\end{aligned}
\end{equation}
Now, we calculate
\begin{align*}
	&((B(u)u_x)_x-A(u)u_x)_x\\
	&=(\al_1v_{1,x}-(\la_1-v_1\al_1^\p)v_1)_x[\tilde{r}_1+v_1\tilde{r}_{1,v}]+(\al_2v_{2,x}- (\la_2-v_2({r}_2\bullet \al_2))v_2)_xr_2\\
	&+(\al_1v_{1,x}-(\la_1-v_1\al_1^\p)v_1)[v_1\tilde{r}_1\bullet\tilde{r}_1+v_2r_2\bullet\tilde{r}_1+2v_{1,x}\tilde{r}_{1,v}+\si_{1,x}\tilde{r}_{1,\si}]\\
	&+(\al_1v_{1,x}-(\la_1-v_1\al_1^\p)v_1)v_1[v_1\tilde{r}_1\bullet\tilde{r}_{1,v}+v_2r_2\bullet\tilde{r}_{1,v}+v_{1,x}\tilde{r}_{1,vv}+\si_{1,x}\tilde{r}_{1,v\si}]\\
	&+\frac{1}{\al_1}((w_1+\si_1v_1)(s_1+v_1s_{1,v}))_xr_2-\frac{v_{1} \al^\p_1}{\al_1^2}((w_1+\si_1v_1)(s_1+v_1s_{1,v}))r_2\\
	&+(v_1\si_{1,x}s_{1,\si})_xr_2+2v_{1}v_{1,x}\si_1\tilde{r}_{1,v}+v^2_{1}\si_{1,x}\tilde{r}_{1,v}\\
	&+v^2_{1}\si_1[v_1\tilde{r}_1\bullet\tilde{r}_{1,v}+v_2r_2\bullet \tilde{r}_{1,v}+v_{1,x}\tilde{r}_{1,vv}+\si_{1,x}\tilde{r}_{1,v\si}]+\al_1(\si_{1,x}v_1)_x\tilde{r}_{1,\si}+(v_{1} \al_1^\p)\si_{1,x}v_1\tilde{r}_{1,\si}\\
	&+\al_1\si_{1,x}v_1[v_1\tilde{r}_1\bullet \tilde{r}_{1,\si}+v_2r_2\bullet \tilde{r}_{1,\si}+v_{1,x}\tilde{r}_{1,v\si}+\si_{1,x}\tilde{r}_{1,\si\si}]+(v_{1,x}v_2+v_1v_{2,x})r_2\bullet s_1r_2\\
	&+v_1v_2\left[v_1\tilde{r}_1\bullet(r_2\bullet s_1)+v_2r_2\bullet(r_2\bullet s_1)+v_{1,x}r_2\bullet s_{1,v}+\si_{1,x}r_2\bullet s_{1,\si}\right]r_2\\
	&+\al_1(v_{1,x}v_2+v_1v_{2,x})r_2\bullet \tilde{r}_1+(v_1\al^\p_1)v_{1}v_2 r_2\bullet \tilde{r}_1\\
	&+\al_1v_{1}v_2\left[v_1\tilde{r}_1\bullet(r_2\bullet \tilde{r}_1)+v_2r_2\bullet (r_2\bullet \tilde{r}_1)+v_{1,x}r_2\bullet \tilde{r}_{1,v}+\si_{1,x}r_2\bullet \tilde{r}_{1,\si}\right] \\
	&+[v_{1,x}v_2+v_1v_{2,x}](\tilde{r}_1\bullet \al_2)r_2+v_1v_2((u_x\otimes \tilde{r}_1):D^2 \al_2)r_2+v_1v_2((u_x\bullet \tilde{r}_1)\bullet \al_2)r_2\\
	&+v_1v_2v_{1,x}( \tilde{r}_{1,v}\bullet \al_2)r_2+v_1v_2\si_{1,x}(\tilde{r}_{1,\si}\bullet \al_2)r_2.
\end{align*}
Similarly using \eqref{eqn-B-A-2}, we have
\begin{align*}
	u_{tx}&=v_{1,t}[\tilde{r}_1+v_1\tilde{r}_{1,v}]+v_{2,t}r_2+v_1u_t\bullet \tilde{r}_1+v_1\si_{1,t}\tilde{r}_{1,\si}\\
	&=v_{1,t}[\tilde{r}_1+v_1\tilde{r}_{1,v}]+v_{2,t}r_2+v_1\si_{1,t}\tilde{r}_{1,\si}\\
	&+(\al_1v_{1,x}-(\la_1 - v_1\al_1^\p)v_1)v_1[\tilde{r}_1\bullet \tilde{r}_1+v_1\tilde{r}_{1,v}\bullet \tilde{r}_1]\\
	&+(\al_2v_{2,x}-(\la_2-r_2\bullet \al_2 v_2)v_2)v_1r_2\bullet \tilde{r}_1\\
	&+\frac{1}{\al_1}(w_1+\si_1v_1)(s_1+v_1s_{1,v})v_1r_2\bullet \tilde{r}_1\\
	&+v^3_{1}\si_1\tilde{r}_{1,v}\bullet \tilde{r}_1+v^2_1\si_{1,x}s_{1,\si}r_2\bullet \tilde{r}_1+\al_1\si_{1,x}v^2_1\tilde{r}_{1,\si}\bullet \tilde{r}_1\\
	&+v^2_1v_2r_2\bullet s_1r_2\bullet \tilde{r}_1+\al_1v^2_{1}v_2(r_2\bullet \tilde{r}_1)\bullet \tilde{r}_1\\
	&+v^2_1v_2(\tilde{r}_1\bullet \al_2)(r_2\bullet \tilde{r}_1).
\end{align*}
Set $\tilde{\la}_1,\tilde{\la}_2$ as follows,
\begin{equation*}
	\tilde{\la}_1=\la_1-\al_1^\p v_1,\quad \tilde{\la}_2=\la_2-(r_2\bullet\al_2)v_2.
\end{equation*}
Since $u_{tx}=((B(u)u_x)_x-A(u)u_x)_x$, we can obtain
\begin{align*}
	&(v_{1,t}+(\tilde{\la}_1v_1)_x-(\al_1v_{1,x})_x)\Big[\tilde{r}_1+v_1\tilde{r}_{1,v}\Big]+(v_{2,t}+(\tilde{\la}_2v_2)_x-(\al_2v_{2,x})_x)r_2\\
	&=\frac{1}{\al_1}((w_1+\si_1v_1)(s_1+v_1s_{1,v}))_xr_2+(v_1\si_{1,x}s_{1,\si})_xr_2-\frac{v_{1} \al^\p_1}{\al_1^2}((w_1+\si_1v_1)(s_1+v_1s_{1,v}))r_2\\
	&+[w_1\sigma_{1,x}+\al_1(\si_{1,x}v_1)_x-\si_{1,t}v_1+v^2_1\alpha'_1 \sigma_{1,x} ]\tilde{r}_{1,\si}+\mathcal{E},
\end{align*} 
where $\mathcal{E}$ is defined as follows
\begin{align*}
	\mathcal{E}&=(\al_1v_{1,x}-(\la_1-v_1\al_1^\p)v_1)[v_1\tilde{r}_1\bullet\tilde{r}_1+v_2r_2\bullet\tilde{r}_1+2v_{1,x}\tilde{r}_{1,v}]\\
	&+(\al_1v_{1,x}-(\la_1-v_1\al_1^\p)v_1)v_1[v_1\tilde{r}_1\bullet\tilde{r}_{1,v}+v_2r_2\bullet\tilde{r}_{1,v}+v_{1,x}\tilde{r}_{1,vv}+\si_{1,x}\tilde{r}_{1,v\si}]\\
	&+2v_{1}v_{1,x}\si_1\tilde{r}_{1,v}\\
	&+v^2_{1}\si_{1,x}\tilde{r}_{1,v}+v^2_{1}\si_1[v_1\tilde{r}_1\bullet\tilde{r}_{1,v}+v_2r_2\bullet \tilde{r}_{1,v}+v_{1,x}\tilde{r}_{1,vv}+\si_{1,x}\tilde{r}_{1,v\si}]\\
	&+\al_1\si_{1,x}v_1[v_1\tilde{r}_1\bullet \tilde{r}_{1,\si}+v_2r_2\bullet \tilde{r}_{1,\si}+v_{1,x}\tilde{r}_{1,v\si}+\si_{1,x}\tilde{r}_{1,\si\si}]+(v_{1,x}v_2+v_1v_{2,x})r_2\bullet s_1r_2\\
	&+v_1v_2\left[v_1\tilde{r}_1\bullet(r_2\bullet s_1)+v_2r_2\bullet(r_2\bullet s_1)+v_{1,x}r_2\bullet s_{1,v}+\si_{1,x}r_2\bullet s_{1,\si}\right]r_2\\
	&+\al_1(v_{1,x}v_2+v_1v_{2,x})r_2\bullet \tilde{r}_1+(v_1\al^\p_1)v_{1}v_2 r_2\bullet \tilde{r}_1\\
	&+\al_1v_{1}v_2\left[v_1\tilde{r}_1\bullet(r_2\bullet \tilde{r}_1)+v_2r_2\bullet (r_2\bullet \tilde{r}_1)+v_{1,x}r_2\bullet \tilde{r}_{1,v}+\si_{1,x}r_2\bullet \tilde{r}_{1,\si}\right] \\
	&+[v_{1,x}v_2+v_1v_{2,x}](\tilde{r}_1\bullet \al_2)r_2+v_1v_2((u_x\otimes \tilde{r}_1):D^2 \al_2)r_2+v_1v_2((u_x\bullet \tilde{r}_1)\bullet \al_2)r_2\\
	&+v_1v_2v_{1,x}( \tilde{r}_{1,v}\bullet \al_2)r_2+v_1v_2\si_{1,x}( \tilde{r}_{1,\si}\bullet \al_2)r_2\\
	&-(\al_1v_{1,x}-(\la_1 - v_1\al_1^\p)v_1)v_1[\tilde{r}_1\bullet \tilde{r}_1+v_1\tilde{r}_{1,v}\bullet \tilde{r}_1]\\
	&-(\al_2v_{2,x}-(\la_2-r_2\bullet \al_2 v_2)v_2)v_1r_2\bullet \tilde{r}_1\\
	&-\frac{1}{\al_1}(w_1+\si_1v_1)(s_1+v_1s_{1,v})v_1r_2\bullet \tilde{r}_1\\
	&-v^3_{1}\si_1\tilde{r}_{1,v}\bullet \tilde{r}_1-v^2_1\si_{1,x}s_{1,\si}r_2\bullet \tilde{r}_1-\al_1\si_{1,x}v^2_1\tilde{r}_{1,\si}\bullet \tilde{r}_1\\
	&-v^2_1v_2r_2\bullet s_1r_2\bullet \tilde{r}_1-\al_1v^2_{1}v_2(r_2\bullet \tilde{r}_1)\bullet \tilde{r}_1\\
	&-v^2_1v_2(\tilde{r}_1\bullet \al_2)(r_2\bullet \tilde{r}_1).
\end{align*}
We have in fact obtained
\begin{align*}
	\mathcal{E}&=w_1[v_2r_2\bullet\tilde{r}_1+2v_{1,x}\tilde{r}_{1,v}]\\
	&+w_1 v_1[v_1\tilde{r}_1\bullet\tilde{r}_{1,v}+v_2r_2\bullet\tilde{r}_{1,v}+v_{1,x}\tilde{r}_{1,vv}+\si_{1,x}\tilde{r}_{1,v\si}]\\
	&+2v_{1}v_{1,x}\si_1\tilde{r}_{1,v}\\
	&+v^2_{1}\si_{1,x}\tilde{r}_{1,v}+v^2_{1}\si_1[v_1\tilde{r}_1\bullet\tilde{r}_{1,v}+v_2r_2\bullet \tilde{r}_{1,v}+v_{1,x}\tilde{r}_{1,vv}+\si_{1,x}\tilde{r}_{1,v\si}]\\
	&+\al_1\si_{1,x}v_1[v_1\tilde{r}_1\bullet \tilde{r}_{1,\si}+v_2r_2\bullet \tilde{r}_{1,\si}+v_{1,x}\tilde{r}_{1,v\si}+\si_{1,x}\tilde{r}_{1,\si\si}]+(v_{1,x}v_2+v_1v_{2,x})r_2\bullet s_1r_2\\
	&+v_1v_2\left[v_1\tilde{r}_1\bullet(r_2\bullet s_1)+v_2r_2\bullet(r_2\bullet s_1)+v_{1,x}r_2\bullet s_{1,v}+\si_{1,x}r_2\bullet s_{1,\si}\right]r_2\\
	&+\al_1(v_{1,x}v_2+v_1v_{2,x})r_2\bullet \tilde{r}_1+(v_1\al^\p_1)v_{1}v_2 r_2\bullet \tilde{r}_1\\
	&+\al_1v_{1}v_2\left[v_1\tilde{r}_1\bullet(r_2\bullet \tilde{r}_1)+v_2r_2\bullet (r_2\bullet \tilde{r}_1)+v_{1,x}r_2\bullet \tilde{r}_{1,v}+\si_{1,x}r_2\bullet \tilde{r}_{1,\si}\right] \\
	&+[v_{1,x}v_2+v_1v_{2,x}](\tilde{r}_1\bullet \al_2)r_2+v_1v_2((u_x\otimes \tilde{r}_1):D^2 \al_2)r_2+v_1v_2((u_x\bullet \tilde{r}_1)\bullet \al_2)r_2\\
	&+v_1v_2v_{1,x}( \tilde{r}_{1,v}\bullet \al_2)r_2+v_1v_2\si_{1,x}( \tilde{r}_{1,\si}\bullet \al_2)r_2\\
	&-w_1 v_1^2\tilde{r}_{1,v}\bullet \tilde{r}_1\\
	&-(\al_2v_{2,x}-(\la_2-r_2\bullet \al_2 v_2)v_2)v_1r_2\bullet \tilde{r}_1\\
	&-\frac{1}{\al_1}(w_1+\si_1v_1)(s_1+v_1s_{1,v})v_1r_2\bullet \tilde{r}_1\\
	&-v^3_{1}\si_1\tilde{r}_{1,v}\bullet \tilde{r}_1-v^2_1\si_{1,x}s_{1,\si}r_2\bullet \tilde{r}_1-\al_1\si_{1,x}v^2_1\tilde{r}_{1,\si}\bullet \tilde{r}_1\\
	&-v^2_1v_2r_2\bullet s_1r_2\bullet \tilde{r}_1-\al_1v^2_{1}v_2(r_2\bullet \tilde{r}_1)\bullet \tilde{r}_1\\
	&-v^2_1v_2(\tilde{r}_1\bullet \al_2)(r_2\bullet \tilde{r}_1).
\end{align*}
It gives then
\begin{align}
		\mathcal{E}&=(2v_{1,x}(w_1+\sigma_1 v_1)+\sigma_{1,x}v_1^2)\tilde{r}_{1,v}+v_1^2(w_1+\sigma_1 v_1)(\tilde{r}_1\bullet\tilde{r}_{1,v}-\tilde{r}_{1,v}\bullet \tilde{r}_1)\nonumber \\
	&+v_1v_{1,x}(w_1+\sigma_1 v_1)\tilde{r}_{1,vv}+v_{1}\sigma_{1,x}(w_1+\alpha_1v_{1,x}+v_1\sigma_1)\tilde{r}_{1,v\sigma}\nonumber \\
	&+
	\alpha_1\sigma_{1,x} v_1^2(\tilde{r}_1\bullet\tilde{r}_{1,\sigma}-\tilde{r}_{1,\sigma}\bullet \tilde{r}_1)+\alpha_1v_1(\sigma_{1,x})^2\tilde{r}_{1,\sigma\sigma}\nonumber \\
	&+\biggl[
	w_1v_2+\al_1(v_{1,x}v_2+v_1v_{2,x})+(v_1\al^\p_1)v_{1}v_2-v^2_1\si_{1,x}s_{1,\si}-(\al_2v_{2,x}-(\la_2-r_2\bullet \al_2 v_2)v_2)v_1\nonumber\\
	&\hspace{2cm}-\frac{1}{\al_1}(w_1+\si_1v_1)(s_1+v_1s_{1,v})v_1-v^2_1v_2(\tilde{r}_1\bullet \al_2)\biggl]   r_2\bullet \tilde{r}_1 \nonumber \\
	&+(w_1 v_1v_2+v^2_{1}\si_1v_2)   r_2\bullet\tilde{r}_{1,v} +2\al_1\si_{1,x}v_1v_2r_2\bullet \tilde{r}_{1,\si}+(v_{1,x}v_2+v_1v_{2,x})r_2\bullet s_1r_2\nonumber \\
	&+v_1v_2\left[v_1\tilde{r}_1\bullet(r_2\bullet s_1)+v_2r_2\bullet(r_2\bullet s_1)+v_{1,x}r_2\bullet s_{1,v}+\si_{1,x}r_2\bullet s_{1,\si}\right]r_2\nonumber \\
	&+\al_1v_{1}v_2\left[v_1\tilde{r}_1\bullet(r_2\bullet \tilde{r}_1)+v_2r_2\bullet (r_2\bullet \tilde{r}_1)+v_{1,x}r_2\bullet \tilde{r}_{1,v}\right] \nonumber \\
	&+[v_{1,x}v_2+v_1v_{2,x}](\tilde{r}_1\bullet \al_2)r_2+v_1v_2((u_x\otimes \tilde{r}_1):D^2 \al_2)r_2+v_1v_2((u_x\bullet \tilde{r}_1)\bullet \al_2)r_2\nonumber \\
	&+v_1v_2v_{1,x}( \tilde{r}_{1,v}\bullet \al_2)r_2+v_1v_2\si_{1,x}( \tilde{r}_{1,\si}\bullet \al_2)r_2\nonumber \\
	&-v^2_1v_2r_2\bullet s_1r_2\bullet \tilde{r}_1-\al_1v^2_{1}v_2(r_2\bullet \tilde{r}_1)\bullet \tilde{r}_1.\label{defE}
\end{align}
Observe that $\langle l_1,\xi\bullet \tilde{r}_1\rangle=0$ for all $\xi\in\R^2$ since $(\xi\bullet \tilde{r}_1)_1=0$. Then we have $\langle l_1,\mathcal{E}\rangle=0$. Hence, taking scalar product with $l_1$ we obtain
\begin{equation}
	v_{1,t}+(\tilde{\la}_1v_1)_x-(\al_1v_{1,x})_x=0.
	\label{v1}
\end{equation}
Similarly, multiplying by ${l}_2$ we get
\begin{equation}
\begin{aligned}
	&v_{2,t}+(\tilde{\la}_2v_{2})_x-(\al_2v_{2,x})_x\\
	&=\frac{1}{\al_1}((w_1+\si_1v_1)(s_1+v_1s_{1,v}))_x +(v_1\si_{1,x}s_{1,\si})_x-\frac{v_{1} \al^\p_1}{\al_1^2}((w_1+\si_1v_1)(s_1+v_1s_{1,v}))\\
	&+[w_1\sigma_{1,x}+\al_1(\si_{1,x}v_1)_x-\si_{1,t}v_1+v^2_1\alpha'_1 \sigma_{1,x} ](l_2\cdot \tilde{r}_{1,\si})+l_2\cdot \mathcal{E}.\label{eqn-v2-1}
\end{aligned}
\end{equation}
The previous computation are satisfied for any parameter $\sigma_1$, it is then important at this level to fix our choice of $\sigma_1$. As in \cite{BB-triangular} since we consider a solution such that $\lim_{x\rightarrow-\infty}u(t,x)=u^*$, we set
\begin{equation}
\begin{aligned}
&\tilde{w}_1=w_1+\lambda_1(u^*)v_1,\\
&\si_1=\la_1(u^*)+\theta(-\frac{\tilde{w}_1}{v_1}),
\end{aligned}
	\label{formulesigma}
\end{equation}
with $\theta:\R\rightarrow [-2\delta_1,2\delta_1]$ an odd function such that
$$
\theta(x)=\begin{cases}
\begin{aligned}
&x\;\;\;\mbox{for}&|x|\leq \delta_1,\\
&\mbox{smooth connection}&\delta_1\leq|x|\leq 3\delta_1,\\
&0&|x|\geq 3\delta_1.
\end{aligned}
\end{cases}
$$
We take here a cubic interpolation satisfying:
$$|\theta(x)|\leq 2\delta_1,\;|\theta'(x)|\leq 1,\;|\theta''(x)|\leq\frac{1}{\delta_1}.$$
\begin{remark}
We can note that $\delta_1$ is chosen sufficiently small such that we can define $\tilde{r}_1(u,v_1,\sigma_1)$, indeed from the Central Manifold Theorem we have seen that $\sigma_1$ lives in a neighbourhood of $0$.
\end{remark}
\begin{remark}
The previous choice on $\sigma_1$ can be explained as follows. Assume that $u(t,x)=U(x-\sigma t)$ is a 1 travelling wave solution of \eqref{eqn-main} with $\sigma$ close from $\lambda_1$, then from the Appendix we know that we have $U'(x)=v_1(x)\tilde{r}_1(U(x),v_1(x),\sigma)$, in addition we have:
$ u_t(t,x)=-\sigma  u_x(t,x)=-\sigma v_1(x-\sigma t)\tilde{r}_1(U(x-\sigma t),v_1(x-\sigma t),\sigma)$ but from \eqref{eqn-B-A-2} we deduce also that:
\begin{equation*}
	u_t=w_1\tilde{r}_1+\frac{1}{\al_1}(w_1+\si_1v_1)(s_1+v_1s_{1,v})r_2+(w_1+v_{1}\si_1)v_1\tilde{r}_{1,v}.
\end{equation*}
It is then natural to expect that $\sigma=-\frac{w_1}{v_1}$ when $\sigma$ is close from $\lambda_1(u^*)$ which is exactly what expressed the truncated expression \eqref{formulesigma} when $|\frac{\tilde{w}_1}{v_1}|\leq \delta_1$. In the sequel in order to simplify the notation, we will assume that $\lambda_1(u^*)=0$ such that $\sigma_1=\theta(-\frac{w_1}{v_1})$.
\end{remark}
In the sequel since we will prove that the solution $u(t,x)$ satisfies $\|v_1(t,\cdot)\|_{L^1}\leq 2 \delta_0$ for any $t\geq 0$ and $\| \pa_x v_1(t,x)\|_{L^1}\leq 2\delta_0^2$, we choose $\delta_1$ such that:
$$
\begin{aligned}
&\|{1}/{\alpha_1}\|_{L^\infty[-2\delta_0,2\delta_0]}
\max{(|\lambda_1(u(t,x))|+|\alpha'_1(u_1(t,x))||v_1(t,x)|)}_{(t,x)}\\
&\hspace{2cm}\leq \| {1}/{\alpha_1}\|_{L^\infty[-2\delta_0,2\delta_0]} (2\|\lambda'_1\|_{L^\infty}\delta_0+2\|\alpha'_1\|_{L^\infty}\delta_0^2)\leq\frac{1}{5}\delta_1.
\end{aligned}
$$
We deduce then that if $w_1+\sigma_1 v_1\ne 0$ then we must have $|\frac{w_1}{v_1}|\geq\delta_1$ which implies in particular that
\begin{equation}\label{5.10}
|\frac{v_{1,x}}{v_1}|\geq\frac{4}{5}\delta_1.
\end{equation}
It yields that when the ratio $|\frac{w_1}{v_1}|$ is large, we can bound $|v_1|$ by $|v_{1,x}|$. Similarly we can choose $\delta_1$ such that when $|\frac{w_1}{v_1}|\leq \delta_1$ then:
\begin{equation}
|\frac{v_{1,x}}{v_1}|\leq\frac{6}{5}\delta_1.
\end{equation}
As previously it implies that when the ratio  $|\frac{w_1}{v_1}|$ is small then we can control $|v_{1,x}|$ by $|v_1|$.

We wish now to define the equation satisfied by $w_1$. Since $w_1=\al_1v_{1,x}-(\la_1-v_1\al_1^\p)v_1$ we calculate
\begin{equation*}
	w_{1,x}=(\al_1v_{1,x}-(\la_1-v_1\al_1^\p)v_1)_x=v_{1,t}.
\end{equation*}
Then we have
\begin{align*}
	w_{1,t}&=\al_1 v_{1,tx}+u_{1,t}\al_1^\p v_{1,x}-(\la_1-v_1\al_1^\p)v_{1,t}-(\la_1^\p-v_1\al_1^{\p\p})u_{1,t}v_{1}+v_{1,t}\al_1^\p v_1\\
	&=(\al_1w_{1,x}-(\la_1-v_1\al^\p_1)w_1)_x-\al_1^\p u_{1,x}w_{1,x}+u_{1,t}\al_1^\p v_{1,x}\\
	&+(\la_1^\p-v_1\al_1^{\p\p})u_{1,x}w_{1}-(\la_1^\p-v_1\al_1^{\p\p})u_{1,t}v_{1}+v_{1,t}\al_1^\p v_1-v_{1,x}\al_1^\p w_1.
\end{align*}
Observe that $u_{1,t}=w_1$ and $u_{1,x}=v_1$. Hence,
\begin{align*}
	w_{1,t}
	&=(\al_1w_{1,x}-(\la_1-v_1\al^\p_1)w_1)_x-\al_1^\p v_1w_{1,x}+w_1\al_1^\p v_{1,x}\\
	&+(\la_1^\p-v_1\al_1^{\p\p})v_1w_{1}-(\la_1^\p-v_1\al_1^{\p\p})w_1v_{1}+w_{1,x}\al_1^\p v_1-v_{1,x}\al_1^\p w_1\\
	&=(\al_1w_{1,x}-(\la_1-v_1\al^\p_1)w_1)_x.
\end{align*}
We have obtained in particular that
\begin{equation}
	w_{1,t}+(\tilde{\la}_1w_1)_x-(\al_1 w_{1,x})_x=0.
	\label{w1}
\end{equation}
Let us simplify now the right hand side of \eqref{eqn-v2-1}. We denote in the sequel by $\theta^{(k)}_1$ the value $\theta^{(k)}\left(-\frac{w_1}{v_1}\right)$ with $k\in\mathbb{N}$. Now we can calculate
\begin{align*}
	&\al_1(\si_{1,x}v_1)_x-\si_{1,t}v_1\\
	&=-\al_1\theta_1^\p\left(w_{1,x}-\frac{w_1}{v_1}v_{1,x}\right)_x+\al_1\theta_1^{\p\p}v_1\left(\frac{w_1}{v_1}\right)_x^2+\theta_1^\p \left(w_{1,t}-\frac{w_1}{v_1}v_{1,t}\right)\\
	&=\theta_1^\p \left(w_{1,t}-\al_1w_{1,xx}-\frac{w_1}{v_1}(v_{1,t}-\al_1v_{1,xx})\right)+\al_1\theta_1^\p\left(\frac{w_1}{v_1}\right)_xv_{1,x}+\al_1\theta_1^{\p\p}v_1\left(\frac{w_1}{v_1}\right)_x^2.
\end{align*}
Note that
\begin{align*}
	w_{1,t}-\al_1w_{1,xx}&=-\tilde{\la}_{1,x}w_1-\tilde{\la}_1w_{1,x}+\al_{1}^\p v_1w_{1,x},\\
	v_{1,t}-\al_1v_{1,xx}&=-\tilde{\la}_{1,x}v_1-\tilde{\la}_1v_{1,x}+\al_{1}^\p v_1v_{1,x}.
\end{align*}
This implies,
\begin{align*}
	&\left(w_{1,t}-\al_1w_{1,xx}-\frac{w_1}{v_1}(v_{1,t}-\al_1v_{1,xx})\right)\\
	&=-\tilde{\la}_1v_1\left(\frac{w_1}{v_1}\right)_x+\al_{1}^\p (v_1w_{1,x}-w_1v_{1,x}).
\end{align*}
It gives in particular
$$
\begin{aligned}
&\al_1(\si_{1,x}v_1)_x-\si_{1,t}v_1=-\theta'_1 \tilde{\la}_1v_1\left(\frac{w_1}{v_1}\right)_x+\al_{1}^\p \theta'_1 v_1^2 \left(\frac{w_1}{v_1}\right)_x+\al_1\theta_1^\p\left(\frac{w_1}{v_1}\right)_xv_{1,x}+\al_1\theta_1^{\p\p}v_1\left(\frac{w_1}{v_1}\right)_x^2.
\end{aligned}
$$
It yields using the fact that $w_1=\alpha_1 v_{1,x}-\tilde{\lambda}_1 v_1$
$$
\begin{aligned}
&w_1\sigma_{1,x}+\al_1(\si_{1,x}v_1)_x-\si_{1,t}v_1+\alpha'_1 v_1^2\sigma_{1,x}\\
&=-\theta'_1 \tilde{\la}_1v_1\left(\frac{w_1}{v_1}\right)_x+\al_{1}^\p \theta'_1 v_1^2 \left(\frac{w_1}{v_1}\right)_x+\al_1\theta_1^\p\left(\frac{w_1}{v_1}\right)_xv_{1,x}+\al_1\theta_1^{\p\p}v_1\left(\frac{w_1}{v_1}\right)_x^2\\
&-\alpha_1 v_{1,x}\theta_1^\p\left(\frac{w_1}{v_1}\right)_x+\tilde{\lambda}_1 v_1 \theta_1^\p\left(\frac{w_1}{v_1}\right)_x-\alpha'_1 v_1^2\theta_1^\p\left(\frac{w_1}{v_1}\right)_x\\
&=\al_1\theta_1^{\p\p}v_1\left(\frac{w_1}{v_1}\right)_x^2.
\end{aligned}
$$
Then we have from \eqref{eqn-v2-1}
\begin{equation}
\begin{aligned}
&v_{2,t}+(\tilde{\la}_2v_{2})_x-(\al_2v_{2,x})_x\\
	&=\frac{1}{\al_1}((w_1+\si_1v_1)(s_1+v_1s_{1,v}))_x +(v_1\si_{1,x}s_{1,\si})_x\\
	&+\al_1\theta_1^{\p\p}v_1\left(\frac{w_1}{v_1}\right)_x^2 l_2\cdot \tilde{r}_{1,\si}-\frac{v_{1} \al^\p_1}{\al_1^2}((w_1+\si_1v_1)(s_1+v_1s_{1,v}))+l_2\cdot \mathcal{E}.
\end{aligned} 
\label{eqv2}
\end{equation}
In the sequel we will note by:
\begin{equation}
\begin{aligned}
\phi_2=&\frac{1}{\al_1}((w_1+\si_1v_1)(s_1+v_1s_{1,v}))_x +(v_1\si_{1,x}s_{1,\si})_x
	+\al_1\theta_1^{\p\p}v_1\left(\frac{w_1}{v_1}\right)_x^2 l_2\cdot \tilde{r}_{1,\si}\\
	&-\frac{v_{1} \al^\p_1}{\al_1^2}((w_1+\si_1v_1)(s_1+v_1s_{1,v}))+l_2\cdot \mathcal{E}.
	\end{aligned}
	\label{eqphi2}
\end{equation}
\section{Form of the remainder terms}\label{section:rem}
In this section, we wish to prove that a solution $u(t,x)$ of \eqref{eqn-main} satisfies for any $t> 0$
\begin{equation}
\|u_{1,x}(t,\cdot)\|_{L^1}+\|u_{2,x}(t,\cdot)\|_{L^1}\leq \delta_0,
\label{cru}
\end{equation}
for $\delta_0>0$ fixed and 
provided that $\mbox{TV}(u(0,\cdot))$ is sufficiently small in terms of $\delta_0$. 
Let us denote by $\kappa_1$ the following constant
\begin{equation}
\kappa_1=\max\left(1,\sup_{|u-u^*|\leq\e, |v_1|\leq \e,|\sigma_1|\leq 2\delta_1}s(u,v_1,\sigma_ 1)\right),
\end{equation}
with $\e>0$ defined as in the Appendix. In addition we can take again $\mbox{Tot.Var.}(u(0,\cdot))$ sufficiently small such that the solution $(u(t,x),v_1(t,x),\sigma_1(t,x))$ satisfies all along the time $u(t,x)\in B(u^*,\e)$ and $v_{1}(t,x)=u_{1,x}(t,x)\in B(0,\e)$ for $t\geq \hat{t}$.
Using the Proposition \ref{prop2.3}, we know that \eqref{cru} is satisfied for any $t\in[0,\hat{t}]$ provided that ${Tot.Var.}(u(0,\cdot))\leq\frac{\delta_0}{C}$ with $C>0$ sufficiently large depending on $\kappa,\kappa_A$ and $\kappa_B$. 

Our main goal now is to prove similar result when $t\geq \hat{t}$, to do this as in \cite{BB-vv-lim-ann-math} it will require hyperbolic type estimates relying to the interactions of wave based on the previous decomposition of $u_x$ as a sum of travelling wave. To do this we start by giving regularity estimates on $v_1$, $v_2$ and $w_1$.
\begin{lemma}  Let $T>\hat{t}$. Assume that the solution $u$ of  \eqref{eqn-main} satisfies on $[0,T]$
$$\|u_{1,x}(t,\cdot)\|_{L^1}+\|u_{2,x}(t,\cdot)\|_{L^1}\leq \delta_0,$$
then for all $t\in[\hat{t},T]$ we have
\begin{equation}\label{estimate:parabolic-2}
	\begin{aligned}
	&\sum_{i=1}^2\norm{v_i(t,\cdot)}_{L^1}+\|w_1(t,\cdot)\|_{L^1}=O(1)\delta_0\\
		&\sum_{i=1}^2\norm{v_{i,x}(t,\cdot)}_{L^1}+\|w_{1,x}(t,\cdot)\|_{L^1}, \sum_{i=1}^2\norm{v_{i}(t,\cdot)}_{L^\infty}+\|w_{1}(t,\cdot)\|_{L^\infty} =O(1)\delta_0^2\\
		&\sum_{i=1}^2 \norm{v_{i,xx}(t,\cdot)}_{L^1}+\|w_{1,xx}(t,\cdot)\|_{L^1},\sum_{i=1}^2 \norm{v_{i,x}(t,\cdot)}_{L^\infty}+\|w_{1,x}(t,\cdot)\|_{L^\infty} =O(1)\delta_0^3
		,\\
		&\sum_{i=1}^2\norm{v_{i,xxx}(t,\cdot)}_{L^1}, \sum_{i=1}^2\norm{v_{i,xx}(t,\cdot)}_{L^\infty}=O(1)\delta_0^4.
		\end{aligned}
		\end{equation}
\label{lemme4.1}
\end{lemma}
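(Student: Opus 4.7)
The strategy is to deduce the bounds on $v_1, v_2, w_1$ and their derivatives from the parabolic estimates of Corollary \ref{coro2.2} via the explicit formulas
\begin{equation*}
v_1 = u_{1,x}, \qquad v_2 = u_{2,x} - s(u, u_{1,x}, \sigma_1)\, u_{1,x}, \qquad w_1 = \alpha_1(u_1)\, u_{1,xx} - \tilde{\lambda}_1\, u_{1,x},
\end{equation*}
together with the key estimates $s = \mathcal{O}(v_1)$, $s_\sigma = \mathcal{O}(v_1)$, $s_{\sigma\sigma} = \mathcal{O}(v_1)$, $s_{u\sigma} = \mathcal{O}(v_1)$ recorded in \eqref{cle1}. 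For the $L^1$ bounds, $\|v_1\|_{L^1} = \|u_{1,x}\|_{L^1} \leq \delta_0$ is immediate; the bound $|s| = \mathcal{O}(v_1)$ yields $\|v_2\|_{L^1} \leq \|u_{2,x}\|_{L^1} + C\|u_{1,x}\|_{L^\infty}\|u_{1,x}\|_{L^1} = \mathcal{O}(\delta_0)$ by Corollary \ref{coro2.2}. For $w_1$, the normalization $\lambda_1(u^*) = 0$ adopted at the end of Section \ref{sec:grad-decomp} gives $\|\lambda_1\|_{L^\infty} \leq C\|u - u^*\|_{L^\infty} \leq C\|u_x\|_{L^1} = \mathcal{O}(\delta_0)$, so that
\begin{equation*}
\|w_1\|_{L^1} \leq C\|u_{1,xx}\|_{L^1} + C\|\lambda_1\|_{L^\infty}\|u_{1,x}\|_{L^1} + C\|u_{1,x}\|_{L^\infty}\|u_{1,x}\|_{L^1} = \mathcal{O}(\delta_0^2) = \mathcal{O}(\delta_0).
\end{equation*}

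Since $v_{1,x}^{(k)} = u_{1,x}^{(k+1)}$, the bounds on higher derivatives of $v_1$ are read off Corollary \ref{coro2.2} directly. For the derivatives of $v_2$ and $w_1$ I would apply the product and chain rules and expand into polynomial expressions in $u_{i,x}, u_{i,xx}, u_{i,xxx}, u_{i,xxxx}$, each of whose $L^1$ norms is controlled by Corollary \ref{coro2.2}; for instance $w_{1,x} = \alpha_1 u_{1,xxx} + \alpha_1' u_{1,x} u_{1,xx} - \lambda_1 u_{1,xx} - \lambda_1' u_{1,x}^2 + \ldots$ is bounded by $\mathcal{O}(\delta_0^3)$, which in particular is $\mathcal{O}(\delta_0^2)$. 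The $L^\infty$ bounds on $v_i, v_{i,x}, v_{i,xx}$ and $w_1, w_{1,x}$ then follow from the corresponding $L^1$ bounds on the next derivative via $\|f\|_{L^\infty} \leq \|f_x\|_{L^1}$ for functions vanishing at infinity.

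The main technical obstacle is the appearance of $\sigma_1 = \theta(-w_1/v_1)$ in the chain-rule expansions for derivatives of $v_2$: a naive computation yields $\sigma_{1,x} = -\theta'_1\bigl(w_{1,x}/v_1 - w_1 v_{1,x}/v_1^2\bigr)$, which contains apparent singularities in $1/v_1$. These are removed by two mechanisms. First, $\sigma_{1,x}$ enters the $v_2$-derivatives only through compositions of the form $s_\sigma \sigma_{1,x}$ with $s_\sigma = \mathcal{O}(v_1)$, giving algebraic cancellations such as $v_1 \sigma_{1,x} = -\theta'_1(w_{1,x} - w_1 v_{1,x}/v_1)$. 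Second, $\theta'_1$ is supported in $\{|w_1/v_1|\leq 3\delta_1\}$, where $|w_1/v_1|$ is bounded, and on this set the relation $w_1 = \alpha_1 v_{1,x} - \tilde{\lambda}_1 v_1$ together with $|\tilde{\lambda}_1| = \mathcal{O}(\delta_0)$ forces $|v_{1,x}/v_1|$ to be bounded, as already observed in \eqref{5.10} and the discussion preceding it. Analogous cancellations handle the higher derivatives of $\sigma_1$ via $s_{\sigma\sigma} = \mathcal{O}(v_1)$ and $s_{u\sigma} = \mathcal{O}(v_1)$. With these observations, all remaining calculations are routine bookkeeping following the template of the proof of Proposition \ref{prop:parabolic}.
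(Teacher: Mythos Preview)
Your proposal is correct and follows essentially the same approach as the paper: the paper's proof is the single sentence ``It suffices to apply the Corollary \ref{coro2.2} and the formula \eqref{formulev},'' and your argument is exactly a fleshed-out version of this, combining the explicit formulas $v_1=u_{1,x}$, $v_2=u_{2,x}-s\,u_{1,x}$, $w_1=\alpha_1 u_{1,xx}-\tilde\lambda_1 u_{1,x}$ with the parabolic bounds on $u_{i,x},\dots,u_{i,xxxx}$ and the cancellations $s,\,s_\sigma=\mathcal{O}(v_1)$. Your careful handling of the $\sigma_{1,x}$ singularities via the support of $\theta'_1$ and the bound $|v_{1,x}/v_1|=\mathcal{O}(1)$ on that support is exactly the mechanism the paper uses elsewhere (e.g.\ in Section~\ref{section:rem}), and is implicitly assumed here.
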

\begin{proof} It suffices to apply the Corollary \ref{coro2.2} and the formula \eqref{formulev}.
\end{proof}
We have seen that the solution $u$ satisfies \eqref{cru} for $t\in[0,\hat{t}]$ and we know that the solution can be extended as long as its total variation remains small. Let us denote the time $T$ satisfying,
\begin{equation}
T=\sup \left\{t,\int^{t}_{\hat{t}}|\phi_2(s,x)| ds dx\leq\frac{\delta_0}{5}\right\}.
\label{10.3}
\end{equation}
We want to prove that $T=+\infty$. We proceed then by contradiction and assume now that $T<+\infty$.
 To do this we observe by applying maximum principle that for any $t\in[\hat{t},T]$ and the formula \eqref{formulev}  we have
\begin{equation}
\begin{aligned}
\|v_1(t,\cdot)\|_{L^1}&=\|u_{1,x}(t,\cdot)\|_{L^1}\leq \|u_{1,x}(\hat{t},\cdot)\|_{L^1}\\\
\|v_2(t,\cdot)\|_{L^1}&\leq \|v_{2}(\hat{t},\cdot)\|_{L^1}+\int^t_{\hat{t}}\int_{\R}|\phi_2(s,x)|ds dx,\\
&\leq (\kappa_1\|u_{1,x}(\hat{t},\cdot)\|_{L^1}+ \|u_{2,x}(\hat{t},\cdot)\|_{L^1})+\frac{\delta_0}{5}.
\end{aligned}
\end{equation}
It implies now that for $t\in[\hat{t},T]$, we get using \eqref{formulev} and \eqref{cru}
\begin{align*}
& \|u_{1,x}(t,\cdot)\|_{L^1}+\|u_{2,x}(t,\cdot)\|_{L^1}\\
&\leq (1+\kappa_1) \|u_{1,x}(\hat{t},\cdot)\|_{L^1}+ (\kappa_1\|u_{1,x}(\hat{t},\cdot)\|_{L^1}+ \|u_{2,x}(\hat{t},\cdot)\|_{L^1})+\frac{\delta_0}{5}\\
&\leq\delta_0.
 \end{align*}
 It means that we can apply Corollary \ref{coro2.2} and Lemma \ref{lemme4.1} for any $t\in[\hat{t},T]$ and in particular we control the $L^\infty$ norm of $v_{i,x}$, $v_{i,xx}$ and $w_1, w_{1,x}$.
 
 We wish now to estimate $\int^t_{\hat{t}}\int_{\R}|\phi_2(s,x)|ds dx$ and to prove that
 \begin{equation}
 \int^t_{\hat{t}}\int_{\R}|\phi_2(s,x)|ds dx=O(1)\delta_0^2
 \label{10.5}
 \end{equation}
 which would contradict the fact that $T$ is a supremum. From \eqref{defE}, we observe that each of the terms of $\mathcal{E}$ can be estimated by the following expression
\begin{enumerate}
	\item wrong speed
	\begin{equation}\label{eqn:wrong-speed}
		O(1)v_{1,x}(w_1+\si_1v_1),
	\end{equation}
    \item change in strength
    \begin{equation}\label{eqn:change-strength}
    	O(1)w_1v_{1,x}-w_{1,x}v_1,
    \end{equation}
\item change in speed
\begin{equation}\label{eqn:change-speed}
	O(1)v_1^2\left(\frac{w_1}{v_1}\right)_x^2\chi_{\abs{w_1/v_1}\leq3\de_1},
\end{equation}
\item transversal interaction
\begin{equation}\label{eqn:transversal}
	O(1)v_{1}v_2,\quad v_{1,x}v_2, \quad v_{2,x}v_1.
\end{equation}
\end{enumerate}
For completeness we are going to estimate some of the terms of $\langle{\cal E},l_2\rangle$, essentially the more complicated. We will use in a crucial way the estimate \eqref{cle1} and \eqref{cle2}. We note that some terms are completely similar to those appearing in \cite{BB-triangular} and some others one are new due to the fact that $\alpha_1$, $\alpha_2$ are not constant and also because $s_1$ is not necessary zero since $\alpha_1\ne \alpha_2$.
\begin{itemize}
\item We have then using the fact that $w_1+\sigma_1 v_1\ne 0$ only if $|\frac{w_1}{v_1}|\geq \delta_1$ which implies that $|v_1|=O(1)|v_{1,x}|$ then it gives
$$v_1^2(w_1+\sigma_1 v_1)\langle(\tilde{r}_1\bullet\tilde{r}_{1,v}-\tilde{r}_{1,v}\bullet \tilde{r}_1) ,l_2\rangle=O(1) v_1 v_{1,x}(w_1+\sigma_1 v_1).$$
\item  Using now the fact that $\theta'_1\ne 0$ if $|\frac{w_1}{v_1}|\leq 3\delta_1$ which implies that $v_{1,x}=0(1)v_1$ we have
$$
\begin{aligned}
&v_{1}\sigma_{1,x}(w_1+\alpha_1v_{1,x}+v_1\sigma_1)\langle \tilde{r}_{1,v\sigma},l_2\rangle\\
&=-\theta'_1(w_{1,x}v_1-v_{1,x}w_1)(\frac{w_1}{v_1}+\alpha_1\frac{v_{1,x}}{v_1}+\sigma_1) )\langle \tilde{r}_{1,v\sigma},l_2\rangle\\
&=O(1)(w_{1,x}v_1-v_{1,x}w_1).
\end{aligned}
$$
\item The term in $\langle\tilde{r}_{1,\sigma\sigma},l_2 \rangle$
satisfies since $\tilde{r}_{1,\sigma\sigma}=O(1)v_1$,
$$\alpha_1v_1(\sigma_{1,x})^2\langle\tilde{r}_{1,\sigma\sigma},l_2 \rangle=O(1)v_1^2\left(\frac{w_1}{v_1}\right)_x^2\chi_{\abs{w_1/v_1}\leq3\de_1}.$$
\item The term in $\langle r_2\bullet \tilde{r}_{1,\si},l_2\rangle$ verifies since $r_2\bullet \tilde{r}_{1,\si}=O(1)v_1$,
$$2\al_1\si_{1,x}v_1v_2\langle r_2\bullet \tilde{r}_{1,\si},l_2\rangle=-2\alpha_1\theta'_1 v_2v_1(w_{1,x} -\frac{v_{1,x}}{v_1})\langle \frac{r_2\bullet \tilde{r}_{1,\si}}{v_1},l_2\rangle=O(1)v_1v_2. $$
\item The term $\langle v_1v_2\si_{1,x}( \tilde{r}_{1,\si}\bullet \al_2)r_2,l_2\rangle$ can be dealt as previously.
\end{itemize}
Let us estimate now the main new terms compared with \cite{BB-triangular}on the right hand side of \eqref{eqphi2} which are $(v_1\si_{1,x}s_{1,\si})_x$, $
-\frac{v_{1} \al^\p_1}{\al_1^2}((w_1+\si_1v_1)(s_1+v_1s_{1,v}))$ and $-\frac{v_{1} \al^\p_1}{\al_1^2}((w_1+\si_1v_1)(s_1+v_1s_{1,v}))$.
Let us start with the term $\frac{1}{\al_1}((w_1+\si_1v_1)(s_1+v_1s_{1,v}))_xr_2$. We note now that since $w_1+\sigma_1 v_1$ is different from $0$ if $|\frac{w_1}{v_1}|\geq \delta_1$ then we have
\begin{align*}
 &((w_1+\si_1v_1)(s_1+v_1s_{1,v}))_x\\
 &=(w_{1,x}+\si_1v_{1,x})(s_1+v_1s_{1,v})\chi_{\left\{\abs{\frac{w_1}{v_1}}\leq\de_1\right\}}+(w_{1,x}+\si_1v_{1,x})(s_1+v_1s_{1,v})\chi_{\left\{\de_1<\abs{\frac{w_1}{v_1}}\right\}}\\
 &-\theta_1^\p\left(\frac{w_1}{v_1}\right)_xv_1(s_1+v_1s_{1,v})+(w_1+\si_1v_1)v_1(\tilde{r}_1\bullet s_1+v_1\tilde{r}_1\bullet s_{1,v})\chi_{\left\{\de_1<\abs{\frac{w_1}{v_1}}\right\}}\\
 &+(w_1+\si_1v_1)v_2(r_2\bullet s_1+v_1r_2\bullet s_{1,v})\chi_{\left\{\de_1<\abs{\frac{w_1}{v_1}}\right\}}+(w_1+\si_1v_1)v_{1,x}( 2s_{1,v}+v_1  s_{1,vv})\chi_{\left\{\de_1<\abs{\frac{w_1}{v_1}}\right\}}\\
 &-(w_1+\si_1v_1)\theta'_1\left(\frac{w_1}{v_1}\right)_x( s_{1,\si}+v_1  s_{1,v\si})\chi_{\left\{\de_1<\abs{\frac{w_1}{v_1}}\right\}}\\[2mm]
  &=(\frac{s_1}{v_1}+s_{1,v})[(w_{1,x}v_1-v_{1,x}w_1)+(w_1+\sigma_1 v_1)v_{1,x}]\\
 &-\theta_1^\p\left(\frac{w_1}{v_1}\right)_x v^2_1(\frac{s_1}{v_1}+s_{1,v})+(w_1+\si_1v_1)v_1(\tilde{r}_1\bullet s_1+v_1\tilde{r}_1\bullet s_{1,v})\chi_{\left\{\de_1<\abs{\frac{w_1}{v_1}}\right\}}\\
 &+(w_1+\si_1v_1)v_2(r_2\bullet s_1+v_1r_2\bullet s_{1,v})\chi_{\left\{\de_1<\abs{\frac{w_1}{v_1}}\right\}}+(w_1+\si_1v_1)v_{1,x}( 2s_{1,v}+v_1  s_{1,vv})\chi_{\left\{\de_1<\abs{\frac{w_1}{v_1}}\right\}}\\
 &-(w_1+\si_1v_1)\theta'_1\left(\frac{w_1}{v_1}\right)_x( s_{1,\si}+v_1  s_{1,v\si})\chi_{\left\{\de_1<\abs{\frac{w_1}{v_1}}\right\}}.
 \end{align*}
Since we have seen that $s_1=O(1)|v_1|$, $s_{1,\sigma}=O(1)|v_1|$, $\mbox{supp}\theta'$ is included in $\{|\frac{w_1}{v_1}|\leq 3\delta_1\}$ and  $|v_1|= O(1)|v_{1,x}|$ when $|\frac{w_1}{v_1}|\geq\delta_1$ we deduce that
 \begin{align*}
 ((w_1+\si_1v_1)(s_1+v_1s_{1,v}))_x&=\mathcal{O}(1)\abs{w_{1,x}v_1-w_1v_{1,x}}+\mathcal{O}(1)|v_{1,x}(w_1+\sigma_1 v_1)|\\
 &+\mathcal{O}(1)(|v_1 v_2|+|v_2 v_{1,x}|)\\
 &+\mathcal{O}(1) v_1^2\left(\frac{w_1}{v_1}\right)^2\chi_{\abs{w_1/v_1}\leq3\de_1}
 +\mathcal{O}(1)(w_1^2+v_1^2)\chi_{\left\{\de_1<\abs{\frac{w_1}{v_1}}\right\}}.
\end{align*}
Let us study now the term $-\frac{v_{1} \al^\p_1}{\al_1^2}((w_1+\si_1v_1)(s_1+v_1s_{1,v}))$, which is of the form $\mathcal{O}(1)|v_{1,x}(w_1+\sigma_1 v_1)|$ since $s_1+v_1s_{1,v}=\mathcal{O}(1)|v_1|$ and $|v_1|=\mathcal{O}(1)|v_{1,x}|$ when $w_1+\sigma_1 v_1\ne 0$.
We would like now to rewrite \eqref{eqn-v2-1} as follows
\begin{equation}
	v_{2,t}+(\tilde{\la}_2v_{2})_x-(\al_2v_{2,x})_x=\varphi_2+(v_1\si_{1,x}s_{1,\si})_x,
	\label{4th.13}
\end{equation}
where $\varphi_2$ is defined as follows
\begin{equation*}
	\varphi_2=\phi_2-(v_1\si_{1,x}s_{1,\si})_x.
\end{equation*}
By the analysis so far we conclude that $\varphi_2$ can be estimated by \eqref{eqn:wrong-speed}, \eqref{eqn:change-strength}, \eqref{eqn:change-speed} and \eqref{eqn:transversal}. Now, we can calculate
\begin{align*}
	(v_1\si_{1,x}s_{1,\si})_x&=-v_{1,x}\theta_1^\p\left(\frac{w_1}{v_1}\right)_xs_{1,\si}+v_1\theta_1^{\p\p}\left(\frac{w_1}{v_1}\right)_x^2s_{1,\si}\\
	&-\theta_1^\p v_1\left(\frac{w_1}{v_1}\right)_{xx}s_{1,\si}-\theta_1^\p v^2_1\left(\frac{w_1}{v_1}\right)_{x}\tilde{r}_1\bullet s_{1,\si}-\theta_1^\p v_1v_2\left(\frac{w_1}{v_1}\right)_{x}r_2\bullet s_{1,\si}\\
	&-\theta_1^\p v_1v_{1,x}\left(\frac{w_1}{v_1}\right)_xs_{1,v\si}+(\theta_1^\p)^2v_1\left(\frac{w_1}{v_1}\right)_x^2s_{1,\si\si}.
\end{align*}
Note that
\begin{equation}
	\left(\frac{w_1}{v_1}\right)_{xx}=\frac{w_{1,xx}v_1-v_{1,xx}w_1}{v_1^2}-\frac{2v_{1,x}}{v_1}\left(\frac{w_1}{v_1}\right)_x.
	\label{formule2}
\end{equation}
In particular we deduce since $s_{1,\si}=\mathcal{O}(1)|v_{1}|$ and because $\mbox{supp}\theta'$ is included in $\{|\frac{w_1}{v_1}|\leq 3\delta_1\}$ that
\begin{equation}
\theta_1^\p v_1\left(\frac{w_1}{v_1}\right)_{xx}s_{1,\si}=\mathcal{O}(1)\abs{w_{1,xx}v_1-v_{1,xx}w_1}+\mathcal{O}(1)\abs{w_{1,x}v_1-v_{1,x}w_1}.
\end{equation}
Since $s_{1,\si}=\mathcal{O}(1)|v_{1}|$ and $v_{1,x}=\mathcal{O}(1)v_1$ when $\theta'_1\ne 0$, we obtain
\begin{align*}
	(v_1\si_{1,x}s_{1,\si})_x&=\mathcal{O}(1)\abs{w_{1,xx}v_1-v_{1,xx}w_1}+\mathcal{O}(1)v_1^2\left(\frac{w_1}{v_1}\right)_x^2\chi_{\left\{\abs{\frac{w_1}{v_1}}\leq 3\delta_1\right\}}\\
	&+\mathcal{O}(1)\abs{w_{1,x}v_1-v_{1,x}w_1}.
\end{align*}
To summarize we have seen that all the terms on the right hand side of \eqref{4th.13} are of the form
\begin{equation}
\begin{aligned}
&O(1)v_{1,x}(w_1+\si_1v_1),\;
    	O(1)(w_1v_{1,x}-w_{1,x}v_1),\;
	\;O(1)v_1^2\left(\frac{w_1}{v_1}\right)_x^2\chi_{\abs{w_1/v_1}\leq3\de_1},\\
	&
	O(1)v_{1}v_2,\;v_{1,x}v_2, \quad v_{2,x}v_1\;\;\mbox{and}\;\;w_{1,xx}v_1-v_{1,xx}w_1.
\end{aligned}
\label{4.19}
\end{equation}
We note that the term $w_{1,xx}v_1-v_{1,xx}w_1$ does not appear in \cite{BB-triangular}. We wish now to estimate $L^1$ norm of each of these terms on $[\hat{t},T]\times\R$.

\section{Shortening curves}\label{sec:shortening}
Your goal now is to deal with the new term $\mathcal{O}(1)\abs{w_{1,xx}v_1-v_{1,xx}w_1}$ which does not appear in the analysis of \cite{BB-triangular,BB-vv-lim-ann-math} when we consider constant viscosity coefficients.
\subsection{Scalar conservation laws}
In this section we mainly discuss the application of shortening curves estimates for scalar conservation laws which reads as follows
\begin{equation}
	u_t+f^\p(u)u_x=(\alpha (u)u_{x})_x.
\end{equation} 
Though it has been already used in \cite{BB-triangular,BB-vv-lim-ann-math} to show that $u_{xx}u_{t}-u_{tx}u_x$ is $L^1_{t,x}$ when $\alpha(u)=1$, here we wish to prove that $u_{xxx}u_t-u_{txx}u_x$ is $L^1_{t,x}$. We consider $v:=u_x$ satisfying the following equation
\begin{equation}
	v_t+(\tilde{\la}(u)v)_x=(\alpha(u)v_{x})_x\mbox{ where }\tilde{\la}(u)=f^\p(u)-\alpha'(u)v\;\;\mbox{and}\;\;\lambda(u)=f^\p(u).
\end{equation}
We define $w=\alpha v_x-\tilde{\lambda} v$. Observe that $w=u_t$ and $v_t=w_x$. Then we deduce 
\begin{align*}
	w_t=(\alpha v_x)_t-(\tilde{\lambda} v)_t&=\alpha'(u)u_t v_x+\alpha w_{xx}
	-\la^\p u_tv+(\alpha'(u)v)_t v-\la v_t+\alpha'(u)v v_t\\
	&=\alpha'(u)w v_x+\alpha w_{xx}
	-\la^\p w v+\alpha''(u)u_t v^2+\alpha'(u) v_t v-\la v_t+\alpha'(u)v v_t\\
	&=\alpha'(u)w v_x+\alpha w_{xx}
	-\la^\p w v+\alpha''(u)w v^2+\alpha'(u) w_x v-\la w_x+\alpha'(u)v w_x\\
	&=(\alpha w_{x})_x-(\tilde{\lambda}w)_x.
\end{align*}
Hence, $w$ solves
\begin{equation}
	w_t+(\tilde{\la} w)_x-(\alpha(u)w_{x})_x=0.
\end{equation}
We introduce a new variable $z:=\alpha(u)w_x-\tilde{\la} w$. We observe that $z_x=w_t$. Furthermore,
\begin{equation}
\begin{aligned}
	z_t&=\alpha(u)w_{tx}+(\alpha(u))_t w_x-(\tilde{\la})_t w-\tilde{\la} w_t\\
	&=\alpha(u)z_{xx}+(\alpha(u))_t w_x-(\tilde{\la})_t w-\tilde{\la} z_x.
\end{aligned}
\label{eq1}
\end{equation}
Let us compute now $-(\tilde{\la})_t w$, it gives:
\begin{align*}
	&-(\tilde{\la})_t w=(-\lambda'(u)u_t+\alpha''(u)u_u v+\alpha'(u)v_t)w\\
	&=-\lambda'(u) w^2+\alpha''(u)w^2 v+\alpha'(u) w_x w\\
	&=-\lambda'(u) w\alpha v_x+\lambda'(u) w\tilde{\lambda} v+\alpha''(u)w\alpha v_x v-\alpha''(u)w\tilde{\lambda} v^2+\alpha'(u) w_x w\\
	&=-\lambda'(u) w\alpha v_x+(\lambda(u))_x (\alpha w_x-z) +\alpha''(u)w\alpha v_x v-(\alpha'(u))_x  (\alpha w_x-z) v+\alpha'(u) w_x w\\
	&=-(\lambda(u))_x z+\lambda'(u)\alpha(w_x v-v_x w)-(\alpha'(u))_x v \alpha w_x+(\alpha'(u))_x v z
	 +\alpha''(u)w\alpha v_x v\\
	 &\hspace{0,5cm}+\alpha'(u) w_x (\alpha v_x-\tilde{\lambda}v)\\
	 &=-(\lambda(u))_x z+\lambda'(u)\alpha(w_x v-v_x w)-(\alpha'(u))_x v \alpha w_x+(\alpha'(u))_x v z
	 +\alpha''(u)w\alpha v_x v\\
	 &\hspace{0,5cm}-\alpha'(u) w_x \tilde{\lambda}v+\alpha'(u)v_x(z+\tilde{\lambda}w)\\[2mm]
	 &=-(\lambda(u))_x z+\lambda'(u)\alpha(w_x v-v_x w)+(\alpha'(u))_x v z+\alpha'(u) v_x z-(\alpha'(u))_x v \alpha w_x+\alpha''(u)w\alpha v_x v\\
	 &\hspace{0,5cm}+\alpha'(u)v_x\tilde{\lambda} w-\alpha'(u)w_x\tilde{\lambda}v.
\end{align*}
We have finally obtained:
\begin{equation}
-(\tilde{\la})_t w=-\tilde{\lambda}_x z+\lambda'(u)\alpha(u)(w_xv-v_x w)+\alpha'(u)\tilde{\lambda}(v_x w-w_x v)+\alpha''(u)v \alpha(u)(wv_x-v w_x).
\label{eq2}
\end{equation}
Let us estimate now $(\alpha(u))_t w_x$,  we have then:
\begin{equation}
\begin{aligned}
	&(\alpha(u))_t w_x=\alpha'(u)w w_x=\alpha'(u)w_x(\alpha v_x-\tilde{\lambda}v)\\
	&=\alpha'(u) v_x(z+\tilde{\lambda}w)-\alpha'(u) w_x \tilde{\lambda}v\\
	&=\alpha'(u) v_x z+\alpha'(u)\tilde{\lambda}( v_x w-w_x v)\\
	&=(\alpha(u))_x z_x+\alpha'(u)(v_x z-v z_x)+\alpha'(u)\tilde{\lambda}(v_x w-w_x v).
	\end{aligned}
	\label{eq3}
	\end{equation}
Combining \eqref{eq1},  \eqref{eq2} and  \eqref{eq3} we obtain that $z$ solves the following equation
\begin{equation*}
	z_t+(\tilde{\la} z)_x-(\alpha(u) z_x)=\phi,
\end{equation*}
with
\begin{equation}
\begin{aligned}
&\phi=\big(\lambda'(u)\alpha(u)-2\alpha'(u)\tilde{\lambda}-\alpha''(u)v \alpha(u)\big)
(w_xv-v_x w)+\alpha'(u)(v_x z-v z_x).
\end{aligned}
\label{phi}
\end{equation}
Now, let us recall the following crucial result from \cite{BB-vv-lim-ann-math}.
\begin{lemma}\label{lemma-sc-1}
	Let $\zeta_1,\zeta_2$ be solutions of the following equations for some $\varphi_1$ and $\varphi_2$.
	\begin{equation}
	\begin{aligned}
		&\zeta_{1,t}+(\la \zeta_1)_x-(\alpha_1\zeta_{1,x})_x=\varphi_1,\\
		&\zeta_{2,t}+(\la \zeta_2)_x-(\alpha_1\zeta_{2,x})_x=\varphi_2.
	\end{aligned}
	\label{8.6}
	\end{equation}
	For each $t$, we assume that $x\mapsto \zeta_1(t,x),\,x\mapsto \zeta_2(t,x)$ and $x\mapsto \la(t,x)$ are $C^{1,1}$. Then, we have
	\begin{equation}
	\begin{aligned}
		\frac{d}{dt}\mathcal{A}(t)\leq& -\int\limits_{\R}\alpha_1(t,x)\abs{\zeta_{1,x}(t,x)\zeta_2(t,x)-\zeta_1(t,x)\zeta_{2,x}(t,x)}\,dx\\
		&+\norm{\zeta_1(t)}_{L^1}\norm{\varphi_2(t)}_{L^1}+\norm{\zeta_2(t)}_{L^1}\norm{\varphi_1(t)}_{L^1},
	\end{aligned}
	\label{8.11}
	\end{equation}
	where $\mathcal{A}$ is defined as below
	\begin{equation}
		\mathcal{A}(t)=\frac{1}{2}\int\int\limits_{x<y}\abs{\zeta_1(t,x)\zeta_2(t,y)-\zeta_1(t,y)\zeta_2(t,x)}\,dxdy.
	\end{equation}
\end{lemma}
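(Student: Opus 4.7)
The plan is to adapt the classical area-functional computation of Bianchini--Bressan \cite{BB-vv-lim-ann-math} to the present setting with the smooth, variable coefficients $\la(t,\cdot)$ and $\alpha_1(t,\cdot)$. I will set $D(t,x,y):=\zeta_1(t,x)\zeta_2(t,y)-\zeta_1(t,y)\zeta_2(t,x)$ so that $\mathcal{A}(t)=\tfrac{1}{2}\int\!\!\int_{x<y}|D|\,dx\,dy$. Using the two lines of \eqref{8.6} and the crucial fact that both equations share the same drift $\la$ and the same viscosity $\alpha_1$, the convective and diffusive contributions to $\partial_t D$ assemble into exact $x$- and $y$-divergences, giving
\[
\partial_t D = \partial_x\!\bigl[-\la(t,x)D+\alpha_1(t,x)D_x\bigr] + \partial_y\!\bigl[-\la(t,y)D+\alpha_1(t,y)D_y\bigr] + \Phi,
\]
with $\Phi:=\varphi_1(x)\zeta_2(y)+\zeta_1(x)\varphi_2(y)-\varphi_1(y)\zeta_2(x)-\zeta_1(y)\varphi_2(x)$.

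Differentiating $\mathcal{A}$ in $t$, I would then bound $\tfrac{1}{2}\int\!\!\int_{x<y}\mathrm{sgn}(D)\,\partial_t D\,dx\,dy$ piece by piece. The source contribution is immediate: bounding each of the four terms of $\Phi$ pointwise, $|\mathrm{sgn}(D)|\le1$ together with Fubini on $\R\times\R$ produces exactly the required $\norm{\zeta_1}_{L^1}\norm{\varphi_2}_{L^1}+\norm{\zeta_2}_{L^1}\norm{\varphi_1}_{L^1}$.

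For the divergence part, the key observation (away from the diagonal, where $D=0$) is that $\mathrm{sgn}(D)D_x=\partial_x|D|$ a.e., so $\mathrm{sgn}(D)\,\partial_x[\alpha_1 D_x-\la D]$ equals the total derivative $\partial_x[\alpha_1\,\partial_x|D|-\la|D|]$ minus a non-positive singular contribution $2\sum_k\alpha_1(t,x_k)|D_x(t,x_k,y)|\,\delta_{x=x_k(y)}$ concentrated on the zeros of $x\mapsto D(t,x,y)$. Integrating in $x$ over $(-\infty,y)$ I pick up only the boundary contribution at $x=y^-$; the Taylor expansion
\[
D(t,x,y)=(x-y)W(t,y)+O\bigl((x-y)^2\bigr),\qquad W(t,y):=\zeta_{1,x}(t,y)\zeta_2(t,y)-\zeta_1(t,y)\zeta_{2,x}(t,y),
\]
gives $\partial_x|D|(t,y^-,y)=-|W(t,y)|$, so the boundary term equals $-\alpha_1(t,y)|W(t,y)|$. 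The symmetric computation for the $\partial_y$ piece produces $-\alpha_1(t,x)|W(t,x)|$ at $y=x^+$, and the two boundary contributions, combined with the prefactor $\tfrac{1}{2}$ from $\mathcal{A}$ and after integrating out the free variable, assemble into exactly $-\int_\R\alpha_1(t,x)|W(t,x)|\,dx$, which is the stated dissipation term in \eqref{8.11}.

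The main obstacle will be the rigorous justification of the distributional identity above on the nodal set of $D(t,\cdot,y)$, together with the sign of the singular contribution. I would handle this in the standard way by first mollifying $\zeta_1$ and $\zeta_2$ in $x$, carrying out the computation in the smooth setting where the nodal set of $D$ consists of transversal curves on which $D_x\ne0$ (so that the sum over zeros takes the stated pointwise form), and then passing to the limit using the $C^{1,1}$ assumption on $\zeta_1,\zeta_2,\la$. The variable coefficient $\alpha_1(t,x)$ brings no additional difficulty because it is smooth and bounded away from zero, and simply gets evaluated along the diagonal $\{x=y\}$ inside the dissipation integral.
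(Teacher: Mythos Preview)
Your computation is correct and yields exactly \eqref{8.11}: the PDE
\[
\partial_t D=\partial_x\bigl[-\la(t,x)D+\alpha_1(t,x)D_x\bigr]+\partial_y\bigl[-\la(t,y)D+\alpha_1(t,y)D_y\bigr]+\Phi
\]
is right (the common $\la,\alpha_1$ are indeed what makes the cross terms cancel), the Kato-type identity for $\operatorname{sgn}(D)\,\partial_x(\alpha_1 D_x)$ produces a singular part that is \emph{non-negative} (your ``non-positive'' is a slip, but your formula has the correct sign and dropping it goes the right way), and the diagonal evaluation via $D(t,x,y)=(x-y)W(t,y)+O((x-y)^2)$ gives the two boundary contributions $-\alpha_1|W|$ which, after the factor $\tfrac12$, combine into the claimed dissipation.

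This is, however, a genuinely different route from the paper's. The paper follows the original Bianchini--Bressan geometric argument: it introduces the planar curve
\[
\gamma(t,x)=\Bigl(\int_{-\infty}^x\zeta_1,\ \int_{-\infty}^x\zeta_2\Bigr),
\]
which solves $\gamma_t+\la\gamma_x-\alpha_1\gamma_{xx}=\varphi$, interprets $\mathcal{A}(t)$ through the total variation of the projections $\chi^{n}(y)=\langle n,\gamma(y)\rangle$ onto the unit normal $n(t,x)$, and differentiates this total variation at its local extrema $y_\alpha$. The dissipative term then appears as $-\int\alpha_1|\gamma_x\wedge\gamma_{xx}|$, which is exactly $-\int\alpha_1|\zeta_{1,x}\zeta_2-\zeta_1\zeta_{2,x}|$. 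Your kernel/Kato approach is more direct and avoids the curve-shortening geometry entirely; its cost is the need to justify the distributional identity on the nodal set of $D(\cdot,y)$ (your mollification plan is fine, though note that mollifying $\zeta_i$ perturbs the right-hand sides $\varphi_i$, so it is cleaner to regularize $|\cdot|$ instead). The paper's geometric argument, by contrast, packages that justification into the regularity assumption on the extrema of $\chi^n$, which is then removed by an approximation argument as in \cite{BB-vv-lim-ann-math}. Both arguments exploit the same mechanism; yours is arguably more self-contained for a reader unfamiliar with the curve-shortening picture.
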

\begin{proof} The proof follows the same line as in \cite{BB-vv-lim-ann-math} except that we deal with a variable viscosity coefficient $\alpha_1$. For completeness we recall the proof. Let us set:
\begin{equation}
\gamma(t,x)=\big(\int^x_{-\infty} \zeta_1(t,y)dy,\int^x_{-\infty} \zeta_2(t,y)dy\big)
\end{equation}
Integrating \eqref{8.6}, we get:
\begin{equation}
\gamma_t+\lambda \gamma_x-\alpha_1 \gamma_{xx}=\varphi(t,x)=\big(\int^x_{-\infty} \varphi_1(t,y)dy,\int^x_{-\infty} \varphi_2(t,y)dy\big)
\label{8.8}
\end{equation}
As in \cite{BB-vv-lim-ann-math}, for the curve $\gamma$, at each point where $\gamma_x\ne 0$ we consider the unit vector $n(x)$ oriented so that $\gamma_x (x)\wedge n=|\gamma_x(x)|>0$. In particular for every vector $w\in\R^2$ we have
$$\gamma_x(x)\wedge v=|\gamma_x (x)|\;\langle n(x),v\rangle.$$
We define also the projection of $\gamma$ along $n$ by:
$$y\rightarrow \chi^n(y)=\langle n,\gamma(y)\rangle.$$
Following the same computations as \cite{BB-vv-lim-ann-math}, we can show that:
$$
\begin{aligned}
&\frac{d\mathcal{A}(t) }{dt}=\frac{1}{2}\int|\gamma_x(t,x)|\frac{d}{dt}(TV(\chi^{n(t,x)}))dx
\end{aligned}
$$
where the derivative with respect to time of the total variation is considered assuming t $n(t,x)$ is like a constant vector in time. If we assume that $\chi^{n(t,x)}$ has a finite number of local minima and maxima $y_{-p},\cdots, y_{-1},y_0=x,y_1,\cdots y_q$ then we can check that:
\begin{equation}
\frac{d}{dt}(TV(\chi^{n(t,x)}))=-\mbox{sign}\langle n(t,x),\gamma_{xx}(t,x)\rangle 2\sum_{-p\leq \alpha\leq q}(-1)^\alpha\langle n(t,x), \gamma_t(t,y_\alpha(t))\rangle
\label{8.13}
\end{equation}
Furthermore we have as in \cite{BB-vv-lim-ann-math}:
$$\langle n(t,x),\gamma_x(t,x)\rangle=0,\;\;\mbox{sign}\langle n(t,x),\gamma_{xx}(t,y_\alpha)\rangle=(-1)^{\alpha}\mbox{sign}\langle n(t,x),\gamma_{xx}(t,x)\rangle,$$
One obtain then:
\begin{equation}
\begin{aligned}
\frac{d \mathcal{A}(t)}{dt}&=-\int|\gamma_x(t,x)|\mbox{sign}\langle n(t,x),\gamma_{xx}(t,x)\rangle\left(\sum_{-p\leq\alpha\leq q}(-1)^{\alpha}\langle n(t,x),\gamma_t(t,y_\alpha(t))\rangle\right) dx\\
&\leq-\int\sum_\alpha \alpha_1(t,y_\alpha(t))|\gamma_x(t,x)\wedge\gamma_{xx}(t,y_\alpha(t))|dx\\
&+\int |\gamma_x(t,x)||\sum_\alpha (-1)^{\alpha}\langle n(t,x),\varphi(t,y_\alpha(t))\rangle| dx\\
&\leq-\int \alpha_1(t,x)|\gamma_x(t,x)\wedge\gamma_{xx}(t,x)|dx+\int \int |\gamma_x(t,x)\wedge\varphi_x(t,y)| dx dy.
\end{aligned}
\end{equation}
This last inequality implies \eqref{8.11}. As in \cite{BB-vv-lim-ann-math} using an approximation argument we can assume that the function $\chi^{n(t,x)}$ has the previous regularity.
\end{proof}

\noi\textbf{Applications:}
\begin{enumerate}
	\item We may take $\zeta_1=v_1$ and $\zeta_2=w_1$. From Lemma \ref{lemma-sc-1} we can obtain
	\begin{equation*}
		\frac{d}{dt}\mathcal{A}(t)\leq -\int\limits_{\R}\alpha_1(u_1(t,x))\abs{v_{1,x}(t,\cdot)w_1(t,\cdot)-w_{1,x}(t,\cdot)v_1(t,\cdot)}\,dx.
	\end{equation*}
	Hence, we get for all $0\leq T\leq T'$
	\begin{equation}\label{estimate-1}
		\int\limits_{T}^{T'}\int\limits_{\R}\alpha_1(u_1)\abs{v_{1,x}(t,\cdot)w_1(t,\cdot)-w_{1,x}(t,\cdot)v_1(t,\cdot)}\,dx\leq \norm{v_1(T)}_{L^1}\norm{w_1(T)}_{L^1}.
	\end{equation}
	
	\item We may take $\zeta_1=v_1$ and $\zeta_2=z_1=\alpha_1(u_1)w_{1,x}-\tilde{\lambda} w_1$. Define $\phi$ as in \eqref{phi}, from Lemma \ref{lemma-sc-1} we can obtain
	\begin{align}
		\frac{d}{dt}\mathcal{A}(t)&\leq -\int\limits_{\R}\alpha_1(u_1(t,x))\abs{v_{1,x}(t,\cdot)z_1(t,\cdot)-z_{1,x}(t,\cdot)v_1(t,\cdot)}\,dx\nonumber\\
		&+\norm{v_1(t)}_{L^1}\norm{\phi(t)}_{L^1}.\label{estimate-2}
	\end{align}
	We first note that
	\begin{align}
		&z_{1,x}v_1-v_{1,x}z_1
		=(\alpha_1 w_{1,x}-\tilde{\lambda}_1w_1)_xv_1-v_{1,x}(\alpha_1w_{1,x}-\tilde{\lambda}_1w_1)  \nonumber\\
		&=\alpha_{1,x} w_{1,x}v_1+\alpha_1 w_{1,xx} v_1-\tilde{\lambda}_{1,x} w_1 v_1-\tilde{\lambda}_1w_{1,x}v_1-\alpha_1 v_{1,x}w_{1,x}+\tilde{\lambda}_1 v_{1,x}w_1 \nonumber\\
		&=\alpha_{1} w_{1,xx}v_1-(w_1+\tilde{\lambda}_1v_1)w_{1,x}+\alpha_{1,x} w_{1,x}v_1-\tilde{\lambda}_{1,x} w_1 v_1+\tilde{\lambda}_1 v_{1,x}w_1-\tilde{\lambda}_1 v_{1}w_{1,x}
		\nonumber\\
		&=\alpha_{1} w_{1,xx}v_1-w_1w_{1,x}-\tilde{\lambda}_1v_1w_{1,x}+\alpha_{1,x} w_{1,x}v_1-\tilde{\lambda}_{1,x} w_1 v_1+\tilde{\lambda}_1 v_{1,x}w_1-\tilde{\lambda}_1 v_{1}w_{1,x} \nonumber\\
		&=\alpha_{1} w_{1,xx}v_1-w_1(\alpha_1 v_{1,x}-\tilde{\lambda}_1v_1)_x-\tilde{\lambda}_1v_1w_{1,x}+\alpha_{1,x} w_{1,x}v_1-\tilde{\lambda}_{1,x} w_1 v_1\nonumber\\
		&+\tilde{\lambda}_1 v_{1,x}w_1-\tilde{\lambda}_1 v_{1}w_{1,x} \nonumber\\
		&=\alpha_1(w_{1,xx} v_1-w_1v_{1,xx})+(2\tilde{\lambda}_1-\alpha'_1(u_1) v_1)(w_{1,x}v_1-v_{1,x}w_1).\label{identity-1}
	\end{align}
	From \eqref{estimate-2} we get for all $0\leq T\leq T'$
	\begin{align*}
		\int\limits_{T}^{T'}\int\limits_{\R}\alpha_1(u_1)\abs{z_{1,x} v_1-v_{1,x}z_1}\,dxdt&\leq \norm{z_1(T)}_{L^1}\norm{v_1(T)}_{L^1}\\
		&+\norm{v_1}_{L^\f((T,T'),L^1(\R))}\norm{\phi}_{L^1([T,T']\times\R)}.
	\end{align*}
	Now, we apply \eqref{phi}, \eqref{estimate-1}, \eqref{estimate-2} and \eqref{identity-1} to get
	\begin{align}
		&\int\limits_{T}^{T'}\int\limits_{\R}\alpha_1^2(u_1)\abs{w_{1,xx} v_1-w_1v_{1,xx}}\,dxdt \nonumber\\
		&\leq \int\limits_{T}^{T'}\int\limits_{\R}\alpha_1(u_1)\abs{z_{1,x} v_1-v_{1,x}z_1}\,dxdt \nonumber\\
		&\hspace{2cm}+\|2\tilde{\lambda}_1-\alpha'_1(u_1) v_1\|_{L^\infty([T,T']\times\R)}\int\limits_{T}^{T'}\int\limits_{\R}\alpha_1(u_1)\abs{w_{1,x} v_1-v_{1,x}w_1}\,dxdt \nonumber\\
		&\leq \norm{z_1(T)}_{L^1}\norm{v_1(T)}_{L^1}+\norm{v_1}_{L^\f((T,T');L^1(\R))}\norm{\phi}_{L^1([T,T']\times\R)} \nonumber\\
		&\hspace{4cm}+\|2\tilde{\lambda}_1-\alpha'_1(u_1) v_1\|_{L^\infty([T,T']\times\R)}\norm{v_1(T)}_{L^1}\norm{w_1(T)}_{L^1} \nonumber\\
	&\leq \norm{z_1(T)}_{L^1}\norm{v_1(T)}_{L^1}+\|2\tilde{\lambda}_1-\alpha'_1(u_1) v_1\|_{L^\infty([T,T']\times\R)}\norm{v_1(T)}_{L^1}\norm{w_1(T)}_{L^1} \nonumber\\
	&+\norm{v_1}_{L^\f((T,T');L^1(\R))}\|\lambda_1'(u_1)-2\frac{\alpha_1'(u_1)}{\alpha_1(u_1)}\tilde{\lambda}-\alpha_1''(u_1)v_1 \|_{L^\infty((T,T')\times\R)} \norm{v_1(T)}_{L^1}\norm{w_1(T)}_{L^1} \nonumber\\
	&+\norm{v_1}_{L^\f((T,T');L^1(\R))} \|\alpha_1'(u_1)\|_{L^\infty((T,T')\times\R)} \|\alpha_1(w_{1,xx} v_1-w_1v_{1,xx})\|_{L^1((T,T')\times\R)} \nonumber\\
	&+	\norm{v_1}_{L^\f((T,T');L^1(\R))} \|\alpha_1'(u_1)\|_{L^\infty((T,T')\times\R)}\times \nonumber\\
	&\hspace*{3cm}\norm{\frac{1}{\alpha_1(u_1)}(2\tilde{\lambda}_1-\alpha'_1(u_1) v_1)}_{L^\infty((T,T')\times\R)}  \norm{v_1(T)}_{L^1}\norm{w_1(T)}_{L^1}.\label{5.19}
	\end{align}
\end{enumerate}
We take now $T=\hat{t}$ and $T'=T$ with $T$ defined in \eqref{10.3}. We can note in particular that 
$\abs{w_{1,xx} v_1-w_1v_{1,xx}}$ and $|z_{1,x}v_1-v_{1,x}z_1|$ are integrable on $[\hat{t},T]\times\R$ due to the estimates of the Lemma \ref{lemme4.1}. From the Lemma \ref{lemme4.1} and using \eqref{condialpha} we deduce that
\begin{equation}
\int\limits_{T}^{T'}\int\limits_{\R}\abs{w_{1,xx} v_1-w_1v_{1,xx}}\,dxdt=O(1)\delta_0^2+O(1)\delta_0
 \|\alpha_1(w_{1,xx} v_1-w_1v_{1,xx})\|_{L^1((\hat{t},T)\times\R)}.
\label{estim1cru}
\end{equation}
We deduce then since $\delta_0>0$ is chosen sufficiently small that
	\begin{equation}
\int\limits_{T}^{T'}\int\limits_{\R}\abs{w_{1,xx} v_1-w_1v_{1,xx}}\,dxdt=O(1)\delta_0^2.
\label{estim2cru}
\end{equation}
Similarly using \eqref{estimate-1} we have
\begin{equation}
\int\limits_{T}^{T'}\int\limits_{\R}\abs{w_{1,x} v_1-w_1v_{1,x}}\,dxdt=O(1)\delta_0^2.
\label{estim2crubis}
\end{equation}
\subsection{Length Functional}
As in \cite{BB-triangular}, we wish now to deal with the terms of the form $O(1)v_1^2\left(\frac{w_1}{v_1}\right)_x^2\chi_{\abs{w_1/v_1}\leq3\de_1}$. We follow the main argument of the proof of \cite{BB-vv-lim-ann-math} except that we have variable viscosity coefficient.
\begin{lemma}
Under the assumptions of Lemma \ref{lemme4.1} with $\zeta_{1}=v_1$, $\zeta_{2}=w_1$, $\lambda=\tilde{\lambda}_1$, $\varphi_1=\varphi_2=0$, at a fixed time $t$ assume that $\gamma_x(t,x)\ne 0$ for every $x$ with $\gamma(t,x)=\big(\int^x_{-\infty} v_1(t,y)dy,\int^x_{-\infty} w_1(t,y)dy\big)$. Then
\begin{equation} \label{utile}
\frac{d}{dt}{\cal L}(t)\leq -\frac{1}{(1+9\delta_1^2)^{\frac{3}{2}}}\int_{\{|\frac{w_1}{v_1}|\leq 3\delta_1\}}|\alpha_1 v_1||(\frac{w_1}{v_1})_x|^2 dx,
\end{equation}
with
$${\cal L}(t)=\int_{\R}\sqrt{v_1^2(t)+w_1^2(t)} dx.$$
\end{lemma}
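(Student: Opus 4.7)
The plan is to differentiate $\mathcal{L}(t)=\int_{\R}\rho\,dx$ with $\rho:=\sqrt{v_1^2+w_1^2}$ under the integral sign, exploiting the fact that both $v_1$ and $w_1$ satisfy the \emph{same} scalar parabolic equation $\zeta_t+(\tilde{\la}_1\zeta)_x-(\alpha_1\zeta_x)_x=0$ (compare \eqref{v1}–\eqref{w1}). The standing hypothesis $\gamma_x\ne 0$ guarantees $\rho>0$ pointwise, so the $1/\rho$ that will appear is harmless. Note that since both components of $\gamma_x$ solve the same linear equation with the same coefficients, $\mathcal{L}$ is literally the length of the curve $\gamma(t,\cdot)$, and we expect a standard curve-shortening identity.

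First I would compute $\rho\rho_t=v_1v_{1,t}+w_1w_{1,t}$ by multiplying each equation by the corresponding component. Using $v_1v_{1,x}+w_1w_{1,x}=\rho\rho_x$, the transport part collapses to $-\tilde{\la}_{1,x}\rho^2-\tilde{\la}_1\rho\rho_x$, while the viscous part becomes $\pa_x(\alpha_1\rho\rho_x)-\alpha_1(v_{1,x}^2+w_{1,x}^2)$ after one integration by parts. Dividing by $\rho$ and rearranging leads to the clean identity
\begin{equation*}
\rho_t+(\tilde{\la}_1\rho)_x-(\alpha_1\rho_x)_x=-\frac{\alpha_1}{\rho}\bigl(v_{1,x}^2+w_{1,x}^2-\rho_x^2\bigr).
\end{equation*}
Integrating in $x$, the two total-derivative terms vanish by the decay at infinity (granted by the $L^1$ bounds of Lemma~\ref{lemme4.1}), leaving
\begin{equation*}
\frac{d\mathcal{L}}{dt}=-\int_{\R}\frac{\alpha_1}{\rho}\bigl(v_{1,x}^2+w_{1,x}^2-\rho_x^2\bigr)\,dx.
\end{equation*}

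Next I would recognize the integrand as a squared curvature via Lagrange's identity: $\rho^2(v_{1,x}^2+w_{1,x}^2)-(\rho\rho_x)^2=(v_1w_{1,x}-w_1v_{1,x})^2$. Hence $v_{1,x}^2+w_{1,x}^2-\rho_x^2=(v_1w_{1,x}-w_1v_{1,x})^2/\rho^2=v_1^4(w_1/v_1)_x^2/\rho^2$ (the formula being trivially valid where $v_1=0$ since it carries an $v_1^2$ factor). This yields
\begin{equation*}
\frac{d\mathcal{L}}{dt}=-\int_{\R}\frac{\alpha_1\,v_1^4}{\rho^3}\Bigl(\frac{w_1}{v_1}\Bigr)_x^{2}dx.
\end{equation*}
Since the whole integrand is nonnegative, restricting the domain of integration to $\{|w_1/v_1|\le 3\delta_1\}$ can only increase the bound on $d\mathcal{L}/dt$. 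On this set one has $\rho^2\le(1+9\delta_1^2)v_1^2$, so $\rho^3\le(1+9\delta_1^2)^{3/2}|v_1|^3$, and the announced inequality \eqref{utile} follows directly, using $\alpha_1\ge c_1>0$ from \eqref{condialpha} to identify $\alpha_1|v_1|=|\alpha_1 v_1|$.

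There is no genuine obstacle: the dissipation is intrinsically well defined as the length-shortening rate, and the only point that requires a moment of attention is that every manipulation involving $\rho^{-1}$ is legitimate precisely under the stated hypothesis $\gamma_x\ne 0$. The calculation is essentially the classical curve-shortening computation, adapted to the variable-viscosity case, where the key new observation is that $(\alpha_1\rho_x)_x$ integrates to zero just like a constant-coefficient Laplacian would.
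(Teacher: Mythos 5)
Your proof is correct and follows essentially the same route as the paper: both compute $\frac{d}{dt}\int|\gamma_x|\,dx$ for the curve $\gamma_x=(v_1,w_1)$, use the Cauchy--Schwarz/Lagrange gap $|\gamma_{xx}|^2-\langle\gamma_x/|\gamma_x|,\gamma_{xx}\rangle^2=(v_1w_{1,x}-w_1v_{1,x})^2/|\gamma_x|^2$ to identify the dissipation as $-\int\alpha_1|v_1|\,|(w_1/v_1)_x|^2\,(1+(w_1/v_1)^2)^{-3/2}dx$, and then restrict to $\{|w_1/v_1|\le 3\delta_1\}$ to bound the denominator. Your presentation via the scalar equation for $\rho=\sqrt{v_1^2+w_1^2}$ is just a notational variant of the paper's curve-shortening computation.
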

\begin{proof} Let us recall that
\begin{equation}
\gamma_{t,x}+(\tilde{\lambda}_1 \gamma_x)_x-(\alpha_1 \gamma_{xx})_x=0.
\label{8.8bis}
\end{equation}
Since $\gamma_x$ never vanishes, we get after integrations by parts
$$
\begin{aligned}
\frac{d}{dt}{\cal L}(t)=&\int\frac{\langle\gamma_x,\gamma_{xt}\rangle}{\sqrt{|\gamma_x|^2}}dx\\
=&\int \frac{\langle\gamma_x,( \alpha_ 1\gamma_{xx})_x\rangle}{|\gamma_x|}- \frac{\langle\gamma_x,( \tilde{\lambda}_ 1\gamma_{x})_x\rangle}{|\gamma_x|}dx\\
=&\int(\alpha_1|\gamma_x|)_x-\frac{\alpha_1|\gamma_{xx}|^2}{|\gamma_x|}+\alpha_1\frac{\langle\frac{\gamma_x}{|\gamma_x|},\gamma_{xx}\rangle^2}{|\gamma_x|}-(\tilde{\lambda}_1|\gamma_x|)_x.
\end{aligned}
$$
As in \cite{BB-vv-lim-ann-math}, we have $\alpha_1\frac{\langle\frac{\gamma_x}{|\gamma_x|},\gamma_{xx}\rangle^2}{|\gamma_x|}=-\frac{ \alpha_1 |v_1||(\frac{w_1}{v_1})_x|^2}{(1+(\frac{w_1}{v_1})^2)^{\frac{3}{2}}}$ which implies that
$$
\begin{aligned}
\frac{d}{dt}{\cal L}(t)=-\int \frac{ \alpha_1 |v_1||(\frac{w_1}{v_1})_x|^2}{(1+(\frac{w_1}{v_1})^2)^{\frac{3}{2}}}dx.
\end{aligned}
$$
This last inequality implies clearly \eqref{utile}.
\end{proof}
From \eqref{utile}, \eqref{condialpha} and applying the Lemma \ref{lemme4.1}, we deduce that
\begin{equation}
\int_{\hat{t}}^T\int_{\R} v_1^2\left(\frac{w_1}{v_1}\right)_x^2\chi_{\abs{w_1/v_1}\leq3\de_1} dx ds =O(1)\delta_0^2.
\label{estim3cru}
\end{equation}
\section{Transversal interaction and energy estimates}\label{sec:transversal}
In this subsection we recall the estimate for transversal interaction (see \cite{BB-vv-lim-ann-math,HJ})
\begin{proposition}
	Let $v,v^\#$ be solutions of the following equations
	\begin{equation}
	\begin{aligned}
		v_t+(\la v)_x-(\al v)_{xx}&=\phi,\\
		v^\#_t+(\la^\# v^\#)_x-(\al^\# v^\#)_{xx}&=\phi^\#.
	\end{aligned}
	\label{5.22}
	\end{equation}
We assume that $\inf \la^\# -\sup\la\geq c_0>0$ and that $\|(\alpha,\alpha^\#)\|_{L^\infty}<+\infty$. Then the following estimate holds true for any $0\leq T\leq T'$
\begin{align*}
	&\int\limits_T^{T'}\int\limits_{\R}\abs{vv^\#}\,dxdt\\
	&\leq \frac{1}{c_1}\left(\norm{v(T,\cdot)}_{L^1(\R)}+\norm{\phi}_{L^1([T,T']\times\R)}\right)\cdot \left(\norm{v^\#(T,\cdot)}_{L^1(\R)}+\norm{\phi^\#}_{L^1([T,T']\times\R)}\right),
\end{align*}
where $c_1=2c_0$. 
\label{prop5.2}
\end{proposition}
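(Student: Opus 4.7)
The plan is to adapt the interaction--potential approach of \cite{BB-vv-lim-ann-math} (already used in Lemma \ref{lemma-sc-1} of this paper for same-speed waves) to the present setting of two distinct advection speeds. I would introduce the ``approaching mass'' functional
\begin{equation*}
Q(t) = \iint_{x>y} |v(t,x)|\,|v^{\#}(t,y)|\,dx\,dy,
\end{equation*}
which satisfies the trivial bound $Q(t)\leq \|v(t)\|_{L^1}\|v^{\#}(t)\|_{L^1}$ and picks out the pairs where $v^{\#}$ (the faster family) sits to the left of $v$ (the slower one), hence the approaching configuration singled out by the transversality hypothesis $\la^{\#}-\la\geq c_0>0$.

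The core calculation is then to establish the differential inequality
\begin{equation*}
\frac{d}{dt}Q(t) \leq -c_0\int_{\R} |v(t,x)\,v^{\#}(t,x)|\,dx + \|v^{\#}(t)\|_{L^1}\|\phi(t)\|_{L^1} + \|v(t)\|_{L^1}\|\phi^{\#}(t)\|_{L^1}.
\end{equation*}
To compute $\frac{d}{dt}Q$ I would apply Kato's inequality $\partial_t|v|\leq \mathrm{sign}(v)\,v_t$ (and similarly for $v^{\#}$) and integrate by parts in the wedge $\{x>y\}$. The advection terms $-(\la v)_x$ and $-(\la^{\#}v^{\#})_x$ produce a diagonal boundary contribution of the form $-(\la^{\#}(t,x)-\la(t,x))|v(t,x)||v^{\#}(t,x)|$, which by transversality is bounded above by $-c_0|vv^{\#}|(t,x)$; the diffusion terms are expected to contribute nonpositively after two integrations by parts (by a convexity/heat--smoothing argument on $|v|,|v^{\#}|$), while the two source terms yield the indicated $L^1\times L^1$ products by H\"older's inequality.

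With the differential inequality in hand, integration over $[T,T']$, together with $Q(T')\geq 0$ and $Q(T)\leq \|v(T)\|_{L^1}\|v^{\#}(T)\|_{L^1}$, gives
\begin{equation*}
c_0\int_T^{T'}\!\!\int_{\R}|vv^{\#}|\,dx\,dt \leq \|v(T)\|_{L^1}\|v^{\#}(T)\|_{L^1} + \int_T^{T'}\!\!\big[\|v^{\#}(t)\|_{L^1}\|\phi(t)\|_{L^1} + \|v(t)\|_{L^1}\|\phi^{\#}(t)\|_{L^1}\big]dt.
\end{equation*}
Using the standard a priori $L^1$--mass control $\|v(t)\|_{L^1}\leq \|v(T)\|_{L^1}+\|\phi\|_{L^1([T,T']\times\R)}$ (and analogously for $v^{\#}$), which is a direct consequence of the $L^1$--contractivity of the parabolic transport applied to the positive/negative parts of $v$, the right--hand side is bounded by $2\big(\|v(T)\|_{L^1}+\|\phi\|_{L^1}\big)\big(\|v^{\#}(T)\|_{L^1}+\|\phi^{\#}\|_{L^1}\big)$, giving the claim with $c_1=2c_0$.

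The main technical obstacle is the handling of the diffusion terms $-(\alpha v)_{xx}$, $-(\alpha^{\#}v^{\#})_{xx}$ against the non--smooth Heaviside kernel $\mathbf{1}_{x>y}$: strictly speaking one cannot integrate by parts twice through a jump, so I expect to regularize the kernel by a smoothed monotone cutoff $H_\varepsilon$, carry out the computation and pass to the limit $\varepsilon\downarrow 0$, exactly as in \cite{BB-vv-lim-ann-math}. The variable viscosities $\alpha,\alpha^{\#}$ bring extra terms involving $\alpha_x, \alpha^{\#}_x$ (which are controlled using the parabolic estimates of Proposition \ref{prop:parabolic}) multiplied by $|v|$ or $|v^{\#}|$; these are absorbed via the $L^\infty$ bound on $\alpha,\alpha^{\#}$ and the strict positivity of the transversality gap $c_0$, so that they do not disturb the negative transversal term at leading order.
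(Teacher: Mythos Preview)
The paper itself gives no proof of this proposition; it simply recalls the result from \cite{BB-vv-lim-ann-math,HJ}. Your overall strategy via a Glimm--type interaction potential is a natural one, and the treatment of the advective boundary term and of the sources is correct. However, your handling of the diffusion is where the argument breaks.

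With the Heaviside kernel $\mathbf 1_{x>y}$, after one integration by parts in each variable the second--order terms leave the diagonal contribution
\[
\int_{\R}\Big[-(\alpha|v|)_x\,|v^\#|+|v|\,(\alpha^\#|v^\#|)_x\Big]\,dx
=-\int_{\R}\alpha_x|v||v^\#|\,dx-\int_{\R}(\alpha+\alpha^\#)\,|v|_x\,|v^\#|\,dx,
\]
and this has \emph{no sign}, even when $\alpha=\alpha^\#\equiv 1$ (take two Gaussians with different centers to see both signs). So your assertion that ``the diffusion terms are expected to contribute nonpositively'' is false, and regularizing the kernel does not help: after mollification the same unsigned term reappears in the limit. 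The extra terms coming from variable $\alpha,\alpha^\#$ are a secondary issue; the basic obstruction is already present in the constant--viscosity case.

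The proof in \cite{BB-vv-lim-ann-math} (extended in \cite{HJ} to variable viscosity) does \emph{not} use the bare Heaviside potential. It proceeds instead by comparison with explicit upper solutions of constant--coefficient advection--diffusion equations (replace $\lambda$ by $\sup\lambda$, $\lambda^\#$ by $\inf\lambda^\#$), so that the Green functions are shifted heat kernels, and then bounds $\int\!\!\int|vv^\#|$ by a direct Gaussian overlap computation: for point masses the time--integrated overlap is
\(
\int_0^\infty \tfrac{1}{\sqrt{s}}\exp\!\big(-\tfrac{(a-b-c_0 s)^2}{Cs}\big)\,ds=\mathcal O(1)/c_0
\)
uniformly in $a,b$, and the general case follows by superposition plus Duhamel for the sources. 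If you want to salvage a potential--functional proof, you must replace the Heaviside by a kernel with exponential tail (so that the second derivative of the kernel is comparable to its first), which is exactly what allows the diffusive boundary term to be absorbed into the transversal gain $-c_0\int|vv^\#|$.
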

Similarly we have the following proposition.
\begin{proposition}
	Let $v,v^\#$ be solutions of \eqref{5.22} and we assume that $\inf \la^\# -\sup\la\geq c_0$ holds  with in addition the following estimates
	\begin{align}
		&\int\limits_{T}^{T'}\int\limits_{\R}\abs{\phi(t,x)}dxdt\leq \de_0,\quad\quad 	\int\limits_{T}^{T'}\int\limits_{\R}\abs{\phi^\#(t,x)}dxdt\leq \de_0,\\
		&\norm{v(t)}_{L^1},\,\norm{v^\#(t)}_{L^1}\leq\de_0,\quad\quad \norm{v_x(t)}_{L^1},\,\norm{v^\#(t)}_{L^\f}\leq C_*\de^2_0,\\
		&\norm{\la_x(t)}_{L^\f},\,\norm{\la_x(t)}_{L^1}\leq C_*\de_0,\quad\quad \lim\limits_{x\rr-\f}\la(t,x)=0,
	\end{align}
	for all $t\in[T,T']$. Then we have
	\begin{equation}\label{est:transversal-2}
		\int\limits_{0}^{T}\int\limits_{\R}\abs{v_x(t,x)}\abs{v^\#(t,x)}\,dxdt=\mathcal{O}(1)\de^2_0.
	\end{equation}
\label{prop5.3}
\end{proposition}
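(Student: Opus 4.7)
The plan is to adapt the interaction-potential argument behind Proposition~\ref{prop5.2}, applying it this time to the pair $(v_x, v^\#)$ rather than $(v, v^\#)$. First, differentiating the equation for $v$ in \eqref{5.22} with respect to $x$ shows that $v_x$ solves a parabolic equation of the same type,
\begin{equation*}
(v_x)_t + (\la v_x)_x - (\al v_x)_{xx} = \tilde{\phi}, \qquad \tilde{\phi} := \phi_x - (\la_x v)_x + (\al_x v)_{xx},
\end{equation*}
with the same drift and viscosity, so that the transversality $\inf \la^\# - \sup \la \geq c_0$ still holds for the pair $(v_x, v^\#)$.

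To exploit this transversality I would introduce a weighted interaction functional of the form
\begin{equation*}
Q(t) = \iint_{\R^2} W(y-x)\,|v_x(t,x)|\,|v^\#(t,y)|\, dx\, dy,
\end{equation*}
where $W: \R \to [0,K]$ is a smooth, nonincreasing kernel concentrated near zero. Computing $dQ/dt$ using the two PDEs and integrating by parts, the convective terms produce the contribution $\iint W'(y-x)\bigl[\la(t,x) - \la^\#(t,y)\bigr]|v_x||v^\#|\,dxdy$, which by the transversality and $W' \leq 0$ is bounded above by $-c\int |v_x(t,x)v^\#(t,x)|\,dx$. The remaining contributions split into viscous terms involving $W''$ (handled by choosing $W \in C^2$ with $\|W''\|_{L^\infty} < \infty$ and invoking Proposition~\ref{prop5.2} on $(v, v^\#)$) and source terms involving $\tilde\phi$ and $\phi^\#$. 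The $\phi^\#$ piece is controlled directly by the hypothesis $\|\phi^\#\|_{L^1} \leq \de_0$, while the first-order terms in $\tilde\phi$ (coming from $(\la_x v)_x$) are absorbed using $\|\la_x\|_{L^1\cap L^\infty} = O(\de_0)$. Time-integrating the resulting dissipation inequality on $[T,T']$ and using $Q(T) = O(\de_0 \cdot C_*\de_0^2)$ would then yield $\int_T^{T'}\int |v_x v^\#|\, dx\,dt = O(\de_0^2)$, as desired.

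The main obstacle will be the second-derivative source term $(\al_x v)_{xx}$ appearing in $\tilde\phi$: it has no direct $L^1_{t,x}$ bound, so it must be integrated by parts twice inside $Q$, transferring the derivatives either onto the smooth weight $W$ or onto $v^\#$ (whose first derivative is controlled in $L^1$ by $C_*\de_0^2$). A bootstrap argument, similar in spirit to the one used in Section~\ref{sec:shortening} to bound $|w_{1,xx}v_1 - v_{1,xx}w_1|$ via the auxiliary variable $z_1 = \al_1 w_{1,x} - \tilde\la_1 w_1$, may be needed here so that the second-order source contributions in $\tilde\phi$ ultimately close up at the $O(\de_0^2)$ level rather than feeding back into the quantity we are trying to estimate.
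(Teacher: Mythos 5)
Your overall strategy --- differentiate the equation for $v$ and rerun the interaction-potential machinery behind Proposition \ref{prop5.2} on the pair $(v_x,v^\#)$ --- is natural, and your computation of the differentiated equation is correct. (The paper itself gives no proof of Proposition \ref{prop5.3}; it is recalled from \cite{BB-vv-lim-ann-math,HJ}, so I judge the proposal on its own terms.) The gap sits exactly where the argument has to do real work: the treatment of the source $\tilde\phi=\phi_x-(\la_xv)_x+(\al_xv)_{xx}$. To make $Q(t)=\iint W(y-x)|v_x(x)||v^\#(y)|$ decrease you must first pass from the equation for $v_x$ to a differential inequality for $|v_x|$ (Kato's inequality); after that step the source enters $\frac{d}{dt}Q$ as $\iint W(y-x)\,\mathrm{sgn}\bigl(v_x(t,x)\bigr)\,\tilde\phi(t,x)\,|v^\#(t,y)|\,dx\,dy$. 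The factor $\mathrm{sgn}(v_x(t,x))$ is not differentiable in $x$, so you cannot integrate by parts to ``transfer the derivatives onto the smooth weight'': $\phi_x$, $(\la_xv)_x$ and $(\al_xv)_{xx}$ are stuck under the sign factor, and $\phi_x$ --- which the proposal never addresses --- has no $L^1_{t,x}$ bound at all, since $\phi$ is merely assumed integrable. This is precisely why Proposition \ref{prop5.2} needs its sources in $L^1_{t,x}$ and why its proof does not survive differentiation of the equation. Two further points: (i) a $C^2$ kernel with $W'\le 0$ does not produce the diagonal dissipation $-c\int|v_x(x)v^\#(x)|\,dx$ from the convective terms alone; in the Bianchini--Bressan construction that term comes from the Dirac mass of $W''$ at the corner of the kernel, acting through the \emph{viscous} terms, which is incompatible with your requirement $\|W''\|_{L^\infty}<\infty$; (ii) the hypotheses contain no bounds on $\al_x$ or $\al_{xx}$, which your differentiated source requires (they hold in the application by Corollary \ref{coro2.2}, but then the proposition is no longer self-contained).

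A telling sign that the intended proof is different is the hypothesis $\norm{v^\#(t)}_{L^\f}\le C_*\de_0^2$, which your argument never uses. The standard route pays for the extra derivative with the heat kernel rather than with the source: on each unit time interval $[\tau,\tau+1]$ one represents $v_x$ by Duhamel's formula, $v_x(t)=G_x(t-\tau)*v(\tau)-\int_\tau^tG_x(t-s)*\bigl[(\la v)_x-\cdots-\phi\bigr](s)\,ds$, uses $\norm{G_x(t)}_{L^1}\le\kappa/\sqrt{t}$ as in \eqref{def:kappa}, bounds the product of the remainder with $v^\#$ by its $L^1_x$ norm times $\norm{v^\#(t)}_{L^\f}=\mathcal{O}(1)\de_0^2$, and controls the leading convolution term through the zeroth-order estimate of Proposition \ref{prop5.2}; summing over unit intervals then uses $\norm{\phi}_{L^1_{t,x}}\le\de_0$. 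Alternatively, for the specific term $\iint|v_{1,x}v_2|$ in \eqref{estim5cru} one can avoid any derivative estimate by writing $v_{1,x}=\frac{1}{\al_1}(w_1+\tilde{\la}_1v_1)$ and applying Proposition \ref{prop5.2} to the pairs $(w_1,v_2)$ and $(v_1,v_2)$, since $w_1$ solves the source-free equation \eqref{w1}. Either way, some mechanism is needed to avoid differentiating the source, and the proposal does not supply one.
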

We can apply directly the Proposition \ref{prop5.2} to $v=v_1$ and $v^\#=v_2$. We have then $\lambda=\lambda_1$, $\phi=0$, $ \la^\#=\la_2-(r_2\bullet\al_2)v_2+u_x\bullet\al_2=\la_2+(r_1\bullet\al_2)v_1$, $\phi^\#=\varphi_2$. We can observe that using \eqref{condhyperbo}, the Lemma \ref{lemme4.1} and taking $\delta_0$ sufficiently small we deduce that $\inf \la^\# -\sup\la\geq c_0>0$ with $c_0=\frac{c}{2}$. It gives by applying again Lemma \ref{lemme4.1} that
\begin{equation}
\int\limits_{T}^{T'}\int\limits_{\R}\abs{v_1v_2}\,dxdt=O(1)\delta_0^2.
\label{estim4cru}
\end{equation}
Similarly by applying the Proposition \ref{prop5.3} we can show that
\begin{equation}\label{estim5cru}
\int\limits_{T}^{T'}\int\limits_{\R}\abs{v_1v_{2,x}}\,dxdt=O(1)\delta_0^2,\;\;\int\limits_{T}^{T'}\int\limits_{\R}\abs{v_{1,x}v_{2}}\,dxdt=O(1)\delta_0^2.
\end{equation}
As in \cite{BB-vv-lim-ann-math}, we are going to prove energy estimate. Define now  a cut-off positive function $\hat{\theta}$ satisfying 
\begin{equation}
\begin{aligned}
\hat{\theta}(x)=\begin{cases}
\begin{aligned}
&0\;\;\;|x|\leq \frac{3\delta_1}{5}, \\
&\mbox{smooth connection}\;\;\  \frac{3\delta_1}{5}  
\leq |x|\leq \frac{4\delta_1}{5},\\
&1\;\;\;|x|\geq\frac{4\delta_1}{5}.
\end{aligned}
\end{cases}
\end{aligned}
\label{6.11}
\end{equation}
We assume here that $\delta_1|\hat{\theta}'|,\delta_1^2|\hat{\theta}''|\leq 16$. 
Let us multiply now \eqref{v1} by $v_1\theta\left(\frac{w_1}{v_1}\right)$ and integrate by parts, we obtain
$$
\begin{aligned}
&\int_{\R}\biggl((\frac{v_1^2\hat{\theta}}{2})_t-\frac{v_1^2}{2}(\hat{\theta}_t+2\tilde{\lambda}_1\hat{\theta}_x-(\alpha_1\hat{\theta}_x)_x)+2\alpha_1 v_{1,x}v_1\hat{\theta}_x+\hat{\theta} v_{1,x}(\alpha_1 v_{1,x}-\tilde{\lambda}_1 v_1)\biggl) dx=0.
\end{aligned}
$$
It implies that
\begin{align}
\int_\R \hat{\theta} v_{1,x}(\alpha_1 v_{1,x}-\tilde{\lambda}_1 v_1)=&- \int_{\R}(\frac{v_1^2\hat{\theta}}{2})_t dx+\int_\R \frac{v_1^2}{2}(\hat{\theta}_t+\tilde{\lambda}_1\hat{\theta}_x-(\alpha_1\hat{\theta}_x)_x)dx\nonumber\\
&-\int_{\R} 2\alpha_1 v_{1,x}v_1\hat{\theta}_x+\int_\R \frac{v_1^2}{2}\tilde{\lambda}_1\hat{\theta}_x.
\label{formenergie}
\end{align}
A direct computation gives using \eqref{formule2},
\begin{equation*}
\begin{aligned}
&\hat{\theta}_t+\tilde{\lambda}_1\hat{\theta}_x-(\alpha_1\hat{\theta}_x)_x\\
&=\hat{\theta}'(\frac{w_{1,t}}{v_1}-\frac{v_{1,t}w_1}{v_1^2})+\tilde{\lambda}_1(\frac{w_{1,x}}{v_1}-\frac{v_{1,x}w_1}{v_1^2})\\
&-\alpha'_1 v_1\hat{\theta}'\left(\frac{w_1}{v_1}\right)_x-\alpha_1\hat{\theta}''\left(\frac{w_1}{v_1}\right)_x^2
+2\alpha_1\hat{\theta}'\frac{v_{1,x}}{v_1}\left(\frac{w_1}{v_1}\right)_x-\alpha_1\hat{\theta}'(\frac{w_{1,xx}v_1-v_{1,xx}w_1}{v_1^2})\\
&=\frac{\hat{\theta}'}{v_1}(w_{1,t}+\tilde{\lambda}_1 w_{1,x}-(\alpha_1 w_{1,x})_x)-\frac{\hat{\theta}' w_1}{v_1^2}(
v_{1,t}+\tilde{\lambda}_1 v_{1,x}-(\alpha_1 v_{1,x})_x)-\alpha_1\hat{\theta}''\left(\frac{w_1}{v_1}\right)_x^2
\\
&+2\alpha_1\hat{\theta}'\frac{v_{1,x}}{v_1}\left(\frac{w_1}{v_1}\right)_x\\
&=-\alpha_1\hat{\theta}''\left(\frac{w_1}{v_1}\right)_x^2
+2\alpha_1\hat{\theta}_x\frac{v_{1,x}}{v_1}.
\end{aligned}
\end{equation*}
Putting this last expression in \eqref{formenergie}, it yields
\begin{equation}\label{formenergie1}
\begin{aligned}
\int_\R \hat{\theta} v_{1,x}(\alpha_1 v_{1,x}-\tilde{\lambda}_1 v_1)&=- \int_{\R}(\frac{v_1^2\hat{\theta}}{2})_t dx-\frac{1}{2}\int_\R \alpha_1\hat{\theta}'' v_1^2\left(\frac{w_1}{v_1}\right)_x^2dx\\
&-\int_{\R} \alpha_1 v_{1,x}v_1\hat{\theta}_x+\int_\R \frac{v_1^2}{2}\tilde{\lambda}_1\hat{\theta}_x.
\end{aligned}
\end{equation}
We observe that $\hat{\theta}(\frac{w_1}{v_1})\ne 0$ if
$|w_1|=|\alpha_1 v_{1,x}-\tilde{\lambda}_1 v_1|\geq \frac{3\delta_1}{5}|v_1|$ which gives
\begin{equation}
|\alpha_1 v_{1,x}|\geq \frac{3\delta_1}{5} |v_1|-\|\tilde{\lambda}_1\|_{L^\infty}|v_1|\geq 2\|\tilde{\lambda}_1\|_{L^\infty}|v_1|,
\label{impoti}
\end{equation}
if we assume that $\delta_1$ satisfies:
\begin{equation}
\delta_1\geq 10\|\lambda'_1\|_{L^\infty}\delta_0+2\delta_0\|\alpha'_1\|_{L^\infty}\geq 5\|\tilde{\lambda}_1\|_{L^\infty}.
\label{5.7}
\end{equation}
From \eqref{formenergie1}, \eqref{impoti}, \eqref{condialpha} we get for $t\in[\hat{t},T]$
\begin{equation}
\begin{aligned}
\frac{c_1}{2 }\int_\R \hat{\theta} v_{1,x}^2&\leq- \int_{\R}\left(\frac{v_1^2\hat{\theta}}{2}\right)_t dx+O(1)\int_{\{ |\frac{w_1}{v_1}|\leq \delta_1\} } v_1^2 | \hat{\theta}'' |\left(\frac{w_1}{v_1}\right)_x^2dx\\
&+O(1)\int_\R(1+|\frac{v_{1,x}}{v_1}|  \chi_{\{|\frac{w_1}{v_1}|\leq\frac{4\delta_1}{5}\}} )|w_{1,x}v_1-v_{1,x}w_1|  dx.
\end{aligned}
\label{formenergie3}
\end{equation}
We can now estimate the term $v_{1,x}(w_1+\si_1v_1)$, using the fact that this term is different from zero only if $|\frac{w_1}{v_1}|\geq\delta_1$ we get from \eqref{formenergie1}, \eqref{estim3cru}, \eqref{estim2crubis} and Lemma \ref{lemme4.1}
\begin{equation}
\begin{aligned}
\int^T_{\hat{t}}\int_{\R}|v_{1,x}(w_1-\sigma_1 v_1)| dx ds&\leq O(1)\int^T_{\hat{t}}\int_{\R} \chi_{\{|\frac{w_1}{v_1}|\geq\delta_1\}} (v_{1,x}^2+|v_1v_{1,x}|) ds dx\\
&\leq O(1)\int^T_{\hat{t}}\int_{\R}  \chi_{\{|\frac{w_1}{v_1}|\geq\delta_1\}} v_{1,x}^2 ds dx\leq O(1)\delta_0^2.
\end{aligned}
 \label{estim6cru}
\end{equation}
Finally using \eqref{4.19}, \eqref{estim2cru}, \eqref{estim2crubis}, \eqref{estim3cru}, \eqref{estim4cru}, \eqref{estim5cru} and \eqref{estim6cru} we have proved that
$$
\int_{\hat{t}}^T \int_{\R}|\phi_2(s,x)| ds dx\leq O(1) \delta_0^2,$$
which contradict the definition of $T$ provided that $\delta_0$ is chosen sufficiently small. In particular since $T=+\infty$ we deduce by maximum principle that we have for any $t\in\R$
\begin{equation}
\|u_{1,x}(t,\cdot)\|_{L^1}+\|u_{2,x}(t,\cdot)\|_{L^1}\leq\delta_0.
\end{equation}
\section{Stability}\label{section:stability}
We wish now to get stability estimate, in other words let us consider two initial data $\bar{u}$ and $\bar{v}$ of the equation \eqref{eqn-main} satisfying the smallness assumption on $\mbox{Tot.Var.}\bar{u}$ and $\mbox{Tot.Var.}\bar{v}$ such that \eqref{eqn-main} admits respectively a global solution $u$ and $v$ associated to the initial data $\bar{u}$ and $\bar{v}$ with small Total Variation. In addition we assume that $(\bar{u}-\bar{v})\in L^1(\R)$ then we are going to prove that there exists a constant $L$ depending on $\delta_0$ such that for any $t\in\R^+$ we have
 \begin{equation}
 \|u(t)-v(t)\|_{L^1}\leq\|\bar{u}-\bar{v}\|_{L^1}.
 \label{stab1}
 \end{equation}
 To do this following \cite{BB-vv-lim-ann-math}, we employ an homotopy argument, setting $\bar{u}^\theta=\theta\bar{u}+(1-\theta)\bar{v}$ with $\theta\in[0,1]$ and denoting by $u^\theta$ the solution of 
  \eqref{eqn-main} with this initial data (this solution exists due to the Theorem \ref{theo1}) we are going to consider the unknown $z^\theta(t,x)=\frac{d u^\theta}{d\theta}(t,x)$ which satisfies the following system
\begin{equation}
\begin{cases}
\begin{aligned}
&	z^\theta_t+(A(u)z^\theta)_x=\left(B(u)z^{\theta}_x+z^\theta\bullet B(u^\theta) u^\theta_x \right)_x,\\
&z^\theta(0,\cdot)=\bar{u}-\bar{v}.
\end{aligned}
\end{cases}
\end{equation}
Indeed we can observe that $u^\theta_x\bullet A(u^\theta) z^\theta= z^\theta\bullet A(u^\theta) u^\theta_x$. It remains now to estimate the $L^1$ norm of $z^\theta$ all along the time, in particular if we show that for any $t\geq 0$
\begin{equation}
\|z^\theta(t,\cdot)\|_{L^1}\leq L \|z^\theta(0,\cdot)\|_{L^1},
\label{stab2}
\end{equation}
 then we deduce that for any $t\geq 0$
 $$
 \begin{aligned}
 &\|u(t)-v(t)\|_{L^1}\leq \int^1_0\|z^\theta(s)\| ds\leq L\|\bar{u}-\bar{v}\|_{L^1},
 \end{aligned}
 $$
 which is exactly the estimate \eqref{stab1} what conclude the proof of the stability. It remains now to prove the estimate \eqref{stab2}. We are reduced to study the $L^1$ norm of the solution of the following equation with $u$ satisfying  \eqref{eqn-main} 
\begin{equation}
	h_t+(Ah)_x=(Bh)_{xx}+(h\bullet B u_x-u_x\bullet B  h)_x.
	\label{equationh}
\end{equation}
\subsection{Parabolic estimates}
As in the previous section, we can get regularity estimates on $h$, by applying similar arguments (it is clear since $h$ satisfies the same equation than $u_x$ except that the initial data is a priori different).
\begin{proposition}\label{prop:parabolic2}
	There exists $C_1,C_2,C_3>0$ depending on $\kappa_A,\kappa_B,\kappa$ such that if $h$ is the solution of the equation \eqref{equationh} satisfying 
	\begin{equation}\label{assumption:h-L1}
		\norm{(h)_1(t,\cdot)}_{L^1}+\norm{(h)_2(t,\cdot)}_{L^1}\leq \de_0\mbox{ for all }t\in[0,\hat{t}]\mbox{ where }\hat{t}\leq \frac{1}{(C_3\delta_0)^2},
	\end{equation}
	for $C_3$ sufficiently large depending on  $\kappa_A,\kappa_B, \kappa$ and $\delta_0$ sufficiently small in terms of $C_1,C_2, C_3$, $\kappa_A$, $\kappa_B$ and $\kappa$ then
	we have for $i\in\{1,2\}$
	\begin{equation}\label{estimate:parabolic-h-1}
		\norm{h_{i,x}(t,\cdot)}_{L^1}\leq \frac{C_1\delta_0}{\sqrt{t}}, \norm{h_{i,xx}(t,\cdot)}_{L^1}\leq 
		\frac{C_2\de_0}{t},\mbox{ and }\norm{h_{i,xxx}(t,\cdot)}_{L^1}\leq 
		\frac{C_3}{t^{3/2}}.
	\end{equation}
\end{proposition}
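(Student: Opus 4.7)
The plan is to mirror the proof of Proposition \ref{prop:parabolic} almost line by line, exploiting that \eqref{equationh} is linear in $h$ with coefficients depending only on $u$ (whose derivatives are already controlled by Proposition \ref{prop:parabolic} and Corollary \ref{coro2.2}). Since $u$ satisfies \eqref{assumption:u-L1} automatically (by shrinking $\bar{u}$), we may use the bounds $\|u_{i,xx}\|_{L^1}=\mathcal{O}(\delta_0/\sqrt{t})$, $\|u_{i,xxx}\|_{L^1}=\mathcal{O}(\delta_0/t)$, $\|u_{i,xxxx}\|_{L^1}=\mathcal{O}(\delta_0/t^{3/2})$ as \emph{data} throughout. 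The bootstrap is run by contradiction: assume the first bound in \eqref{estimate:parabolic-h-1} fails and let $t^*\leq\hat{t}$ be the smallest time at which equality is attained; smoothness of $h$ on $[0,t^*]$ (from smoothness of the initial data, then by density) is used to justify all manipulations, and we derive the strict inequality at $t^*$, a contradiction.

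To set this up concretely, I would introduce $k:=P(u)h$ as in \eqref{diag}, so that \eqref{equationh} is transformed into a system
\begin{equation*}
k_t+A_1(u)k_x-B_1(u)k_{xx}=\widetilde{{\cal G}},
\end{equation*}
where $\widetilde{\cal G}$ is an explicit combination of $P(u)$, $P(u)^{-1}$, derivatives of $P(u)$ along $u$, and the extra transport term $(h\bullet Bu_x-u_x\bullet Bh)_x$. The latter is the genuinely new ingredient compared with \eqref{sol29}; its components are bilinear in $(h,u_x)$ and $(h,u_{xx})$, $(h_x,u_x)$, which by H\"older and the bounds on $u$ are absorbed as $\mathcal{O}(\delta_0)\|h_x\|_{L^1}+\mathcal{O}(\delta_0/\sqrt{t})\|h\|_{L^1}+\ldots$. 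Next, I would apply the time-dependent change of variables $\widetilde{k}_i(t,\cdot)=k_i(t,X_i^{-1}(t,\cdot))$ with the same $X_i$ as in Proposition \ref{prop:parabolic}, so that the diffusion becomes constant and the equation takes the form
\begin{equation*}
\widetilde{k}_{i,t}+\mu_i^*\widetilde{k}_{i,x}-\widetilde{k}_{i,xx}={\cal F}_i(t,\cdot),
\end{equation*}
where ${\cal F}_i$ gathers all remainder terms (including the drift perturbation $\widetilde{\cal A}_i$ analogous to \eqref{defn:A-i} and the transport correction $-\widetilde{k}_{i,x}X_{i,t}(\cdot,X_i^{-1})$).

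The first estimate in \eqref{estimate:parabolic-h-1} is then obtained from the Duhamel representation
\begin{equation*}
\widetilde{k}_{i,x}(t,\cdot)=G_{i,x}(t/2,\cdot)*\widetilde{k}_i(t/2,\cdot)-\int_{t/2}^tG_{i,x}(t-s,\cdot)*{\cal F}_i(s,\cdot)\,ds,
\end{equation*}
and the estimates \eqref{def:kappa}. The leading contribution gives $\mathcal{O}(\kappa\delta_0/\sqrt{t})$; every term inside ${\cal F}_i$ is of the schematic form $(\text{coefficient in }u\text{ or }u_x)\cdot(\text{component of }h\text{ or }h_x)$, so bounding it produces either a factor $\delta_0\|h_{x}\|_{L^1}$ or $\mathcal{O}(\delta_0^2/\sqrt{s})\|h\|_{L^1}$, which after the $1/\sqrt{t-s}$ convolution and integration over $[t/2,t]$ remains of order $\delta_0/\sqrt{t}$ provided $\delta_0$ is small and $\hat{t}\leq 1/(C_3\delta_0)^2$. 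Once the first estimate is established, the second and third follow by the same Duhamel loop applied to $\widetilde{k}_{i,xx}$ and $\widetilde{k}_{i,xxx}$ (as in \eqref{estima21} and \eqref{estima31}), using the first estimate together with the bounds on $u$ from Corollary \ref{coro2.2}.

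The main obstacle is purely bookkeeping: the transport correction $(h\bullet Bu_x-u_x\bullet Bh)_x$ produces, after differentiation and the change of variables, a profusion of bilinear terms involving $(h,u)$-derivatives up to one order higher than in Proposition \ref{prop:parabolic} (analogues of the $\mathcal R_1,\ldots,\mathcal R_4$ blocks). The key point is that each such term contains \emph{at least one factor from $u$} which, by the already established bounds, carries an $\mathcal{O}(\delta_0)$ smallness or an integrable-in-time singularity weaker than that of the corresponding $h$-factor; consequently the absorption argument that closes the bootstrap for $u$ in \eqref{estim5bis}--\eqref{estima-7} goes through essentially unchanged for $h$, after taking $C_1, C_2, C_3$ large with respect to $\kappa, \kappa_A, \kappa_B$ and $\delta_0$ correspondingly small. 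Since the equation is linear in $h$, there is no genuine nonlinear obstruction beyond the careful tracking of the commutator-type terms introduced by the transport perturbation.
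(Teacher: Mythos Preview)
Your approach is correct and is essentially the one the paper takes: the paper simply notes that $h$ satisfies the \emph{same} equation as $u_x$ (with $u$ appearing as a coefficient function and only the initial data different), so the proof of Proposition~\ref{prop:parabolic} applies verbatim. You overcomplicate slightly by treating $(h\bullet Bu_x-u_x\bullet Bh)_x$ as a ``genuinely new'' term: if you expand $(Bh)_{xx}=(Bh_x+u_x\bullet Bh)_x$, the $u_x\bullet Bh$ piece cancels and \eqref{equationh} becomes $h_t+(Ah)_x=(Bh_x+h\bullet Bu_x)_x$, which is exactly \eqref{ux} with $u_x$ replaced by $h$; in particular the remainder terms have the \emph{same} order as in Proposition~\ref{prop:parabolic}, not one order higher.
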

\begin{corollary}
\label{coro2.3} Let $T>\hat{t}$. Assume that the solution $h$ of  \eqref{equationh} satisfies on $[0,T]$
$$\|(h)_1(t,\cdot)\|_{L^1}+\|(h)_2(t,\cdot)\|_{L^1}\leq \delta_0,$$
then for all $t\in[\hat{t},T]$ we have
\begin{equation}\label{estimate:parabolic-2-h}
	\begin{aligned}
		&\sum_{i=1}^2\norm{h_{i,x}(t,\cdot)}_{L^1}, \sum_{i=1}^2\norm{h_{i}(t,\cdot)}_{L^\infty},=O(1)\delta_0^2,\\
		&\sum_{i=1}^2 \norm{h_{i,xx}(t,\cdot)}_{L^1},\sum_{i=1}^2 \norm{h_{i,x}(t,\cdot)}_{L^\infty}=O(1)\delta_0^3,\\
		&\sum_{i=1}^2\norm{h_{i,xxx}(t,\cdot)}_{L^1},\sum_{i=1}^2\norm{h_{i,xx}(t,\cdot)}_{L^\infty}=O(1)\delta_0^4.
		\end{aligned}
		\end{equation}
\end{corollary}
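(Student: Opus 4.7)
My plan is to establish Corollary~\ref{coro2.3} as a direct consequence of Proposition~\ref{prop:parabolic2}, in exactly the same way that Corollary~\ref{coro2.2} is deduced from Proposition~\ref{prop:parabolic} for the solution $u$ itself. The strategy is simply to shift the time origin.

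Concretely, for any fixed $t\in[\hat{t},T]$, I would apply Proposition~\ref{prop:parabolic2} on the time window $[t-\hat{t},t]$, treating $h(t-\hat{t},\cdot)$ as a fresh initial datum for a Cauchy problem of length $\hat{t}$. The smallness hypothesis \eqref{assumption:h-L1} demanded by that proposition is transferred from the standing assumption $\|h_1(s,\cdot)\|_{L^1}+\|h_2(s,\cdot)\|_{L^1}\leq\delta_0$, which holds on all of $[0,T]$ and in particular on $[t-\hat{t},t]$. Evaluating the conclusion of Proposition~\ref{prop:parabolic2} at the endpoint of the shifted interval then yields
\[
\|h_{i,x}(t,\cdot)\|_{L^1}=\mathcal{O}\!\left(\tfrac{\delta_0}{\sqrt{\hat{t}}}\right),\qquad \|h_{i,xx}(t,\cdot)\|_{L^1}=\mathcal{O}\!\left(\tfrac{\delta_0}{\hat{t}}\right),\qquad \|h_{i,xxx}(t,\cdot)\|_{L^1}=\mathcal{O}\!\left(\tfrac{\delta_0}{\hat{t}^{3/2}}\right),
\]
and combining this with the constraint $\hat{t}\sim \delta_0^{-2}$ built into the definition of $\hat{t}$ produces precisely the orders $\delta_0^2$, $\delta_0^3$, and $\delta_0^4$ claimed in \eqref{estimate:parabolic-2-h}.

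The $L^\infty$ estimates will then follow from the one-dimensional embedding $\|f\|_{L^\infty(\R)}\leq\|f_x\|_{L^1(\R)}$, valid for any smooth $f$ that decays at $-\infty$ (which is automatic here because each of $h_i$, $h_{i,x}$, $h_{i,xx}$ is smooth and has finite $L^1$ norm by the preceding step). Applying this in turn to $h_i$, $h_{i,x}$, and $h_{i,xx}$ converts each $L^1$ bound on a derivative into the required $L^\infty$ bound on the derivative of one order lower, producing the three $L^\infty$ estimates of the corollary.

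Because the whole argument reduces to a single invocation of Proposition~\ref{prop:parabolic2} on a translated time interval, there is essentially no substantive obstacle in this step; the main care required is the bookkeeping of powers of $\delta_0$ and verifying that the hypothesis of the proposition is inherited from that of the corollary, which is automatic. The genuinely difficult work has already been done upstream in Proposition~\ref{prop:parabolic2} itself, which supplies the parabolic smoothing estimates for the linearized system \eqref{equationh} with variable viscosity $B(u)$.
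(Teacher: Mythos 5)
Your proposal is correct and coincides with the paper's own (one-line) proof: the corollary is obtained by applying the parabolic regularization proposition for $h$ on the shifted window $[t-\hat t,t]$ and using $\hat t\sim\delta_0^{-2}$, with the $L^\infty$ bounds following from $\|f\|_{L^\infty(\R)}\le\|f_x\|_{L^1(\R)}$. Your bookkeeping of the powers of $\delta_0$ and the inheritance of the smallness hypothesis are exactly as intended.
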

\begin{proof} It suffices to apply Proposition \eqref{prop:parabolic} on $[t-\hat{t},t]$.
\end{proof}
\begin{proposition}
\label{prop2.3-h}
There exists $C>0$ large enough depending on $\kappa_A$, $\kappa_B$, $\kappa$ such that
 if the solution $h$ of  \eqref{equationh} verifies:
 \begin{equation}
 \|h(0,\cdot)\|_{L^1}\leq\frac{\delta_0}{C},
 \label{2.16-h}
 \end{equation}
 then $h$ satisfies on the whole interval $[0,\hat{t}]$:
 \begin{equation}
 \|h(t,\cdot)\|_{L^1}\leq \frac{\delta_0}{2}.
 \label{2.17-h}
 \end{equation}
\end{proposition}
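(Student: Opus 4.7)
The plan is to adapt the contradiction/bootstrap scheme used for Proposition \ref{prop2.3} (see \cite[Proposition 2.3]{BB-vv-lim-ann-math} and \cite[Proposition 3.6]{HJ}), combined with the new parabolic regularization estimates for $h$ provided by Proposition \ref{prop:parabolic2}. The essential point is that the linear equation \eqref{equationh} for $h$ has the same principal part as the equation \eqref{ux} for $u_x$, so after diagonalization by $P(u)$ one obtains, for $\tilde{h}=P(u)h$, a system of the form
\begin{equation*}
\tilde{h}_{i,t}+\lambda_i(u^*)\tilde{h}_{i,x}-\alpha_i(u^*)\tilde{h}_{i,xx}={\cal K}_i,
\end{equation*}
where the forcing ${\cal K}_i$ collects: (i) the commutator term $(h\bullet B u_x-u_x\bullet B h)_x$ expanded in the chosen basis; (ii) the lower order perturbations coming from the difference $A(u)-A(u^*)$, $B(u)-B(u^*)$ and from time/space derivatives of $P(u)$ (analogous to ${\cal R},{\cal R}_1,\ldots,{\cal R}_4$ and ${\cal A}_i$ of Section \ref{sec:parabolic}).

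First, I would argue by contradiction: supposing \eqref{2.17-h} fails on $[0,\hat t\,]$, by continuity there exists a first time $t^*\in(0,\hat t\,]$ with $\|h(t^*,\cdot)\|_{L^1}=\delta_0/2$, and on $[0,t^*]$ the hypothesis \eqref{assumption:h-L1} of Proposition \ref{prop:parabolic2} is satisfied. Hence the short-time bounds
\begin{equation*}
\|h_{i,x}(t,\cdot)\|_{L^1}\leq\frac{C_1\delta_0}{\sqrt t},\qquad \|h_{i,xx}(t,\cdot)\|_{L^1}\leq\frac{C_2\delta_0}{t}
\end{equation*}
are available on $(0,t^*]$, together with the analogous estimates \eqref{estimate:parabolic-1} for $u$. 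Using Duhamel with the heat kernels $G_1,G_2$ and the $L^1$-bounds \eqref{def:kappa}, the representation of $\tilde{h}_i$ gives
\begin{equation*}
\|\tilde h_i(t,\cdot)\|_{L^1}\leq \kappa\,\|\tilde h_i(0,\cdot)\|_{L^1}+\kappa\int_0^{t}\|{\cal K}_i(s,\cdot)\|_{L^1}\,ds.
\end{equation*}
Each term in ${\cal K}_i$ is a product of a factor involving $u_x,u_{xx}$ (or $P(u)-P(u^*)$, $B(u)-B(u^*)$, etc.) and a factor involving $h$ or $h_x$. Using $\|u_x\|_{L^\infty}=\mathcal{O}(\delta_0/\sqrt t)$, $\|u_{xx}\|_{L^1}=\mathcal{O}(\delta_0/\sqrt t)$, and the matching bounds for $h$, every contribution is integrable in time with a factor which is either $\delta_0$ (from the smallness of $u_x$) or a $1/\sqrt{t-s}$-singularity absorbed by the heat kernel $L^1$-bound.

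Collecting, one obtains an estimate of the form
\begin{equation*}
\|h(t,\cdot)\|_{L^1}\leq C'\|h(0,\cdot)\|_{L^1}+C'\delta_0\sup_{s\in[0,t]}\|h(s,\cdot)\|_{L^1},
\end{equation*}
for a constant $C'$ depending only on $\kappa,\kappa_A,\kappa_B$. Taking $\delta_0$ so small that $C'\delta_0\leq 1/4$ and then choosing $C\geq 4C'$ in \eqref{2.16-h}, one gets $\|h(t^*,\cdot)\|_{L^1}\leq \frac{\delta_0}{4C}+\frac{\delta_0}{8}<\frac{\delta_0}{2}$, contradicting the definition of $t^*$. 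By density this extends from smooth to $L^1$ data, yielding \eqref{2.17-h}. The main technical obstacle is to show that the commutator-type term $(h\bullet Bu_x-u_x\bullet Bh)_x$, which after differentiation produces pieces involving $h\cdot u_{xx}$ and $h_x\cdot u_x$, is integrable in time with the expected smallness: this is handled exactly as in the estimates \eqref{estim1}--\eqref{estim4} of Section \ref{sec:parabolic} upon replacing one factor of $u_x$ by the corresponding factor of $h$ and invoking Proposition \ref{prop:parabolic2}.
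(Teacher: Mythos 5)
The paper itself gives no written proof of this proposition (just as the analogous Proposition \ref{prop2.3} is justified only by reference to \cite{BB-vv-lim-ann-math} and \cite{HJ}), and your sketch correctly reconstructs the intended argument: a first-time-of-failure contradiction, Duhamel for the diagonalized unknown $P(u)h$ against the constant-coefficient kernels $G_i$, the short-time bounds of Proposition \ref{prop:parabolic2}, and absorption of the term $C'\delta_0\sup_{s}\|h(s)\|_{L^1}$. The one point your blanket integrability claim glosses over is that the commutator source must be kept in divergence form and paired with $G_{i,x}$ — estimating $\|h\bullet Bu_x-u_x\bullet Bh\|_{L^1}=\mathcal{O}(1)\|u_x\|_{L^\infty}\|h\|_{L^1}=\mathcal{O}(\delta_0^2/\sqrt{s})$, which is integrable against $\kappa/\sqrt{t-s}$ — because expanding the outer derivative produces pieces such as $u_xh_x$ and $u_{xx}h$ whose $L^1$ norms are only $\mathcal{O}(\delta_0^2/s)$ and are therefore not integrable down to $s=0$.
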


\subsection{Gradient decomposition}
We wish now to  estimate on $h(t,\cdot)$ in $L^1$ when $t\geq\hat{t}$. As previously we decompose
$h$ as a sum of viscous travelling waves
\begin{equation}\label{decomp-h}
	h=h_1\tilde{r}_1(u,v_1,\sigma_1)+h_2r_2,
\end{equation}
with the same choice on $\sigma_1$ as in \eqref{formulesigma}. In the sequel we distinguish $(h)_2$ which is the second coordinate of $h$ whereas $h_2$ is the new variable introduced in \eqref{decomp-h}. From the previous estimate on $h$ we deduce that
\begin{lemma}  Let $T>\hat{t}$. Assume that the solution $h$ of  \eqref{equationh} satisfies on $[0,T]$
$$\|(h)_{1}(t,\cdot)\|_{L^1}+\|(h)_2(t,\cdot)\|_{L^1}\leq \delta_0,$$
then for all $t\in[\hat{t},T]$ we have
\begin{equation}
	\begin{aligned}
	&\sum_{i=1}^2\norm{h_i(t,\cdot)}_{L^1}=O(1)\delta_0\\
		&\sum_{i=1}^2\norm{h_{i,x}(t,\cdot)}_{L^1}, \sum_{i=1}^2\norm{h_{i}(t,\cdot)}_{L^\infty} =O(1)\delta_0^2\\
		&\sum_{i=1}^2 \norm{h_{i,xx}(t,\cdot)}_{L^1},\sum_{i=1}^2 \norm{h_{i,x}(t,\cdot)}_{L^\infty}=O(1)\delta_0^3
		,\\
		&\sum_{i=1}^2\norm{h_{i,xxx}(t,\cdot)}_{L^1}, \sum_{i=1}^2\norm{h_{i,xx}(t,\cdot)}_{L^\infty}=O(1)\delta_0^4.
		\end{aligned}
		\end{equation}
\label{lemme4.2}
\end{lemma}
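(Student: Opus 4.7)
The strategy is to pass from the estimates on the Cartesian coordinates $(h)_1,(h)_2$ of $h$ (provided by Corollary \ref{coro2.3} applied to $h$) to the coordinates $h_1,h_2$ in the moving basis $(\tilde r_1(u,v_1,\sigma_1),r_2)$, using only the algebraic structure of the decomposition together with the bounds of Lemma \ref{lemme4.1} on the state $(u,v_1,w_1,\sigma_1)$. Since $\tilde r_1(u,v_1,\sigma_1)=\bigl(1,s(u,v_1,\sigma_1)\bigr)^T$ and $r_2=(0,1)^T$, the decomposition \eqref{decomp-h} is equivalent to
\begin{equation*}
h_1=(h)_1,\qquad h_2=(h)_2-s(u,v_1,\sigma_1)\,(h)_1.
\end{equation*}
Thus $h_1$ inherits immediately all $L^1$, $L^\infty$ and derivative bounds of $(h)_1$ from Corollary \ref{coro2.3}. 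For $h_2$ the extra term $s(u,v_1,\sigma_1)(h)_1$ is controlled thanks to \eqref{cle1}, which asserts $s=\mathcal O(1)v_1$, and to the pointwise bound $|v_1(t,\cdot)|_{L^\infty}=\mathcal O(1)\delta_0$ for $t\geq\hat t$ coming from Lemma \ref{lemme4.1}. This already yields the $L^1$ and $L^\infty$ bounds of the lemma for $h_1$ and $h_2$.

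For the higher derivatives I would just differentiate the identity $h_2=(h)_2-s(u,v_1,\sigma_1)(h)_1$ and distribute the derivatives through $s$. Each $\partial_x$ falling on $s$ produces one of $s_u\cdot u_x$, $s_{v_1}v_{1,x}$ or $s_{\sigma_1}\sigma_{1,x}$; using \eqref{cle1} (so that $s_u,s_{v_1},s_\sigma$ are $\mathcal O(1)$ and $s_\sigma=\mathcal O(v_1)$), the definition \eqref{formulesigma} of $\sigma_1$ (so that $\sigma_{1,x}=\mathcal O(1)\left(\tfrac{w_{1,x}}{v_1}-\tfrac{v_{1,x}w_1}{v_1^2}\right)\chi_{\{|w_1/v_1|\le 3\delta_1\}}$), and the estimates of Lemma \ref{lemme4.1} on $u_x,v_{1,x},v_{1,xx},w_1,w_{1,x},w_{1,xx}$, every term one encounters is a product of a bounded factor with a quantity already estimated in Corollary \ref{coro2.3} or Lemma \ref{lemme4.1}. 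The same bookkeeping, iterated once more, handles $h_{2,xx}$ and $h_{2,xxx}$, with each extra derivative costing at most one extra power of $\delta_0/\sqrt{t}$ which is absorbed because $t\geq\hat t\sim\delta_0^{-2}$.

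I expect the only genuinely delicate point is the treatment of $\sigma_{1,x}$ (and its higher $x$-derivatives) on the support $\{|w_1/v_1|\le 3\delta_1\}$, where $v_1$ may be small and the ratios $w_1/v_1$ and $(w_1/v_1)_x$ are not individually bounded by Lemma \ref{lemme4.1}. The resolution is the same as in Section \ref{section:rem}: on this set one has $|v_{1,x}|\le\tfrac{6}{5}\delta_1|v_1|$, so that every occurrence of $v_1$ in a denominator is cancelled by the cutoff $\theta'_1$, leaving only polynomial expressions in $v_1,v_{1,x},v_{1,xx},w_1,w_{1,x}$. Once this is observed, the bounds of the lemma follow directly by applying Corollary \ref{coro2.3} and Lemma \ref{lemme4.1} term by term.
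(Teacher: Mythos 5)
Your proposal is correct and follows exactly the route the paper intends: the paper's own justification for this lemma is the one-line observation (as for Lemma \ref{lemme4.1}) that it suffices to combine Corollary \ref{coro2.3} with the decomposition \eqref{decomp-h}, i.e. $h_1=(h)_1$ and $h_2=(h)_2-s(u,v_1,\sigma_1)(h)_1$, together with the bounds \eqref{cle1} and Lemma \ref{lemme4.1}. Your identification of the only delicate point (the factors $\sigma_{1,x}$, $\sigma_{1,xx}$, where the apparent $1/v_1$ singularities are neutralized by $s_\sigma=\mathcal O(1)v_1$ and by $|v_{1,x}|=\mathcal O(\delta_1)|v_1|$ on $\mathrm{supp}\,\theta_1'$) is the right one, and your argument fills in more detail than the paper itself provides.
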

We calculate
\begin{align*}
	&(Bh)_x+(h\bullet B u_x-u_x\bullet B  h)	-Ah\\
	&=(\al_1h_1\tilde{r}_1+(s_1h_1+\al_2 h_2)r_2)_x-h_1A\tilde{r}_1-\la_2h_2r_2+(h_1v_2-h_2v_1) (\tilde{r}_1\cdot DB r_2-r_2\cdot DB\tilde{r}_1)\\
	&=\al_1h_{1,x}\tilde{r}_1+h_1\left[-A\tilde{r}_1+\al_1^\p v_1\tilde{r}_1+\al_1v_1\tilde{r}_1\bullet \tilde{r}_1+v_1\tilde{r}_1\bullet s_1r_2\right]+\al_1v_2h_1r_2\bullet \tilde{r}_1\\
	&+\al_1v_{1,x}h_1\tilde{r}_{1,v}+\al_1h_1\si_{1,x}\tilde{r}_{1,\si}+h_{1,x}s_1r_2+v_2h_1r_2\bullet s_1r_2\\
	&-\theta_1^\p \left(\frac{w_1}{v_1}\right)_x h_1 s_{1,\si}r_2+v_{1,x}h_1s_{1,v}r_2+\tilde{r}_1\bullet \al_2 v_1h_2r_2+r_2\bullet \al_2 v_2h_2r_2\\
	&+(\al_2 h_{2,x}-\la_2h_2)r_2+(h_1v_2-h_2v_1) (\tilde{r}_1\cdot DB r_2-r_2\cdot DB\tilde{r}_1).
\end{align*}
By using \eqref{rel:A-v} we have
\begin{align*}
	&(Bh)_x
	-Ah\\
	&=(\al_1h_{1,x}-(\la_1-v_1 \al_1^\p)h_1)\tilde{r}_1+(\al_1v_{1,x}-(\la_1-v_1\al_1^\p-\si_1)v_1)h_1\tilde{r}_{1,v}\\
	&+\frac{1}{\al_1}(\al_1 h_{1,x}-(\la_1-v_1\al_1^\p-\si_1)h_1)s_1r_2+\al_1v_2h_1r_2\bullet \tilde{r}_1\\
	&+\frac{1}{\al_1}(\al_1 v_{1,x}-(\la_1-v_1\al_1^\p-\si_1)v_1)h_1s_{1,v}r_2\\
	&-\al_1h_1\theta_1^\p \left(\frac{w_1}{v_1}\right)_x\tilde{r}_{1,\si}+v_2h_1r_2\bullet s_1r_2-  h_1\theta_1^\p \left(\frac{w_1}{v_1}\right)_x s_{1,\si}r_2\\
	&+\tilde{r}_1\bullet \al_2 v_1h_2r_2+(\al_2 h_{2,x}-(\la_2-r_2\bullet \al_2 v_2)h_2)r_2\\
	&+(h_1v_2-h_2v_1) (\tilde{r}_1\cdot DB r_2-r_2\cdot DB\tilde{r}_1)
\end{align*}
Taking derivative with respect to $x$ we get
\begin{align*}
	&[(Bh)_x+(h\bullet B u_x-u_x\bullet B  h)
	-Ah]_x\\
	&=(\al_1h_{1,x}-(\la_1-v_1 \al_1^\p)h_1)_x\tilde{r}_1+(\al_1h_{1,x}-(\la_1-v_1 \al_1^\p)h_1)v_1\tilde{r}_1\bullet \tilde{r}_1\\
	&+(\al_1h_{1,x}-(\la_1-v_1 \al_1^\p)h_1)v_2r_2\bullet \tilde{r}_1+(\al_1h_{1,x}-(\la_1-v_1 \al_1^\p)h_1)v_{1,x}\tilde{r}_{1,v}\\
	&-\theta_1^{\p} \left(\frac{w_1}{v_1}\right)_x(\al_1h_{1,x}-(\la_1-v_1 \al_1^\p)h_1)\tilde{r}_{1,\si}+(\al_1v_{1,x}-(\la_1-v_1\al_1^\p-\si_1)v_1)_xh_1\tilde{r}_{1,v}\\
	&+(\al_1v_{1,x}-(\la_1-v_1\al_1^\p-\si_1)v_1)h_{1,x}\tilde{r}_{1,v}+(\al_1v_{1,x}-(\la_1-v_1\al_1^\p-\si_1)v_1)h_1v_1\tilde{r}_1\bullet \tilde{r}_{1,v}\\
	&+(\al_1v_{1,x}-(\la_1-v_1\al_1^\p-\si_1)v_1)h_1v_2r_2\bullet \tilde{r}_{1,v}+(\al_1v_{1,x}-(\la_1-v_1\al_1^\p-\si_1)v_1)h_1v_{1,x}\tilde{r}_{1,vv}\\
	&-(\al_1v_{1,x}-(\la_1-v_1\al_1^\p-\si_1)v_1)h_1\theta_1^\p \left(\frac{w_1}{v_1}\right)_x\tilde{r}_{1,v\si}\\
	&-\frac{\al_1^\p v_1}{\al_1^2}(\al_1 h_{1,x}-(\la_1-v_1\al_1^\p-\si_1)h_1)s_1r_2\\
	&+\frac{1}{\al_1}((\al_1 h_{1,x}-(\la_1-v_1\al_1^\p-\si_1)h_1)s_1)_xr_2+\al^\p_1v_1v_2h_1r_2\bullet \tilde{r}_1+\al_1(v_{2,x}h_1+v_2h_{1,x})r_2\bullet \tilde{r}_1\\
	&+\al_1v_2h_1(r_2\otimes u_x):D^2_u \tilde{r}_1+\al_1v_2h_1v_{1,x}r_2\bullet \tilde{r}_{1,v}-\al_1 \theta_1^\p \left(\frac{w_1}{v_1}\right)_xv_2h_1r_2\bullet \tilde{r}_{1,\si}\\
	&-\frac{\al_1^\p v_1}{\al^2_1}(\al_1 v_{1,x}-(\la_1-v_1\al_1^\p-\si_1)v_1)h_1s_{1,v}r_2+\frac{1}{\al_1}((\al_1 v_{1,x}-(\la_1-v_1\al_1^\p-\si_1)v_1)h_1s_{1,v})_xr_2\\
	&-v_1\al^\p_1 h_1\theta_1^\p \left(\frac{w_1}{v_1}\right)_x \tilde{r}_{1,\si}-\al_1h_1\left[-\theta_1^{\p\p} \left(\frac{w_1}{v_1}\right)_x^2+\theta_1^\p \left(\frac{w_1}{v_1}\right)_{xx}\right]\tilde{r}_{1,\si}\\
	&-\al_1 h_{1,x}\theta_1^\p\left(\frac{w_1}{v_1}\right)_x\tilde{r}_{1,\si}-\al_1  h_1\theta_1^\p \left(\frac{w_1}{v_1}\right)_xv_1\tilde{r}_1\bullet \tilde{r}_{1,\si}-\al_1 h_1\theta_1^\p \left(\frac{w_1}{v_1}\right)_xv_2r_2\bullet \tilde{r}_{1,\si}\\
	&-\al_1 h_1\theta_1^\p \left(\frac{w_1}{v_1}\right)_xv_{1,x}\tilde{r}_{1,v\si}+\al_1 h_1(\theta_1^\p)^2 \left(\frac{w_1}{v_1}\right)^2_x\tilde{r}_{1,\si\si}\\
	&+(v_{2,x}h_1+v_2h_{1,x})r_2\bullet s_1r_2+v_2h_1(u_x\otimes r_2):D^2 s_1r_2+v_2h_1v_{1,x}r_2\bullet s_{1,v}r_2\\
	&-v_2h_1 \theta_1^\p \left(\frac{w_1}{v_1}\right)_xr_2\bullet s_{1,\si}r_2+ h_1\left[\theta_1^{\p\p} \left(\frac{w_1}{v_1}\right)^2_x-\theta_1^\p \left(\frac{w_1}{v_1}\right)_{xx}\right]s_{1,\si}r_2- h_{1,x}\theta_1^\p \left(\frac{w_1}{v_1}\right)_x s_{1,\si}r_2\\
	&- h_1\theta_1^\p \left(\frac{w_1}{v_1}\right)_x v_1\tilde{r}_1\bullet s_{1,\si}r_2-  h_1\theta_1^\p \left(\frac{w_1}{v_1}\right)_x v_2r_2\bullet s_{1,\si}r_2\\
	&+  h_1(\theta_1^\p)^2 \left(\frac{w_1}{v_1}\right)^2_x s_{1,\si\si}r_2- h_1\theta_1^\p \left(\frac{w_1}{v_1}\right)_xv_{1,x} s_{1,v\si}r_2\\
	&+(\tilde{r}_1\otimes u_x):D^2\al_2 v_1h_2r_2+(\tilde{r}_1\bullet \tilde{r}_1)\bullet \al_2 v^2_1h_2r_2+(r_2\bullet \tilde{r}_1)\bullet \al_2 v_1v_2h_2r_2\\
	&+\tilde{r}_{1,v}\bullet \al_2 v_1v_{1,x}h_2r_2-\tilde{r}_{1,\si}\bullet \al_2\theta_1^\p \left(\frac{w_1}{v_1}\right)_xv_1h_2r_2\\
	&+\tilde{r}_1\bullet \al_2 (v_{1,x}h_2+v_1h_{2,x})r_2+(\al_2 h_{2,x}-(\la_2-r_2\bullet \al_2 v_2)h_2)_xr_2\\
	&+(h_1v_2-h_2v_1)_x (\tilde{r}_1\cdot DB r_2-r_2\cdot DB\tilde{r}_1)+(h_1v_2-h_2v_1)v_1 ((\tilde{r}_1\bullet \tilde{r}_1)\cdot DB r_2-r_2\cdot DB(\tilde{r}_1\bullet \tilde{r}_1))\\
	&+(h_1v_2-h_2v_1)v_2 ((r_2\bullet \tilde{r}_1)\cdot DB r_2-r_2\cdot DB(r_2\bullet \tilde{r}_1))+(h_1v_2-h_2v_1)v_{1,x} (\tilde{r}_{1,v}\cdot DB r_2-r_2\cdot DB\tilde{r}_{1,v})\\
	&-(h_1v_2-h_2v_1)\theta_1^\p \left(\frac{w_1}{v_1}\right)_x (\tilde{r}_{1,\si}\cdot DB r_2-r_2\cdot DB\tilde{r}_{1,\si})\\
	&+(h_1v_2-h_2v_1) ((\tilde{r}_1\otimes u_x)\cdot D^2B r_2-(u_x\otimes r_2)\cdot D^2B\tilde{r}_1).
\end{align*}
Similarly, we have using \eqref{ut1}
\begin{align*}
	h_{t}&=h_{1,t}\tilde{r}_1+v_{1,t}h_1\tilde{r}_{1,v}+h_{2,t}r_2+h_1u_t\bullet \tilde{r}_1+h_1\si_{1,t}\tilde{r}_{1,\si}\\
	&=h_{1,t}\tilde{r}_1+v_{1,t}h_1\tilde{r}_{1,v}+h_{2,t}r_2-h_1\theta_1^\p\left(\frac{w_1}{v_1}\right)_t\tilde{r}_{1,\si}\\
	&+w_1 h_1[\tilde{r}_1\bullet \tilde{r}_1+v_1\tilde{r}_{1,v}\bullet \tilde{r}_1]\\
	&+(\al_2v_{2,x}-(\la_2-r_2\bullet \al_2 v_2)v_2)h_1r_2\bullet \tilde{r}_1\\
	&+\frac{1}{\al_1}(w_1+\si_1v_1)(s_1+v_1s_{1,v})h_1r_2\bullet \tilde{r}_1\\
	&+h_1v^2_{1}\si_1\tilde{r}_{1,v}\bullet \tilde{r}_1+v_1h_1\si_{1,x}s_{1,\si}r_2\bullet \tilde{r}_1+\al_1\si_{1,x}v_1h_1\tilde{r}_{1,\si}\bullet \tilde{r}_1\\
	&+h_1v_1v_2r_2\bullet s_1r_2\bullet \tilde{r}_1+\al_1v_{1}h_1v_2r_2\bullet \tilde{r}_1\bullet \tilde{r}_1\\
	&+h_1v_1v_2(\tilde{r}_1\bullet \al_2)(r_2\bullet \tilde{r}_1).
\end{align*}
We obtain
$$
\begin{aligned}
	&(h_{1,t}+(\tilde{\la}_1h_1)_x-(\al_1h_{1,x})_x)\tilde{r}_1+(h_{2,t}+(\tilde{\la}_2h_2)_x-(\al_2h_{2,x})_x)r_2\\
	&+(v_{1,t}+(\tilde{\la}_1v_1)_x-(\al_1v_{1,x})_x)h_1\tilde{r}_{1,v}\\
	&=\frac{1}{\al_1}((\al_1 h_{1,x}-(\la_1-v_1\al_1^\p-\si_1)h_1)s_1)_xr_2\\
	&+\frac{1}{\al_1}((\al_1 v_{1,x}-(\la_1-v_1\al_1^\p-\si_1)v_1)h_1s_{1,v})_xr_2
	\end{aligned}
	$$
	\begin{equation}
	\begin{aligned}
	&+h_1\theta_1^\p\left(\frac{w_1}{v_1}\right)_t\tilde{r}_{1,\si}-\al_1 h_1\left[\theta_1^\p\left(\frac{w_1}{v_1}\right)_{xx}-\theta_1^{\p\p}\left(\frac{w_1}{v_1}\right)_x^2\right]\tilde{r}_{1,\si}+\widetilde{\mathcal{R}},
\end{aligned}
\label{eqprincip}
\end{equation}
where $\widetilde{\mathcal{R}}$ is defined as follows
\begin{align*}
	\widetilde{\mathcal{R}}=&(\al_1h_{1,x}-(\la_1-v_1 \al_1^\p)h_1)v_1\tilde{r}_1\bullet \tilde{r}_1-(\al_1v_{1,x}-(\la_1 - v_1\al_1^\p)v_1)h_1\tilde{r}_1\bullet \tilde{r}_1\\
	&+(\al_1h_{1,x}-(\la_1-v_1 \al_1^\p)h_1)v_2r_2\bullet \tilde{r}_1+[(\al_1h_{1,x}-(\la_1-v_1 \al_1^\p)h_1)v_{1,x}+(\si_1v_1)_x h_1]\tilde{r}_{1,v}\\
	&-\theta_1^\p\left(\frac{w_1}{v_1}\right)_x(\al_1h_{1,x}-(\la_1-v_1 \al_1^\p)h_1)\tilde{r}_{1,\si}\\
	&+(w_1+\si_1v_1)h_{1,x}\tilde{r}_{1,v}+(w_1+\si_1v_1)h_1v_1\tilde{r}_1\bullet \tilde{r}_{1,v}\\
	&+(w_1+\si_1v_1)h_1v_2r_2\bullet \tilde{r}_{1,v}+(w_1+\si_1v_1)h_1v_{1,x}\tilde{r}_{1,vv}-(w_1+\si_1v_1)h_1\theta_1^\p\left(\frac{w_1}{v_1}\right)_x\tilde{r}_{1,v\si}\\
	&{\color{blue}-}\frac{\al_1^\p v_1}{\al_1^2}(\al_1 h_{1,x}-(\la_1-v_1\al_1^\p-\si_1)h_1)s_1r_2\\
	&+\al^\p_1v_1v_2h_1r_2\bullet \tilde{r}_1+\al_1(v_{2,x}h_1+v_2h_{1,x})r_2\bullet \tilde{r}_1\\
	&+\al_1v_2h_1(r_2\otimes u_x):D^2_u \tilde{r}_1+\al_1v_2h_1v_{1,x}r_2\bullet \tilde{r}_{1,v}-\al_1\theta_1^\p\left(\frac{w_1}{v_1}\right)_x v_2h_1r_2\bullet \tilde{r}_{1,\si}\\
	&-\frac{\al_1^\p v_1}{\al^2_1}(w_1+\si_1v_1)h_1s_{1,v}r_2-v_1\al^\p_1 h_1\theta_1^\p\left(\frac{w_1}{v_1}\right)_x\tilde{r}_{1,\si}-\al_1 h_{1,x}\theta_1^\p\left(\frac{w_1}{v_1}\right)_x \tilde{r}_{1,\si}\\
	&-\al_1  h_1\theta_1^\p\left(\frac{w_1}{v_1}\right)_x v_1\tilde{r}_1\bullet \tilde{r}_{1,\si}-\al_1  h_1\theta_1^\p\left(\frac{w_1}{v_1}\right)_xv_2r_2\bullet \tilde{r}_{1,\si}-\al_1  h_1\theta_1^\p\left(\frac{w_1}{v_1}\right)_xv_{1,x}\tilde{r}_{1,v\si}\\
	&+\al_1 h_1(\theta_1^\p)^2 \left(\frac{w_1}{v_1}\right)^2_x\tilde{r}_{1,\si\si}+(v_{2,x}h_1+v_2h_{1,x})r_2\bullet s_1r_2+v_2h_1(u_x\otimes r_2):D^2 s_1r_2\\
	&+v_2h_1v_{1,x}r_2\bullet s_{1,v}r_2-v_2h_1\theta_1^\p\left(\frac{w_1}{v_1}\right)_x r_2\bullet s_{1,\si}r_2+ h_1\left[\theta_1^{\p\p} \left(\frac{w_1}{v_1}\right)^2_x-\theta_1^\p \left(\frac{w_1}{v_1}\right)_{xx}\right]s_{1,\si}r_2\\
	&-  h_{1,x}\theta_1^\p\left(\frac{w_1}{v_1}\right)_x s_{1,\si}r_2- h_1\theta_1^\p\left(\frac{w_1}{v_1}\right)_x v_1\tilde{r}_1\bullet s_{1,\si}r_2- h_1\theta_1^\p\left(\frac{w_1}{v_1}\right)_x v_2r_2\bullet s_{1,\si}r_2\\
	&- h_1 (\theta_1^\p)^2 \left(\frac{w_1}{v_1}\right)_x^2s_{1,\si\si}r_2-  h_1\theta_1^\p\left(\frac{w_1}{v_1}\right)_x v_{1,x} s_{1,v\si}r_2\\
	&+(\tilde{r}_1\otimes u_x):D^2\al_2 v_1h_2r_2+(\tilde{r}_1\bullet \tilde{r}_1)\bullet \al_2 v^2_1h_2r_2+(r_2\bullet \tilde{r}_1)\bullet \al_2 v_1v_2h_2r_2\\
	&+\tilde{r}_{1,v}\bullet \al_2 v_1v_{1,x}h_2r_2-\tilde{r}_{1,\si}\bullet \al_2\theta_1^\p\left(\frac{w_1}{v_1}\right)_x v_1h_2r_2+\tilde{r}_1\bullet \al_2 (v_{1,x}h_2+v_1h_{2,x})r_2\\
	&+(h_1v_2-h_2v_1)_x (\tilde{r}_1\cdot DB r_2-r_2\cdot DB\tilde{r}_1)\\
	&+(h_1v_2-h_2v_1)v_1 ((\tilde{r}_1\bullet \tilde{r}_1)\cdot DB r_2-r_2\cdot DB(\tilde{r}_1\bullet \tilde{r}_1))\\
	&+(h_1v_2-h_2v_1) {\color{blue}v_2}((r_2\bullet \tilde{r}_1)\cdot DB r_2-r_2\cdot DB(r_2\bullet \tilde{r}_1))\\
	&+(h_1v_2-h_2v_1)v_{1,x} (\tilde{r}_{1,v}\cdot DB r_2-r_2\cdot DB\tilde{r}_{1,v})\\
	&-(h_1v_2-h_2v_1)\theta_1^\p\left(\frac{w_1}{v_1}\right)_x (\tilde{r}_{1,\si}\cdot DB r_2-r_2\cdot DB\tilde{r}_{1,\si})\\
	&+(h_1v_2-h_2v_1) ((\tilde{r}_1\otimes u_x)\cdot D^2B r_2-(u_x\otimes r_2)\cdot D^2B\tilde{r}_1)\\
	&-(w_1+\si_1v_1)h_1v_1\tilde{r}_{1,v}\bullet \tilde{r}_1-(\al_2v_{2,x}-(\la_2-r_2\bullet \al_2 v_2)v_2)h_1r_2\bullet \tilde{r}_1\\
	&-\frac{1}{\al_1}(w_1+\si_1v_1)(s_1+v_1s_{1,v})h_1r_2\bullet \tilde{r}_1\\
	&-v_1 {\color{blue}h_1}\theta_1^\p\left(\frac{w_1}{v_1}\right)_xs_{1,\si}r_2\bullet \tilde{r}_1-\al_1\theta_1^\p\left(\frac{w_1}{v_1}\right)_xv_1h_1\tilde{r}_{1,\si}\bullet \tilde{r}_1\\
	&-h_1v_1v_2r_2\bullet s_1r_2\bullet \tilde{r}_1-\al_1v_{1}h_1v_2r_2\bullet \tilde{r}_1\bullet \tilde{r}_1-h_1v_1v_2(\tilde{r}_1\bullet \al_2)(r_2\bullet \tilde{r}_1).
\end{align*}
As in the previous section taking the scalar product of \eqref{eqprincip} with $l_1$
 we get
\begin{equation}
	h_{1,t}+(\tilde{\la}_1h_1)_x-(\al_1h_{1,x})_x=0.
	\label{7.7}
\end{equation}
Now taking the scalar product with $l_2$ we obtain
\begin{equation}
	h_{2,t}+(\tilde{\la}_2h_2)_x-(\al_2h_{2,x})_x=\tilde{\varphi}_2,
	\label{7.8}
\end{equation}
where $\tilde{\varphi}_2$ is defined as follows
\begin{equation}
\begin{aligned}
	\tilde{\varphi}_2&=\frac{1}{\al_1}((\al_1 h_{1,x}-(\la_1-v_1\al_1^\p-\si_1)h_1)s_1)_x\\
	&+\frac{1}{\al_1}\big((w_1+\si_1v_1)h_1s_{1,v}\big)_x+\langle l_2, \widetilde{\mathcal{R}}\rangle\\
	&+\langle l_2,\left(h_1\theta_1^\p\left(\frac{w_1}{v_1}\right)_t\tilde{r}_{1,\si}-\al_1 h_1\left[\theta_1^\p\left(\frac{w_1}{v_1}\right)_{xx}-\theta_1^{\p\p}\left(\frac{w_1}{v_1}\right)_x^2\right]\tilde{r}_{1,\si}\right)\rangle.
\end{aligned}
\label{formva2}
\end{equation}
We are now going to estimate each terms on the right hand side of \eqref{formva2}. We start by defining the unknown $\hat{h}_1=\al_1 h_{1,x}-\tilde{\la}_1h_1$ which is the equivalent of $w_1$ for $v_1$. By using the same techniques as in the previous section, we will show energy estimate which will allow to control the quantity $h^2_{1,x}\chi_{\{|\frac{\hat{h}_1}{h_1}|\geq\delta_1\}}$ in $L^1([\hat{t},T^*]\times\R)$. Let us start with the term $((\al_1 h_{1,x}-(\la_1-v_1\al_1^\p-\si_1)h_1)s_1)_x$, we note that 
\begin{align*}
	&((\al_1 h_{1,x}-(\la_1-v_1\al_1^\p-\si_1)h_1)s_1)_x\\
	&=[\al_1h_{1,xx}-\la_1 h_{1,x}-\la_1^\p v_1h_1+ \al_1^\p v_{1,x}h_1+\al_1^{\p\p}v_1^2h_1+2\al_1^\p v_1h_{1,x}+\si_{1,x}h_1+\si_1h_{1,x}]s_1\\
	&+(\al_1 h_{1,x}v_{1,x}-\la_1v_{1,x}h_1+v_{1,x}v_1\al_1^\p h_1+\si_1v_{1,x}h_1)s_{1,v}\\
	& +(\al_1 h_{1,x}-\tilde{\la}_1 h_1+\si_1 h_1)(v_1\tilde{r_1}\bullet s_1+v_2r_2\bullet s_1)-(\al_1 h_{1,x}-\tilde{\la}_1 h_1+\si_1 h_1)\theta'_1\left(\frac{w_1}{v_1}\right)_x s_{1,\sigma} \\
	&=2\al_1^\p (v_1h_{1,x}-v_{1,x}h_1)s_1+[h_{1,xx}v_1-h_1v_{1,xx}]\frac{\al_1 s_1}{v_1}\\
	&+\frac{ s_1}{v_1}[w_{1,x}h_1-\al_1^\p v_1v_{1,x}h_1+\la_1^\p v_1^2h_1+\la_1v_{1,x}h_1-\al_1^{\p\p}v_1^3h_1+\al_1^\p v_1v_{1,x}h_1]\\
	&+[-\la_1 h_{1,x}-\la_1^\p v_1h_1 +\al_1^{\p\p}v_1^2h_1+\si_1h_{1,x}]s_1-\theta_1^\p\left(\frac{w_1}{v_1}\right)_xh_1s_{1}\\
	&+( h_{1,x}w_1+\la_1 h_{1,x}v_1-\al_1^\p v_1^2 h_{1,x}-\la_1v_{1,x}h_1+v_{1,x}v_1\al_1^\p h_1+\si_1v_{1,x}h_1)s_{1,v}\\
	& +\big(\alpha_1(h_{1,x}v_1-v_{1,x}h_1)+h_1(w_1+\sigma_1 v_1)\big)\tilde{r}_1\bullet s_1+ (\al_1 h_{1,x}-\tilde{\la}_1 h_1+\si_1 h_1)v_2r_2\bullet s_1\\
	& -(\al_1 h_{1,x}-\tilde{\la}_1 h_1+\si_1 h_1)\theta'_1\left(\frac{w_1}{v_1}\right)_x s_{1,\sigma}.
\end{align*}
We can simplify as follows
\begin{align*}
&((\al_1 h_{1,x}-(\la_1-v_1\al_1^\p-\si_1)h_1)s_1)_x  \nonumber\\
	&=2\al_1^\p (v_1h_{1,x}-v_{1,x}h_1)s_1 +[h_{1,xx}v_1-h_1v_{1,xx}]\frac{\al_1 s_1}{v_1}  \nonumber\\
	&+\frac{s_1}{v_1}[(w_{1,x}h_1-w_1h_{1,x})+\lambda_1(v_{1,x}h_1-h_{1,x}v_1)+(w_1+\sigma_1 v_1)h_{1,x}]-\theta_1^\p\left(\frac{w_1}{v_1}\right)_xh_1s_{1}\\
	&+( h_{1,x}w_1+\la_1 h_{1,x}v_1-\al_1^\p v_1^2 h_{1,x}-\la_1v_{1,x}h_1+v_{1,x}v_1\al_1^\p h_1+\si_1v_{1,x}h_1)s_{1,v}\\
	& +\big(\alpha_1(h_{1,x}v_1-v_{1,x}h_1)+h_1(w_1+\sigma_1 v_1)\big)\tilde{r}_1\bullet s_1+ (\al_1 h_{1,x}-\tilde{\la}_1 h_1+\si_1 h_1)v_2r_2\bullet s_1\\
	& -(\al_1 h_{1,x}-\tilde{\la}_1 h_1+\si_1 h_1)\theta'_1\left(\frac{w_1}{v_1}\right)_x s_{1,\sigma} \\[2mm]
	&=2\al_1^\p (v_1h_{1,x}-v_{1,x}h_1)s_1 +[h_{1,xx}v_1-h_1v_{1,xx}]\frac{\al_1 s_1}{v_1}  \nonumber\\
	&+\frac{s_1}{v_1}[(w_{1,x}h_1-w_1h_{1,x})+\lambda_1(v_{1,x}h_1-h_{1,x}v_1)+(w_1+\sigma_1 v_1)h_{1,x}]\\
	&+\frac{s_{1}}{v_1}\big(-\theta'_1\frac{w_1}{v_1}(h_{1,x}v_1-v_{1,x}h_1)+\theta'_1(h_{1,x}w_1-w_{1,x}h_1)\big)\\
	&+\big( h_{1,x}(w_1+\sigma_1v_1)+(\tilde{\lambda}_1-\sigma_1)(h_{1,x}v_1-v_{1,x}h_1)\big)s_{1,v}\\
	& +\big(\alpha_1(h_{1,x}v_1-v_{1,x}h_1)+h_1(w_1+\sigma_1 v_1)\big)\tilde{r}_1\bullet s_1+ (\al_1 h_{1,x}-\tilde{\la}_1 h_1+\si_1 h_1)v_2r_2\bullet s_1\\
	& -(\al_1 h_{1,x}-\tilde{\la}_1 h_1+\si_1 h_1)\theta'_1\left(\frac{w_1}{v_1}\right)_x s_{1,\sigma}.
\end{align*}
Since $\frac{s_1}{v_1}=O(1)$,  $\frac{s_{1,\sigma}}{v_1}=O(1)$ and using the fact that 
\begin{equation}
\sigma_{1,x}h_1v_1=-\theta'_1\frac{w_1}{v_1}(h_{1,x}v_1-v_{1,x}h_1)+\theta'_1(h_{1,x}w_1-w_{1,x}h_1),
\label{astucesig}
\end{equation}
 it implies that
\begin{equation}
\begin{aligned}
&((\al_1 h_{1,x}-(\la_1-v_1\al_1^\p-\si_1)h_1)s_1)_x  =O(1)|v_1h_{1,x}-v_{1,x}h_1|+O(1)|h_{1,xx}v_1-h_1v_{1,xx}|\\
&+O(1)|w_{1,x}h_1-w_1h_{1,x}|+O(1)|(w_1+\sigma_1 v_1)h_{1,x}|+O(1)|h_1v_2|+O(1)|h_{1,x}v_2|\\
&+O(1)h_{1,x}^2\chi_{\{|\frac{\hat{h}_1}{h_1}|\geq\delta_1\}}+O(1)v_1^2(\theta'_1)^2\left(\frac{w_1}{v_1}\right)_x^2\chi_{\{|\frac{\hat{h}_1}{h_1}|\geq\delta_1\}}
\end{aligned}
\label{superimp1}
\end{equation}
\begin{remark}
We can observe that the term $|h_{1,xx}v_1-h_1v_{1,xx}|$ is new compared to the interaction terms in \cite{BB-triangular}.
\end{remark}
Using the fact that
$$(w_{1,x}+\si_1v_{1,x})h_1=(h_1w_{1,x}-h_{1,x}w_1)+h_{1,x}(w_1+\sigma_1 v_1)+\sigma_1(v_{1,x}h_1-h_{1,x}v_1),$$
we estimate the term $\big((w_1+\si_1v_1)h_1s_{1,v}\big)_x$ as follows 
\begin{align}
	&((w_1+\si_1v_1)h_1s_{1,v})_x \nonumber\\
	 &=(w_{1,x}+\si_1v_{1,x})h_1s_{1,v} 
	-\theta_1^\p\left(\frac{w_1}{v_1}\right)_xv_1h_1s_{1,v}+(w_1+\si_1v_1)v_1h_1\tilde{r}_1\bullet s_{1,v}\chi_{\left\{\de_1<\abs{\frac{w_1}{v_1}}\right\}}  \nonumber\\
	&+(w_1+\si_1v_1)v_2h_1r_2\bullet s_{1,v}\chi_{\left\{\de_1<\abs{\frac{w_1}{v_1}}\right\}}+(w_1+\si_1v_1) (h_{1,x}s_{1,v}+h_1  v_{1,x}s_{1,vv})\chi_{\left\{\de_1<\abs{\frac{w_1}{v_1}}\right\}}  \nonumber\\
	&-(w_1+\si_1v_1)\theta_1\left(\frac{w_1}{v_1}\right)_xh_1  s_{1,v\si}\chi_{\left\{\de_1<\abs{\frac{w_1}{v_1}}\right\}}  \nonumber\\
	&=\mathcal{O}(1)\Big(\abs{w_{1,x}h_1-w_1h_{1,x}}+\abs{v_{1,x}h_1-v_1h_{1,x}}+\abs{h_1v_2}\Big)  \nonumber\\
	&+\mathcal{O}(1)\abs{w_1+\si_1v_1}(\abs{h_{1,x}}+\abs{v_{1,x}}).  \label{estimate-h-1-2}
\end{align}
Similarly, we can show using \eqref{astucesig} and the fact that $\tilde{r}_{1,\sigma}=0(1)v_1$ that
\begin{align}
	&h_1\theta_1^\p\left(\frac{w_1}{v_1}\right)_t\tilde{r}_{1,\si}-\al_1 h_1\left[\theta_1^\p\left(\frac{w_1}{v_1}\right)_{xx}-\theta_1^{\p\p}\left(\frac{w_1}{v_1}\right)_x^2\right]\tilde{r}_{1,\si}    \nonumber\\
	&=\theta_1^\p h_1\left(w_{1,t}-\al_1 w_{1,xx}-\frac{w_1}{v_1}(v_{1,t}-\al_1 v_{1,xx})+2\alpha_1 v_{1,x}\left(\frac{w_1}{v_1}\right)_x\right)[\tilde{r}_{1,\si}/v_1]\nonumber\\
	&+\alpha_1 h_1\theta_1^{\p\p}\left(\frac{w_1}{v_1}\right)_x^2\tilde{r}_{1,\si}   \nonumber\\
	&=\theta_1^\p \left(\al_1^\p(w_{1,x}v_1-w_1v_{1,x})h_1-\tilde{\la}_1 h_1 v_1 \left(\frac{w_1}{v_1}\right)_x \right)[\tilde{r}_{1,\si}/v_1]   \nonumber\\
	&-2\alpha_1\frac{\tilde{r}_{1,\sigma}}{v_1}\frac{v_{1,x}}{v_1} v_1h_1\sigma_{1,x}\chi_{\{|\frac{w_1}{v_1}|\leq 3\delta_1\}}
	+\alpha_1\theta_1^{\p\p}\left(\frac{w_1}{v_1}\right)_x^2h_1\tilde{r}_{1,\si}   \nonumber\\
	&=\mathcal{O}(1)\left(\abs{v_1w_{1,x}-v_{1,x}w_1}+\abs{h_1w_{1,x}-h_{1,x}w_1}+\abs{h_1v_{1,x}-h_{1,x}v_1}\right)\nonumber\\
	&+\mathcal{O}(1)+\abs{\theta_1^{\p\p}}\abs{h_1v_1}\left(\frac{w_1}{v_1}\right)_x^2. \label{estimate-h-1-3}
\end{align}
\begin{lemma}\label{lemma:estimate-R}
	The remainder terms in $\widetilde{\mathcal{R}}$ satisfies
	\begin{align*}
		\widetilde{\mathcal{R}}&=\mathcal{O}(1)\Big(\abs{h_{1,x}v_1-h_1v_{1,x}}+\abs{h_{1,x}w_1-h_1w_{1,x}}+\abs{w_{1,x}v_1-w_{1}v_{1,x}}\Big)\\
		&+\mathcal{O}(1)\Big(\abs{h_1v_2}+\abs{h_1v_{2,x}}+\abs{h_2v_{1}}+\abs{h_2v_{1,x}}+\abs{h_{2,x}v_1}+\abs{h_{1,x}v_2}\Big)\\
		&+\mathcal{O}(1)(\abs{h_{1,x}}+\abs{h_1})\abs{v_1}\left(\frac{w_1}{v_1}\right)_x^2\chi_{\{|\frac{w_1}{v_1}|\leq 3\delta_1\}}+\mathcal{O}(1)\abs{h_{1,xx}v_1-v_{1,xx}h_1}\\
		&+\mathcal{O}(1)\abs{w_1+\si_1v_1}(\abs{h_{1,x}}+\abs{v_1}+|v_{1,x}|)+\mathcal{O}(1)\abs{h_{1,xx}w_1-w_{1,xx}h_1}.
	\end{align*}
\end{lemma}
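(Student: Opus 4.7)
The plan is to treat $\widetilde{\mathcal{R}}$ term by term, exactly mimicking the classification \eqref{eqn:wrong-speed}--\eqref{eqn:transversal} that was carried out for $\phi_{2}$, but now keeping track of the additional perturbation $h$. The three algebraic tools will be (i) the smallness estimates \eqref{cle1}--\eqref{cle2}, which say that $s_{1}, s_{1,v}, s_{1,\sigma}, \tilde{r}_{1,v}, \tilde{r}_{1,\sigma}$ and the commutators $\tilde{r}_{1}\bullet\tilde{r}_{1,v}-\tilde{r}_{1,v}\bullet \tilde{r}_{1}$ are all $\mathcal{O}(1)v_{1}$; (ii) the support property of $\theta'_{1}$, giving $|v_{1,x}|=\mathcal{O}(1)|v_{1}|$ on $\{|w_{1}/v_{1}|\leq 3\delta_{1}\}$; and (iii) the reverse estimate $|v_{1}|=\mathcal{O}(1)|v_{1,x}|$ whenever $w_{1}+\sigma_{1}v_{1}\neq 0$, which follows from \eqref{5.10}. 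Finally, I will repeatedly use the substitution \eqref{astucesig} to express $\sigma_{1,x} h_1 v_1$ in terms of the determinants $h_{1,x}v_1-v_{1,x}h_1$ and $h_{1,x}w_1-w_{1,x}h_1$.

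For the first block of terms in $\widetilde{\mathcal{R}}$, the expressions of type $(\alpha_{1}h_{1,x}-\tilde{\lambda}_{1}h_{1})v_{1}\tilde{r}_{1}\bullet\tilde{r}_{1}-(\alpha_{1}v_{1,x}-\tilde{\lambda}_{1}v_{1})h_{1}\tilde{r}_{1}\bullet\tilde{r}_{1}$ rearrange into $\alpha_{1}(h_{1,x}v_{1}-v_{1,x}h_{1})\tilde{r}_{1}\bullet\tilde{r}_{1}+h_{1}(w_{1}+\sigma_{1}v_{1})(\cdot)$, which are of the required determinant/wrong-speed type. All contributions that are proportional to $(\alpha_{1}h_{1,x}-\tilde{\lambda}_{1}h_{1})v_{2}$ or $h_{1}v_{2,x}$ or $h_{2}v_{1}$ are clearly transversal interactions. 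All contributions carrying $\theta'_{1}(w_{1}/v_{1})_{x}$ either pair with a factor $v_{1}$ to become change-in-speed terms (because $\tilde{r}_{1,\sigma}$ brings a factor $v_{1}$), or pair with a wrong-speed factor $h_{1,x}$; when $\theta'_{1}$ is active, $v_{1,x}$ and $v_{1}$ are mutually bounded. The contributions coming from $(w_{1}+\sigma_{1}v_{1})h_{1,x}\tilde{r}_{1,v}$, $(w_{1}+\sigma_{1}v_{1})h_{1}v_{1}\tilde{r}_{1}\bullet\tilde{r}_{1,v}$, etc., are wrong-speed terms multiplied by $\mathcal{O}(1)(|h_{1}|+|h_{1,x}|+|v_{1}|)$. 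The terms involving $(h_{1}v_{2}-h_{2}v_{1})_{x}$ and products with $DB$, $D^{2}B$ are transversal interactions after expansion, since $v_{2}, h_{2}$ appear multiplicatively.

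The genuinely new ingredient, compared with \cite{BB-triangular}, is the block that arose in the analogue \eqref{superimp1} already established for the scalar case: namely the term $(\sigma_{1,x}v_{1})_{x}h_{1}\tilde{r}_{1,v}$ together with the combination $\alpha_{1}h_{1}[\theta'_{1}(w_{1}/v_{1})_{xx}-\theta''_{1}(w_{1}/v_{1})_{x}^{2}]\tilde{r}_{1,\sigma}$ appearing on the right-hand side of \eqref{eqprincip}. Using the identity \eqref{formule2} and the fact that $\tilde{r}_{1,v}, \tilde{r}_{1,\sigma}=\mathcal{O}(1)v_{1}$, the factor $v_{1}(w_{1}/v_{1})_{xx}$ produces precisely $w_{1,xx}v_{1}-v_{1,xx}w_{1}$ plus lower-order commutators of $v_{1,x}$ with $w_{1}$ and $w_{1,x}$; when these are multiplied by $h_{1}$ or $h_{1,x}$ and recombined, they become either transversal between $h$ and $u_{x}$ or change-in-speed contributions. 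The truly new second-order determinants $h_{1,xx}v_{1}-v_{1,xx}h_{1}$ and $h_{1,xx}w_{1}-w_{1,xx}h_{1}$ enter only through the derivative $((\alpha_{1}h_{1,x}-\tilde{\lambda}_{1}h_{1}+\sigma_{1}h_{1})s_{1})_{x}$ of \eqref{formva2}, already estimated in \eqref{superimp1} above the statement of the lemma; the analogous computation with $s_{1,v}$ (treated in \eqref{estimate-h-1-2}) and the combination \eqref{estimate-h-1-3} finish the job.

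The main obstacle I expect is bookkeeping: each of the $\sim 40$ summands in $\widetilde{\mathcal{R}}$ must be put into exactly one of the six listed categories, and several of them are borderline (e.g.\ those proportional to $(w_{1}+\sigma_{1}v_{1})v_{1,x}\tilde{r}_{1,vv}$, where we must simultaneously use $|v_{1}|\leq\mathcal{O}(1)|v_{1,x}|$ from \eqref{5.10} and $\tilde{r}_{1,vv}=\mathcal{O}(1)$). No single estimate is delicate, but the algebraic manipulation needed to extract the determinant structure $h_{1,xx}v_{1}-v_{1,xx}h_{1}$, with harmless remainder, is the only nontrivial step; this was essentially already done in \eqref{superimp1} for the $\phi_{2}$ computation and will be reproduced here with $(v_{1},v_{1,x})$ replaced by $(h_{1},h_{1,x})$. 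Everything else is a mechanical application of the three tools listed above together with the identity \eqref{astucesig}.
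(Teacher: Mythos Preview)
Your overall strategy---term-by-term classification using the smallness \eqref{cle1}--\eqref{cle2}, the support of $\theta'_1$, the inequality \eqref{5.10}, and the identity \eqref{astucesig}---is exactly the paper's. Two concrete issues in the sketch, however, need fixing.

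First, a slip: $\tilde{r}_{1,v}$ is \emph{not} $\mathcal{O}(1)v_1$. Only the $\sigma$-derivatives $\tilde{r}_{1,\sigma}$, $\tilde{r}_{1,\sigma\sigma}$, $\tilde{r}_1\bullet\tilde{r}_{1,\sigma}$ carry the extra factor of $v_1$ (this is \eqref{cle2}); $\tilde{r}_{1,v}$ is merely $\mathcal{O}(1)$. Several of your classifications rely on this, so they need to be redone using only $\tilde{r}_{1,\sigma}=\mathcal{O}(1)v_1$.

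Second, and more to the point of the lemma: you have misplaced the origin of the second-order determinants $h_{1,xx}v_1-v_{1,xx}h_1$ and $h_{1,xx}w_1-w_{1,xx}h_1$. You attribute them to $((\alpha_1 h_{1,x}-\tilde{\lambda}_1 h_1+\sigma_1 h_1)s_1)_x$ and to $\alpha_1 h_1[\theta'_1(w_1/v_1)_{xx}-\theta''_1(w_1/v_1)_x^2]\tilde{r}_{1,\sigma}$, but neither of these belongs to $\widetilde{\mathcal{R}}$: they are the first and last summands in \eqref{formva2}, treated separately in \eqref{superimp1} and \eqref{estimate-h-1-3}. Inside $\widetilde{\mathcal{R}}$ itself the source is the term
\[
h_1\Bigl[\theta_1''\Bigl(\tfrac{w_1}{v_1}\Bigr)_x^2-\theta_1'\Bigl(\tfrac{w_1}{v_1}\Bigr)_{xx}\Bigr]s_{1,\sigma}r_2,
\]
which is indeed listed in $\widetilde{\mathcal{R}}$. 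Since $s_{1,\sigma}=\mathcal{O}(1)v_1$, one uses \eqref{formule2} to write $\theta'_1 h_1 v_1 (w_1/v_1)_{xx}=\theta'_1\bigl(h_1 w_{1,xx}-\tfrac{w_1}{v_1}h_1 v_{1,xx}\bigr)+\mathcal{O}(1)|v_{1,x}h_1\sigma_{1,x}|$ and then adds and subtracts $w_1 h_{1,xx}$ and $v_1 h_{1,xx}$ to extract exactly $(h_1 w_{1,xx}-w_1 h_{1,xx})-\tfrac{w_1}{v_1}(h_1 v_{1,xx}-v_1 h_{1,xx})$; the leftover $v_{1,x}h_1\sigma_{1,x}$ piece is handled by \eqref{astucesig}. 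This is the step the paper performs and the one your sketch should contain in order to prove the lemma about $\widetilde{\mathcal{R}}$ rather than about $\tilde{\varphi}_2$ as a whole.
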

\begin{proof}[Proof of Lemma \ref{lemma:estimate-R}:]
	We observe that
	\begin{align*}
		&(\al_1h_{1,x}-(\la_1-v_1 \al_1^\p)h_1)v_1\tilde{r}_1\bullet \tilde{r}_1-(\al_1v_{1,x}-(\la_1 - v_1\al_1^\p)v_1)h_1\tilde{r}_1\bullet \tilde{r}_1\\
		&=\alpha_1(h_{1,x}v_1-h_1v_{1,x})\tilde{r}_1\bullet \tilde{r}_1=O(1)|h_{1,x}v_1-h_1v_{1,x}|.
	\end{align*}
Then we note that using \eqref{astucesig}
\begin{align*}
	&(\al_1h_{1,x}-(\la_1-v_1 \al_1^\p)h_1)v_2r_2\bullet \tilde{r}_1+[(\al_1h_{1,x}-(\la_1-v_1 \al_1^\p)h_1)v_{1,x}+(\si_1v_1)_xh_1]\tilde{r}_{1,v}\\
	&=(\al_1h_{1,x}-(\la_1-v_1 \al_1^\p)h_1)v_2r_2\bullet \tilde{r}_1+[h_{1,x}(w_1+\sigma_1 v_1)+\sigma_1(v_{1,x}h_1-h_{1,x}v_1)+(\sigma_1)_x v_1h_1]\tilde{r}_{1,v}\\
	&=\mathcal{O}(1)\big(\abs{w_1+\si_1v_1}\abs{h_{1,x}}+\abs{h_{1,x}v_1-h_1v_{1,x}}+\abs{h_{1,x}w_1-h_1w_{1,x}}+\abs{v_2}(\abs{h_{1,x}}+\abs{h_1})\big).
\end{align*}
Since $\abs{\theta_1^\p v_{1,x}}=\mathcal{O}(1)\abs{v_1}$,  $\abs{\theta_1^\p \frac{w_1}{v_1}}=\mathcal{O}(1)$, $\tilde{r}_{1,\sigma}=O(1)v_1$ and $b\leq 1+b^2$ we get
\begin{align*}
	&-\theta_1^\p\left(\frac{w_1}{v_1}\right)_x(\al_1h_{1,x}-(\la_1-v_1 \al_1^\p)h_1)\tilde{r}_{1,\si}\\
	&=-\theta_1^\p v_1\left(\frac{w_1}{v_1}\right)_x(\al_1h_{1,x}-(\la_1-v_1 \al_1^\p)h_1)[\tilde{r}_{1,\si}/v_1]\\
	&=\mathcal{O}(1)\abs{\theta_1^\p}\left[\abs{h_{1,x}v_1}\abs{\left(\frac{w_1}{v_1}\right)_x}+\abs{h_1\left(w_{1,x}-\frac{w_1}{v_1}v_{1,x}\right)}\right]\\
	&=\mathcal{O}(1)\abs{\theta_1^\p}\left[\abs{h_{1,x}v_1-h_{1}v_{1,x}}\abs{\left(\frac{w_1}{v_1}\right)_x}+\abs{-\frac{w_1}{v_1}(h_1v_{1,x}-v_1h_{1,x})+(h_1w_{1,x}-w_1h_{1,x})}\right]\\
	&=\mathcal{O}(1)\abs{\theta_1^\p}\left[\abs{h_{1,x}v_1-h_{1}v_{1,x}}+(\abs{h_{1,x}}+\abs{h_1})\abs{v_1}\left(\frac{w_1}{v_1}\right)_x^2+\abs{h_{1,x}w_1-h_1w_{1,x}}\right].
\end{align*}
We can check
\begin{align*}
	&(w_1+\si_1v_1)h_{1,x}\tilde{r}_{1,v}+(w_1+\si_1v_1)h_1v_1\tilde{r}_1\bullet \tilde{r}_{1,v}\\
	&=\mathcal{O}(1)\abs{w_1+\si_1v_1}(\abs{h_{1,x}}+\abs{v_1}),\\
	&(w_1+\si_1v_1)h_1v_2r_2\bullet \tilde{r}_{1,v}+(w_1+\si_1v_1)h_1v_{1,x}\tilde{r}_{1,vv}\\
	&=\mathcal{O}(1)\big(\abs{w_1+\si_1v_1}\abs{v_{1,x}}+\abs{v_{1,x}v_2}+\abs{v_1v_2}\big).
	\end{align*}
	Using \eqref{astucesig} and the fact that $\theta'_1 w_1=0(1)v_1$ we have
	\begin{align*}
	&-(w_1+\si_1v_1)h_1\theta_1^\p\left(\frac{w_1}{v_1}\right)_x\tilde{r}_{1,v\si}\\
	&=\mathcal{O}(1)(\abs{h_{1,x}w_1-h_1w_{1,x}}+\abs{h_{1,x}v_1-h_1v_{1,x}}).
	\end{align*}
	We note now that
	\begin{align*}
	&\frac{\al_1^\p v_1}{\al_1^2}(\al_1 h_{1,x}-(\la_1-v_1\al_1^\p-\si_1)h_1)s_1r_2\\
	&=\mathcal{O}(1)\big(\abs{w_1+\si_1v_1}\abs{v_1}+\abs{h_{1,x}v_1-h_1v_{1,x}}\big),\\
	&\al^\p_1v_1v_2h_1r_2\bullet \tilde{r}_1+\al_1(v_{2,x}h_1+v_2h_{1,x})r_2\bullet \tilde{r}_1+\al_1v_2h_1(r_2\otimes u_x):D^2_u \tilde{r}_1\\
	&+\al_1v_2h_1v_{1,x}r_2\bullet \tilde{r}_{1,v}-\al_1\theta_1^\p\left(\frac{w_1}{v_1}\right)_x v_2h_1r_2\bullet \tilde{r}_{1,\si}\\
	&=\mathcal{O}(1)(\abs{h_{1}v_2}+\abs{h_{1,x}v_2}+\abs{h_1v_{2,x}})\\
	&-\frac{\al_1^\p v_1}{\al^2_1}(w_1+\si_1v_1)h_1s_{1,v}r_2-v_1\al^\p_1 h_1\theta_1^\p\left(\frac{w_1}{v_1}\right)_x\tilde{r}_{1,\si}\\
	&=\mathcal{O}(1)\big(\abs{w_1+\si_1v_1} \abs{v_1}+\abs{h_{1,x}v_1-h_1v_{1,x}}+\abs{h_{1,x}w_1-h_1w_{1,x}}\big).
	\end{align*}
	Using \eqref{astucesig} and the fact that $\theta'_1 v_{1,x}=0(1)v_1$ we have
	\begin{align*}
	&-\al_1 h_{1,x}\theta_1^\p\left(\frac{w_1}{v_1}\right)_x \tilde{r}_{1,\si}=-\al_1 (h_{1,x}v_1-v_{1,x}h_1)\theta_1^\p\left(\frac{w_1}{v_1}\right)_x \frac{\tilde{r}_{1,\si}}{v_1}+\alpha_1 v_{1,x}h_1\sigma_{1,x}\\
	&=\mathcal{O}(1)\abs{\theta_1^\p}\left[\abs{h_{1,x}v_1-h_{1}v_{1,x}}+(\abs{h_{1,x}}+\abs{h_1})\abs{v_1}\left(\frac{w_1}{v_1}\right)_x^2+\abs{h_{1,x}w_1-h_1w_{1,x}}\right].
	\end{align*}
	Similarly we have
	\begin{align*}
	&-\al_1  h_1\theta_1^\p\left(\frac{w_1}{v_1}\right)_x v_1\tilde{r}_1\bullet \tilde{r}_{1,\si}-\al_1  h_1\theta_1^\p\left(\frac{w_1}{v_1}\right)_xv_2r_2\bullet \tilde{r}_{1,\si}-\al_1  h_1\theta_1^\p\left(\frac{w_1}{v_1}\right)_xv_{1,x}\tilde{r}_{1,v\si}\\
	&=\mathcal{O}(1)\big(\abs{w_{1,x}v_1-w_1v_{1,x}}+\abs{v_2 h_1}+\abs{h_{1,x}v_1-h_1v_{1,x}}+\abs{h_{1,x}w_1-h_1w_{1,x}}\big),\\
	&\al_1 h_1(\theta_1^\p)^2 \left(\frac{w_1}{v_1}\right)^2_x\tilde{r}_{1,\si\si}+(v_{2,x}h_1+v_2h_{1,x})r_2\bullet s_1r_2+v_2h_1(u_x\otimes r_2):D^2 s_1r_2\\
	&=\mathcal{O}(1)\left(\abs{h_1v_1}\left(\frac{w_1}{v_1}\right)_x^2+\abs{h_{1,x}v_2}+\abs{h_1v_{2,x}}+\abs{h_1v_2}\right).
	\end{align*}
Now using \eqref{formule2}, \eqref{astucesig} and the fact that $s_{1,\sigma}=O(1)v1$, 
$\theta'_1 v_{1,x}=O(1)v_1$, we have
\begin{align*}
	&v_2h_1v_{1,x}r_2\bullet s_{1,v}r_2-v_2h_1\theta_1^\p\left(\frac{w_1}{v_1}\right)_x r_2\bullet s_{1,\si}r_2 + h_1\left[\theta_1^{\p\p} \left(\frac{w_1}{v_1}\right)^2_x-\theta_1^\p \left(\frac{w_1}{v_1}\right)_{xx}\right]s_{1,\si}r_2\\
	&=\mathcal{O}(1)\left(\theta_1^{\p\p}\abs{h_1v_1}\left(\frac{w_1}{v_1}\right)_x^2 +\abs{h_1v_2}\right)+\mathcal{O}(1)\theta'_1 |h_1(w_{1,xx}-\frac{v_{1,xx}w_1}{v_1})| +\mathcal{O}(1) |\frac{v_{1,x}}{v_1} h_1v_1\sigma_{1,x} |,\\
	&=\mathcal{O}(1)\left(\theta_1^{\p\p}\abs{h_1v_1}\left(\frac{w_1}{v_1}\right)_x^2 +\abs{h_1v_2}\right)+\mathcal{O}(1)\theta'_1 |(h_1w_{1,xx}-w_1h_{1,xx})-\frac{w_1}{v_1}(h_1 v_{1,xx}-v_1h_{1,xx})| \\
	&+\mathcal{O}(1) |\frac{v_{1,x}}{v_1} h_1v_1\sigma_{1,x} |,\\
	&=\mathcal{O}(1)\big(\theta_1^{\p\p}\abs{h_1v_1}\left(\frac{w_1}{v_1}\right)_x^2 +\abs{h_1v_2}+|h_1w_{1,xx}-w_1h_{1,xx}|+|h_1 v_{1,xx}-v_1h_{1,xx}|+|h_{1,x}v_1-v_1h_{1,x}|\\
	&+|h_{1,x}w_1-w_{1,x}h_1|\big).
	\end{align*}
	Next we have similarly
	\begin{align*}
	&-h_{1,x}\theta_1^\p\left(\frac{w_1}{v_1}\right)_x s_{1,\si}r_2- h_1\theta_1^\p\left(\frac{w_1}{v_1}\right)_x v_1\tilde{r}_1\bullet s_{1,\si}r_2\\
	&=\mathcal{O}(1)\abs{\theta_1^\p}\left[\abs{h_{1,x}v_1-h_{1}v_{1,x}}+(\abs{h_{1,x}}+\abs{h_{1}})\abs{v_1}\left(\frac{w_1}{v_1}\right)_x^2+\abs{h_{1,x}w_1-h_1w_{1,x}}\right],\\
	&- h_1\theta_1^\p\left(\frac{w_1}{v_1}\right)_x v_2r_2\bullet s_{1,\si}r_2=\mathcal{O}(1)\abs{h_1v_2},\\
	&- h_1 (\theta_1^\p)^2 \left(\frac{w_1}{v_1}\right)_x^2s_{1,\si\si}r_2-  h_1\theta_1^\p\left(\frac{w_1}{v_1}\right)_x v_{1,x} s_{1,v\si}r_2\\
	&=\mathcal{O}(1)\abs{\theta_1^\p}\left[\abs{h_{1,x}v_1-h_{1}v_{1,x}}+\abs{h_{1}}\abs{v_1}\left(\frac{w_1}{v_1}\right)_x^2+\abs{h_{1,x}w_1-h_1w_{1,x}}\right],\\
	&(\tilde{r}_1\otimes u_x):D^2\al_2 v_1h_2r_2+(\tilde{r}_1\bullet \tilde{r}_1)\bullet \al_2 v^2_1h_2r_2+(r_2\bullet \tilde{r}_1)\bullet \al_2 v_1v_2h_2r_2=\mathcal{O}(1)\abs{v_1h_2},\\
	&\tilde{r}_{1,v}\bullet \al_2 v_1v_{1,x}h_2r_2-\tilde{r}_{1,\si}\bullet \al_2\theta_1^\p\left(\frac{w_1}{v_1}\right)_x v_1h_2r_2+\tilde{r}_1\bullet \al_2 (v_{1,x}h_2+v_1h_{2,x})r_2\\
	&=\mathcal{O}(1)\big(\abs{v_1h_2}+\abs{v_{1,x}h_2}+\abs{v_1h_{2,x}}\big),\\
	&(h_1v_2-h_2v_1)_x (\tilde{r}_1\cdot DB r_2-r_2\cdot DB\tilde{r}_1)+(h_1v_2-h_2v_1)v_1 ((\tilde{r}_1\bullet \tilde{r}_1)\cdot DB r_2-r_2\cdot DB(\tilde{r}_1\bullet \tilde{r}_1))\\
	&+(h_1v_2-h_2v_1) v_2((r_2\bullet \tilde{r}_1)\cdot DB r_2-r_2\cdot DB(r_2\bullet \tilde{r}_1))+(h_1v_2-h_2v_1)v_{1,x} (\tilde{r}_{1,v}\cdot DB r_2-r_2\cdot DB\tilde{r}_{1,v})\\
	&-(h_1v_2-h_2v_1)\theta_1^\p\left(\frac{w_1}{v_1}\right)_x (\tilde{r}_{1,\si}\cdot DB r_2-r_2\cdot DB\tilde{r}_{1,\si})\\
	&+(h_1v_2-h_2v_1) ((\tilde{r}_1\otimes u_x)\cdot D^2B r_2-(u_x\otimes r_2)\cdot D^2B\tilde{r}_1)\\
	&=\mathcal{O}(1)\big(\abs{h_1v_2}+\abs{h_2v_1}+\abs{h_{1,x}v_2}+\abs{h_{2,x}v_1}+\abs{h_1v_{2,x}}+\abs{h_2v_{1,x}}\big).
	\end{align*}
	Again we have since $s_1=O(1)v_1$
	\begin{align*}
	&-(w_1+\si_1v_1)h_1v_1\tilde{r}_{1,v}\bullet \tilde{r}_1-(\al_2v_{2,x}-(\la_2-r_2\bullet \al_2 v_2)v_2)h_1r_2\bullet \tilde{r}_1\\
	&-\frac{1}{\al_1}(w_1+\si_1v_1)(s_1+v_1s_{1,v})h_1r_2\bullet \tilde{r}_1\\
	&=\mathcal{O}(1)\big(\abs{w_1+\si_1v_1}\abs{v_1}+\abs{h_1v_{2,x}}+\abs{h_1v_2}\big),\\
	&-h_1 v_1\theta_1^\p\left(\frac{w_1}{v_1}\right)_xs_{1,\si}r_2\bullet \tilde{r}_1-\al_1\theta_1^\p\left(\frac{w_1}{v_1}\right)_xv_1h_1\tilde{r}_{1,\si}\bullet \tilde{r}_1\\
	&=\mathcal{O}(1)\big(\abs{h_{1,x}w_1-h_1w_{1,x}}+\abs{h_{1,x}v_1-h_1v_{1,x}}\big),\\
	&-h_1v_1v_2r_2\bullet s_1r_2\bullet \tilde{r}_1-\al_1v_{1}h_1v_2r_2\bullet \tilde{r}_1\bullet \tilde{r}_1-h_1v_1v_2(\tilde{r}_1\bullet \al_2)(r_2\bullet \tilde{r}_1)\\
	&=\mathcal{O}(1)\abs{h_1v_2}.
\end{align*}
This completes  the proof of Lemma \ref{lemma:estimate-R}.
\end{proof}
From \eqref{formva2}, \eqref{superimp1}, \eqref{estimate-h-1-2}, \eqref{estimate-h-1-3} and Lemma \ref{lemma:estimate-R} we obtain that
\begin{align}
	\tilde{\varphi}_2&=\mathcal{O}(1)\big(\abs{h_{1,xx}v_1-h_1v_{1,xx}} +\abs{h_{1,xx}w_1-h_1w_{1,xx}}+\abs{w_1+\si_1v_1}(\abs{h_1}+\abs{h_{1,x}}+\abs{v_1}+\abs{v_{1,x}}) \nonumber\\
	&+\abs{w_{1,x}v_1-w_{1}v_{1,x}}\big)+\mathcal{O}(1)\Big(\chi_{\{|\frac{w_1}{v_1}|\leq 3\delta_1\}}(\abs{h_{1,x}}+\abs{h_1})\abs{v_1}\left(\frac{w_1}{v_1}\right)_x^2+\abs{h_{1,x}v_1-h_1v_{1,x}}\nonumber\\
	&+\abs{h_{1,x}w_1-h_1w_{1,x}}\Big)+\mathcal{O}(1)\Big(\abs{h_1v_2}+\abs{h_1v_{2,x}}+\abs{h_2v_{1}}+\abs{h_2v_{1,x}}+\abs{h_{2,x}v_1}+\abs{h_{1,x}v_2}\Big)\nonumber\\
	&+\mathcal{O}(1) |h_{1,x}|^2\chi_{\{|\frac{\hat{h}_1}{h_1}|\geq \delta_1\}}. \label{formevarphi}
\end{align}
We have seen that taking the initial data $h(0,\cdot)$ sufficiently small in $L^1$ we  can assume that
\begin{equation}
\|(h)_1(\hat{t},\cdot)\|_{L^1}+\|(h)_2(\hat{t},\cdot)\|_{L^1}\leq\frac{\delta_0}{5\kappa_1}.
\label{estimthat}
\end{equation}
In addition the solution $h$ of \eqref{equationh} satisfies the estimates \eqref{estimate:parabolic-2-h} for $t\in]0,\hat{t}]$ and we know that the solution can be extended as long as the $L^1$ norm of $h$  remains small. Let us denote the time $T$ satisfying:
\begin{equation}
T=\sup \{t,\int^{t}_{\hat{t}}|\tilde{\varphi}_2(s,x)| ds dx\leq\frac{\delta_0}{5}\}.
\label{10.3-h}
\end{equation}
We want to prove that $T=+\infty$. We proceed again by contradiction and assume now that $T<+\infty$.
 Since $h_1$ and $h_2$ satisfy for $t\geq\hat{t}$ respectively the equations \eqref{7.7} and \eqref{7.8}, applying maximum principle we deduce that for any $t\in[\hat{t},T]$  we have
\begin{equation}
\begin{aligned}
\|h_1(t,\cdot)\|_{L^1}&\leq \|(h)_{1}(\hat{t},\cdot)\|_{L^1}\\\
\|h_2(t,\cdot)\|_{L^1}&\leq \|h_{2}(\hat{t},\cdot)\|_{L^1}+\int^t_{\hat{t}}\int_{\R}|\tilde{\varphi}_2(s,x)|dxds\\
&\leq (\kappa_1 \|h_{1}(\hat{t},\cdot)\|_{L^1}+\|(h)_2(\hat{t},\cdot)\|_{L^1})+\frac{\delta_0}{5}.
\end{aligned}
\end{equation}
It implies now that for $t\in[\hat{t},T]$, we get using \eqref{estimthat}
\begin{equation}
 \|(h)_{1}(t,\cdot)\|_{L^1}+\|(h)_{2}(t,\cdot)\|_{L^1}\leq (1+ 2\kappa_1) \|(h)_{1}(\hat{t},\cdot)\|_{L^1}+\|(h)_{2}(\hat{t},\cdot)\|_{L^1}+\frac{\delta_0}{5}\leq\delta_0.
 \end{equation}
 It means that we can apply the Corollary \ref{coro2.3} and the Lemma \ref{lemme4.2} for any $t\in[\hat{t},T]$ and in particular we control the $L^\infty$ norm of $h_i$, $h_{i,x}$ and $\hat{h}_1,\hat{h}_{1,x}$.\\
 We wish now to estimate $\int^t_{\hat{t}}\int_{\R}|\tilde{\varphi}_2(s,x)|ds dx$ and to prove that
 \begin{equation}
 \int^t_{\hat{t}}\int_{\R}|\tilde{\varphi}_2(s,x)|ds dx=O(1)\delta_0^2,
 \label{10.5-h}
 \end{equation}
 which would contradict the fact that $T$ is a supremum. To do this, we must estimate each terms of \eqref{formevarphi}. Using  \eqref{estim6cru} and \eqref{estim2crubis}, we have
\begin{equation}
\begin{aligned}
\int_{\hat{t}}^T\int_{\R} \big(\abs{w_1+\si_1v_1}(\abs{h_1}+\abs{h_{1,x}})+\abs{w_{1,x}v_1-w_{1}v_{1,x}}\big)dx ds&=\mathcal{O}(1)\de_0^2,
\end{aligned}
\label{top0}
 \end{equation}
 Using now the Lemma \ref{lemme4.2}, Lemma \ref{lemma-sc-1} and \eqref{utile} we deduce that
 \begin{equation}
 \begin{aligned}
\int_{\hat{t}}^T\int_{\R}\Big((\abs{h_{1,x}}+\abs{h_1})\abs{v_1}\left(\frac{w_1}{v_1}\right)_x^2+\abs{h_{1,x}v_1-h_1v_{1,x}}+\abs{h_{1,x}w_1-h_1w_{1,x}}\Big)dx ds&=\mathcal{O}(1)\de_0^2.
\end{aligned}
\label{top0a}
\end{equation}
Since $\tilde{\varphi}_2$ satisfies \eqref{10.5-h}, applying Propositions \ref{prop5.2} and \ref{prop5.3} we obtain that
\begin{equation*}
\int_{\hat{t}}^T\int_{\R}\big(	\abs{h_1v_2}+\abs{h_1v_{2,x}}+\abs{h_2v_{1}}+\abs{h_2v_{1,x}}+\abs{h_{2,x}v_1}+\abs{h_{1,x}v_2} dx ds=\mathcal{O}(1)\de_0^2.
\end{equation*}
Now we are going to estimate the new terms compared to \cite{BB-triangular} $\abs{h_{1,xx}v_1-h_1v_{1,xx}}$ and $\abs{h_{1,xx}w_1-h_1w_{1,xx}}$. We recall that $\hat{h}_1=\al_1 h_{1,x}-\tilde{\la}_1h_1$. Then $h_{1,t}=\hat{h}_{1,x}$. This implies
\begin{align*}
	\hat{h}_{1,t}&=\al_1 h_{1,tx}+u_{1,t}\al_1^\p h_{1,x}-(\la_1-v_1\al_1^\p)h_{1,t}-(\la_1^\p-v_1\al_1^{\p\p})u_{1,t}h_{1}+v_{1,t}\al_1^\p h_1\\
	&=(\al_1\hat{h}_{1,x}-(\la_1-v_1\al^\p_1)\hat{h}_1)_x-\al_1^\p v_{1}\hat{h}_{1,x}+u_{1,t}\al_1^\p h_{1,x}\\
	&+(\la_1^\p-v_1\al_1^{\p\p})v_{1}\hat{h}_{1}-(\la_1^\p-v_1\al_1^{\p\p})u_{1,t}h_{1}+v_{1,t}\al_1^\p h_1-v_{1,x}\al_1^\p \hat{h}_1.
\end{align*}
Since $u_{1,t}=w_1$ and $v_{1,t}=w_{1,x}$ we have
\begin{align*}
	\hat{h}_{1,t}
	&=(\al_1\hat{h}_{1,x}-\tilde{\lambda}_1\hat{h}_1)_x-\al_1^\p v_1\hat{h}_{1,x}+w_1\al_1^\p h_{1,x}\\
	&+(\la_1^\p-v_1\al_1^{\p\p})v_1\hat{h}_{1}-(\la_1^\p-v_1\al_1^{\p\p})w_1h_{1}+w_{1,x}\al_1^\p h_1-v_{1,x}\al_1^\p \hat{h}_1\\
	&=(\al_1\hat{h}_{1,x}-\tilde{\lambda}_1\hat{h}_1)_x-\al_1^\p (v_1\hat{h}_{1,x}-v_{1,x}\hat{h}_1)+2\al_1^\p(w_1h_{1,x}- v_{1,x}\hat{h}_{1})\\
	&+(\la_1^\p-v_1\al_1^{\p\p})(v_1(\al_1h_{1,x}-\tilde{\la}_1h_1)-(\al_1v_{1,x}-\tilde{\la}_1v_1)h_{1})\\
	&+\al_1^\p (w_{1,x} h_1-w_1h_{1,x})\\
	&=(\al_1\hat{h}_{1,x}-\tilde{\lambda}_1\hat{h}_1)_x-\al_1^\p (v_1\hat{h}_{1,x}-v_{1,x}\hat{h}_1)+\al_1 (\la_1^\p-v_1\al_1^{\p\p})(v_1h_{1,x}-v_{1,x}h_1)\\
	&+\al_1^\p (w_{1,x} h_1-w_1h_{1,x})+2\al_1^\p((\al_1 v_{1,x}-\tilde{\la}_1v_1)h_{1,x}-v_{1,x}(\al_1 h_{1,x}-\tilde{\la}_1h_1))\\
	&=(\al_1\hat{h}_{1,x}-\tilde{\lambda}_1\hat{h}_1)_x-\al_1^\p (v_1\hat{h}_{1,x}-v_{1,x}\hat{h}_1)+\al_1 (\la_1^\p-v_1\al_1^{\p\p})(v_1h_{1,x}-v_{1,x}h_1)\\
	&+\al_1^\p (w_{1,x} h_1-w_1h_{1,x})+2\tilde{\la}_1\al_1^\p(h_1v_{1,x}-h_{1,x}v_1).
\end{align*}
It implies that $\hat{h}_1$ satisfies the following equation
\begin{equation}
	\hat{h}_{1,t}+(\tilde{\lambda}_1\hat{h}_1)_x-(\alpha_1\hat{h}_{1,x})_x=\hat{\varphi}_1,
	\label{equationhath}
\end{equation}
with
\begin{equation}
\begin{aligned}
\hat{\varphi}_1=&-\al_1^\p (v_1\hat{h}_{1,x}-v_{1,x}\hat{h}_1)+\al_1 (\la_1^\p-v_1\al_1^{\p\p})(v_1h_{1,x}-v_{1,x}h_1)+\al_1^\p (w_{1,x} h_1-w_1h_{1,x})\\
&+2\tilde{\la}_1\al_1^\p(h_1v_{1,x}-h_{1,x}v_1).
\end{aligned}
\label{vah1}
\end{equation}
Now we can calculate
\begin{equation*}
	\begin{aligned}
		&h_{1,xx}v_1-h_1v_{1,xx}=\frac{1}{\al_1}\big(\al_1h_{1,xx}v_1-h_1\al_1v_{1,xx}\big)\\
		&=\frac{1}{\al_1}\big(\hat{h}_{1,x}v_1+\tilde{\la}_{1,x}h_1v_1+\tilde{\la}_1h_{1,x}v_1-h_1w_{1,x}-\tilde{\la}_{1,x}v_1h_1-\tilde{\la}_1h_1v_{1,x}- \alpha'_1 v_1(v_1 h_{1,x}-h_1v_{1,x})\big)\\
		&=\frac{1}{\al_1}\left(\hat{h}_{1,x}v_1-\hat{h}_1v_{1,x}+h_{1,x}w_1-h_{1}w_{1,x}\right)\\
		&+\frac{1}{\al_1}\left(\hat{h}_{1}v_{1,x}-h_{1,x}w_1+\tilde{\la}_1h_{1,x}v_1-\tilde{\la}_1h_1v_{1,x}- \alpha'_1 v_1(v_1 h_{1,x}-h_1v_{1,x})\right).
	\end{aligned}
\end{equation*}
Then we can simplify
\begin{equation}
\begin{aligned}
	&h_{1,xx}v_1-h_1v_{1,xx}=\frac{1}{\al_1}\big(\al_1h_{1,xx}v_1-h_1\al_1v_{1,xx}\big)\\
	&=\frac{1}{\al_1}\left(\hat{h}_{1,x}v_1-\hat{h}_1v_{1,x}+h_{1,x}w_1-h_{1}w_{1,x}\right)\\
	&+\frac{1}{\al_1}\left(\al_1h_{1,x}v_{1,x}-\tilde{\la}_1h_1v_{1,x}-\al_1h_{1,x}v_{1,x}+\tilde{\la}_1h_{1,x}v_1\right)\\
	&+\frac{1}{\al_1}\left(\tilde{\la}_1h_{1,x}v_1-\tilde{\la}_1h_1v_{1,x}-\alpha'_1 v_1(v_1 h_{1,x}-h_1v_{1,x})\right)\\
	&=\frac{1}{\al_1}\left((\hat{h}_{1,x}v_1-\hat{h}_1v_{1,x})+(h_{1,x}w_1-h_{1}w_{1,x})\right)+(\frac{2\tilde{\la}_1}{\al_1}-\frac{\alpha'_1 v_1}{\alpha_1})  \left(h_{1,x}v_1-h_1v_{1,x}\right).
\end{aligned}
\label{supercru4}
\end{equation}
Therefore, we have applying  Lemma \ref{lemma-sc-1} to the couples $(\hat{h}_1,v_1)$, $(h_1,w_1)$ and $(v_1,h_1)$ and using the Lemma \ref{lemme4.2} we get
\begin{equation}
	\int_{\hat{t}}^T\int_{\R}|	h_{1,xx}v_1-h_1v_{1,xx}| dx ds=\mathcal{O}(1)\de_0^2.
	\label{top1}
\end{equation}
Similarly we have
\begin{align*}
	h_{1,xx}w_1-h_1w_{1,xx}	&=\frac{1}{\al_1}\left((\hat{h}_{1,x}w_1-\hat{h}_1w_{1,x})+(h_{1,x}z_1-h_{1}z_{1,x})\right)\\
	&+(\frac{2\tilde{\la}_1}{\al_1}-\frac{\alpha'_1 v_1}{\alpha_1})  \left(h_{1,x}w_1-h_1w_{1,x}\right).
\end{align*}
Again using  Lemma \ref{lemma-sc-1} to the couples $(\hat{h}_1,w_1)$, $(h_1,z_1)$ and $(h_1,w_1)$ and using the Lemma \ref{lemme4.2} we get
\begin{equation}
	\int_{\hat{t}}^T\int_{\R}|	h_{1,xx}w_1-h_1w_{1,xx}| dx ds=\mathcal{O}(1)\de_0^2.
	\label{top2}
\end{equation}
It remains now to deal with the terms  $|h_{1,x}|^2\chi_{\{|\frac{\hat{h}_1}{h_1}|\geq \delta_1\}}$,$ |(w_1-\sigma_1 v_1)(|h_1|+|h_{1,x}|)$ which as in the previous section required energy estimate. To do this we multiply again the equation \eqref{7.7} by $h_1\hat{\theta}(\frac{\hat{h}_1}{h_1})$ with $\hat{\theta}$ defined as in \eqref{6.11} and we integrate. We obtain then
$$
\begin{aligned}
&\int_{\R}\biggl((\frac{h_1^2\hat{\theta}}{2})_t-\frac{h_1^2}{2}(\hat{\theta}_t+2\tilde{\lambda}_1\hat{\theta}_x-(\alpha_1\hat{\theta}_x)_x)+2\alpha_1 h_{1,x}h_1\hat{\theta}_x+\hat{\theta} h_{1,x}(\alpha_1 h_{1,x}-\tilde{\lambda}_1 h_1)\biggl) dx=0.
\end{aligned}
$$
It implies that
\begin{equation}
\begin{aligned}
\int_\R \hat{\theta} h_{1,x}(\alpha_1 h_{1,x}-\tilde{\lambda}_1 h_1)&=- \int_{\R}(\frac{h_1^2\hat{\theta}}{2})_t dx+\int_\R \frac{h_1^2}{2}(\hat{\theta}_t+\tilde{\lambda}_1\hat{\theta}_x-(\alpha_1\hat{\theta}_x)_x)dx\\
&-\int_{\R} 2\alpha_1 h_{1,x}h_1\hat{\theta}_x+\int_\R \frac{h_1^2}{2}\tilde{\lambda}_1\hat{\theta}_x.
\end{aligned}
\label{formenergiea}
\end{equation}
A direct computation gives using the equation \eqref{equationhath}
\begin{equation}
\begin{aligned}
&\hat{\theta}_t+\tilde{\lambda}_1\hat{\theta}_x-(\alpha_1\hat{\theta}_x)_x=\hat{\theta}'(\frac{\hat{h}_{1,t}}{h_1}-\frac{h_{1,t}\hat{h}_1}{h_1^2})+\tilde{\lambda}_1(\frac{\hat{h}_{1,x}}{h_1}-\frac{h_{1,x}\hat{h}_1}{h_1^2})\\
&-\alpha'_1 v_1\hat{\theta}'\left(\frac{\hat{h}_1}{h_1}\right)_x-\alpha_1\hat{\theta}''\left(\frac{\hat{h}_1}{h_1}\right)_x^2
+2\alpha_1\hat{\theta}'\frac{h_{1,x}}{h_1}\left(\frac{\hat{h}_1}{h_1}\right)_x-\alpha_1\hat{\theta}'(\frac{\hat{h}_{1,xx}h_1-h_{1,xx}\hat{h}_1}{v_1^2})\\
&=\frac{\hat{\theta}'}{h_1}(\hat{h}_{1,t}+\tilde{\lambda}_1 \hat{h}_{1,x}-(\alpha_1 \hat{h}_{1,x})_x)-\frac{\hat{\theta}' \hat{h}_1}{h_1^2}(
h_{1,t}+\tilde{\lambda}_1 h_{1,x}-(\alpha_1 h_{1,x})_x)-\alpha_1\hat{\theta}''\left(\frac{\hat{h}_1}{h_1}\right)_x^2
\\
&+2\alpha_1\hat{\theta}'\frac{h_{1,x}}{h_1}\left(\frac{\hat{h}_1}{h_1}\right)_x\\
&=\frac{\hat{\theta}'}{h_1}\hat{\varphi}_1-\alpha_1\hat{\theta}''\left(\frac{\hat{h}_1}{h_1}\right)_x^2
+2\alpha_1\hat{\theta}_x\frac{h_{1,x}}{h_1}.
\end{aligned}
\end{equation}
Plugging this last expression in \eqref{formenergiea}, it implies that
\begin{equation}
\begin{aligned}
\int_\R \hat{\theta} h_{1,x}(\alpha_1 h_{1,x}-\tilde{\lambda}_1 h_1)&=- \int_{\R}(\frac{h_1^2\hat{\theta}}{2})_t dx-\frac{1}{2}\int_\R \alpha_1\hat{\theta}'' h_1^2\left(\frac{\hat{h}_1}{h_1}\right)_x^2dx-\int_{\R} \alpha_1 \frac{h_{1,x}}{h_1}h_1^2\hat{\theta}_x\\
&+\int_\R \frac{h_1^2}{2}\tilde{\lambda}_1\hat{\theta}_x+\int_{\R}\hat{\theta}' h_1\hat{\va}_1.
\end{aligned}
\label{formenergie1a}
\end{equation}
We observe that $\hat{\theta}(\frac{\hat{h}_1}{h_1})\ne 0$ if
$|\hat{h}_1|=|\alpha_1 h_{1,x}-\tilde{\lambda}_1 h_1|\geq \frac{3\delta_1}{5}|h_1|$ which gives using \eqref{5.7}
\begin{equation}
|\alpha_1 h_{1,x}|\geq \frac{3\delta_1}{5} |h_1|-\|\tilde{\lambda}_1\|_{L^\infty}|h_1|\geq 2\|\tilde{\lambda}_1\|_{L^\infty}|h_1|.
\label{impotia}
\end{equation}
From \eqref{formenergie1a}, \eqref{impotia}, \eqref{condialpha}, \eqref{vah1}, \eqref{supercru4} we get for $t\in[\hat{t},T]$
\begin{equation}
\begin{aligned}
\frac{c_1}{2 }\int_\R \theta h_{1,x}^2\,dx&\leq- \int_{\R}\left(\frac{h_1^2\theta}{2}\right)_t dx+O(1)\int_{\{ |\frac{\hat{h}_1}{h_1}|\leq \delta_1\} } h_1^2 | \theta'' |\left(\frac{\hat{h}_1}{h_1}\right)_x^2dx\\
&+O(1)\int_\R(1+|\frac{h_{1,x}}{h_1}|  \chi_{\{|\frac{\hat{h}_1}{h_1}|\leq\frac{4\delta_1}{5}\}} )|\hat{h}_{1,x}h_1-h_{1,x}\hat{h}_1|  dx\\
&+O(1)\int_{\R}\big(|v_1h_{1,xx}-v_{1,x}h_{1,xx}|+|v_1h_{1,x}-v_{1,x}h_1|\big)dx\\
&+O(1)\int_{\R}|w_{1,x} h_1-w_1h_{1,x}|\,dx.
\end{aligned}
\label{formenergie3a}
\end{equation}
It remains now to estimate the term $\int_{\{ |\frac{\hat{h}_1}{h_1}|\leq \delta_1\} } h_1^2 | \theta'' |\left(\frac{\hat{h}_1}{h_1}\right)_x^2dx$, as previously we set:
$${\cal L}_1(t)=\int_{\R}\sqrt{h_1^2+\hat{h}_1^2} (t,x)dx.$$
As previously we can prove that
$$
\begin{aligned}
\frac{d}{dt}{\cal L}_1(t)\leq-\int \frac{ \alpha_1 |h_1||\left(\frac{\hat{h}_1}{h_1}\right)_x|^2}{(1+\left(\frac{\hat{h}_1}{h_1}\right)^2)^{\frac{3}{2}}}dx+\|\hat{\varphi}_1(t,\cdot)\|_{L^1}.
\end{aligned}
$$
We deduce then that from \eqref{vah1}, \eqref{top0a}, \eqref{top2} and Lemma \ref{lemme4.2}, we have
\begin{equation}
\int_{\hat{t}}^T\int_{\{ |\frac{\hat{h}_1}{h_1}|\leq \delta_1\} } h_1^2 | \theta'' |\left(\frac{\hat{h}_1}{h_1}\right)_x^2dx=O(1)\delta_0^2.
\label{techj}
\end{equation}
Combining now \eqref{formenergie3a}, \eqref{techj}, \eqref{top0a} and the Lemma \ref{lemma-sc-1}
applied to the couple $(h_1,\hat{h}_1)$ we deduce that
\begin{equation}
\begin{aligned}
&\int_{\hat{t}}^T\int_{\{|\frac{\hat{h}_1}{h_1}|\geq\frac{4\delta_1}{5}\}} h_{1,x}^2 dx ds=O(1)\delta_0^2.
\end{aligned}
\label{energiefinale1}
\end{equation}
We can now estimate the term $(|h_{1,x}|+|h_1|)(w_1+\si_1v_1)$. First we observe that this term is different from zero if $|\frac{w_1}{v_1}|\geq\delta_1$ then we have:
\begin{align*}
	&\int\limits_{\hat{t}}^{T}\int\limits_{\R}\abs{w_1+\si_1v_1}(|h_1|+\abs{h_{1,x}})\,dxds\\
	&\leq \int\limits_{\hat{t}}^{T}\int_{\{|\frac{\hat{h}_1}{h_1}|\leq\frac{4\delta_1}{5}\}\cup \{|\frac{w_1}{w_1}|\geq\delta_1\} }\abs{w_1+\si_1v_1}(|h_1|+\abs{h_{1,x}}) dx ds\\
	&+\int\limits_{\hat{t}}^{T}\int_{\{|\frac{\hat{h}_1}{h_1}|\geq\frac{4\delta_1}{5}\}\cup \{|\frac{w_1}{w_1}|\geq\delta_1\} }\abs{w_1+\si_1v_1}(|h_1|+\abs{h_{1,x}})dx ds.
\end{align*}
First we have
\begin{align}
&\int\limits_{\hat{t}}^{T}\int_{\{|\frac{\hat{h}_1}{h_1}|\geq\frac{4\delta_1}{5}\}\cup \{|\frac{w_1}{w_1}|\geq\delta_1\} }\abs{w_1+\si_1v_1}(|h_1|+\abs{h_{1,x}})dx ds\\
&=O(1)\int\limits_{\hat{t}}^{T}\int_{\{|\frac{\hat{h}_1}{h_1}|\geq\frac{4\delta_1}{5}\} }(h_{1,x})^2 dx ds+O(1)\int\limits_{\hat{t}}^{T}\int_{ \{|\frac{w_1}{w_1}|\geq\delta_1\} }|w_1+\si_1v_1|^2dx ds\\
&= O(1)\int\limits_{\hat{t}}^{T}\int_{\{|\frac{\hat{h}_1}{h_1}|\geq\frac{4\delta_1}{5}\} }(h_{1,x})^2 dx ds+O(1)\int\limits_{\hat{t}}^{T}\int_{ \{|\frac{w_1}{w_1}|\geq\delta_1\} }(v_{1,x})^2 dx ds.\label{estima1}
\end{align}
We observe now that $\{|\frac{\hat{h}_1}{h_1}|\leq\frac{4\delta_1}{5}\}\cup \{|\frac{w_1}{v_1}|\geq\delta_1\}$ is included in $\{|\frac{\hat{h}_1}{h_1}|\leq\frac{4}{5}|\frac{w_1}{v_1}|\}\cup\{|\alpha_1h_{1,x}|\leq \delta_1 |h_1|\}$ provided that $\delta_0$ is sufficiently small in terms of $\delta_1$ then we have
\begin{align*}
	\abs{h_{1,x}v_1-h_1v_{1,x}}=\frac{1}{\al_1}\abs{\hat{h}_1v_1-w_1h_1}=\frac{\abs{h_1v_1}}{\al_1}\abs{\frac{w_1}{v_1}-\frac{\hat{h}_1}{h_1}}\geq \frac{\abs{h_1w_1}}{5\al_1}\geq  \frac{\abs{h_{1,x} w_1}}{5\delta_1}\geq \frac{\abs{h_{1,x} v_1}}{5}
\end{align*}
Therefore we get
\begin{align}
\int\limits_{\hat{t}}^{T}\int_{\{|\frac{\hat{h}_1}{h_1}|\leq\frac{4\delta_1}{5}\}\cup \{|\frac{w_1}{w_1}|\geq\delta_1\} }\abs{w_1+\si_1v_1}(|h_1|+\abs{h_{1,x}}) dx ds&=O(1)\int\limits_{\hat{t}}^{T}\int \abs{h_{1,x}v_1-h_1v_{1,x}} dx ds\nonumber\\
&=O(1)\delta_0^2.
\label{estima2}
\end{align}
Combining \eqref{energiefinale1}, \eqref{estima1}, \eqref{estima2} and  \eqref{estim6cru}, we deduce that
\begin{equation}
	\int\limits_{\hat{t}}^{T}\int\limits_{\R}\abs{w_1+\si_1v_1}(|h_1|+\abs{h_{1,x}})\,dxds=O(1)\delta_0^2.
	\label{superfinal}
	\end{equation}
From \eqref{top0}, \eqref{top0a}, \eqref{top1}, \eqref{top2} and \eqref{superfinal} we deduce that $T_*=+\f$. 
This completes the proof of  \eqref{thm1-Lipschitz}.

We wish to conclude this section by proving the estimate \eqref{L1-cont} which will finish the proof of the Theorem \ref{theo1}. From the Proposition \ref{prop:parabolic} and the Corollary \ref{coro2.2}, we have seen that there exists $C>0$ large enough such that
\begin{equation}
\begin{cases}
\|u_{xx}(t,\cdot)\|_{L^1}&\leq \frac{C\delta_0}{\sqrt{t}}\;\;\mbox{if}\;0<t\leq\hat{t},\\
&\leq \frac{C\delta_0}{\sqrt{\hat{t}}}\;\;\mbox{if}\;t\geq\hat{t}.
\label{7.52}
\end{cases}
\end{equation}
We deduce in particular that there exists $L_3>0$ such that for any $t>0$
\begin{equation}
\|u_t(t,\cdot)\|_{L^1}\leq L_3(1+\frac{1}{2\sqrt{t}}).
\label{ut}
\end{equation}
For any $0\leq s<t$ we have then using \eqref{ut}
$$\|u(t)-u(s)\|_{L^1}\leq\int^t_s\|u_t(\theta)\|_{L^1}d\theta\leq L_3(|t-s|+|\sqrt{t}-\sqrt{s}|).$$
\section{Vanishing viscosity limit}\label{sec:vv-lim}
As claimed in Corollary \ref{theo2} we want to prove vanishing viscosity limit as $\e\rr0$ for the following Cauchy problem
\begin{equation}
	u^\e_t+A(u^\e)u^\e_x=\e(B(u^\e)_x)_x\mbox{ and }u^\e(0,x)=\bar{u}(x).
\end{equation}
Note that if we define $u$ as follows $u^\e(t,x)=u(\frac{t}{\e},\frac{x}{\e})$ then $u$ solves the following problem with fix viscosity but scaled initial data, 
\begin{equation}
	u_t+A(u)u_x=(B(u)_x)_x\mbox{ and }u(0,x)=\bar{u}(\e x).
\end{equation}
Observe that
\begin{align*}
	TV(\bar{u}(\e \cdot ))&= TV(\bar{u}(\cdot)),\\
	\norm{\bar{u}(\e\cdot )}_{L^1}&=\frac{1}{\e}	\norm{\bar{u}(\cdot )}_{L^1}.
\end{align*}
Therefore using the Theorem \ref{theo1}, we obtain for $0\leq s\leq t$
\begin{align}
	TV(u^\e(t))&\leq L_1 TV(\bar{u}),\label{est-1}\\
	\norm{u^\e(t)-v^\e(t)}_{L^1}&=\e\norm{u(t)-v(t)}_{L^1}\leq L_2\norm{\bar{u}-\bar{v}}_{L^1},\label{est-2}\\
	\norm{u^\e(t)-u^\e(s)}_{L^1}&\leq L_3\left(\abs{t-s}+\sqrt{\e}\abs{\sqrt{t}-\sqrt{s}}\right),\label{est-3}.
\end{align}
The convergence of $u^\e$ as $\e\rr0$ follows from a standard argument with an application of Helly's theorem and the $L^1$ continuity \eqref{est-3}. Indeed, due to the uniform TV estimate \eqref{est-1} by using Helly's theorem we can pass to a the limit (up to a subsequence) for a countable dense set $\{t_n\}$ and then applying $L^1$ continuity we can define the limit function at all time $t>0$. We set
\begin{equation}
	L^1_{loc}-\lim\limits_{k\rr\f}u^{\e_k}(t,\cdot)=u(t,\cdot).
\end{equation}
We wish no to prove that $u$ is a global weak solution for the system \eqref{hyperbo}. In particular up to a subsequence $(u^{\e})_{\e>0}$ converges almost everywhere to $u$, it implies in particular that for any function $\varphi$ in $C^\infty_0(\R^+\times\R)$ we have applying dominated convergence
\begin{equation}
\begin{aligned}
&\int_{\R^+}\int_\R\varphi_x(s,x)f_1(u_1^\e(s,x) ds dx\rightarrow_{\e\rightarrow0} \int_{\R^+}\int_\R\varphi_x(s,x)f_1(u(s,x) ds dx\\
&\int_{\R^+}\int_\R\varphi_x(s,x)g(u^\e(s,x) ds dx\rightarrow_{\e\rightarrow0} \int_{\R^+}\int_\R\varphi_x(s,x)g(u(s,x) ds dx.
\end{aligned}
\end{equation}
Similarly we have for any $T>0$
 \begin{equation}
\begin{aligned}
&\e\int^T_0 \int_{\R^+}\int_\R |B(u^\e)u^\e_x| dx ds\leq \sup_{\{|v-u^*|\leq 2\delta_0\}}|B(v)|\delta_0\e T,
\end{aligned}
\end{equation}
then $\e B(u^\e)u^\e_x$ converges to zero in $L^1_{loc, t,x}$. It implies that $u$ is a global weak solution of \eqref{hyperbo}. In addition as in \cite{BB-vv-lim-ann-math} we can show that the solution $u$ satisfies  the Liu condition for any shock. Applying the result of \cite{BDL}, it implies that the solution $u$ of \eqref{hyperbo} is unique since it is a global weak solution small in $BV$ and satisfying Liu shocks conditions. In particular it means that the sequence $(u^\e)_{\e>0}$ has a unique accumulation point and then the convergence of $(u^\e)_{\e>0}$ to $u$ is strong in $L^1_{loc}$. It completes the proof of the Corollary \ref{theo2}.%


\appendix
\section{Viscous travelling wave}\label{App-1}
The viscous  travelling wave $u(t,x)=U(x-\sigma t)$ of the equation \eqref{eqn-main} satisfies
\begin{equation*}
	(A(U)-\si)U^\p=(B(U)U^\p)^\p.
\end{equation*}
We can write
\begin{equation*}
	U^{\p\p}=B^{-1}(U)(A(U)-\si)U^\p-B^{-1}(U)(U^\p\cdot DB(U))U^\p.
\end{equation*}
We consider the following system of ordinary differential equations
\begin{equation}\label{ODE-1}
	\begin{cases}
	\begin{aligned}
		\dot{u}&=v,\\
		\dot{v}&=B^{-1}(u)(A(u)-\si)v-B^{-1}(u)(v\bullet DB(u))v,\\
		\dot{\si}&=0.
	\end{aligned}
	\end{cases}
\end{equation}
We note that $P^*_i:=(u^*,0,\la_{i}(u^*))$ are equilibrium points for $1\leq i\leq 2$. We linearize near the point $P^*_i$ and get
\begin{equation}
	\begin{cases}
	\begin{aligned}
			\dot{u}&=v,\\
		\dot{v}&=B^{-1}(u^*)(A(u^*)-\la_i(u^*))v,\\
		\dot{\si}&=0.
	\end{aligned}
	\end{cases}
\end{equation} 
Let $\{r_i(u)\}_{1\leq i\leq n}$ and $\{l_i(u)\}_{1\leq i\leq n}$ be the sets of right and left eigenvector of $A(u)$.
We denote $r_i^*=r_i(u^*),l_i^*=l_i(u^*)$. We define $V_i,1\leq i\leq 2$ as follows
\begin{equation}
	v=\sum\limits_{j}V_jr_j^*,\quad V_j:=l_j^*\cdot v.
\end{equation}
Then we have
\begin{equation}
	Z=DF(u^*,0,\la_i(u^*))=\begin{pmatrix}
		O_n&I_n& 0\\
		O_n&B^{-1}(u^*)(A(u^*)-\la_i(u^*)I_n)&0\\
		0&0&0
	\end{pmatrix}.
\end{equation}
Subsequently, we get
\begin{equation}
	Z^2=(DF(u^*,0,\la_i(u^*)))^2=\begin{pmatrix}
		O_n&A(u^*)-\la_i(u^*)I_n& 0\\
		O_n&(A(u^*)-\la_i(u^*)I_n)^2&0\\
		0&0&0
	\end{pmatrix}.
\end{equation}
Therefore the center subspace will look like
\begin{equation}
	\mathcal{N}_i:=\left\{(u,v,\si)\in\R^2\times\R^2\times\R;\,V_j=0,j\neq i\right\}.
\end{equation}
Note that $\mbox{dim}(\mathcal{N}_i)=4$, by Center Manifold Theorem \cite{V}, there exists a smooth manifold $\mathcal{M}_i\subset \R^{5}$ which is tangent to $\mathcal{N}$ at $P^*_i$. Furthermore, $\mathcal{M}_i$ has dimension $4$ and is locally invariant under the flow of \eqref{ODE-1}. We can write
\begin{equation}
	V_j=\varphi_j(u,V_i,\si),\quad j\neq i.
\end{equation}
We further consider 
\begin{equation}
	\mathcal{D}_i:=\left\{\abs{u-u^*}<\e,\,\abs{V_i}<\e,\,\abs{\si-\la_i(u^*)}<\e\right\}.
\end{equation} 
Since $\mathcal{M}_i$ is tangent to $\mathcal{N}_i$ we have
\begin{equation}\label{quadratic-varphi-1}
	\varphi_j(u,V_i,\si)=\mathcal{O}(1)\left(\abs{u-u^*}^2+\abs{V_i}^2+\abs{\si-\la_i(u^*)}^2\right).
\end{equation}
Note that equilibrium points $(u,0,\si)$ with $\abs{u-u^*}<\e,\abs{\si-\la_i(u^*)}<\e$ lie in $\mathcal{M}_i$ then we have
\begin{equation}\label{equilibrium-1}
	\varphi_j(u,0,\si)=0\mbox{ for all }j\neq i.
\end{equation}
Hence, we may write
\begin{equation}
	\varphi_j(u,V_i,\si)=\psi_j(u,V_i,\si) V_i,
\end{equation}
for some $\psi_j$. Now, we would like to make a change of coordinates $V_i\mapsto \tilde{V}_i$ as follows
\begin{equation}
	\tilde{V}_i=l_i(u)\cdot v=V_i l_i(u)\cdot (r_i^*+\sum_{j\ne i}\psi_j(u,V_i,\sigma)r_j^*)
\end{equation}
It implies that $\tilde{V}_i=\zeta_{ii}(u,V_i,\si)V_i$ with $\zeta_{ii}(u,V_i,\si)=l_i(u)\cdot (r_i^*+\sum_{j\ne i}\psi_j(u,V_i,\sigma)r_j^*)$
a $C^2$ function such that $\zeta_{ii}(u^*,0,\si)=1$. If we consider the function $f_{(u,\sigma)}(V_i)=\zeta_{ii}(u,V_i,\si)V_i$, we observe that $f_{(u,\sigma)}'(V_i)=\zeta_{ii}(u,V_i,\si)+V_i\zeta_{ii,V_i}(u,V_i,\si)\ne 0$ in ${\cal D}_i$. It implies that $f_{(u,\sigma)}$ is locally invertible and we can make the change of coordinates  $V_i\mapsto \tilde{V}_i$.
Therefore, for any point $(u,v,\si)\in\mathcal{M}_i$ we can write
\begin{equation}
\begin{aligned}
v&=\sum\limits_{k}l_k(u)\cdot v\, r_k(u)=\sum\limits_{k}V_i l_k(u)\cdot (r_i^*+\sum_{j\ne i}\psi_{j}(u,V_i,\sigma)r_{j}^*)r_{k}(u)\\
&=\sum\limits_{k}l_k(u)\cdot v\, r_k(u)=\tilde{V}_ir_i(u)+\sum\limits_{k\ne i}\tilde{V}_i \frac{1}{\xi_{ii}(u,V_i,\sigma)}l_k(u)\cdot (r_i^*+\sum_{j\ne i}\psi_{j}(u,V_i,\sigma)r_{j}^*)\,r_{k}(u)\\
&=\tilde{V}_i\left(r_i(u)+\sum\limits_{j\neq i}\tilde{\psi}_j(u,\tilde{V}_i,\si)r_j(u)\right)=\tilde{V}_i \tilde{r_i} (u,\tilde{V}_i,\si),
\end{aligned}
\end{equation}
with $\tilde{\psi}_j(u,\tilde{V}_i,\si)=\frac{1}{\xi_{ii}(u,V_i,\sigma)}l_j(u)\cdot (r_i^*+\sum_{j\ne i}\psi_{j}(u,V_i,\sigma)r_{j}^*)$.
From \eqref{quadratic-varphi-1} it follows that
\begin{equation}
	\tilde{r}_i(u,\tilde{V}_i,\si)\rr r_i^*\mbox{ as }(u,\tilde{V}_i,\si)\rr(u^*,0,\la_i(u^*)).
\end{equation}
Therefore, we may write $	v=\tilde{V}_i \tilde{r}_i$ and we have
\begin{equation}
	\mathcal{M}_i=\left\{(u,v,\si_i);\,\,v=\tilde{V}_i \tilde{r}_i(u,\tilde{V}_i,\si_i)\right\}\mbox{ for }1\leq i\leq 2,
\end{equation}
provided that $(u,\tilde{V}_i,\si_i)\in\R^2\times\R^2\times\R$ are in a sufficiently small neighborhood of $(u^*,0,\la_i(u^*))$. 
Next we derive few more estimates on $\tilde{\psi}_k,\,k\neq i$. 
\begin{claim}\label{claim-3.1}
	In a neighbourhood of $(u^*,0,\la_i^*)$ we have
	\begin{equation}
		\tilde{\psi}_{k}(u,\tilde{V}_i,\si)=\mathcal{O}(1)\abs{\tilde{V}_i}.
	\end{equation}
	Furthermore, we have
	\begin{equation}
		\tilde{\psi}_{k,\si}(u,\tilde{V}_i,\si),\tilde{\psi}_{k,u\si}(u,\tilde{V}_i,\si),\tilde{\psi}_{k,\si\si}(u,\tilde{V}_i,\si)=\mathcal{O}(1)\abs{\tilde{V}_i}.
		\label{tech1}
	\end{equation}
\end{claim}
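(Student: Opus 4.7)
The claim asserts that $\tilde{\psi}_k$ and its derivatives $\tilde{\psi}_{k,\sigma}$, $\tilde{\psi}_{k,u\sigma}$, $\tilde{\psi}_{k,\sigma\sigma}$ all vanish at $\tilde{V}_i = 0$. The strategy is to reduce everything to the single identity
$$
\tilde{\psi}_k(u, 0, \sigma) = 0 \qquad \text{for all } k \ne i \text{ and all } (u,\sigma) \text{ close to } (u^*, \lambda_i(u^*)).
$$
Once this is proved, the smoothness of the center manifold (hence of $\tilde{r}_i$ and of $\tilde{\psi}_k$) combined with a Taylor expansion of $\tilde{\psi}_k$ in the variable $\tilde{V}_i$ around $\tilde{V}_i = 0$ gives $\tilde{\psi}_k = \mathcal{O}(|\tilde{V}_i|)$; moreover, differentiating the identity $\tilde{\psi}_k(u,0,\sigma) \equiv 0$ in $(u,\sigma)$ yields $\tilde{\psi}_{k,\sigma}(u,0,\sigma) = \tilde{\psi}_{k,u\sigma}(u,0,\sigma) = \tilde{\psi}_{k,\sigma\sigma}(u,0,\sigma) = 0$, so that the same Taylor argument produces the remaining three estimates \eqref{tech1}.

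The heart of the proof is therefore the identity $\tilde{\psi}_k(u,0,\sigma) = 0$. I would substitute the parametrization $v = \tilde{V}_i\,\tilde{r}_i(u,\tilde{V}_i,\sigma)$ into the traveling-wave system \eqref{ODE-1}. Since $\dot{u} = v = \tilde{V}_i \tilde{r}_i$ and $\dot{\sigma} = 0$, the chain rule gives
$$
\dot{v} = \dot{\tilde{V}}_i \bigl(\tilde{r}_i + \tilde{V}_i\,\tilde{r}_{i,\tilde{V}_i}\bigr) + \tilde{V}_i^{\,2}\bigl(\tilde{r}_i \cdot D_u \tilde{r}_i\bigr),
$$
while the right-hand side of \eqref{ODE-1} equals $\tilde{V}_i B^{-1}(u)(A(u)-\sigma)\tilde{r}_i - \tilde{V}_i^{\,2} B^{-1}(u)(\tilde{r}_i \cdot DB(u))\tilde{r}_i$. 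Dividing by $\tilde{V}_i$ and letting $\tilde{V}_i \to 0$, one obtains the eigenvalue relation
$$
B^{-1}(u)\bigl(A(u)-\sigma\bigr)\tilde{r}_i(u,0,\sigma) \;=\; \mu(u,\sigma)\,\tilde{r}_i(u,0,\sigma),
$$
where $\mu(u,\sigma)$ is the limit of $\dot{\tilde{V}}_i/\tilde{V}_i$ on the center manifold.

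Because $A(u)$ and $B(u)$ commute, the matrix $B^{-1}(u)(A(u)-\sigma)$ is simultaneously diagonalizable with eigenvectors $r_k(u)$ and eigenvalues $(\lambda_k(u) - \sigma)/\alpha_k(u)$. For $(u,\sigma)$ in a small neighborhood of $(u^*, \lambda_i(u^*))$, strict hyperbolicity \eqref{condhyperbo} guarantees that only the eigenvalue $(\lambda_i(u)-\sigma)/\alpha_i(u)$ is close to zero while the remaining ones are bounded away from zero. Since $\tilde{r}_i(u^*,0,\lambda_i(u^*)) = r_i^*$ by the tangency of $\mathcal{M}_i$ to $\mathcal{N}_i$ at $P^*_i$, a continuity argument forces $\tilde{r}_i(u,0,\sigma)$ to be collinear with $r_i(u)$ throughout the neighborhood. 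The normalization of $\tilde{r}_i$ (namely that its first component equals one, as in \eqref{vecteur1}, matching the normalization of $r_i(u)$ in \eqref{eigenv}) promotes this collinearity to equality, yielding $\tilde{r}_i(u,0,\sigma) = r_i(u)$, i.e.\ $\tilde{\psi}_k(u,0,\sigma) = 0$ for every $k \neq i$.

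The main obstacle is precisely this last step: the eigenvalue relation only constrains $\tilde{r}_i(u,0,\sigma)$ to be \emph{some} eigenvector of $B^{-1}(u)(A(u)-\sigma)$, and one must rule out that it jumps to a different eigenspace as $(u,\sigma)$ moves away from $(u^*,\lambda_i(u^*))$. This is where both the spectral-gap consequence of \eqref{condhyperbo} and the $C^2$-smoothness of the center manifold are used together. Once this is settled, the conclusion $\tilde{\psi}_k(u,\tilde{V}_i,\sigma) = \tilde{V}_i \int_0^1 \tilde{\psi}_{k,\tilde{V}_i}(u, t\tilde{V}_i,\sigma)\,dt = \mathcal{O}(|\tilde{V}_i|)$, and the analogous representation for $\tilde{\psi}_{k,\sigma}$, $\tilde{\psi}_{k,u\sigma}$, $\tilde{\psi}_{k,\sigma\sigma}$, follows immediately from smoothness.
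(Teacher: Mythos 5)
Your proposal is correct and follows essentially the same route as the paper: substitute the center-manifold parametrization $v=\tilde V_i\tilde r_i$ into the travelling-wave ODE, divide by $\tilde V_i$ and let $\tilde V_i\to 0$ to get the eigenvalue relation at $\tilde V_i=0$, then use the commutativity $AB=BA$ (common eigenbasis $\{r_k\}$) together with the spectral gap from \eqref{condhyperbo} to force $\tilde r_i(u,0,\sigma)$ to be collinear with $r_i(u)$, i.e.\ $\tilde\psi_k(u,0,\sigma)=0$, after which the estimates \eqref{tech1} follow by Taylor expansion in $\tilde V_i$. The paper implements the "rule out the other eigenspaces" step by comparing coefficients in the basis $\{r_j\}$ and deriving $c_0\le|\lambda_i-\lambda_j|=|\mu|\,|\alpha_i-\alpha_j|\le\e_0$, which is exactly the concrete form of your spectral-gap argument.
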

\begin{proof}[Proof of Claim \ref{claim-3.1}:] Let us consider an $i$ viscous travelling wave, $u(t,x)=U(x-\sigma t)$ and assume that this solution is contained in the center manifold ${\cal M}_i$ then we have $U'(x)=\tilde{V}_i(x)\tilde{r}_i(U(x),\tilde{V}_i(x),\sigma)$ and it yields
	\begin{equation}
		U^{\p\p}=\tilde{V}_i^{\p}[\tilde{r}_i+\tilde{V}_i\tilde{r}_{i,v}]+\tilde{V}^2_i\tilde{r}_{i,u}\tilde{r}_i.
	\end{equation}
	Then we have
	\begin{equation}
		\tilde{V}_i^{\p}B(u)[\tilde{r}_i+\tilde{V}_i\tilde{r}_{i,v}]+\tilde{V}^2_iB\tilde{r}_{i,u}\tilde{r}_i+\tilde{V}_i^2\tilde{r}_i\bullet DB(u)\tilde{r}_i=\tilde{V}_i(A(u)-\si)\tilde{r}_i.
	\end{equation}
	We have
	\begin{align}
		\tilde{V}_i^{\p}=\frac{1}{\langle B(u)[\tilde{r}_i+\tilde{V}_i\tilde{r}_{i,v}],\tilde{r}_i \rangle }(\tilde{\la}_i-\si\|\tilde{r}_i\|^2)\tilde{V}_i+\mathcal{O}\left(\abs{\tilde{V}_i}^2\right),
	\end{align}
	with $\tilde{\lambda}_i=\langle A(u)\tilde{r}_i\,\tilde{r}_i\rangle$. It gives then
	\begin{equation}
		\frac{1}{\langle B(u)[\tilde{r}_i+\tilde{V}_i\tilde{r}_{i,v}],\tilde{r}_i \rangle }(\tilde{\la}_i-\si\|\tilde{r}_i\|^2)\tilde{V}_iB(u)[\tilde{r}_i+\tilde{V}_i\tilde{r}_{i,v}]+\mathcal{O}\left(\abs{\tilde{V}_i}^2\right)=\tilde{V}_i(A(u)-\si)\tilde{r}_i.
	\end{equation}
	Dividing by $\tilde{V}_i$ and passing $\tilde{V}_i\rr0$, we obtain
	\begin{equation}\label{eqn:vi=0}
		\frac{1}{\langle B(u)\tilde{r}_i(u,0,\si),\tilde{r}_i(u,0,\si) \rangle }(\tilde{\la}_i(u,0,\si)-\si\|\tilde{r_i}(u,0,\sigma)\|^2)B(u)\tilde{r}_i(u,0,\si)= (A(u)-\si)\tilde{r}_i(u,0,\si).
	\end{equation}
	Set $b(u,0,\si)=\langle B(u)\tilde{r}_i(u,0,\si),\tilde{r}_i(u,0,\si) \rangle>0$. We get
	\begin{align}
		B(u)\tilde{r}_i(u,0,\si)&=\left(Br_i(u)+\sum\limits_{j\neq i}\tilde{\psi}_j(u,0,\si)Br_j(u)\right)\\
		&=\left(\alpha_i(u)r_i(u)+\sum\limits_{j\neq i}\tilde{\psi}_j(u,0,\si)\alpha_j(u)r_j(u)\right),\\
		A(u)\tilde{r}_i(u,0,\si)&=\left(Ar_i(u)+\sum\limits_{j\neq i}\tilde{\psi}_j(u,0,\si)Ar_j(u)\right)\\
		&=\left(\la_i(u)r_i(u)+\sum\limits_{j\neq i}\tilde{\psi}_j(u,0,\si)\la_j(u)r_j(u)\right).
	\end{align}
	Applying above identities on \eqref{eqn:vi=0} we obtain
	\begin{align}
		&\frac{(\tilde{\la}_i(u,0,\si)-\si\|\tilde{r_i}(u,0,\sigma)\|^2)}{b(u,0,\si)}\alpha_i(u)r_i(u)+	\frac{(\tilde{\la}_i(u,0,\si)-\si\|\tilde{r_i}(u,0,\sigma)\|^2)}{b(u,0,\si)}\\
		&\times\sum\limits_{j\neq i}\tilde{\psi}_j(u,0,\si)\alpha_j(u)r_j(u)=(\la_i(u)-\si)r_i(u)+\sum\limits_{j\neq i}\tilde{\psi}_j(u,0,\si)(\la_j(u)-\si)r_j(u).
	\end{align}
	Therefore, we have 
	\begin{align}
		\frac{(\tilde{\la}_i(u,0,\si)-\si\|\tilde{r_i}(u,0,\sigma)\|^2)}{b(u,0,\si)}\alpha_i(u)&=\la_i(u)-\si,\\
		\frac{(\tilde{\la}_i(u,0,\si)-\si\|\tilde{r_i}(u,0,\sigma)\|^2)}{b(u,0,\si)}\alpha_j(u)&=\la_j(u)-\si\,\,\mbox{ if }\,\,\tilde{\psi}_j(u,0,\si)\neq0.
	\end{align}
	This implies  since $\tilde{\lambda}_i(u,0,\si)=\langle A(u)\tilde{r}_i(u,0,\si)\,\tilde{r}_i(u,0,\si)\rangle$ and using the fact that $\tilde{r}_i(u,0,\si)$ and $\sigma$ are respectively close from $r_i^*$ and $\lambda_i^*$ we can find $\e_0$ small enough such  that  we have
	\begin{equation}
		c_0\leq \abs{\la_i(u)-\la_j(u)}=\frac{\abs{\tilde{\la}_i(u,0,\si)-\si \|\tilde{r_i}(u,0,\sigma)\|^2}}{b(u,0,\si)}\abs{\alpha_i(u)-\alpha_j(u)}\leq \e_0.
	\end{equation}
	This is a contradiction. Hence, $\tilde{\psi}_j(u,0,\si)=0$ for all $j\neq i$.
	
\end{proof}

\noi\textbf{Acknowledgements:} AJ would like to thank the project PRIN 2022YXWSLR “BOUNDARY ANALYSIS FOR DISPERSIVE AND VISCOUS FLUIDS” - DIT.PN012.008 for supporting his postdoctoral position at IMATI-CNR, Pavia, Italy.

\end{document}